\documentclass[11pt]{article}
\usepackage{authblk}
\usepackage[utf8]{inputenc}
\usepackage{amsmath}
\usepackage{amsfonts}
\usepackage{amssymb}
\usepackage{amsthm}
\usepackage{geometry}
\usepackage{enumitem}
\usepackage{comment}
\usepackage{xcolor}
\usepackage{appendix}
\usepackage{mathrsfs}
\usepackage{url}
\usepackage{longtable}

\usepackage{booktabs,tabularx}

\usepackage{tikz}

\usetikzlibrary{arrows.meta,positioning,fit,backgrounds,calc,decorations.pathreplacing}

\usepackage{tikz-cd}

\newif\ifJonesDetection
\JonesDetectionfalse        

\newcommand{\eps}{\varepsilon}
\newcommand{\cQ}{Q}

\newcommand{\cS}{\mathscr{S}}
\newcommand{\cU}{\mathscr{U}}

\newcommand{\cm}{\mathcal{M}}

\newcommand{\cros}{\mathrm{cross}}

\newcommand{\unl}{\TikCircle[1] \, \TikCircle[1]}

\newtheorem{lemma}{Lemma}[section]
\newtheorem{define}[lemma]{Definition} 
\newtheorem{theorem}[lemma]{Theorem}
\newtheorem{proposition}[lemma]{Proposition}
\newtheorem{corollary}[lemma]{Corollary}
\newtheorem{remark}[lemma]{Remark} 
\newtheorem{convention}[lemma]{Convention} 

\newcommand{\inlinefigflip}[4][1.5ex]{%
    \raisebox{#4}{%
        \ifnum#3=1%
            \scalebox{-1}[1]{\includegraphics[height=#1]{#2}}%
        \else%
            \includegraphics[height=#1]{#2}%
        \fi%
    }%
}

\newcommand\TikCircle[1][2.5]{\tikz[baseline=-#1]{\draw[thick](0,0)circle[radius=#1mm];}}

\newcommand{\Symb}[2]{\cS_{#1 \; ; \; #2}}

\title{On trivial Jones--Vassiliev polynomials} 

\author[1]{Avishy Carmi\thanks{avishycarmi@gmail.com }}
\author[1,2]{Eliahu Cohen\thanks{eliahu.cohen@biu.ac.il}}
\affil[1]{Faculty of Engineering and the Institute of Nanotechnology and Advanced Materials, Bar-Ilan University, Ramat Gan 5290002, Israel}
\affil[2]{Institute for Quantum Studies, Chapman University, Orange, California 92866, USA}
\date{}

\begin{document}

\maketitle

\begin{abstract}
We develop a finite-type framework for studying the Jones polynomial and its ability to distinguish the unknot. The main difficulty is to propagate the vanishing of its finite-type coefficient layers from the natural upper degree bound down to the first potentially informative orders. To overcome this, we introduce a local clasped-twist construction whose diagrammatic reductions remain uniformly controlled even when the twist is made arbitrarily long. This separates the role of the twist length from the degree bound and permits a descending vanishing argument. A single construction transfers high-order finite-type vanishing to a long residual twist, where local smoothing relations and an anchor reproducing the original knot force vanishing at all relevant lower orders. Two constructions placed in disjoint regions then generate a controlled two-crossing family. On this family, the resulting low-order relations impose a component count after successive smoothings that contradicts the topology of a suitably chosen pair of crossings. As a consequence, every nontrivial knot has a nonzero Jones finite-type coefficient at an order bounded above by three times its crossing number. In particular, a knot whose Jones polynomial is equal to that of the unknot must itself be the unknot.
\end{abstract}



\section{Introduction}
\addcontentsline{toc}{section}{Introduction}

The question whether the Jones polynomial detects the unknot has
occupied a central place in knot theory since the introduction of the
Jones polynomial.  Bigelow gave an influential discussion of the
problem and of several equivalent or closely related formulations
\cite{Bigelow-JKTR}; see also the recent survey of Manolescu
\cite{Manolescu-survey}.  Extensive computations have supplied strong
evidence for unknot detection.  In particular, Tuzun and Sikora
verified the conjecture for knots through \(24\) crossings
\cite{TuzunSikora-24}.  In the normalization used here, the conjecture
asserts that
\[
V_K(x)=1
\qquad\Longrightarrow\qquad
K\text{ is the unknot}.
\]

A second strand of the subject is the theory of finite-type, or
Vassiliev, invariants, initiated by Vassiliev
\cite{vassiliev1990} and developed through the work of Kontsevich,
Bar-Natan, Goussarov, and many others
\cite{kontsevich1993,bar-natan1995,
ChmutovDuzhinMostovoy2012,goussarov1994}.  Birman and Lin showed that suitable
expansions of quantum knot polynomials produce finite-type invariants
\cite{birman-lin1993}; see also Birman's survey
\cite{birman1993bams}.  The present paper exploits this connection in
the opposite direction: rather than studying the individual
finite-type coefficients of the Jones polynomial in isolation, we
retain the exact Jones skein relation and organize all of its
finite-type layers into a finite algebraic package.

At a conceptual level, the proof combines finite-type rigidity with a local geometric
construction.  We first rewrite the Jones polynomial as a finite sequence of
Jones--Vassiliev coefficient vectors and restrict these coefficients to cubes of crossing
changes.  On a long anti-parallel twist, the Jones skein relation and the local smoothing
geometry force a coefficient layer, once the next two layers vanish, to propagate across
almost the entire cube.  A single vanishing anchor then confines the layer to two
antipodal states, and finite-type degree eliminates these remaining values whenever the
twist is longer than the order under consideration.

The main difficulty is that lengthening an ordinary twist also raises its ambient Jones
degree bound, apparently preventing this descending process from being initialized.  The
clasping twist machine separates these competing effects: its signed states are compressed
above a threshold independent of the twist length, while opening the clasps transfers that
vanishing to the long residual twist.  Two such machines, planted at a suitably chosen pair
of crossings, produce a genuine two-crossing face on which the second Jones--Vassiliev
layer vanishes.  The lowest-order skein recursion then converts this algebraic vanishing
into a prediction for the number of components after one and two oriented smoothings.
That prediction contradicts the elementary topology of the chosen crossing pair, proving
the quantitative triviality barrier and hence unknot detection.

The main theorem also has a consequence for the classical
unknot-detection problem in finite-type theory.  By Goussarov's
characterization of \(m\)-equivalence, a knot which is \(m\)-trivial
for every \(m\) agrees with the unknot under every finite-type
invariant \cite{goussarov1994}. The Jones--Vassiliev coefficients
constructed in this paper form a particular sequence of finite-type
invariants, and the quantitative triviality barrier shows that at
least one of them is nonzero on every nontrivial knot.  Consequently,
a knot which is \(m\)-trivial in the sense of Goussarov for all \(m\)
must be the unknot. Thus, the results establish the unknot-detection
form of the Vassiliev--Goussarov conjecture.

In fact, the full collection of finite-type invariants is not needed
for this conclusion: the finite-type coefficients extracted from the
Jones polynomial already detect the unknot.  It follows that any
universal finite-type invariant over a field of characteristic zero,
in particular the normalized Kontsevich integral
\cite{kontsevich1993}, distinguishes every nontrivial knot from the
unknot.  This is an unknot-detection statement, it does not assert
that finite-type invariants distinguish every pair of non-isotopic
knots.

\subsection{A walkthrough}

The construction begins by converting the Jones polynomial into a finite expansion of finite-type coefficients while retaining the exact skein relation. Concretely, set
\[
p=x-x^{-1},
\qquad\text{equivalently}\qquad
x^2-px-1=0,
\]
and work in the free \(\mathbb Z[p]\)-module with basis \(\{1,x\}\). This gives rise to the Jones–Vassiliev polynomial (JVP),
\[
V_K(x,p)
 =
\sum_{q\geq 0}
\bigl(a_q(K)+b_q(K)x\bigr)p^q.
\]
The coefficient vector
\[
f_q(K):=
\begin{pmatrix}
a_q(K)\\ b_q(K)
\end{pmatrix}
\]
consists of finite-type invariants of order at most \(q\); this is
established in Lemma~\ref{JVP}.  We also use
\[
c_q:=a_q+b_q.
\]
The order-zero coefficient has the particularly useful interpretation
\[
c_0(L)=(-2)^{\ell(L)-1},
\]
where \(\ell(L)\) is the number of components of \(L\)
(Lemma~\ref{JVPfreec}).  Thus, unlike a general finite-type coefficient,
\(c_0\) records a concrete topological quantity.

The JVP is finite rather than formal.  If a diagram has \(n\) crossings
and \(\ell\) components, Corollary~\ref{cor:p-degree-cap} gives
\[
f_q=0
\qquad
\bigl(q>3n+\ell-1\bigr).
\]
For a knot represented by a crossing-minimal diagram this becomes
\[
f_q(K)=0
\qquad
\bigl(q>3c(K)\bigr).
\]
This finite upper bound is one of the two endpoints of the proof.  The
other endpoint is the low-order component-count information carried
by \(c_0\).  The purpose of the intervening machinery is to propagate
vanishing from the upper degree cap down to order \(2\), on a
specially constructed two-crossing variation face.

We say that a knot \(K\) is \(J_N\)-trivial if
\[
f_q(K)=0
\qquad
(1\leq q<N);
\]
see Definition~\ref{def:jm-trivial}.  The main quantitative statement of the paper is
the following.

\medskip
\noindent
\textbf{Main theorem (Theorem~\ref{thm:JN-triviality-barrier}).}
\emph{If \(K\) is a nontrivial \(J_N\)-trivial knot, then}
\[
N\leq 3c(K).
\]

\medskip
\noindent
Equivalently, every nontrivial knot possesses a nonzero
Jones--Vassiliev layer \(f_q\) with
\[
2\leq q\leq 3c(K),
\]
where the lower bound uses the universal vanishing \(f_1=0\) for
knots from Lemma~\ref{lem:f1-vanishing}.  Since \(V_K(x)=1\) is equivalent to the
vanishing of every positive JVP layer, the main theorem immediately
gives the Jones unknot conjecture; this is Theorem~\ref{thm:detection}.

\paragraph{Variation cubes and finite-type propagation.}
Fixing a diagram shadow and varying its crossing signs produces a
Boolean cube.  A finite-type invariant of order at most \(q\) restricts
to a pseudo-Boolean function of degree at most \(q\) on this cube, and
the Vassiliev skein operation becomes a discrete derivative.  The
Jones skein relation is stronger than the bare finite-type relation:
at a given level it couples \(f_q\) to the two preceding JVP layers and
to evaluations on oriented smoothings.  Its coefficient form is given
in Lemma~\ref{JVPskeins}.

The basic local geometric input is \emph{anti-parallel nugatory
invariance}.  In an anti-parallel pair of crossings, smoothing one
crossing renders the other nugatory.  Consequently, after two
successive JVP layers have vanished, the skein relation degenerates
to the graph \(3_1\)-law of Lemma~\ref{lem:31-law}.  On an anti-parallel pair it gives
\[
a_q(++)=a_q(+-)=a_q(-+),
\]
and
\[
b_q(+-)=b_q(-+)=b_q(--).
\]
If the anti-parallel relation graph is connected, these local
equalities propagate through the entire crossing cube.  Lemma~\ref{lem:graph-connectivity}
shows that \(a_q\) is then constant away from the all-negative vertex,
whereas \(b_q\) is constant away from the all-positive vertex.

A single non-extreme vertex at which \(f_q\) vanishes therefore serves
as an \emph{anchor}.  The support of \(a_q\) is reduced to at most the
all-negative vertex, and that of \(b_q\) to at most the all-positive
vertex.  On an \(L\)-dimensional cube, these remaining values are
\(L\)-fold discrete derivatives.  Since \(f_q\) has finite-type degree
at most \(q\), they vanish whenever
\[
L>q.
\]
This is the one-point-support mechanism summarized in
Corollary~\ref{cor:one-output} and specialized to long anti-parallel twists in
Lemma~\ref{lem:twist-supp}.

\paragraph{The initialization problem.}
A long twist supplies the large cube dimension needed to enforce
\(L>q\), but it also increases the ordinary crossing-number degree
cap.  A direct use of a twist therefore encounters a mismatch:
lengthening the twist strengthens the final derivative argument while
simultaneously moving the initial vanishing layer farther upward.
The raw ambient cap grows with \(L\), so it does not initialize the
descent at an \(L\)-independent level.

The principal construction of the paper is designed to separate these
two roles.  A \emph{clasping twist machine}, introduced in
Definition~\ref{def:clasping-twist-machine}, consists of an odd anti-parallel twist block
\[
Z=\{z_1,\ldots,z_L\}
\]
and four clasp coordinates
\[
Y=\{y_0,y'_0,y_{L+1},y'_{L+1}\}.
\]
The full signed machine cube is
\[
Q_{\mathcal M}=Q_Y\times Q_Z.
\]
The machine captures the geometry of a long twist whose ends
clasp both outgoing strands of a seed crossing that has already been smoothed. 
Its geometry is arranged so that almost all signed states collapse,
after local Reidemeister moves, to diagrams with a degree bound
independent of \(L\).  The remaining states are controlled
recursively by the graph \(3_1\)-law or are converted geometrically
into nonexceptional states.

Proposition~\ref{prop:clasping-twist-machines} verifies the complete geometric package.  Its
effective compression threshold, the finite-type level from which descent may proceed, is
\[
\widehat B
 =
B_{\mathrm{host}}+11,
\]
where \(B_{\mathrm{host}}\) is the pre-planting host cap.  The important
point is not the numerical value \(11\), but its independence of the
twist length \(L\).  

Lemma~\ref{lem:locked-state-escape} proves the one-step compression implication
\[
G_{q+2}(Q_{\mathcal M}),
\quad
G_{q+1}(Q_{\mathcal M})
\quad\Longrightarrow\quad
G_q(Q_{\mathcal M})
\qquad(q>\widehat B),
\]
where \(G_q(Q)\) denotes the assertion that \(f_q\) vanishes identically
on \(Q\).  Beginning at any finite ambient degree cap, which may
depend on \(L\), Theorem~\ref{thm:signed-cube-compression} descends through this recurrence and
obtains
\[
G_q(Q_{\mathcal M})
\qquad(q>\widehat B).
\]
The long, \(L\)-dependent ambient cap is therefore used only as a
remote terminal condition.  The machine compresses the actual
initialization level to the fixed threshold \(\widehat B\).

\paragraph{Opening the clasps and descending on the residual twist.}
On the distinguished slice
\[
y'_0=-,
\qquad
y'_{L+1}=+,
\]
apply the Jones skein relation successively at the two unprimed clasp
crossings.  The one-smoothed terms cancel, giving the exact identity
\[
V_{-+}+V_{+-}
 =
x^{-4}V_{++}+x^4V_{--}-p^2V_{00}.
\]
The double-smoothed state \(V_{00}\) is the ordinary residual twist
cube \(Q_\rho\).  Since every signed machine state vanishes above
\(\widehat B\), coefficient extraction from this identity yields
Lemma~\ref{lem:four-clasp-residual-initialization}:
\[
G_r(Q_\rho)
\qquad(r>\widehat B+2).
\]
This initializes two consecutive residual levels immediately above
\(\widehat B+2\).

One canonical residual state \(Z^\circ\) cancels by
Reidemeister--II moves to the original seed crossing and is therefore
isotopic to the host diagram.  It supplies the anchor needed for the
graph-delta argument.  If the host coefficients vanish through
\(\widehat B+2\) and
\[
L>\widehat B+2,
\]
Theorem~\ref{thm:four-clasp-descent} descends from the initialized residual levels to obtain
\[
G_q(Q_\rho)
\qquad(q\geq 2).
\]

\paragraph{Two machines and generated anchors.}
The final obstruction requires a genuine two-crossing face.  Planting
two long machines simultaneously would naively reintroduce the
length-dependent cap, since either machine would regard the other as
part of its ambient diagram.  Theorem~\ref{thm:joint-descent2} avoids this by a
facewise procedure.

First, fix the \(u\)-machine at the residual state reproducing the
original crossing \(u\), and apply the one-machine descent to the
\(v\)-machine.  This generates two vanishing anchor values, one over
each residual sign of the \(v\)-machine.  Next, restrict the
\(v\)-machine to its two-state residual sign face and use those two
values as anchors for descending on the \(u\)-cube.  The result is
\[
G_q(F_u\times F_v)
\qquad(q\geq 2),
\]
where \(F_u\) and \(F_v\) are genuine one-crossing variation faces.
In particular,
\[
G_2(F_u\times F_v).
\]
The locality assertions in Proposition~\ref{prop:clasping-twist-machines} ensure that the two
effective crossings and their oriented smoothings reproduce the
original seed crossings and their smoothings.

\paragraph{The final component-count obstruction.}
Section~\ref{sec:triviality-barriers} converts the order-two vanishing into a contradiction in
elementary planar topology.  In a crossing-minimal diagram of a
nontrivial knot, choose a non-nugatory crossing \(u\).  Lemma~\ref{lem:admissible}
provides an admissible witness \(v\): after smoothing \(u\), the
crossing \(v\) lies between the two resulting components.  Hence
smoothing \(v\) must merge them, by Lemma~\ref{lem:merge}.

Plant the two machines at \(u\) and \(v\).  Theorem~\ref{thm:joint-descent2} gives
\(c_2=0\) on the resulting effective two-crossing face.  Its signed
vertices are knots, so
\[
c_0=1,
\qquad
c_1=0.
\]
The symbolic recursion of Lemma~\ref{lem:BLsymskein} then forces
\[
c_0(\widetilde u=0)=-2
\]
and
\[
c_0(\widetilde u=0,\widetilde v=0)=4.
\]
By Lemma~\ref{JVPfreec}, these values say that the single smoothing has two
components and the double smoothing has three components.  The
smoothing-proxy Lemma~\ref{lem:smooth-proxy} identifies these effective smoothings with
the original diagrams
\[
D\setminus\{u\},
\qquad
D\setminus\{u,v\}.
\]
But admissibility says that the second smoothing merges the two
components of \(D\setminus\{u\}\), so
\(D\setminus\{u,v\}\) must have one component, not three.  This
contradiction proves Theorem~\ref{thm:JN-triviality-barrier}.

\subsection{Proof blueprint}
\addcontentsline{toc}{subsection}{Proof blueprint}

The logical route to the main theorem may be read as follows.

\begin{enumerate}
\item
\textbf{Package the Jones polynomial into finite-type layers.}
Lemma~\ref{JVP} gives the unique finite expansion
\[
V_K(x,p)=\sum_{q\geq0}(a_q+b_qx)p^q,
\]
with \(f_q=(a_q,b_q)^T\) of order at most \(q\).
Definition~\ref{def:jm-trivial} introduces \(J_N\)-triviality.

\item
\textbf{Establish the two numerical endpoints.}
Corollary~\ref{cor:p-degree-cap} gives the upper degree cap
\[
f_q(K)=0\qquad(q>3c(K)),
\]
while Lemma~\ref{JVPfreec} identifies
\[
c_0(L)=(-2)^{\ell(L)-1}.
\]
Lemma~\ref{lem:f1-vanishing} supplies \(c_1=0\) on knots.

\item
\textbf{Convert anti-parallel geometry into cube propagation.}
Lemma~\ref{lem:ap} gives the anti-parallel smoothing identities.
Under two consecutive higher-layer vanishings,
Lemma~\ref{lem:31-law} produces the graph \(3_1\)-law.
Lemma~\ref{lem:graph-connectivity} and Corollary~\ref{cor:one-output} then reduce an anchored layer to
two possible one-point supports.

\item
\textbf{Kill one-point supports by dimension.}
Lemma~\ref{lem:twist-supp} observes that the exceptional values are \(L\)-fold
derivatives.  Since \(f_q\) has order at most \(q\), they vanish
whenever \(L>q\).

\item
\textbf{Compress the length-dependent ambient cap.}
Definition~\ref{def:clasping-twist-machine} introduces the clasping twist machine.
Lemma~\ref{lem:locked-state-escape} proves the locked-state escape step, and
Theorem~\ref{thm:signed-cube-compression} descends from an arbitrary ambient cap to
\[
G_q(Q_{\mathcal M})\qquad(q>\widehat B),
\]
where
\[
\widehat B=B_{\mathrm{host}}+11
\]
is independent of \(L\).
Proposition~\ref{prop:clasping-twist-machines} proves the required local geometric statements.

\item
\textbf{Initialize the residual twist.}
The paired two-clasp skein in Lemma~\ref{lem:four-clasp-residual-initialization} converts signed-cube
compression into
\[
G_r(Q_\rho)\qquad(r>\widehat B+2).
\]

\item
\textbf{Descend from the residual initialization.}
The residual state reproducing the seed crossing supplies an anchor.
Theorem~\ref{thm:four-clasp-descent} combines this anchor, the graph \(3_1\)-law, and
\(L>\widehat B+2\) to obtain
\[
G_q(Q_\rho)\qquad(q\geq2).
\]

\item
\textbf{Generate a two-crossing zero face.}
Theorem~\ref{thm:joint-descent2} first uses one machine to generate two anchors for the
other and then descends facewise, yielding
\[
G_2(F_u\times F_v).
\]

\item
\textbf{Choose crossings with incompatible smoothing behavior.}
Lemmas~\ref{lem:shadow-connectivity}--\ref{lem:merge} produce an admissible pair \((u,v)\): smoothing
\(u\) splits the knot into two components, while smoothing \(v\)
afterward must merge them.  Lemmas~\ref{lem:smooth-slide}--\ref{lem:smooth-proxy} identify the effective
machine smoothings with these original smoothings.

\item
\textbf{Apply the symbolic recursion.}
On \(F_u\times F_v\), the vanishings
\[
c_2=0,\qquad c_1=0,\qquad c_0=1
\]
and Lemma~\ref{lem:BLsymskein} force the double smoothing to have three components.
Admissibility forces it to have one.  This contradiction proves the
triviality barrier of Theorem~\ref{thm:JN-triviality-barrier}.

\item
\textbf{Deduce unknot detection.}
If \(V_K(x)=1\), uniqueness of the JVP expansion gives
\(f_q(K)=0\) for every \(q\geq1\), so \(K\) is \(J_N\)-trivial for
every \(N\).  Taking
\[
N=3c(K)+1
\]
contradicts Theorem~\ref{thm:JN-triviality-barrier} unless \(K\) is the unknot.  This is
Theorem~\ref{thm:detection}.
\end{enumerate}

\subsection{Notation and conventions}

Throughout the text, links and diagrams are oriented, and the Jones polynomial is normalized by
\[
V_U(x)=1
\]
for the unknot \(U\).  

\paragraph{\emph{Shadow}.} A shadow \(S\) is the plane \(4\)-valent graph obtained from a
diagram by forgetting its over--under information.  We write
\[
\operatorname{cross}(S)
\]
for its crossing set, \(\mathcal D(S)\) for the diagrams carried by \(S\), and \(c(K)\) for the
crossing number of a knot \(K\). The notation \(D_1\cong D_2\) means ambient isotopy, relative to the boundary whenever the diagrams are compared inside a specified local disk.

\paragraph{\emph{Cube}.}
The variation cube of a shadow is
\[
Q(S)=\{\pm1\}^{\operatorname{cross}(S)}.
\]
A vertex is denoted by \(\varepsilon\), and
\[
\varepsilon(i=s),\qquad s\in\{+,-,0\},
\]
means that the \(i\)-th crossing is assigned sign \(s\).  The value \(0\) always denotes the
\emph{oriented smoothing}; it is an evaluation on a residual shadow rather than a signed vertex
of \(Q(S)\).  

\paragraph{\emph{Oriented smoothing and residual cubes}.}
For \(J\subseteq\operatorname{cross}(S)\),
\[
D\setminus J
\]
denotes the diagram obtained by oriented smoothing at every crossing in \(J\), and
\(Q_{S\setminus J}\) denotes the corresponding residual variation cube.  By contrast,
\[
|S|\setminus\{u\}
\]
denotes topological deletion of the point \(u\) from the underlying plane shadow.

\paragraph{\emph{JVP finite-type datum}.}
Set
\[
p=x-x^{-1},
\qquad
x^2-px-1=0.
\]
The Jones--Vassiliev expansion is written
\[
V_L(x,p)
 =
\sum_{q\ge0}\bigl(a_q(L)+b_q(L)x\bigr)p^q.
\]
We use
\[
f_q:=
\begin{pmatrix}
a_q\\ b_q
\end{pmatrix},
\qquad
c_q:=a_q+b_q,
\]
and impose the convention
\[
a_q=b_q=c_q=0\qquad(q<0).
\]
For a family \(Q\) of diagrams,
\[
G_q(Q)
\]
means that \(f_q\) vanishes identically on \(Q\).

\paragraph{\emph{Discrete derivatives}.}
Unless stated otherwise, discrete derivatives are unnormalized:
\[
\partial_i f
 =
f(i=+)-f(i=-),
\qquad
\partial_T:=\prod_{i\in T}\partial_i.
\]
Operators supported at distinct crossings commute.  

\paragraph{\emph{Exceptional/antipodal vertices}.}
The all-positive and all-negative vertices of
a signed cube are denoted by
\[
\alpha=(+,\ldots,+),
\qquad
\omega=(-,\ldots,-).
\]

\paragraph{\emph{Clasping twist machines}.}
The machine captures the geometry of a long twist whose ends
clasp both outgoing strands of a seed crossing of a host knot. The seed crossing is smoothed
before the machine is planted. For a clasping twist machine, \(Z=\{z_1,\ldots,z_L\}\) denotes the odd residual twist block, \(Y\) denotes the four clasp coordinates, and
\[
Q_{\mathcal M}=Q_Y\times Q_Z
\]
is the full signed machine cube.  The symbol \(\rho\) denotes the double-smoothed clasp state and
\(Q_\rho\) its residual twist cube.  The states \(Z^+\) and \(Z^-\) reduce to a single positive,
respectively negative, effective crossing; \(Z^\circ\) is the one reproducing the original seed
crossing, and
\[
F_Z=\{Z^+,Z^-\}
\]
is the residual sign face. Finally, \(B_{\mathrm{host}}\) denotes the Jones--Vassiliev degree cap of the pre-planting host shadow.

\paragraph{\emph{Machine compression threshold}.}
The effective compression threshold of the clasping machine is
\[
\widehat B
 =
B_{\mathrm{host}}+\kappa_{\mathrm{mach}}.
\]
In our construction, the machine overhead constant $\kappa_{\mathrm{mach}}=11$.
This threshold marks the initial finite-type layer that vanishes from which the descent proceeds.

\newpage

\section{Preliminaries}
\label{sec:preliminaries}

\subsection{Finite-type invariants}

Fix the category $\mathcal K$ of oriented links in $S^3$, considered
up to ambient isotopy. For $r\geq0$, let $\mathcal K^{(r)}$ denote
the set of immersed oriented singular links with exactly $r$
transverse double points and no other singularities, and set
\[
\mathcal K^{\mathrm{sing}}
 :=
\bigcup_{r\geq0}\mathcal K^{(r)}.
\]
Let $A$ be an abelian group, typically $\mathbb Z$ or $\mathbb Z[p]$.

\begin{define}[Vassiliev extension and order]
A function
\[
v:\mathcal K\longrightarrow A
\]
has a unique extension
\[
\widetilde v:\mathcal K^{\mathrm{sing}}\longrightarrow A
\]
determined recursively by the Vassiliev skein rule
\[
\widetilde v(L_\times)
 =
\widetilde v(L_+)-\widetilde v(L_-)
\]
at every double point.
The invariant $v$ is of finite type of order at most $d$ if
\[
\widetilde v(L)=0
\qquad
\text{for every }L\in\mathcal K^{(d+1)}.
\]
The order, or type, of $v$ is the least such $d$.
\end{define}

\begin{define}[Symbol and weight system]
Let $v$ be of order at most $d$. Its degree-$d$ symbol
\[
\sigma_d(v)
\]
is the restriction of $\widetilde v$ to
$\mathcal K^{(d)}$, regarded as a function on degree-$d$ chord
diagrams. The symbol factors through the $1T$ and $4T$ relations and
therefore defines a linear functional on the degree-$d$ diagram space
$\mathcal A_d$.
\end{define}

\subsection{Pseudo-Boolean functions and finite-type invariants}

\paragraph{Shadows and cubes.}

Fix a shadow $S$ with outside crossing set $I=\cros(S)$, $|I|=n$. The associated
\emph{variation cube} is
\begin{equation}\label{eq:cube}
  \cQ(S) \;=\; \{\varepsilon=(\varepsilon_i)_{i\in I} \mid \varepsilon_i\in\{\pm 1\}\}
  \;\cong\; \{\pm 1\}^n.
\end{equation}
For each $\varepsilon\in \cQ(S)$ let $D(\varepsilon)\in\mathcal D(S)$ denote the diagram carried by $S$ with over/under choices encoded by $\varepsilon$ (see Conventions).

\paragraph{Pseudo‑Boolean functions on a fixed shadow.}
Let $A$ be an abelian group (typically $\mathbb Z$ or $\mathbb Z[p]$).
A \emph{pseudo‑Boolean function on the cube of $S$} is any map
\[
  f_S:\cQ(S)\longrightarrow A,\qquad
  \varepsilon\longmapsto f_S(\varepsilon).
\]
Given an ambient‑isotopy invariant $v:\mathcal K\to A$, we obtain its restriction to the cube of $S$ by evaluation at the vertices:
\[
  f^{\,S}_v(\varepsilon)\;:=\;v\bigl(D(\varepsilon)\bigr),\qquad \varepsilon\in \cQ(S).
\]
Thus, every knot/link invariant supplies, for each choice of shadow, a canonical pseudo‑Boolean function on $\cQ(S)$. Whenever we write $v(D)$ we mean $v([D])$ for the isotopy class represented by the diagram.

\paragraph{Faces and $\cU$‑subcubes.}
For $\cU\subseteq I$ and a base state $\varepsilon \in \cQ(S)$ the \emph{$\cU$‑subcube through $\varepsilon$} is
\[
  \cQ_\cU(\eps)\;:=\;\{\eta\in \cQ(S):\ \eta_i=\eps_i \text{ for all } i\notin \cU\}.
\]
We freely regard alternating sums over such faces as \emph{finite differences} (discrete derivatives). 



\paragraph{Residual cubes and smoothing.}
For later use we smooth a specified set $J\subseteq I$ (oriented “$0$” smoothing from Conventions) and denote by $\cQ_{S\setminus J}$ the variation cube of the resulting shadow; see Definition~\ref{def:residualQ}. Order‑0 constancy on residual cubes is recorded in Corollary~\ref{cor:constancy}.

We extend any cube function $f$ to include evaluations at smoothed crossings by defining $f(\varepsilon_i = 0)$ (alternatively $f(\eps_i^0)$) to mean $v(D(\varepsilon_i = 0))$, i.e.\ evaluation on the smoothed diagram (a vertex of a residual cube).\\[1ex]

The Boolean cube-complex formalism enjoys a number of advantages. The Vassiliev skein relation is realized by a boundary operator, $\partial_i$, contracting/projecting along the $i$-th dimension. The type or order of the Vassiliev invariant becomes the \emph{polynomial degree} of the pseudo-Boolean function. Beyond the observation that this may explain why polynomial invariants are native objects for encoding combinatorial finite-type data, we have tools from spectral analysis at our disposal.

\paragraph{Discrete (finite) derivatives.}
Let $f:\{\pm1\}^{n}\to\mathbb{C}$ be a pseudo-Boolean function.
For coordinate $i$ we define the (unnormalized) derivative
\begin{equation}
        \label{eq:unnormdiff}
        (\partial_{i}f)(\varepsilon) \equiv f(\varepsilon_i^+)-f(\varepsilon_i^-).
\end{equation}
where $\varepsilon_i^\pm$ is $\varepsilon$ with the $i$-th entry set to $\pm 1$. For a subset $T\subseteq[n]$,
\begin{equation}
\partial_{T} = \prod_{i\in T}\partial_{i}, \qquad (\partial_{T}f)(\varepsilon) = \left(\prod_{i \in T} \partial_i\right) f(\eps) .
\end{equation}
Equivalently, it is the alternating sum of $f$ over the $T$-subcube through $\varepsilon$.
%
%

\begin{lemma}[Cube rigidity]
\label{lem:cube-rigidity}
If $f : \{\pm 1\}^k \to A$ satisfies $\partial_i f \equiv 0$ for all coordinates $i$, then $f$ is constant.
\end{lemma}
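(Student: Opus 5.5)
The plan is to use the connectivity of the Boolean cube: we compare $f$ at an arbitrary vertex with its value at a fixed base vertex by walking along a path that changes one coordinate at a time. By definition, $(\partial_i f)(\varepsilon)=f(\varepsilon_i^+)-f(\varepsilon_i^-)$. The hypothesis $\partial_i f\equiv 0$ therefore says that $f$ takes the same value at any two vertices differing only in the $i$-th coordinate. In the language of the paper, $f$ is invariant under flipping any single crossing sign.

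Fix the base vertex $\omega=(-,\ldots,-)$ and let $\varepsilon\in\{\pm1\}^k$ be arbitrary. Let $T=\{i:\varepsilon_i=+\}$, and enumerate $T=\{i_1,\ldots,i_m\}$. Define vertices $\eta^{(0)}=\omega,\eta^{(1)},\ldots,\eta^{(m)}=\varepsilon$, where $\eta^{(j)}$ is obtained from $\eta^{(j-1)}$ by setting coordinate $i_j$ to $+$. Consecutive vertices differ only in coordinate $i_j$, so
\[
f(\eta^{(j)})-f(\eta^{(j-1)})=\pm(\partial_{i_j}f)(\eta^{(j)})=0 .
\]
Summing this telescoping chain gives $f(\varepsilon)=f(\omega)$. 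Since $\varepsilon$ was arbitrary, $f$ is constant.

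Nothing in the argument uses any structure of $A$ beyond its being an abelian group: only subtraction is needed, and no division or characteristic assumption enters. The case $k=0$ is trivial, since the cube is a single point. I do not expect any genuine obstacle. The only point requiring care is that $\partial_i f$ is a function of $\varepsilon$ independent of its $i$-th entry, so its vanishing at a vertex is exactly the statement that the two endpoints of the corresponding edge carry equal values.
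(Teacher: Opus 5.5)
Your proof is correct and is essentially the paper's argument: connect any two vertices by a path of single-coordinate flips, along which $f$ cannot change. You write that path out explicitly starting from $\omega$ and note that only the abelian-group structure of $A$ is used, which the paper leaves implicit.
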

\begin{proof}
Connect any two vertices by a path of edge flips.
\end{proof}

\begin{lemma}[Local commutation]
Let $i\neq j$ be distinct coordinates of a variation cube, and let
$f$ be any cube function. Then
\[
\partial_i\partial_jf
 =
\partial_j\partial_if,
\]
and
\[
\partial_j\bigl(f(\varepsilon_i=0)\bigr)
 =
(\partial_jf)(\varepsilon_i=0).
\]
More generally, local modifications supported at distinct crossing
coordinates commute whenever both compositions are defined.
\end{lemma}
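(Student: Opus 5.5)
The plan is to verify both identities directly from the definition of the unnormalized derivative, working vertex by vertex, since everything here is finite and combinatorial.

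\emph{Commutation of derivatives.} Fix a vertex \(\varepsilon\) and write \(\varepsilon^{st}\) for \(\varepsilon\) with \(\varepsilon_i=s\) and \(\varepsilon_j=t\), where \(s,t\in\{+,-\}\). Because \(i\neq j\), setting coordinate \(i\) and then coordinate \(j\) gives the same vertex as setting them in the opposite order. Expanding the definition,
\[
(\partial_i\partial_j f)(\varepsilon)
 = f(\varepsilon^{++})-f(\varepsilon^{+-})-f(\varepsilon^{-+})+f(\varepsilon^{--}).
\]
This expression is symmetric in the roles of \(i\) and \(j\), so \((\partial_j\partial_i f)(\varepsilon)\) is the same alternating sum over the \(\{i,j\}\)-subcube through \(\varepsilon\).

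\emph{Commutation with smoothing.} The point is that \(f(\varepsilon_i=0)\) is a function on the residual cube \(Q_{S\setminus\{i\}}\). The coordinate \(j\) survives in that residual shadow, because smoothing is a local modification inside a small disk around \(i\) that does not touch the crossing \(j\). Hence
\[
\partial_j\bigl(f(\varepsilon_i=0)\bigr)
 = v\bigl(D(\varepsilon_i=0,\varepsilon_j=+)\bigr)-v\bigl(D(\varepsilon_i=0,\varepsilon_j=-)\bigr).
\]
The right-hand side of the lemma is \((\partial_j f)(\varepsilon_i=0)\), meaning the function \(\varepsilon\mapsto f(\varepsilon_j^+)-f(\varepsilon_j^-)\) extended to the smoothed value at \(i\). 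By the convention for evaluation on smoothed diagrams, this extension is applied termwise. It therefore gives the same two terms, provided the diagrams \(D(\varepsilon_i=0,\varepsilon_j=\pm)\) are identical whichever order the two local operations are performed in.

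That identity of diagrams holds because the two modifications are supported in disjoint disks around distinct crossings, so they commute as operations on diagrams. The general statement follows the same way. Any two local modifications (sign assignments, smoothings, derivatives) supported at distinct coordinates act in disjoint disks, so the resulting diagrams agree, and linearity of the alternating sums then gives equality of the evaluated quantities.

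The only real obstacle is bookkeeping: the extended function must be well defined on residual shadows, with coordinate \(j\) canonically identified between \(Q(S)\) and \(Q_{S\setminus\{i\}}\). I would settle this by noting that smoothing at \(i\) leaves the crossing set \(\operatorname{cross}(S)\setminus\{i\}\) and its local pictures unchanged.
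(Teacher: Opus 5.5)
Your proposal is correct and follows the same idea as the paper, which simply observes that each operator changes or evaluates only its own coordinate, so for $i\neq j$ the two local operations are independent and their order is immaterial. Your explicit expansion of $\partial_i\partial_j f$ as the alternating sum over the $\{i,j\}$-face, together with your termwise reading of $(\partial_j f)(\varepsilon_i=0)$ via disjoint local disks, spells out that one-line argument in more detail.
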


\begin{proof}
Each operator changes or evaluates only its indicated coordinate.
Since $i\neq j$, the two local operations are independent, and hence
their order is immaterial.
\end{proof}


\paragraph{Mirror involution $\tau$.}
Define the global sign flip (mirror on the cube) by
\[
    \tau(\eta) := -\eta \qquad \text{(i.e.\ } (\tau(\eta))_k = -\eta_k \text{ for all } k \in I\text{)}.
\]
This is an involution $\tau^2 = \mathrm{id}$. The mirror involution satisfies the following properties:
\begin{enumerate}
    \item For any $\eta$ and any $i$, the mirror involution
\begin{equation}
\label{eq:K1}
    \tau(\eta_i^+) = (\tau\eta)_i^-,
    \qquad
    \tau(\eta_i^-) = (\tau\eta)_i^+.
\end{equation}

\item For any $g : \cQ \to A$ and any $i$,
\begin{equation}
    \label{eq:K2}
    \partial_i(g \circ \tau) = -(\partial_i g) \circ \tau.
\end{equation}
Proof:
\[
    \partial_i(g \circ \tau)(\eta)
    = g(\tau(\eta_i^+)) - g(\tau(\eta_i^-))
    = g((\tau\eta)_i^-) - g((\tau\eta)_i^+)
    = -\partial_i g(\tau\eta).
\]
\end{enumerate}
%
Formally, we have extended evaluation maps
\[
    f(\eta(i = 0)),
\]
coming from evaluating the invariant $f$ on the diagram where crossing $i$ is smoothed (See Conventions). Crucially, mirror does not change smoothing evaluation
\begin{equation}
    \label{eq:K3}
    \tau(\eta(i = 0)) = (\tau\eta)(i = 0).
\end{equation}

\paragraph{Polynomial degree equals Vassiliev order.}
A pseudo-Boolean invariant $f$ is said to be \emph{finite type of (Vassiliev) order $d$} if
\begin{equation}
\partial_{T}f\equiv0 \quad\text{for every }T\subseteq[n]\text{ with }|T|>d.
\end{equation}
Intuitively, all $(d+1)$-dimensional faces evaluate to alternating sums of zero. The derivative representation immediately implies
\begin{equation}
\text{Type}(f)=\deg(f).
\end{equation}


The derivative operator not only detects type but produces new finite-type functions of \emph{singular} knots: Fix a face defined by coordinates $T$ and take the alternating sum of $f$ along that face. The result is $\partial_{T}f$ restricted to the complementary coordinates. If $f$ is type $d$, then $\partial_{T}f$ is type $d-|T|$. 

Every pseudo-Boolean function admits a Fourier-Walsh spectral representation. This aspect of the theory is briefly reviewed in the appendix.

\section{Finite-type expansions via ring isomorphism: The JVP}

\label{sec:JVP}
We propose a \emph{finite} packaging of the Jones finite-type layers that is stable under basic operations and easy to compute from tabulated Jones polynomials. The finite-type expansion, referred to as the Jones--Vassiliev polynomial (JVP), is given below. The basic idea is to define the indeterminate $p := x-x^{-1}$ in the standard Jones skein
\begin{equation*}
 x^{-2}V^{(+)}-x^{2}V^{(-)}=pV^{(0)},
\end{equation*}
and then work in the quotient rank-2 module with a basis $\{1,x\}$.

\begin{lemma}[Jones–Vassiliev expansion (JVP)]
\label{JVP}
Let $R=\mathbb{Z}[p]$ and consider the $R$-algebra $R[x]/(x^2-px-1)$. For any oriented link $K$ with Jones polynomial $V_K(x)\in\mathbb{Z}[x,x^{-1}]$, the following hold.
\begin{enumerate}
\item Under the homomorphism $p\mapsto x-x^{-1}$, the class of $V_K(x)$ in $R[x]/(x^2-px-1)$ admits a unique finite expansion
   $$
   V_K(x,p)=\sum_{q \geq 0} \big(a_q(K)+b_q(K)x\big)p^q,\qquad a_q(K),b_q(K)\in\mathbb{Z}.
   $$
   (Equivalently, $R[x]/(x^2-px-1)$ is a free $R$-module with basis $\{1,x\}$.)

\item For each $q\ge 0$, the coefficient functionals $a_q,b_q : \mathcal{K} \to \mathbb{Z}$ are Vassiliev invariants of order $\le q$; write $c_q:=a_q+b_q$ likewise of order $\le q$. 

\end{enumerate}
\end{lemma}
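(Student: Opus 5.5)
Part 1 is algebraic and I would prove it first. Since $x^2=px+1$, the element $x$ is a unit in $\Lambda:=R[x]/(x^2-px-1)$, with $x^{-1}=x-p$. Hence every Laurent polynomial in $x$ defines a class in $\Lambda$, and $\mathbb Z[x,x^{-1}]\to\Lambda$ is well defined. Freeness with basis $\{1,x\}$ is the standard fact that a quotient by a monic polynomial of degree $2$ is free, via division with remainder. So $V_K$ is represented by $A(p)+B(p)x$ with unique $A,B\in\mathbb Z[p]$, and $a_q,b_q$ are their coefficients. Finiteness holds because $A$ and $B$ are polynomials. Concretely, I would get them recursively from $x^{n+1}=p\,x^n+x^{n-1}$: writing $x^n=F_{n-1}(p)+F_n(p)x$ with Fibonacci-type polynomials, $\deg_p$ stays below $|n|$. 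I would also check that $\Lambda\to\mathbb Z[x,x^{-1}]$, $p\mapsto x-x^{-1}$, is injective, since $\Lambda\cong\mathbb Z[x,x^{-1}]$. This makes the expansion a faithful repackaging of $V_K$, and uniqueness also holds at the level of Laurent polynomials.

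For part 2, I would extend $V$ to singular links by the Vassiliev rule: $\widetilde V(L_\times)=V(L_+)-V(L_-)$. I then need to show that $\widetilde V$ on a link with $r$ double points lies in $p^r\Lambda$. Granting this, if $r=q+1$ the coefficient of $p^q$ vanishes. Because the extension is $\mathbb Z$-linear, $\widetilde{a_q}$ and $\widetilde{b_q}$ are exactly the $p^q$-coefficients of $\widetilde V$. So $a_q$, $b_q$ and $c_q$ have order at most $q$.

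The key step is the local computation. The skein relation gives $V_+=x^4V_-+x^2pV_0$, so
\[
V_+-V_-=(x^4-1)V_-+x^2pV_0 .
\]
In $\Lambda$ we have $x^4-1=(x^2-1)(x^2+1)$ and $x^2-1=px$. Hence $x^4-1=px(x^2+1)$, and therefore
\[
V_+-V_-=p\bigl(x(x^2+1)V_-+x^2V_0\bigr).
\]
Every double point thus contributes a factor $p$. The remaining factor is a $\Lambda$-linear combination of Jones polynomials of resolved or smoothed links, with coefficients independent of the rest of the diagram.

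For $r$ double points I would argue by induction on $r$. Resolve the double points one at a time using the identity above, applied inside $\Lambda$. Local operations at distinct crossings commute, by the Local commutation lemma, and the identity is linear. So
\[
\widetilde V(L)=p^r\sum_{s}\lambda_s V(L_s),
\]
where $L_s$ runs over the links obtained by choosing $-$ or $0$ at each double point, and $\lambda_s\in\Lambda$. Since $\Lambda$ is a ring and each $V(L_s)\in\Lambda$, we get $\widetilde V(L)\in p^r\Lambda$. In the basis $\{1,x\}$ this means both coordinate polynomials are divisible by $p^r$, because $p^r\Lambda=p^rR\oplus p^rRx$. The main obstacle is bookkeeping rather than substance. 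One must verify carefully that $x^{\pm1}$ and the smoothing terms stay inside $\Lambda$ without introducing negative powers of $p$; the unit property $x^{-1}=x-p$ handles this. One must also keep the convention ($V_+$ versus $V_-$ normalization) consistent with the skein relation stated in the paper.
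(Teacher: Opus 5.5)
Your proposal is correct and follows the paper's route. For part 1 you use freeness of $R[x]/(x^2-px-1)$ with basis $\{1,x\}$, together with its identification with $\mathbb Z[x,x^{-1}]$. For part 2 you use the one-crossing identity $V_+-V_-=p\bigl(x(x^2+1)V_-+x^2V_0\bigr)$, which is the paper's $p\bigl[xV_-+V_0+(x-p)V_+\bigr]$ after substituting $V_+=x^4V_-+x^2pV_0$, and iterate it over the double points to get $\widetilde V\in p^r\Lambda$. Your induction on double points spells out what the paper compresses into $\partial^nV\equiv 0 \bmod p^n$. One minor slip: $\deg_p x^n$ equals $|n|$ rather than staying below it when $n\le 0$ (e.g.\ $x^{-1}=x-p$), but this plays no role in the argument.
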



The uniqueness of the JVP expansion is proven in Appendix \ref{Proof_JVP}.

\begin{define}[$J_m$-triviality; JVP coefficients]
\label{def:jm-trivial}
A knot $K$ is $J_m$-trivial if its $q$-type JVP coefficient $a_q(K)=b_q(K) = 0$ for $1 \leq q \leq m-1$.
\end{define}

\begin{define}[JVP-trivial / $p$-constant polynomial]
    \label{def:p-constancy}
    A JVP is said to be $p$-constant if its degree in $p$ is $0$. Equivalently, $a_q=b_q=0$ for all $q \geq 1$,
    \[
    V_0(x,p) = \alpha_0 + \beta_0 x.
    \]
\end{define}

It turns out that the JVP free coefficients (type-$0$ Vassiliev invariants) encode the number of link components.

\begin{lemma}
    \label{JVPfreec}
    Let $K$ be a link with $\ell_K$ components. Then, $c_0(K) \equiv a_0(K) + b_0(K) = (-2)^{\ell_K - 1}$, and
    \begin{equation}
    a_0(K) = \left \{ 
    \begin{array}{ll}
        (-2)^{\ell_K - 1}, & \ell_K \equiv 1 \mod 2 \\
        0, & \ell_K \equiv 0 \mod 2
    \end{array} \right. \; \;, \quad
    b_0(K) = \left \{
    \begin{array}{ll}
        0, & \ell_K \equiv 1 \mod 2 \\
        (-2)^{\ell_K - 1}, & \ell_K \equiv 0 \mod 2
    \end{array} \right.
    \end{equation}
\end{lemma}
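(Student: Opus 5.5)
Setting \(p=0\) in the JVP means passing to the quotient \(R[x]/(x^2-px-1)\to\mathbb Z[x]/(x^2-1)\), i.e.\ imposing \(x=x^{-1}\), or equivalently \(x^2=1\). So \(a_0(K)+b_0(K)x\) is simply the image of \(V_K(x)\) in \(\mathbb Z[x]/(x^2-1)\), written in the basis \(\{1,x\}\). This reduction is well defined, because the JVP expansion of Lemma~\ref{JVP} is unique and compatible with the ring map sending \(p\mapsto 0\).

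The plan is to compute this image via the skein relation, by induction on crossings in the standard unknotting procedure. Reducing the Jones skein \(x^{-2}V^{(+)}-x^{2}V^{(-)}=pV^{(0)}\) modulo \(p\) and using \(x^2\equiv1\) gives
\[
V^{(+)}\equiv V^{(-)}\pmod{p,\;x^2-1}.
\]
Hence \(a_0+b_0x\) is invariant under crossing changes, as expected for an order-\(0\) invariant. It therefore depends only on the number of components, since any link diagram can be changed by crossing changes into a diagram of the \(\ell\)-component unlink.

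It remains to evaluate the \(\ell\)-component unlink \(U_\ell\). From \(V_{U_\ell}=\bigl(-(x+x^{-1})\bigr)^{\ell-1}\) (this follows from the skein relation applied to a kinked diagram, giving \(V_{U_2}=-(x+x^{-1})\) and multiplicativity under disjoint union), reduction with \(x^{-1}\equiv x\) gives
\[
V_{U_\ell}\equiv(-2x)^{\ell-1}\pmod{p,\;x^2-1}.
\]
Then \(x^{\ell-1}\equiv 1\) if \(\ell\) is odd and \(\equiv x\) if \(\ell\) is even. This yields exactly the stated values of \(a_0,b_0\), and setting \(x=1\) gives \(c_0=(-2)^{\ell-1}\).

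The only delicate point is justifying that the \(p^0\) coefficient of the unique expansion coincides with this naive reduction mod \(p\). This holds because reduction modulo \(p\) is a ring homomorphism from the free \(\mathbb Z[p]\)-module with basis \(\{1,x\}\) to \(\mathbb Z[x]/(x^2-1)\) that sends this basis to the \(\mathbb Z\)-basis \(\{1,x\}\). There is also the sign convention for the unlink value, which I would check directly from the skein relation on the two-crossing diagram of the Hopf link together with the unknot.
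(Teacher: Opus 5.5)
Your proposal is correct and is essentially the paper's argument. The paper kills $p$ by evaluating $V_K$ at the two points $x=\pm1$ (where $p=x-x^{-1}=0$), uses the unlink value $-(x+x^{-1})$ to get $a_0\pm b_0=(\mp2)^{\ell_K-1}$, and then adds and subtracts; you instead reduce modulo $p$ into $\mathbb{Z}[x]/(x^2-1)$ and read off $a_0+b_0x=(-2x)^{\ell_K-1}$ directly. Your crossing-change step ($V^{(+)}\equiv V^{(-)}$ once $x^2=1$) also supplies the justification that the paper only cites as ``well known'' at $x=1$ and simply asserts at $x=-1$.
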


\begin{remark}[Unknot normalization]
For knots, Lemma~\ref{JVPfreec} forces $a_0 = 1$, $b_0 = 0$. Thus, $p$-constant means $V_K(x,p) = 1$ in our normalization.
\end{remark}

\begin{lemma}
\label{lem:f1-vanishing}
For every knot $K$,
\[
a_1(K)=0, \quad b_1(K)=0 .
\]
\end{lemma}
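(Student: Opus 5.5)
The plan is to avoid any skein calculation and use two facts. First, by Lemma~\ref{JVP}, \(a_1\) and \(b_1\) are Vassiliev invariants of order at most \(1\). Second, every knot invariant of order at most \(1\) is constant. Granting these, \(a_1(K)=a_1(U)\) and \(b_1(K)=b_1(U)\). Since \(V_U(x)=1\) and the expansion in Lemma~\ref{JVP} is unique, the unknot has \(a_0=1\), \(b_0=0\) and \(a_q=b_q=0\) for \(q\ge1\). Hence \(a_1(K)=b_1(K)=0\).

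To prove that order-\(\le1\) knot invariants are constant, I would argue as follows. Let \(v\) have order at most \(1\). Its extension \(\widetilde v\) vanishes on singular knots with two double points. Therefore, on singular knots with one double point, \(\widetilde v\) is unchanged by crossing changes elsewhere in the diagram. Every singular knot with one double point can be unknotted by such crossing changes into one whose double point is nugatory, which is the \(1T\) situation. At a nugatory double point the two resolutions \(L_+\) and \(L_-\) are isotopic, so \(\widetilde v=0\) there, and hence \(\widetilde v\) vanishes on every singular knot with one double point. Consequently every crossing change on a genuine knot preserves \(v\). Since any knot diagram can be turned into a diagram of the unknot by crossing changes, \(v\) is constant. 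The step needing the most care is the unknotting of the singular knot, which is the standard argument via a descending diagram taken relative to the double point.

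As a cross-check, or as an alternative route independent of finite-type theory, I would use two properties of knots: \(V_K\) is a Laurent polynomial in \(t=x^2\), so it is even in \(x\), and \(V_K'(1)=0\), which is Jones's classical fact. Under \(p\mapsto x-x^{-1}\), the expansion becomes an identity in \(\mathbb Z[x,x^{-1}]\). At \(x=\pm1\) we have \(p=0\) and \(dp/dx=2\), so the terms with \(q\ge2\) contribute nothing to \(V_K'(\pm1)\). Using \(b_0=0\) for knots, differentiating gives \(V_K'(1)=2(a_1+b_1)\) and \(V_K'(-1)=2(a_1-b_1)\). Evenness and \(V_K'(1)=0\) make both derivatives vanish, so \(a_1=b_1=0\).
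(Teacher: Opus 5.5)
Your main argument is correct and is exactly the paper's route: $a_1,b_1$ have order at most $1$, every order-$\le 1$ knot invariant is constant by the $1T$ relation (you unpack this with the standard descending-diagram argument, which the paper only cites), and both coefficients vanish on the unknot. Your alternative check is also valid and needs no finite-type theory: with $b_0=0$ one gets $V_K'(\pm1)=2(a_1\pm b_1)$, and $V_K'(1)=0$ together with evenness of $V_K$ in $x$ forces $a_1=b_1=0$.
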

 
\begin{proof}
By Lemma~\ref{JVP}, both $a_1$ and $b_1$ are finite-type invariants of order $\le 1$. On knots, the degree-one chord-diagram space vanishes by the $1T$ relation, so every order-${\le}\,1$ knot invariant is constant. Since the unknot has JVP equal to $1$, both $a_1$ and $b_1$ vanish on the unknot. Hence both vanish on every knot.
\end{proof}

\subsection{Degree bound}

An upper bound on the $p$-degree of $V_L(x,p)$ is linear in the number of crossings and link components.

\begin{corollary}[Crossing-number cap for the $p$-adic JVP]
\label{cor:p-degree-cap}
Let $D$ be an oriented diagram of a link $L$ with $n$ crossings and $\ell$ components. Then the JVP coefficients
\[
    a_q(L) = b_q(L) = 0 \qquad \text{for all } q > 3n + \ell - 1.
\]
Equivalently,
\[
    \deg_p V_L(x,p) \le 3n + \ell - 1.
\]
In particular, for knots ($\ell = 1$), $\deg_p V_K(x,p) \le 3n$.
\end{corollary}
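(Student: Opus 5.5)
The plan is to bound the exponents of $x$ in $V_L(x)$ by a state-sum estimate, and then convert each monomial $x^k$ into the basis $\{1,x\}$ over $\mathbb Z[p]$, tracking its $p$-degree. By the uniqueness part of Lemma~\ref{JVP}, the $p$-degree of $V_L(x,p)$ is then at most the largest $p$-degree appearing among these converted monomials.

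\emph{Step 1 (exponent span).} I would use the Kauffman bracket. Write
\[
\langle D\rangle=\sum_{s}A^{a(s)-b(s)}\,d^{\,|s|-1},\qquad d=-A^2-A^{-2},
\]
summed over the $2^n$ states $s$. Here $a(s)+b(s)=n$ and $|s|$ is the number of state circles. Smoothing all crossings of a diagram with $\ell$ components produces at most $n+\ell$ circles: induct on $n$, noting that each smoothing changes the circle count by at most one, and that the crossingless diagram has at most $\ell$ circles. Hence every $A$-exponent of $\langle D\rangle$ lies in
\[
[-(n+2(n+\ell-1)),\; n+2(n+\ell-1)]=[-(3n+2\ell-2),\;3n+2\ell-2].
\]
Multiplying by $(-A^3)^{-w(D)}$ with $|w(D)|\le n$ gives $A$-exponents of absolute value at most $6n+2\ell-2$. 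With the standard substitution $t=A^{-4}$, the skein relation $x^{-2}V^{(+)}-x^2V^{(-)}=pV^{(0)}$ corresponds to $x=t^{1/2}=A^{-2}$. So every monomial $x^k$ of $V_L$ satisfies $|k|\le 3n+\ell-1$. I must check carefully that this normalization matches $V_U=1$ and the sign conventions of the skein relation. If the mirror convention is used instead, the interval is symmetric, so the bound is unaffected. The exponents of $x$ are integers because all $A$-exponents of $V$ are even; this follows from the parity of $a(s)-b(s)\equiv n$ combined with the writhe shift.

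\emph{Step 2 (conversion).} In $R[x]/(x^2-px-1)$, induction using $x^{k+1}=p\,x^k+x^{k-1}$ gives
\[
x^k=P_k(p)\,x+P_{k-1}(p)\qquad(k\ge1),
\]
with $\deg_p P_k=k-1$. For negative exponents, $x^{-1}=x-p$, so $x^{-k}=(x-p)^k$. Reducing this with $x^2=px+1$ gives a $p$-degree of at most $k$ in both coordinates, again by induction. Hence any monomial $x^k$ with $|k|\le M$ has $p$-degree at most $M$. Taking $M=3n+\ell-1$ from Step 1 yields $a_q=b_q=0$ for $q>3n+\ell-1$, and the knot case $\ell=1$ gives the bound $3n$.

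The main obstacle is Step 1: getting the exact constant rather than an off-by-a-few bound. The two delicate points are the circle-count bound $|s|\le n+\ell$ for possibly split diagrams, and the precise dictionary between $A$, $t$ and $x$, including the writhe factor. A naive skein induction on crossing changes loses a factor $x^{\pm4}$ at each change, which is why I prefer the state sum. If the Kauffman normalization proves awkward, I would fall back on that skein induction, using as an auxiliary fact that the exponents of $V_L$ are congruent to $\ell-1$ modulo $2$, to absorb the extra unit lost at the smoothing term.
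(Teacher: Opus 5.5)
Your proposal is correct and follows essentially the same route as the paper: a Kauffman-bracket state-sum estimate (using $|a(s)-b(s)|\le n$, $|s|\le n+\ell$, the writhe factor with $|w(D)|\le n$, and $x=A^{-2}$) to confine the $x$-exponents of $V_L$ to $[-(3n+\ell-1),\,3n+\ell-1]$, combined with the observation that $\deg_p(x^m)\le |m|$ in $\mathbb Z[p][x]/(x^2-px-1)$. The paper simply states the conversion step first and leaves the parity of the exponents implicit. Your extra checks (parity, normalization, and the circle-count induction) are consistent with its argument.
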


\begin{proof}
Write an arbitrary element of the JVP ring as
\[
F=P(p)+Q(p)x,
\qquad
P(p),Q(p)\in\mathbb Z[p].
\]
Using $x^2=px+1$ we thus have
\[
xF
 =
Q+(P+pQ)x.
\]
Since \(x^{-1}=x-p\), one also has
\[
x^{-1}F
 =
(Q-pP)+Px.
\]
Thus multiplication by either \(x\) or \(x^{-1}\) raises the
\(p\)-degree by at most one. It follows inductively that
\[
\deg_p(x^m)\le |m|
\qquad(m\in\mathbb Z).
\]
Consequently, any Laurent polynomial in \(x\) supported in the
exponent interval \([-M,M]\) has JVP \(p\)-degree at most \(M\).

We now estimate the Laurent support of the Jones polynomial. To avoid
confusion with the coefficient polynomials \(P(p)\) and \(Q(p)\), let
\(\mathsf A\) denote the Kauffman bracket variable. A bracket state
\(\sigma\) contributes~\cite{Kauffman1987}
\[
\mathsf A^{a(\sigma)-b(\sigma)}
\bigl(-\mathsf A^2-\mathsf A^{-2}\bigr)^{|\sigma|-1},
\]
where
\[
a(\sigma)+b(\sigma)=n
\]
and \(|\sigma|\) is the number of state circles. Since
\[
|a(\sigma)-b(\sigma)|\le n
\]
and each smoothing changes the number of components by at most one,
\[
|\sigma|\le n+\ell.
\]
Hence every monomial appearing in a bracket-state contribution has
\(\mathsf A\)-exponent of absolute value at most
\[
n+2(|\sigma|-1)
 \le
3n+2\ell-2.
\]
The writhe normalization multiplies the bracket by
\[
(-\mathsf A^3)^{-w(D)}.
\]
Since
\[
|w(D)|\le n,
\]
the absolute value of every \(\mathsf A\)-exponent in the normalized
Jones polynomial is at most
\[
3n+2\ell-2+3n
 =
6n+2\ell-2.
\]
In the present convention, the Jones variable is
\[
x=\mathsf A^{-2}.
\]
Thus, an \(x\)-exponent \(m\) corresponds to the
\(\mathsf A\)-exponent \(-2m\). Therefore the Jones polynomial is
supported in \(x\)-exponents satisfying
\[
|m|
 \le
\frac{6n+2\ell-2}{2}
 =
3n+\ell-1.
\]
By the first part of the proof,
\[
\deg_p V_L(x,p)\le 3n+\ell-1.
\]
Equivalently,
\[
a_q(L)=b_q(L)=0
\qquad
(q>3n+\ell-1).
\]
For a knot, \(\ell=1\), and hence
\[
\deg_p V_K(x,p)\le 3n.
\]
\end{proof}

\begin{remark}[Asymptotic sharpness of the crossing-number cap]
The coefficient \(3\) in Corollary~\ref{cor:p-degree-cap} is asymptotically sharp,
although the additive constant is not claimed to be optimal.
Indeed, the relations
\[
x^2=px+1,
\qquad
x^{-1}=x-p
\]
give, by induction,
\[
\deg_p(x^j)=
\begin{cases}
j-1, & j\geq 1,\\
-j,  & j\leq 0.
\end{cases}
\]
Let \(T_m=T(2,2m+1)\) denote the handedness of the
\((2,2m+1)\)-torus knot whose Jones polynomial is
\[
V_{T_m}(x)
 =
x^{2m}+x^{2m+4}-x^{2m+6}
+x^{2m+8}-\cdots+x^{6m}-x^{6m+2}.
\]
The term of largest exponent occurs uniquely, and hence
\[
\deg_p V_{T_m}=6m+1.
\]
Since
\[
c(T_m)=2m+1,
\]
this may be written as
\[
\deg_p V_{T_m}=3c(T_m)-2.
\]
For the mirror knot,
\[
V_{\overline{T_m}}(x)=V_{T_m}(x^{-1}),
\]
and the unique extreme monomial is \(x^{-(6m+2)}\). Therefore
\[
\deg_p V_{\overline{T_m}}
 =
6m+2
 =
3c(T_m)-1.
\]
Consequently, no universal estimate of the form
\[
\deg_p V_K\leq \gamma\,c(K)+O(1)
\]
can hold with \(\gamma<3\).
For \(m=1\), this gives the trefoil and its mirror:
\[
V_{T_1}(x)=x^2+x^6-x^8,
\qquad
V_{\overline{T_1}}(x)=x^{-2}+x^{-6}-x^{-8},
\]
with
\[
\deg_p V_{T_1}=7,
\qquad
\deg_p V_{\overline{T_1}}=8,
\]
while Corollary~\ref{cor:p-degree-cap} gives the common upper bound \(9\).
Indeed, direct evaluation gives the JVP expansion
\[
V_{T_1}(x,p)= -p^7x -p^6 -5p^5x -4p^4 -6p^3x -3p^2 + 1
\]
for the trefoil, and
\[
V_{\overline{T_1}}(x,p) = -p^8 + p^7x -6p^6 + 5p^5x -10p^4 +6p^3x -3p^2 +1
\]
for its mirror.
This also illustrates that JVP \(p\)-degree need not be preserved
under mirroring, even though the same crossing-number cap applies to
a knot and its mirror.
\end{remark}

\medskip


\begin{lemma}[Equivalence of mirror and primal caps]
\label{lem:mirror-cap}
Let \(\mathcal H\) be a family of diagrams carried by a fixed oriented shadow
\(S\), with \(n\) crossings and \(\ell\) components. Set
\[
        B=3n+\ell-1.
\]
Then every diagram in \(\mathcal H\), and every diagram in the mirror family
\(\tau\mathcal H\), has JVP \(p\)-degree at most \(B\).
In particular, if \(S\) is the shadow of a crossing-minimal knot diagram
\(D\) of \(K\), then
\[
        B=3c(K)
\]
is a uniform JVP degree cap for every crossing-flip of \(D\) and for every
mirror of such a crossing-flip.
\end{lemma}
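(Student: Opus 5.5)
The plan is to reduce everything to Corollary~\ref{cor:p-degree-cap}, observing that its bound depends only on the crossing count $n$ and the component count $\ell$, and that both of these are read off from the shadow alone. The first step is to check that every diagram $D(\varepsilon)\in\mathcal H$, for $\varepsilon\in Q(S)$, has exactly $n$ crossings, since the crossings are the vertices of $S$, and exactly $\ell$ components. The latter holds because the components of the underlying link are the closed curves obtained by traversing $S$ straight through each vertex, and this does not depend on the over/under data. Applying Corollary~\ref{cor:p-degree-cap} to each $D(\varepsilon)$ then gives $a_q=b_q=0$ for $q>3n+\ell-1=B$, uniformly over $\mathcal H$.

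The second step treats the mirror family. The diagram $\tau D(\varepsilon)=D(-\varepsilon)$ is obtained by switching every crossing, so it is again carried by the same shadow $S$. In fact $\tau\mathcal H\subseteq\mathcal D(S)$, which is the family already handled in the first step. The conclusion is the same if one reads the mirror as a planar reflection of $D$: that reflected diagram still has $n$ crossings and $\ell$ components, so the corollary applies to it directly. A different reading might seem to need $V_{\overline L}(x)=V_L(x^{-1})$, but that is not required, because the corollary's Laurent support bound $|m|\le 3n+\ell-1$ is symmetric in $m$, and the passage from support to $p$-degree via $\deg_p(x^m)\le|m|$ is symmetric as well. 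I would add a one-line remark to this effect, since Remark~\ref{cor:p-degree-cap}'s trefoil example shows that the actual $p$-degree is not mirror-invariant; it is only the bound that is.

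For the ``in particular'' clause, take $S$ to be the shadow of a crossing-minimal diagram $D$ of a knot $K$. Then $n=c(K)$ and $\ell=1$, so $B=3c(K)$. Every crossing-flip of $D$ is a vertex of $Q(S)$, and every mirror of such a flip is again a vertex of $Q(S)$ by the second step. Note that the crossing-flips may be links that are not isotopic to $K$, but they remain knots, since $\ell$ is a property of the shadow. I do not expect a genuine obstacle here. The only point needing care is to state explicitly that the component count is determined by the shadow, and that the cap is stated in terms of the diagram's data $(n,\ell)$ rather than the isotopy class, so that no minimality of the flipped diagrams is needed.
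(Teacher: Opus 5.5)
Your proposal is correct and follows the paper's proof. Both arguments observe that the mirror \(D(\tau\eta)=D(-\eta)\) is carried by the same shadow \(S\), so the crossing count \(n\) and the component count \(\ell\) are unchanged; Corollary~\ref{cor:p-degree-cap} then gives the cap \(B=3n+\ell-1\) on all of \(\mathcal D(S)\), which specializes to \(B=3c(K)\) when \(n=c(K)\) and \(\ell=1\). Your additional remarks, on the symmetry of the Laurent-support bound and the planar-reflection reading of the mirror, are correct but not needed.
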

\begin{proof}
The mirror of a diagram is carried by the same shadow and has the same number
of crossings. The component count is also unchanged on a fixed shadow. Hence
Corollary~\ref{cor:p-degree-cap} gives the same bound for the mirror family. In the knot case
\(\ell=1\) and \(n=c(K)\), so the bound is \(B=3c(K)\).
\end{proof}

\begin{remark}[$\deg_p V_L$ is a link invariant]
\label{rem:cap-invariance}
    Because $V_L$ is a link invariant its $p$-adic degree is similarly a link invariant.
    For a knot $K$ with crossing number $c(K)$, Corollary~\ref{cor:p-degree-cap} implies
    \[
    \deg_p V_K(x,p) = \mathcal{O}(c(K)).
    \]
\end{remark}

\subsection{Skein relations}

\begin{lemma}[Finite-type skein relations]
    \label{JVPskeins}
    For a knot/link dependent function, $K \mapsto f(K)$, let $f(\eps_i^-)$, $f(\eps_i^0)$, and $f(\eps_i^+)$ be the function evaluated at a knot obtained from $K$ by allowing the $i$-th crossing to vary as over, under, and oriented smoothing, respectively. The Jones--Vassiliev skein relation induces the following levels of recursive dependencies:
    \begin{equation}
    \label{eq:skein}
        \begin{array}{ll}
             \text{Polynomial level:} &  \partial_i V = p\left[x V(\eps_i^-) + V(\eps_i^0) + (x-p) V(\eps_i^+) \right],\\[1.5ex]
            \text{Finite-type level:} & 
            \partial_i \begin{pmatrix}a_q \\[1ex] b_q \end{pmatrix} = \begin{pmatrix} b_{q-1}(\eps_i^-) + a_{q-1}(\eps_i^0) + b_{q-1}(\eps_i^+) - a_{q-2}(\eps_i^+) \\[1ex] a_{q-1}(\eps_i^-) + b_{q-1}(\eps_i^0) + a_{q-1}(\eps_i^+) + b_{q-2}(\eps_i^-) \end{pmatrix}, \\[4ex]
        \end{array}
    \end{equation}
    where the \emph{unnormalized} partial derivative, $\partial_i V \equiv V(\eps_i^+)-V(\eps_i^-)$.
\end{lemma}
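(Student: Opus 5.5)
The plan is to derive the polynomial-level identity directly from the normalized Jones skein relation by rewriting it in terms of $p$. Then the finite-type level follows by expanding both sides in the JVP basis $\{1,x\}$ and comparing coefficients, which is legitimate by the uniqueness part of Lemma~\ref{JVP}.

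\textbf{Polynomial level.} Start from $x^{-2}V(\eps_i^+)-x^{2}V(\eps_i^-)=pV(\eps_i^0)$. Split the left-hand side as
\[
x^{-2}V^{+}-x^{2}V^{-}=(V^{+}-V^{-})-(1-x^{-2})V^{+}-(x^{2}-1)V^{-}.
\]
The two correction factors become multiples of $p$. From $p=x-x^{-1}$ we get $x^2-1=px$. Also $1-x^{-2}=x^{-1}(x-x^{-1})=px^{-1}=p(x-p)$, using $x^{-1}=x-p$ as in the proof of Corollary~\ref{cor:p-degree-cap}. Substituting and moving the correction terms to the right gives
\[
\partial_iV=p\bigl[xV(\eps_i^-)+V(\eps_i^0)+(x-p)V(\eps_i^+)\bigr].
\]
This is an identity in $\mathbb Z[x,x^{-1}]$, hence also in $R[x]/(x^2-px-1)$.

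\textbf{Finite-type level.} Write each evaluation as $V^\bullet=\sum_q(a_q^\bullet+b_q^\bullet x)p^q$ for $\bullet\in\{-,0,+\}$, and use the multiplication rule $x(a+bx)=b+(a+pb)x$ coming from $x^2=px+1$. This gives three products:
\begin{itemize}
\item $xV^-$ has $1$-component $b^-$ and $x$-component $a^-+pb^-$;
\item $V^0$ contributes $a^0+b^0x$;
\item $(x-p)V^+=xV^+-pV^+$ equals $(b^+-pa^+)+a^+x$, since the $pb^+x$ terms cancel.
\end{itemize}
Adding these and multiplying by the overall factor $p$, the coefficient of $p^q$ on the right has $1$-component
\[
b_{q-1}(\eps_i^-)+a_{q-1}(\eps_i^0)+b_{q-1}(\eps_i^+)-a_{q-2}(\eps_i^+)
\]
and $x$-component
\[
a_{q-1}(\eps_i^-)+b_{q-2}(\eps_i^-)+b_{q-1}(\eps_i^0)+a_{q-1}(\eps_i^+).
\]
On the left, the coefficient of $p^q$ is $\partial_i a_q+(\partial_i b_q)x$, because $\partial_i$ is linear. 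The convention $a_q=b_q=0$ for $q<0$ handles the boundary cases $q=0,1$.

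\textbf{Matching coefficients.} Since $R[x]/(x^2-px-1)$ is free over $\mathbb Z[p]$ with basis $\{1,x\}$ (Lemma~\ref{JVP}), the coefficients of $p^q\cdot1$ and $p^q\cdot x$ must agree separately. This yields the stated vector identity.

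The only real point of care is the polynomial-level rewriting, specifically recognizing $1-x^{-2}=p(x-p)$ so that every term lands in the $\{1,x\}$ basis without negative powers. After that, the coefficient extraction is routine bookkeeping.
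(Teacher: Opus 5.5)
Your proof is correct and takes the same approach as the paper: rewrite the Jones skein in the $p$-form $\partial_i V = p\,[xV^- + V^0 + (x-p)V^+]$ (the paper does this in the proof of Lemma~\ref{JVP}), expand in the basis $\{1,x\}$ using $x^2=px+1$, and match the coefficients of $p^q$ and $p^q x$ by freeness. The only difference is cosmetic: you reduce the bracket to its $\{1,x\}$-components before multiplying by $p$, whereas the paper multiplies each term by $p$ first and then reindexes.
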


 Let $f_r = [a_r,b_r]^T$. For every $r \geq 0$ and every index set $S \ni x$, the JVP skein above may be written at level $r$
\begin{equation}
\label{eq:theskein}
\partial_x
\binom{\partial_{S\setminus\{x\}}a_r}{\partial_{S\setminus\{x\}}b_r}
=
\partial_{S\setminus\{x\}}
\binom{
b_{r-1}(x{=}{-})+a_{r-1}(x{=}0)+b_{r-1}(x{=}{+})-a_{r-2}(x{=}+)
}{
a_{r-1}(x{=}{-})+b_{r-1}(x{=}0)+a_{r-1}(x{=}{+})+b_{r-2}(x{=}-)
}.
    \end{equation}
This is the form of the skein used throughout.

\subsection{Mirror transformation}

The JVP of a mirror diagram may be obtained by swapping 
$$x \mapsto x^{-1} := x-p, \qquad p \mapsto -p.$$
Equivalently, one may swap $(+) \leftrightarrow (-)$ through the global sign flipping $\tau$.
The finite-type coefficients of the mirror JVP are related to the primal one as follows.

\begin{lemma}[Mirror transformation]
    \label{lem:JVP-mirror}
    Let $\eta$ be a cube state, and let $\bar{D} := D(\tau(\eta))$ be a mirror diagram of $D := D(\eta)$. Write its JVP
    \[
    V_{\bar{D}}(x,p) = \sum_{q \geq 0} (a_q(-\eta) + b_q(-\eta) x) p^q.
    \]
    Then
    \[
    a_q(-\eta) = (-1)^q(a_q(\eta) + b_{q-1}(\eta)), \quad b_q(-\eta) = (-1)^q b_q(\eta),
    \]
    where $f_q(\eta) := (a_q(\eta), b_q(\eta))^T$ are the coefficients of $V_D(x,p)$.
\end{lemma}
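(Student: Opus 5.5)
The plan is to realize mirroring as a ring automorphism of the JVP ring and then read off coefficients using the uniqueness part of Lemma~\ref{JVP}. For a mirror diagram, the classical relation is $V_{\bar D}(x)=V_D(x^{-1})$. In the present normalization this is the same fact recorded in the preceding remark on torus knots, and it comes from the bracket state sum, where mirroring exchanges $\mathsf A\leftrightarrow \mathsf A^{-1}$ and negates the writhe. So $V_{\bar D}$ is obtained from $V_D$ by the substitution $x\mapsto x^{-1}$ in $\mathbb Z[x,x^{-1}]$.

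Under $x\mapsto x^{-1}$ we have $p=x-x^{-1}\mapsto x^{-1}-x=-p$. I would therefore define an $R$-semilinear map $\varphi$ on $R[x]/(x^2-px-1)$ by
\[
p\mapsto -p,\qquad x\mapsto x-p .
\]
The first step is to check that $\varphi$ is well defined, which amounts to checking that it preserves the defining relation:
\[
(x-p)^2-(-p)(x-p)-1=x^2-2px+p^2+px-p^2-1=x^2-px-1 .
\]
The second step is to check compatibility with the evaluation $p\mapsto x-x^{-1}$ into $\mathbb Z[x,x^{-1}]$. Here $x-p$ evaluates to $x^{-1}$ and $-p$ evaluates to $x^{-1}-x$, so $\varphi$ lifts the substitution $x\mapsto x^{-1}$. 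Consequently $\varphi(V_D(x,p))$ is an expansion in the basis $\{1,x\}$ whose evaluation is $V_{\bar D}(x)$. By the uniqueness statement in Lemma~\ref{JVP}, it equals $V_{\bar D}(x,p)$.

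The third step is the coefficient computation:
\[
\varphi\Bigl(\sum_q (a_q+b_qx)p^q\Bigr)=\sum_q (-1)^q\bigl(a_q+b_qx-b_qp\bigr)p^q .
\]
Collecting the constant part at $p^q$ gives $(-1)^q a_q+(-1)^{q-1}(-b_{q-1})=(-1)^q(a_q+b_{q-1})$. Collecting the $x$-part gives $(-1)^q b_q$. Together with the convention $b_{-1}=0$, this yields exactly the stated formulas.

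Identifying $\bar D$ with $D(\tau\eta)$ on the same shadow is immediate from the definition of $\tau$. The only step needing care is well-definedness and uniqueness, that is, that $\varphi$ genuinely lands in the free module with basis $\{1,x\}$ so that coefficients can be compared. The relation check above handles this.
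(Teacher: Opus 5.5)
Your proposal is correct and follows essentially the paper's proof: both realize mirroring as the substitution $x\mapsto x-p$, $p\mapsto -p$ in the JVP ring, expand, reindex the $-b_q p^{q+1}$ term, and compare coefficients using uniqueness of the $\{1,x\}$-expansion. Your explicit checks make precise a step the paper leaves implicit, since it only notes $x(x-p)=1$: you verify that the substitution preserves $x^2-px-1$ and that it intertwines the evaluation $p\mapsto x-x^{-1}$ with $x\mapsto x^{-1}$.
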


\subsection{Symbolic calculus}


\begin{convention}[Smoothing notation \(D\setminus J\)]
\label{conv:smoothing}
Let \(S\) be a fixed shadow with outside crossing set \(\cros(S)\), and let
\(D\in\mathcal D(S)\) be a diagram carried by \(S\).
For a subset \(J\subseteq \cros(S)\), we write $D\setminus J$ for the diagram obtained from \(D\) by performing the \emph{oriented smoothing} at every
crossing in \(J\) (our “\(0\)” smoothing). Thus:
\begin{itemize}
  \item \(\,D\setminus\emptyset = D\), and for a singleton \(\{i\}\) we have \(D\setminus\{i\}=D(\eps_i=0)\).
  \item Smoothing at distinct crossings is local and order–independent, so
        \(D\setminus J\) is well defined (independent of any ordering of \(J\)).
  \item If \(J\subseteq \cros(S)\) and \(S\setminus J\) denotes the shadow with those
        vertices smoothed, then the \emph{residual cube} \(\cQ_{S\setminus J}\) is the
        variation cube on the remaining crossings; functions of the form
        \(D\mapsto f(D\setminus J)\) are naturally viewed on \(\cQ_{S\setminus J}\)
        (see Definition~\ref{def:residualQ} and Corollary~\ref{cor:constancy}).
  \item We occasionally write \(J_r\subseteq J\) to mean an arbitrarily chosen \(r\)-subset
        of \(J\); by order–independence, statements about \(D\setminus J_r\) do not
        depend on the choice of \(J_r\) (used, e.g., when discussing incremental smoothings
        and the last–row fingerprint).
  \item In single‑crossing skeins we retain \(D(+), D(-), D(0)\); the set‑notation
        agrees with this: \(D\setminus\{i\}=D(\eps_i=0)\).
\end{itemize}
\end{convention}

\begin{define}[Residual cubes]
    \label{def:residualQ}
    For a shadow $S$ and $J \subset \cros(S)$, the residual cube $\cQ_{S \setminus J}$ is the variation cube of over/under choices on the remaining crossings after smoothing all $J$.
\end{define}

\begin{corollary}[Order-0 constancy on residual cubes] 
\label{cor:constancy}
Fix any shadow $S$ of $K$. Because $c_0$ is a type-0 (order-0) Vassiliev invariant, for any fixed subset $J \subset \cros(S)$ of crossings the map $D \mapsto c_0(D \setminus J)$ is constant on the residual cube of over/under choices. Here $\cros(S)$ denotes the vertex set of the shadow (the common set of crossings for all $D \in \mathcal{D}(S)$). 
\end{corollary}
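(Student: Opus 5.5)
The plan is to reduce the corollary to the fact that $c_0$ is an order-$0$ invariant and to apply cube rigidity (Lemma~\ref{lem:cube-rigidity}) on the residual shadow.

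\emph{Step 1: identify the residual cube.} Fix $J\subset\cros(S)$. The shadow $S\setminus J$ is obtained from $S$ by replacing each vertex in $J$ with its oriented smoothing; this is a planar operation on the shadow and does not depend on over/under data. Its crossing set is $\cros(S)\setminus J$. As $D$ ranges over $\mathcal D(S)$, the smoothed diagram $D\setminus J$ depends only on the signs of $D$ at the crossings outside $J$ (Convention~\ref{conv:smoothing}). Hence $D\setminus J$ ranges exactly over the diagrams carried by $S\setminus J$. The map $D\mapsto c_0(D\setminus J)$ therefore factors through a pseudo-Boolean function
\[
g:\cQ_{S\setminus J}\longrightarrow\mathbb Z,\qquad g(\eta)=c_0\bigl(D_{S\setminus J}(\eta)\bigr).
\]

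\emph{Step 2: vanishing of first derivatives.} By Lemma~\ref{JVP}, $c_0=a_0+b_0$ is a Vassiliev invariant of order at most $0$. Its Vassiliev extension therefore vanishes on every singular link with one double point. In the cube language this says $\partial_i g\equiv0$ for every coordinate $i\in\cros(S)\setminus J$, since $\partial_i g(\eta)$ is exactly the value of the extension on the singular link whose double point is at $i$. This uses the correspondence stated in the preliminaries: finite type of order $d$ means $\partial_T f\equiv 0$ whenever $|T|>d$, here with $d=0$.

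\emph{Step 3: conclude.} Lemma~\ref{lem:cube-rigidity} now gives that $g$ is constant on $\cQ_{S\setminus J}$, which is the claim. As a consistency check, crossing changes preserve the number of components, so Lemma~\ref{JVPfreec} gives the same constant $(-2)^{\ell-1}$ directly, where $\ell$ is the component count of $S\setminus J$. One could in fact use this as an independent one-line proof.

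The only point needing care is Step 1. We must make sure that smoothing commutes with varying the other crossings, so that $D\mapsto D\setminus J$ is a well-defined surjection onto $\mathcal D(S\setminus J)$ that is compatible with the coordinates. This is the order-independence and locality recorded in Convention~\ref{conv:smoothing}, together with the local commutation lemma.
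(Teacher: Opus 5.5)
Your proposal is correct and follows the paper's own argument: the paper likewise uses that $c_0$ has order $0$, so every nonempty discrete derivative $\partial_T c_0$ vanishes on the residual cube $\cQ_{S\setminus J}$, which forces constancy. Your Step~1 identification of the residual cube and your explicit appeal to Lemma~\ref{lem:cube-rigidity} spell out what the paper leaves implicit, and your alternative route via invariance of component counts and Lemma~\ref{JVPfreec} is also valid (it runs in the reverse direction to Corollary~\ref{cor:components}).
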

\begin{proof} 
Since $c_0$ is type-0, $\partial_T c_0 \equiv 0$ for all nonempty $T$, i.e., it is constant on every face of every residual cube $\cQ_{S \setminus J}$.
\end{proof}


\begin{remark}[Finite-type Jones coefficients $c_q$]
    Define 
    $$c_q \equiv a_q + b_q,$$ 
    where $a_q$ and $b_q$ are the JVP coefficients. The symbolic calculus in this section works with either the Birman-Lin or the JVP's $c_q$. In contrast with the BL expansion, the JVP has only finitely many non-vanishing coefficients (explicitly, $a_q \equiv 0, \, b_q \equiv 0$ for $q > 3n+\ell-1$, as shown by Corollary~\ref{cor:p-degree-cap}).
    The significance of this detail becomes clear in the ensuing.
\end{remark}



\begin{define}[Set-indexed symbols]
\label{def:symbols}
For any $K\subset I$ and $J\subset I\setminus K$, define
\[
\Symb{K}{J} := \partial_J c_{|J|}^K
\]
By $c^K_{|J|}$ we mean the coefficient function $c_{|J|}$ evaluated on the diagram $D \setminus K$. To obtain $\Symb{K}{J}$ first perform oriented smoothings at all crossings in $K$, then take the unnormalized alternating sum over the $J$-face (order $|J|$).
%
\end{define}

\begin{lemma}[JVP symbol-skein recursion]
\label{lem:BLsymskein}
Fix a crossing $i$. For every $n\ge 1$,
\begin{equation}
\label{eq:BLsymskein}
\Symb{\varnothing}{J}
\;=\; 2\, \Symb{\varnothing}{J\setminus\{i\}}
\;+\; \Symb{\{i\}}{J\setminus\{i\}}
\qquad (i\in J,\ |J|=n).
\end{equation}
and more generally, 
\begin{equation}
\label{eq:BLsymskein1}
\Symb{K}{J}=2\,\Symb{K}{J\setminus\{i\}} + \Symb{K\cup\{i\}}{J\setminus\{i\}}.
\end{equation}
\end{lemma}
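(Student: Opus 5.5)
The identity will follow from the finite-type skein of Lemma~\ref{JVPskeins}, summed into a relation for $c_q=a_q+b_q$ alone, followed by applying $\partial_{J\setminus\{i\}}$ and discarding every term whose finite-type order is too low. I will first prove \eqref{eq:BLsymskein}. The general form \eqref{eq:BLsymskein1} is the same argument applied to the diagrams $D\setminus K$. This works because oriented smoothing at the crossings of $K$ commutes with all derivatives and evaluations at crossings outside $K$ (Local commutation lemma and Convention~\ref{conv:smoothing}), and $D\setminus K$ is again a diagram on the residual shadow, where Lemma~\ref{JVP} applies.

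\emph{Step 1: the $c$-skein.} Add the two rows of \eqref{eq:skein} at level $q=n$. This gives
\[
\partial_i c_n
 = c_{n-1}(\varepsilon_i^+)+c_{n-1}(\varepsilon_i^-)+c_{n-1}(\varepsilon_i^0)
 +\bigl[b_{n-2}(\varepsilon_i^-)-a_{n-2}(\varepsilon_i^+)\bigr].
\]
I then rewrite $c_{n-1}(\varepsilon_i^+)=c_{n-1}(\varepsilon_i^-)+\partial_i c_{n-1}$.

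\emph{Step 2: apply $\partial_{J\setminus\{i\}}$,} a derivative in $n-1$ coordinates, none of them $i$. The left side becomes $\partial_J c_n=\Symb{\varnothing}{J}$, using that derivatives at distinct coordinates commute. On the right side I treat the terms one at a time.
\begin{itemize}
\item The bracket terms vanish. With crossing $i$ frozen at $\pm$, the maps $a_{n-2}(\varepsilon_i^+)$ and $b_{n-2}(\varepsilon_i^-)$ are order-$\le n-2$ invariants restricted to the cube of the remaining crossings. Hence $n-1$ derivatives kill them. For $n=1$ they vanish already by the convention $a_q=b_q=0$ for $q<0$.
\item The term $\partial_{J\setminus\{i\}}\partial_i c_{n-1}=\partial_J c_{n-1}$ is an $n$-fold derivative of an order-$\le n-1$ function, so it vanishes.
\item Two copies of $\partial_{J\setminus\{i\}}c_{n-1}(\varepsilon_i^-)$ remain. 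By the previous item, $\partial_{J\setminus\{i\}}c_{n-1}$ does not depend on the $i$-th coordinate, so this equals $2\,\Symb{\varnothing}{J\setminus\{i\}}$, whichever sign of $i$ is used.
\item The smoothing term becomes $\partial_{J\setminus\{i\}}c_{n-1}(\varepsilon_i^0)=\Symb{\{i\}}{J\setminus\{i\}}$, by the second part of the Local commutation lemma.
\end{itemize}

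The main point requiring care is well-definedness, namely that $\Symb{\varnothing}{J\setminus\{i\}}$ is meaningful as a function that ignores coordinate $i$. This is exactly the vanishing of $\partial_J c_{n-1}$. I also need to justify that freezing a crossing preserves the order bound: a restriction of an order-$\le d$ invariant to a subface of the cube still has all derivatives of length $>d$ equal to zero. Both points follow directly from the pseudo-Boolean description of order given in Section~\ref{sec:preliminaries}, so no step should present a real obstacle beyond bookkeeping of signs in the summed skein.
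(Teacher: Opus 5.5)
Your proposal is correct and follows the paper's proof essentially step for step. You sum the two rows of \eqref{eq:skein} to get the $c$-skein, apply $\partial_{J\setminus\{i\}}$, discard the $a_{n-2},b_{n-2}$ terms by order, identify the two frozen-sign $c_{n-1}$ terms with $\Symb{\varnothing}{J\setminus\{i\}}$, and read off the smoothed term, obtaining the $K$-version by running the same argument on $D\setminus K$; your rewriting $c_{n-1}(\varepsilon_i^+)=c_{n-1}(\varepsilon_i^-)+\partial_i c_{n-1}$ just spells out the paper's appeal to ``$\partial_T f$ is constant for an order-$|T|$ invariant.''
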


\subsubsection{Order-0 constancy: $c_0$ is a shadow/cube invariant}

The free coefficient $c_0$ is a 0-order finite-type invariant, which in turn makes it an invariant of variation cubes or shadows rather than of any single vertex (knot/link). In addition, we have seen it uniquely encodes the link component count. These two facts render it a powerful tool for analyzing the behavior of shadows under oriented smoothing.

\begin{corollary}[Shadow invariance of component counts]
\label{cor:components}
Let $S$ be a shadow and $D(S)$ the set of all diagrams carried by $S$. Then:
\begin{enumerate}
\item (\textbf{No smoothing}) The number of link components $\ell(D)$ is independent of $D \in D(S)$.
\item (\textbf{Fixed smoothings}) More generally, for any $J \subseteq \cros(S)$ (crossing set of $S$), the component count of $D \setminus J$ (link obtained from $D$ by smoothing exactly the crossings in $J$) is independent of $D \in D(S)$.
\end{enumerate}
\end{corollary}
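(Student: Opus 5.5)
The plan is to deduce both parts from two results already available: the identification \(c_0=(-2)^{\ell-1}\) of Lemma~\ref{JVPfreec}, and the order-zero constancy of Corollary~\ref{cor:constancy}. The argument simply transports constancy of \(c_0\) on (residual) variation cubes to constancy of the component count. Part~1 is the special case \(J=\varnothing\) of part~2, so I will prove part~2 and then specialize.

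First, fix \(J\subseteq\cros(S)\). The diagrams \(D\setminus J\), as \(D\) ranges over \(\mathcal D(S)\), are exactly the diagrams carried by the residual shadow \(S\setminus J\). Two diagrams of \(S\) that differ only at crossings of \(J\) become identical after smoothing \(J\). Hence \(D\mapsto D\setminus J\) factors through the residual cube \(\cQ_{S\setminus J}\).

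Corollary~\ref{cor:constancy} then says that \(c_0(D\setminus J)\) takes a single value on \(\cQ_{S\setminus J}\). The underlying reason is that \(c_0\) has order \(\le 0\) by Lemma~\ref{JVP}, so every discrete derivative \(\partial_i c_0\) vanishes. Lemma~\ref{lem:cube-rigidity} (cube rigidity) upgrades this to constancy, since any two vertices of the residual cube are joined by a path of edge flips.

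Next, apply Lemma~\ref{JVPfreec} to write \(c_0(D\setminus J)=(-2)^{\ell(D\setminus J)-1}\). The map \(\ell\mapsto(-2)^{\ell-1}\) is injective on positive integers, because the absolute values \(2^{\ell-1}\) are distinct. So equality of \(c_0\) values forces equality of component counts.

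The only point needing care is that the order-zero property must hold for all links, not merely knots. The smoothed diagrams \(D\setminus J\) are links, so the vanishing of \(\partial_i c_0\) must be applied in the class of links. Lemma~\ref{JVP} is stated for oriented links, so this is covered.

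As a sanity check independent of the Jones machinery, the same conclusion follows directly: a crossing change does not alter the underlying immersed curve's connectivity, so \(\ell\) depends only on the shadow after smoothing. I would mention this as a remark, but keep the \(c_0\) argument as the proof to match the paper's framework. I expect no real obstacle here.
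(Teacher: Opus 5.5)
Your proposal is correct and follows the paper's own proof: order-zero constancy of \(c_0\) on the residual cube (Corollary~\ref{cor:constancy}), combined with \(c_0=(-2)^{\ell-1}\) from Lemma~\ref{JVPfreec}, gives part~2, and \(J=\varnothing\) gives part~1. Your additional points make explicit what the paper leaves implicit: that \(D\mapsto D\setminus J\) factors through \(\cQ_{S\setminus J}\), that \(\ell\mapsto(-2)^{\ell-1}\) is injective, and that the order-zero property must hold for links.
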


\begin{proof}
By order-0 constancy (Corollary~\ref{cor:constancy}), $D \mapsto c_0(D \setminus J)$ is constant on the residual cube determined by $J$. By Lemma~\ref{JVPfreec}, $c_0(D \setminus J) = (-2)^{\ell(D \setminus J) - 1}$, so $\ell(D \setminus J)$ is constant as $D$ varies. Taking $J = \varnothing$ yields (1).
\end{proof}


\section{Anti-parallel nugatory graphs}

Consider an R2 pair of crossings $(i,j)$. There are two distinctive ways to arrange the directions of the two interlacing strands. The anti-parallel configuration shown below exhibits a distinctive smoothing behavior as this ends up with a nugatory crossing. This behavior is independent of crossing signs and hence shared by any long twist whose interlacing strands are anti-parallel. The rule
\[
\text{\emph{Smoothing one crossing makes the other nugatory}}
\]
is referred to here as anti-parallel nugatory invariance. For an anti-parallel pair, smoothing one of the crossings gives:
\[
    \includegraphics[width=0.2\textwidth]{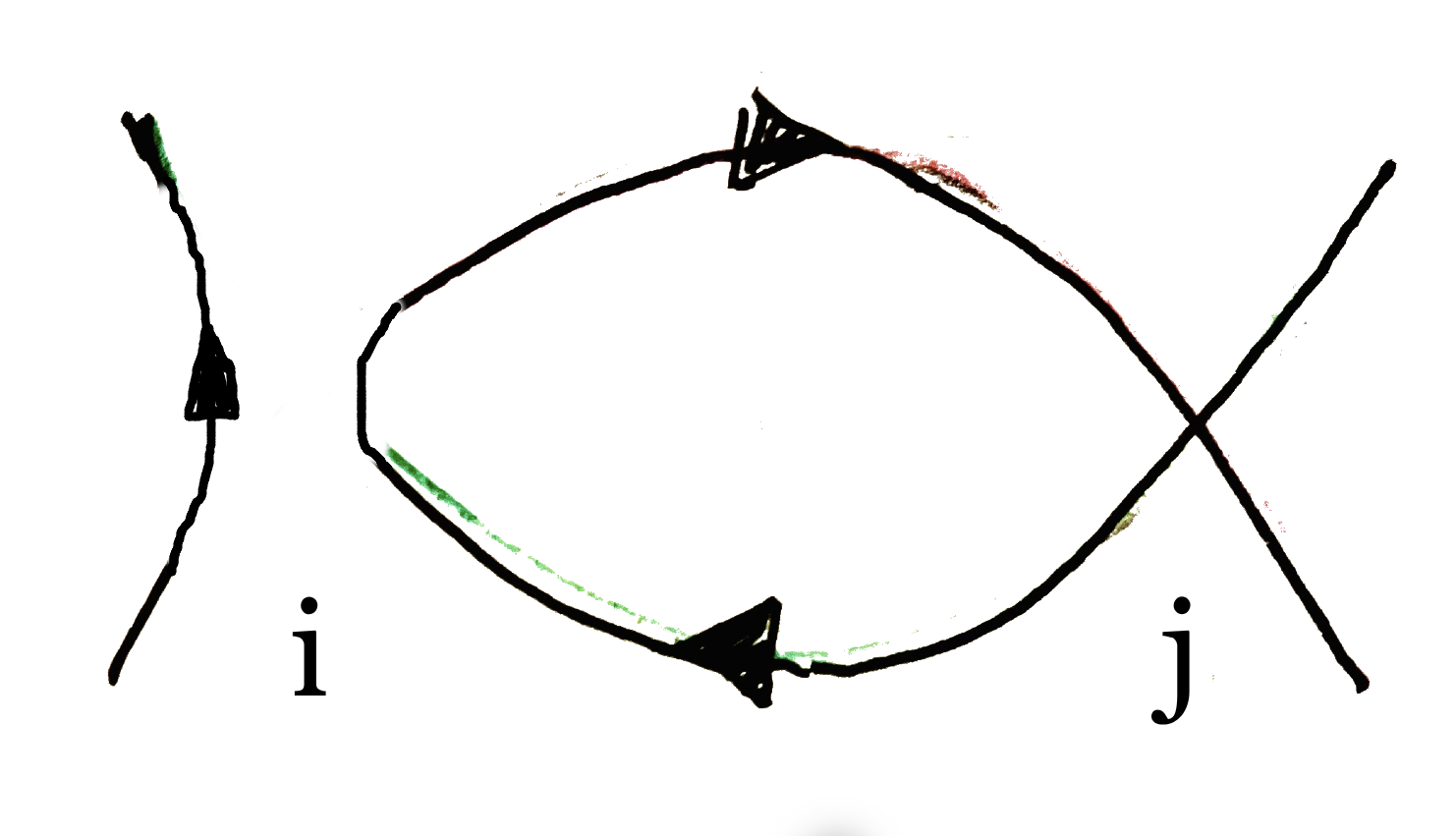}
    \quad \raisebox{0.8cm}{$\stackrel{i=0}{\Longleftarrow}$} \quad
    \includegraphics[width=0.2\textwidth]{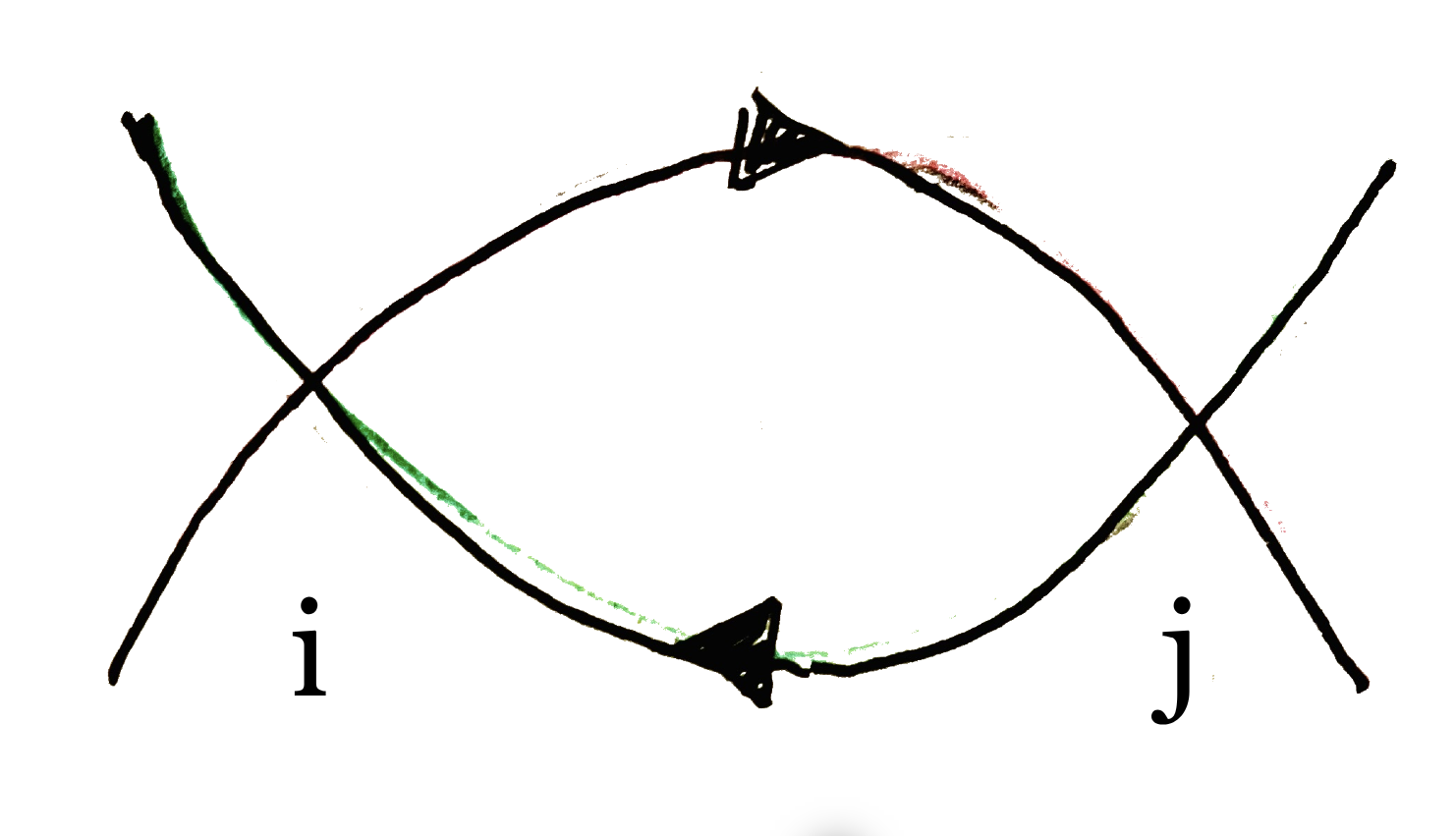} \quad \raisebox{0.8cm}{$\stackrel{j=0}{\Longrightarrow}$} \quad
    \includegraphics[width=0.2\textwidth]{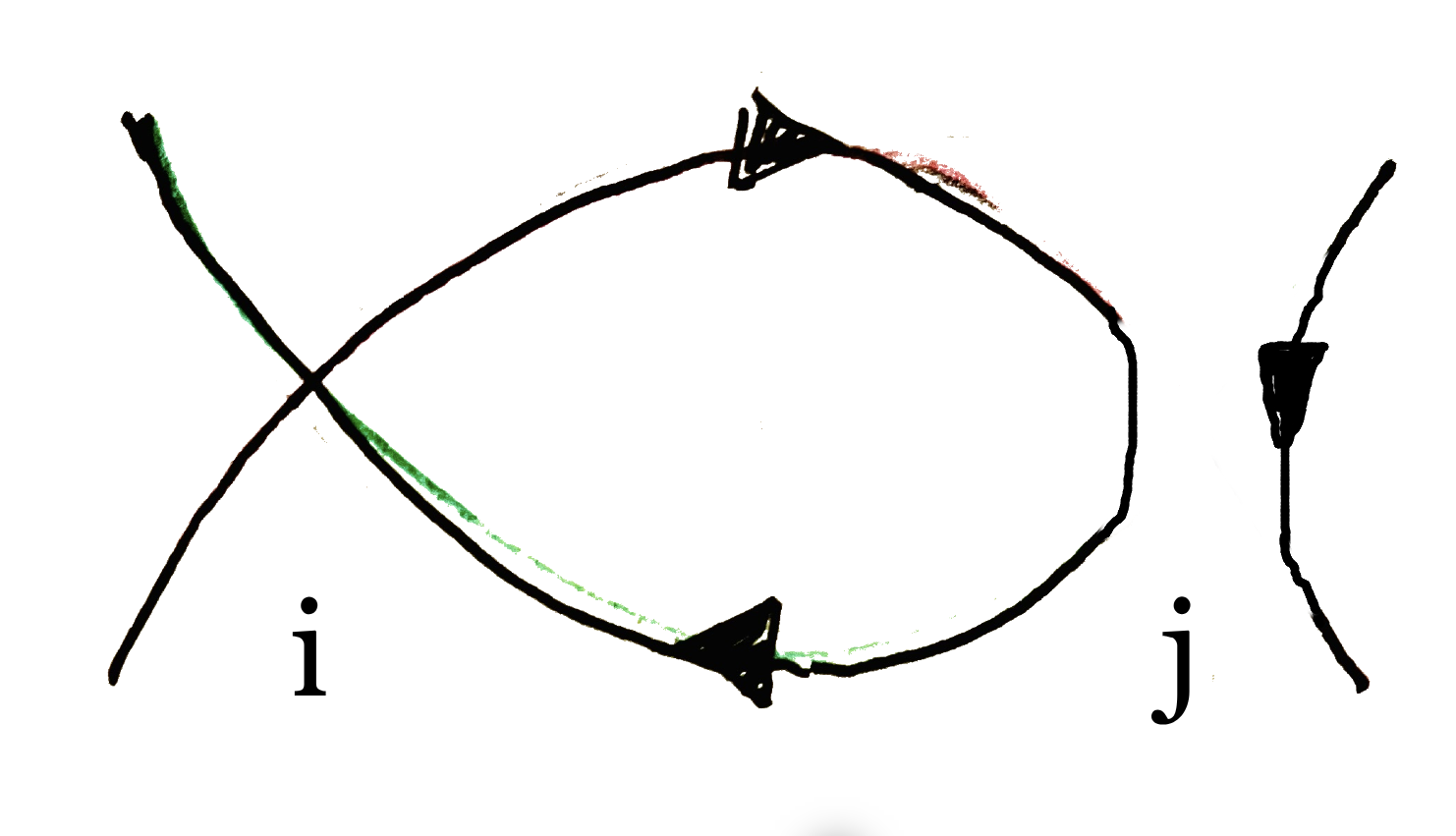}
\]
hence, by Reidemeister-I equivalence
\[
D(i = 0, \ \varepsilon_j = \pm) \cong D(\eps_i = \mp, \ j = 0).
\]
More generally, in a long twist anti-parallel nugatory invariance is respected pairwise.
In terms of shadow diagrams
\[
\includegraphics[width=0.4\textwidth]{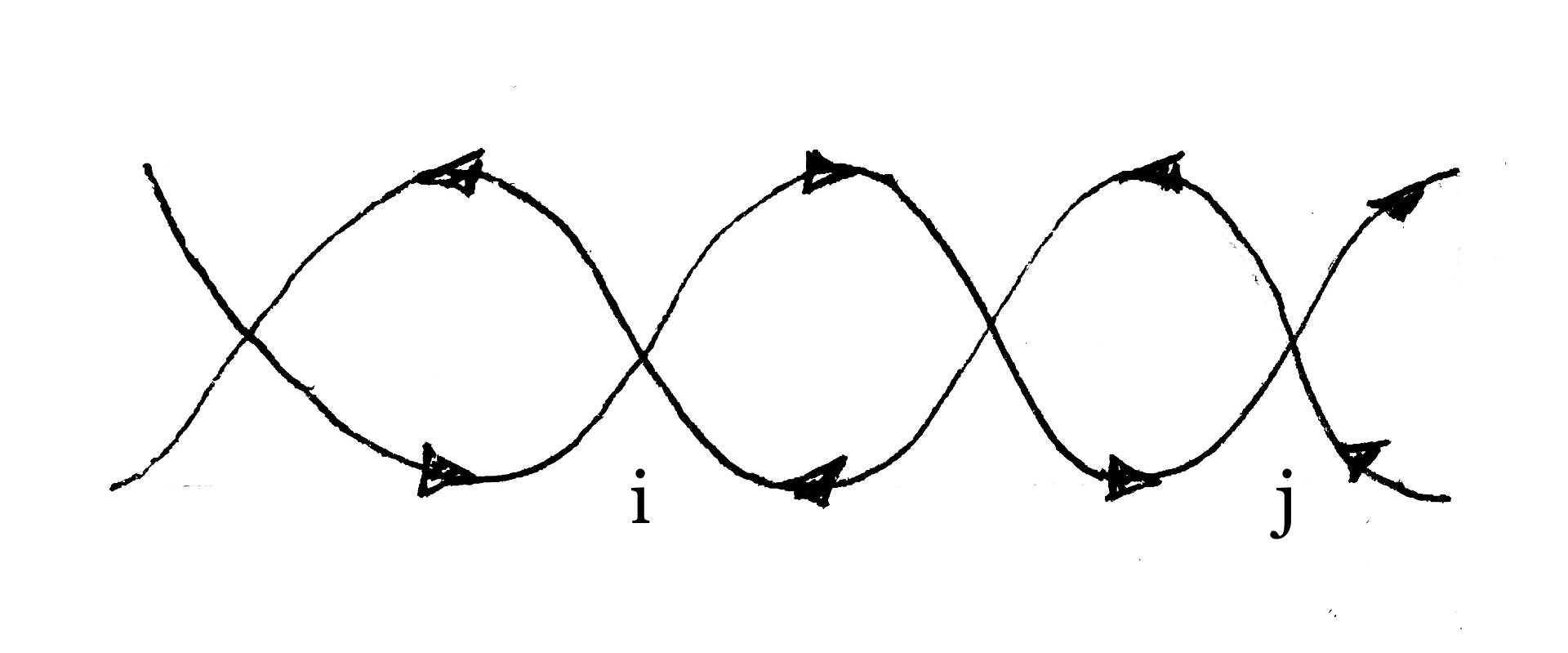}
\quad \raisebox{1cm}{$\stackrel{j=0}{\Longrightarrow}$} \quad
\includegraphics[width=0.4\textwidth]{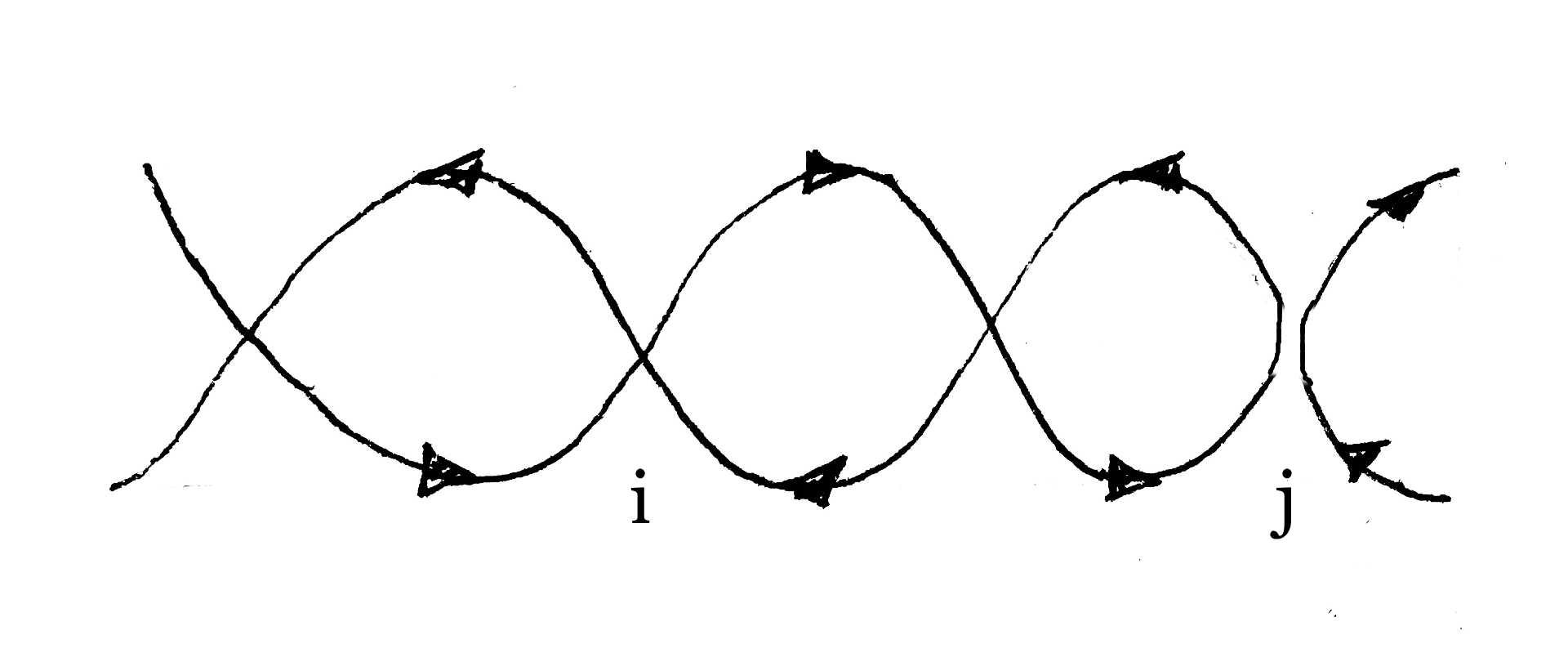}
\]




\begin{lemma}[Anti-parallel nugatory invariance]
\label{lem:ap}
Let $(i,j)$ be an anti-parallel pair in a diagram shadow, and let $v$
be any ambient-isotopy invariant.  Then for every choice~$\eta$ of the other
crossing states,
\[
  v\bigl(D(\eta; \ i{=}0, \ \eps_j{=}+)\bigr)
    = v\bigl(D(\eta;\ i{=}0,\ \eps_j{=}-)\bigr),
\]
hence
\[
  \partial_j(v\circ D)\bigl(\eta(i{=}0)\bigr) = 0.
\]
The same holds with $i$ and $j$ interchanged.  Since mirror preserves
smoothing, $\tau(\eta(i{=}0))=(\tau\eta)(i{=}0)$, the same identity holds on
the mirror cube. Let $f_q=(a_q,b_q)$ be the JVP coefficients. Then, in particular, for every $q\ge 0$,
\[
\partial_i f_q(j=0) = 0, \qquad \partial_j f_q(i=0)=0.
\]

\end{lemma}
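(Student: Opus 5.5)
The argument is local: it reduces to a Reidemeister~I move inside a disk containing the anti-parallel pair $(i,j)$. I first fix all crossing states $\eta$ outside $\{i,j\}$ and choose a small disk $\Delta$ that meets the diagram only in the two strands forming the R2-type clasp through $i$ and $j$. Since the strands are anti-parallel, the oriented smoothing at $i$ reconnects them so that, inside $\Delta$, one of the new arcs runs into the region of $j$ and back out. As a result $j$ becomes a crossing of a single arc with itself, bounding a monogon whose interior contains no other part of the diagram. I would verify this from the local picture: label the four endpoints of $\Delta$ with their orientations, smooth $i$ in the orientation-respecting way, and check that the two endpoints adjacent to $j$ on one side are now joined by an arc passing through $j$ only. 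This is precisely the anti-parallel configuration; in the parallel case the oriented smoothing would instead produce two arcs still crossing at $j$.

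Once $j$ is nugatory in $D(\eta; i{=}0)$, a Reidemeister~I move removes it, regardless of which strand is over. Both diagrams $D(\eta;i{=}0,\varepsilon_j{=}\pm)$ are therefore isotopic, relative to $\partial\Delta$, to the same diagram with the monogon untwisted. Because the isotopy is supported in $\Delta$, it extends by the identity over the rest of the diagram. Invariance of $v$ then gives the stated equality, and subtracting the two values gives $\partial_j(v\circ D)(\eta(i{=}0))=0$ by the definition of the unnormalized derivative. The statement with $i$ and $j$ interchanged follows from the symmetry of the local picture: the anti-parallel condition is symmetric in the two crossings, so smoothing $j$ makes $i$ nugatory by the same argument, as shown in the second figure.

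For the mirror cube, I would use (\ref{eq:K3}): $\tau(\eta(i{=}0))=(\tau\eta)(i{=}0)$. The mirrored diagram is carried by the same shadow, and the pair $(i,j)$ is still anti-parallel, since orientations are unchanged and only the over/under data flip. The previous argument therefore applies verbatim at the state $\tau\eta$. Alternatively, (\ref{eq:K2}) gives $\partial_j(g\circ\tau)=-(\partial_j g)\circ\tau$, so vanishing transfers directly. Finally, $a_q$ and $b_q$ are link invariants by Lemma~\ref{JVP}: they are coefficients of the isotopy invariant $V$ in the unique expansion. Taking $v=a_q$ and $v=b_q$ then yields $\partial_i f_q(j{=}0)=0$ and $\partial_j f_q(i{=}0)=0$.

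The only step needing genuine care is the first one, checking that the \emph{oriented} smoothing in the anti-parallel case really produces a monogon at the other crossing with no other strands inside it. I would handle this by an explicit endpoint-labelling of the local tangle in $\Delta$, noting that $\Delta$ can be shrunk to exclude all other parts of the diagram.
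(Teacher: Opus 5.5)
Your proposal is correct and follows the same route as the paper. Once $i$ is smoothed, $j$ lies in an empty monogon, which a Reidemeister~I move supported in the twist disk removes; hence $v$ agrees on $D(\eta;i{=}0,\varepsilon_j{=}\pm)$, the interchanged case follows by symmetry, and the mirror case holds because $\tau$ commutes with oriented smoothing. Your endpoint check (the oriented smoothing of an anti-parallel crossing is the cap--cup resolution, unlike the parallel case) and your explicit specialization to $v=a_q,b_q$ via uniqueness of the JVP expansion only spell out what the paper leaves to its figure and to the word ``in particular.''
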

\begin{proof}
Work inside the anti-parallel twist disk and fix the exterior of the diagram. After smoothing $i$, the remaining crossing $j$ lies in a monogon. A Reidemeister~I move supported inside the disk removes that monogon, so the diagrams
\[
D(\eta; i=0,\eps_j=+) \quad \text{and} \quad D(\eta; i=0,\eps_j=-)
\]
are isotopic relative to the boundary of the disk. Hence, $v$ takes the same value on both, which proves
\[
\partial_j(v\circ D)(\eta(i=0))=0.
\]
The proof with $i$ and $j$ interchanged is identical. Since mirror commutes with oriented smoothing, the same identity holds on $\tau(\cQ)$.
\end{proof}

The significance of the anti-parallel invariance rule becomes apparent when working at particular levels of the finite-type ladder. Let $f_q$ be the JVP order-$q$ coefficients. Whenever 
\[
f_{q+2} \equiv 0, \qquad  f_{q+1} \equiv 0,
\]
anti-parallel nugatory invariance enforces the JVP skein to exhibit a distinctive rigid behavior where
order-$q$ coefficients propagate through adjacent vertices in the cube. When crossings are pairwise anti-parallel, this connectivity pattern becomes as strong as to exclude exactly two exceptional vertices.


\subsection{Graph-delta machines}


\begin{define}[Finite-type datum $G_q(Q)$]
Let \(f_q=(a_q,b_q)\) denote the JVP coefficient vector. For a variation cube \(Q\), write
\[
G_q(Q)
\]
for the assertion
\[
f_q\equiv0\quad\text{on }Q.
\]
\end{define}


\begin{define}[Anti-parallel relation graph]
Let
\[
V=\{z_1,\ldots,z_M\}
\]
be a set of crossing coordinates in a variation cube \(Q_V\cong\{\pm1\}^M\). An
anti-parallel relation graph on \(V\) is a graph
\[
\Gamma=(V,E)
\]
such that for every edge \(\{z_i,z_j\}\in E\), the pair \((z_i,z_j)\) satisfies the
anti-parallel smoothing identities
\[
\partial_{z_i} f_r(z_j=0)=0,
\qquad
\partial_{z_j} f_r(z_i=0)=0
\]
for all relevant levels \(r\).
\end{define}

\begin{define}[Graph-delta machine]
A graph-delta machine is a set of crossing coordinates \(V\) equipped with a connected
anti-parallel relation graph
\[
\Gamma=(V,E).
\]
The all-positive and all-negative vertices of \(Q_V\) are denoted
\[
\alpha=(+,\ldots,+),
\qquad
\omega=(-,\ldots,-).
\]
\end{define}

\begin{lemma}[Graph \(3_1\)-law]
\label{lem:31-law}   
Let \(\Gamma=(V,E)\) be an anti-parallel relation graph. Assume
\[
G_{q+2}(Q_V),
\qquad
G_{q+1}(Q_V).
\]
Then for every edge \(\{z_i,z_j\}\in E\),
\[
\partial_{z_i}a_q(z_j=+)=0,
\qquad
\partial_{z_i}b_q(z_j=-)=0,
\]
and similarly with \(i,j\) interchanged. Equivalently, for every edge $\{z_i,z_j\} \in E$
%
\[
a_q(++) =
\]
\[
a_q(+-) \quad = \quad  a_q(-+)
\]
\vspace{-2pt}
\[
b_q(+-) \quad = \quad  b_q(-+)
\]
\[
= b_q(--)
\]
\end{lemma}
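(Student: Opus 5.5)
We plan to apply the level-$(q+2)$ form of the finite-type skein of Lemma~\ref{JVPskeins} at the crossing $z_i$, with the state of $z_j$ fixed, and use the anti-parallel identity at the edge $\{z_i,z_j\}$ to remove the smoothing term.

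\textbf{Step 1: two-step skein reduction.} Apply Lemma~\ref{JVPskeins} at crossing $z_i$ at level $r=q+2$, evaluated at a cube vertex where $z_j$ has a fixed sign $s\in\{\pm\}$ (the other coordinates arbitrary). Since $G_{q+2}(Q_V)$ holds, the left side $\partial_{z_i} f_{q+2}$ vanishes. Since $G_{q+1}(Q_V)$ holds, the terms $b_{q+1}(\eps_i^\pm)$ and $a_{q+1}(\eps_i^\pm)$ evaluated at signed vertices also vanish. We are left with
\[
0=a_{q+1}(z_i=0,z_j=s)-a_q(z_i=+,z_j=s),
\qquad
0=b_{q+1}(z_i=0,z_j=s)+b_q(z_i=-,z_j=s).
\]
This expresses $a_q(z_i=+)$ and $b_q(z_i=-)$ through level-$(q+1)$ values on the smoothed diagram.

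\textbf{Step 2: kill the smoothing term with anti-parallel invariance.} By Lemma~\ref{lem:ap} and the definition of the relation graph, $\partial_{z_j}f_{q+1}(z_i=0)=0$. Hence $a_{q+1}(z_i=0,z_j=+)=a_{q+1}(z_i=0,z_j=-)$, and likewise for $b_{q+1}$. Taking the $z_j$-difference of the two identities from Step 1 therefore gives
\[
a_q(z_i=+,z_j=+)=a_q(z_i=+,z_j=-),
\qquad
b_q(z_i=-,z_j=+)=b_q(z_i=-,z_j=-).
\]
That is, $\partial_{z_j}a_q(z_i=+)=0$ and $\partial_{z_j}b_q(z_i=-)=0$. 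Interchanging the roles of $i$ and $j$, which is legitimate because the edge relation is symmetric, gives the stated identities $\partial_{z_i}a_q(z_j=+)=0$ and $\partial_{z_i}b_q(z_j=-)=0$.

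\textbf{Step 3: read off the chains.} Combining $a_q(++)=a_q(-+)$ with $a_q(++)=a_q(+-)$ yields $a_q(++)=a_q(+-)=a_q(-+)$. Similarly, $b_q(--)=b_q(+-)=b_q(-+)$.

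\textbf{Main obstacle.} The delicate point is the bookkeeping in Step 1. We must check that every level-$(q+1)$ term at a signed vertex genuinely lies in $Q_V$, where $G_{q+1}$ applies, while the smoothed term lies on the residual cube, where it does not. We must also confirm that the correction terms $-a_{q-2+2}$ and $b_{q-2+2}$ occur at the correct signs $\eps_i^+$ and $\eps_i^-$. If $G_{q+1}$ were meant only on signed vertices, the argument still works, because the smoothed term is eliminated by differencing rather than by vanishing.
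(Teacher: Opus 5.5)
Your proposal is correct and is essentially the paper's proof. You take the level-$(q+2)$ JVP skein in one edge coordinate, use $G_{q+2}$ and $G_{q+1}$ to remove the left side and the signed level-$(q+1)$ terms, and remove the smoothed term with the anti-parallel identity. The only difference is cosmetic: you apply the skein at $z_i$ and difference in $z_j$ before swapping labels, whereas the paper applies it at $z_j$ and then takes $\partial_{z_i}$.
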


\begin{proof}
We use the JVP skein relation \eqref{eq:theskein}
\begin{equation}
\label{eq:master-skein}
\partial_x
\binom{\partial_{S\setminus\{x\}}a_r}{\partial_{S\setminus\{x\}}b_r}
=
\partial_{S\setminus\{x\}}
\binom{
b_{r-1}(x{=}{-})+a_{r-1}(x{=}0)+b_{r-1}(x{=}{+})-a_{r-2}(x{=}+)
}{
a_{r-1}(x{=}{-})+b_{r-1}(x{=}0)+a_{r-1}(x{=}{+})+b_{r-2}(x{=}-)
}.
\end{equation}
for $x\in S$. Apply the \(a\)-component of the JVP skein relation at level \(r=q+2\) in the coordinate \(z_j\),
then take \(\partial_{z_i}\):
\[
\partial_{z_i}\partial_{z_j}a_{q+2}
=
\partial_{z_i}b_{q+1}(z_j=-)
+
\partial_{z_i}a_{q+1}(z_j=0)
+
\partial_{z_i}b_{q+1}(z_j=+)
-
\partial_{z_i}a_q(z_j=+).
\]
The left-hand side vanishes by \(G_{q+2}\). The two endpoint terms at level $q+1$ vanish by \(G_{q+1}\). The
smoothing term vanishes by the anti-parallel identity
\[
\partial_{z_i}a_{q+1}(z_j=0)=0.
\]
Thus
\[
\partial_{z_i}a_q(z_j=+)=0.
\]
The \(b\)-component is identical:
\[
\partial_{z_i}\partial_{z_j}b_{q+2}
=
\partial_{z_i}a_{q+1}(z_j=-)
+
\partial_{z_i}b_{q+1}(z_j=0)
+
\partial_{z_i}a_{q+1}(z_j=+)
+
\partial_{z_i}b_q(z_j=-).
\]
Again all terms except the last vanish, giving
\[
\partial_{z_i}b_q(z_j=-)=0.
\]
\end{proof}

\begin{lemma}[Connected graph support propagation]
\label{lem:graph-connectivity}
Let \(\Gamma=(V,E)\) be connected. Assume the relations
\[
\partial_{z_i}a(z_j=+)=0
\]
hold for every oriented edge \(z_i\leftrightarrow z_j\) of \(\Gamma\). Then \(a\) is constant on the non-exceptional branch
\[
Q_V\setminus\{\omega\}.
\]
Similarly, if
\[
\partial_{z_i}b(z_j=-)=0
\]
holds for every oriented edge, then \(b\) is constant on
\[
Q_V\setminus\{\alpha\}.
\]
\end{lemma}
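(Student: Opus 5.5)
The plan is to read each hypothesis as a \emph{conditional flip invariance} on the cube and then show that the permitted flips connect every vertex of \(Q_V\setminus\{\omega\}\) to \(\alpha\). Concretely, the relation \(\partial_{z_i}a(z_j=+)=0\) is an identity of functions of the remaining coordinates. So for every vertex \(\varepsilon\) with \(\varepsilon_{z_j}=+\) it says
\[
a(\varepsilon(z_i=+))=a(\varepsilon(z_i=-)).
\]
In words, \(a\) is unchanged when we flip the coordinate \(z_i\), provided some \(\Gamma\)-neighbour \(z_j\) of \(z_i\) currently carries the sign \(+\). Since the relation holds for both orientations of every edge, this applies to each endpoint of each edge. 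Flipping \(z_i\) does not change \(z_j\), so the move is reversible within the same rule.

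Fix \(\varepsilon\in Q_V\setminus\{\omega\}\) and let \(P(\varepsilon)=\{z\in V:\varepsilon_z=+\}\), which is nonempty. If \(P(\varepsilon)\neq V\), connectivity of \(\Gamma\) gives an edge \(\{z_j,z_i\}\) with \(z_j\in P(\varepsilon)\) and \(z_i\notin P(\varepsilon)\). The flip \(z_i:-\to+\) is permitted because its neighbour \(z_j\) is \(+\), so \(a\) keeps its value. This strictly enlarges \(P\), and \(P\) stays nonempty throughout. After at most \(|V|-1\) steps we reach \(P=V\), that is, the vertex \(\alpha\). Hence \(a(\varepsilon)=a(\alpha)\) for every \(\varepsilon\neq\omega\), which is the asserted constancy. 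The chain never passes through \(\omega\), consistent with \(\omega\) being excluded. The degenerate case \(|V|=1\) is trivial, since then \(Q_V\setminus\{\omega\}=\{\alpha\}\).

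For \(b\) the argument is the same with the signs interchanged. The relation \(\partial_{z_i}b(z_j=-)=0\) permits flipping \(z_i\) whenever a neighbour carries \(-\). Starting from any \(\varepsilon\neq\alpha\), we grow the set of \(-\) coordinates along edges of \(\Gamma\) until we reach \(\omega\), so \(b(\varepsilon)=b(\omega)\) on \(Q_V\setminus\{\alpha\}\). Equivalently, one may apply the \(a\)-case to \(b\circ\tau\) using the mirror involution \(\tau\).

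No step is a genuine obstacle. The only point needing care is the first paragraph: the hypothesis is a pointwise identity valid for all values of the other coordinates, not just at one base state. This follows from the way \(\partial_{z_i}f(z_j=+)\) is defined as a function on the residual coordinates, and it is exactly the form delivered by Lemma~\ref{lem:31-law}.
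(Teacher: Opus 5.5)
Your proof is correct and follows essentially the same route as the paper: fix a plus endpoint as the witness, grow the plus set along an edge of $\Gamma$ leaving it until you reach $\alpha$, and run the sign-reversed argument for $b$. Your remark that the hypothesis holds for all values of the remaining coordinates is right, and the alternative for $b$ via $b\circ\tau$ is also valid.
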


\begin{proof}
We prove the \(a\)-statement. Let \(\epsilon\neq\omega\). Then at least one coordinate is \(+\).
Let
\[
S=\{z\in V:\epsilon_z=+\}
\]
be the current plus set. If \(S\neq V\), connectedness of \(\Gamma\) gives an edge from \(S\) to
\(V\setminus S\). Along such an edge, keep the plus endpoint fixed as the witness \(z_j=+\), and
flip the negative endpoint \(z_i\) using
\[
\partial_{z_i}a(z_j=+)=0.
\]
This increases the plus set by one without changing the value of \(a\). Iterating, \(\epsilon\)
is connected to \(\alpha\). Hence \(a\) is constant on \(Q_V\setminus\{\omega\}\).
The \(b\)-statement is the sign-reversed argument, using a minus coordinate as the witness and
connecting every vertex other than \(\alpha\) to \(\omega\).
\end{proof}

\begin{remark}[Anchoring condition]
Lemma~\ref{lem:graph-connectivity} dictates the propagation of coefficients across the connected non-exceptional branch of $\Gamma$. If the coefficients of one of the vertices in this branch vanish, then the support shrinks accordingly. We call such a vertex an anchor.
\end{remark}

\begin{corollary}[One-output graph-delta support]
\label{cor:one-output}
Let \(Q_V\) be a connected graph-delta machine of dimension \(M=|V|\). Assume
\[
G_{q+2}(Q_V),
\qquad
G_{q+1}(Q_V).
\]
Assume also the anchor vanishing
\[
f_q(v^\circ)=0, \qquad v^\circ \in Q_V \setminus\{\alpha,\omega\}.
\]
Then
\[
\operatorname{Supp}(a_q)\subseteq\{\omega\}, \qquad \operatorname{Supp}(b_q)\subseteq\{\alpha\}.
\]
\end{corollary}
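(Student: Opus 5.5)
The plan is to combine Lemma~\ref{lem:31-law} with Lemma~\ref{lem:graph-connectivity}, and then let the anchor determine the constants.

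\emph{Step 1: edge relations.} Since \(Q_V\) is a graph-delta machine, \(\Gamma\) is an anti-parallel relation graph. The hypotheses \(G_{q+2}(Q_V)\) and \(G_{q+1}(Q_V)\) are exactly those of Lemma~\ref{lem:31-law}. That lemma gives, for every edge \(\{z_i,z_j\}\) and in both orientations,
\[
\partial_{z_i}a_q(z_j=+)=0,
\qquad
\partial_{z_i}b_q(z_j=-)=0 .
\]
One point needs checking here. The anti-parallel smoothing identities \(\partial_{z_i}f_{q+1}(z_j=0)=0\) were built into the definition of the relation graph \say{for all relevant levels}. I would note explicitly that level \(r=q+1\) is among them; this holds for a genuine anti-parallel pair by Lemma~\ref{lem:ap}.

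\emph{Step 2: propagation.} Since \(\Gamma\) is connected, Lemma~\ref{lem:graph-connectivity} applies. It shows that \(a_q\) equals a constant \(A\) on \(Q_V\setminus\{\omega\}\), and that \(b_q\) equals a constant \(B\) on \(Q_V\setminus\{\alpha\}\).

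\emph{Step 3: anchoring.} The anchor \(v^\circ\) is neither \(\alpha\) nor \(\omega\). It therefore lies in both non-exceptional branches simultaneously. From \(f_q(v^\circ)=0\) we get \(A=a_q(v^\circ)=0\) and \(B=b_q(v^\circ)=0\). Hence \(a_q\) vanishes off \(\omega\) and \(b_q\) vanishes off \(\alpha\), which is the claimed support inclusion.

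\emph{Where the difficulty lies.} There is essentially no obstacle. The only delicate point is the requirement \(v^\circ\notin\{\alpha,\omega\}\), which is what lets one anchor serve both branches. If \(v^\circ\) were \(\omega\), for example, it would control \(b_q\) but say nothing about \(a_q\). I would also remark on the degenerate case \(M=1\). There \(Q_V=\{\alpha,\omega\}\), so no admissible anchor exists and the statement is vacuous. For \(M\ge 2\), a non-extreme vertex exists and the argument above goes through.
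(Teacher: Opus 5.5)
Your proposal is correct and follows the paper's proof essentially verbatim: the graph $3_1$-law (Lemma~\ref{lem:31-law}) and connected support propagation (Lemma~\ref{lem:graph-connectivity}) make $a_q$ constant on $Q_V\setminus\{\omega\}$ and $b_q$ constant on $Q_V\setminus\{\alpha\}$, and the non-extreme anchor $v^\circ$ then forces both constants to vanish. Your side remarks are accurate points the paper leaves implicit: the level-$(q+1)$ smoothing identity is supplied by Lemma~\ref{lem:ap}, and the case $M=1$ is vacuous.
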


\begin{proof}
By the graph \(3_1\)-law and connected graph support propagation, Lemma~\ref{lem:31-law} and Lemma~\ref{lem:graph-connectivity}, \(a_q\) is constant on
\[
Q_V\setminus\{\omega\}
\]
and $b_q$ is constant on
\[
Q_V\setminus\{\alpha\}
\]
Since \(v^\circ \in Q_V\setminus\{\alpha,\omega\}\) and \(a_q(v^\circ)=0\), it follows that
\[
a_q=0
\]
on \(Q_V\setminus\{\omega\}\). Similarly, since \(b_q(v^\circ)=0\), it follows that
\[
b_q=0
\]
on \(Q_V\setminus\{\alpha\}\).
\end{proof}



\section{Twist-delta machines}

We specialize the preceding formalism to a class of anti-parallel relation graphs, long twists respecting pairwise anti-parallel conditions. 

\begin{define}[Twist-delta cube]
\label{def:twist-delta}
Let $Q_Z=\{\pm1\}^{Z}$ be a variation cube whose coordinates are crossings in one long anti-parallel twist region. We assume that for every pair of distinct active coordinates
\[
z_i,z_j\in Z,
\]
the anti-parallel smoothing identities hold:
\[
\partial_{z_i}f_r(z_j=0)=0,
\qquad
\partial_{z_j}f_r(z_i=0)=0
\]
for every relevant level \(r\).
We write
\[
\omega_Q=(-,\ldots,-),
\qquad
\alpha_Q=(+,\ldots,+)
\]
for the exceptional all-negative and all-positive vertices of \(Q_Z\). In what follows we always assume an odd twist length, $|Z| \equiv 1 \mod 2$.
\end{define}

The twist-delta cube underlies the crossing variations of a $|Z|$-long anti-parallel twist.
Consider a host knot $K$. Pick one of $K$'s crossings and delineate it by a small disk. Within the disk, plant a twist after cutting out the base crossing as shown below. 
\vspace{-0.2cm}
\[
\includegraphics[width=0.3\textwidth]{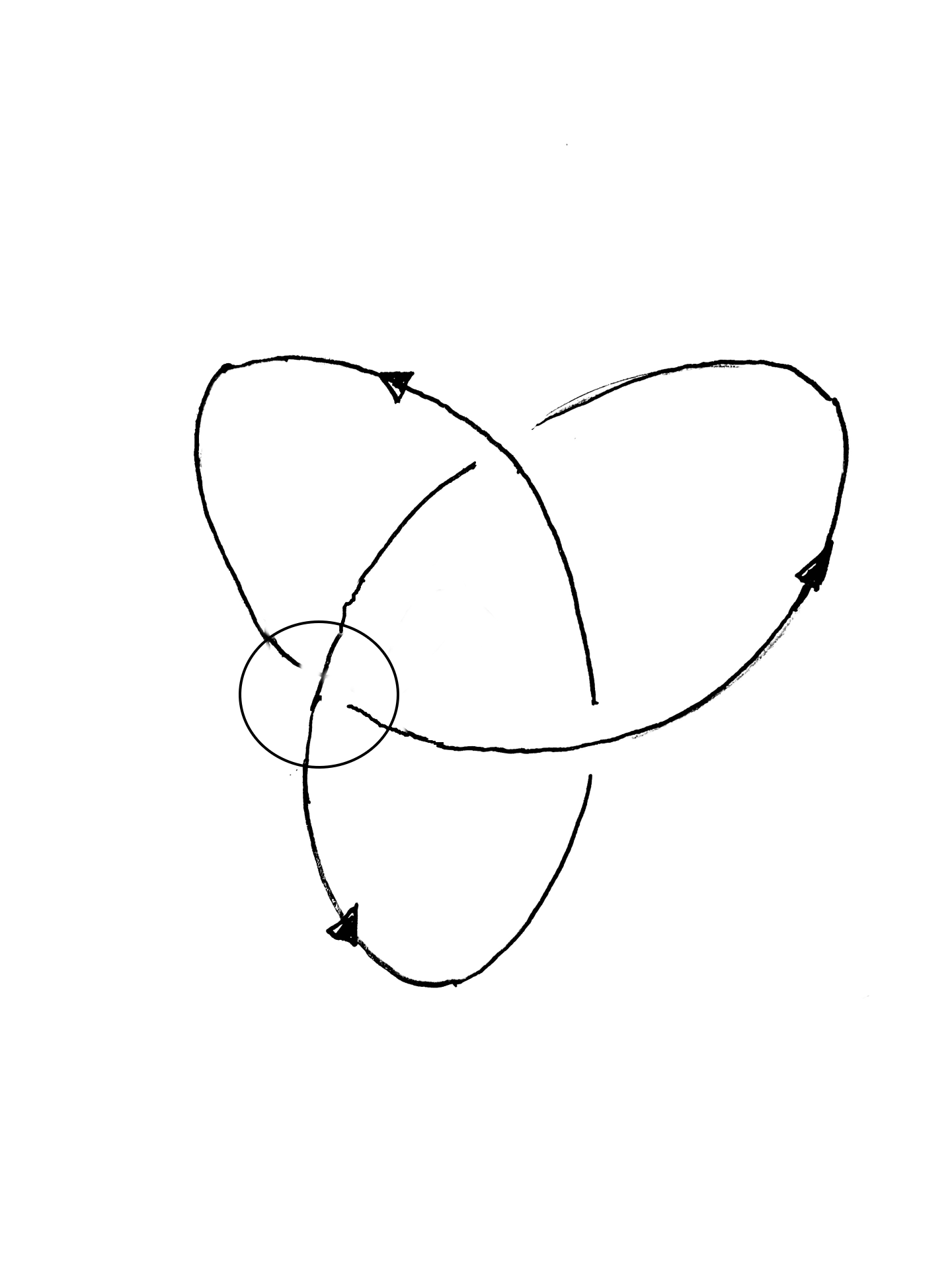}
\; \raisebox{3cm}{$\Longrightarrow$} \;
\raisebox{1cm}{
\includegraphics[width=0.3\textwidth]{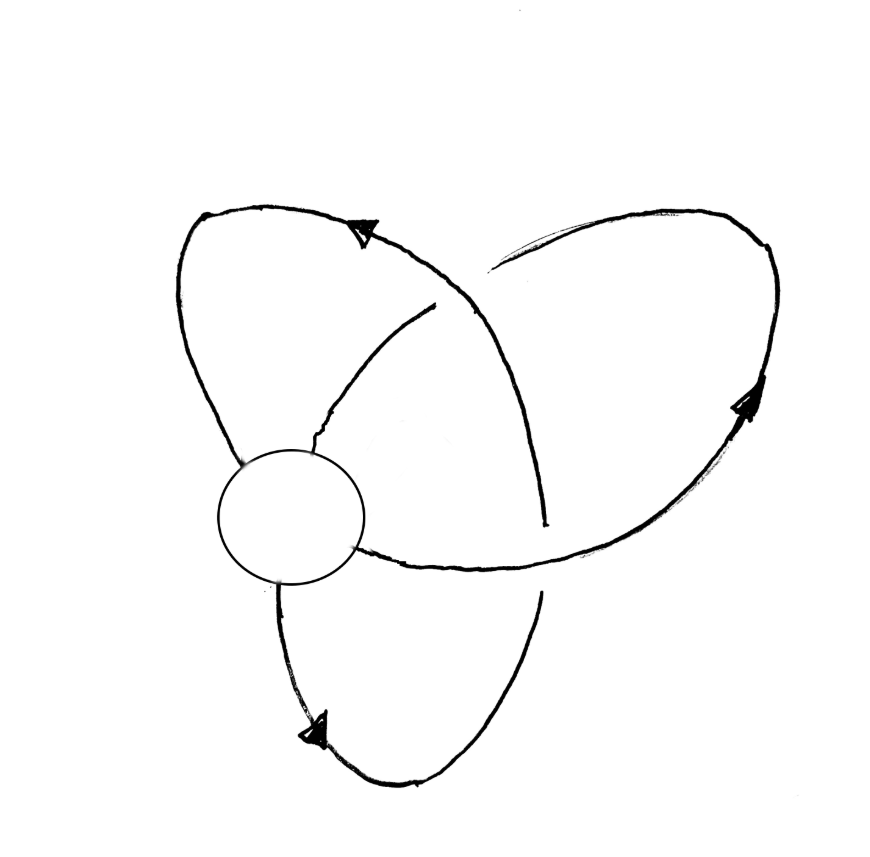}
}
\; \raisebox{3cm}{$\Longrightarrow$} \;
\includegraphics[width=0.27\textwidth]{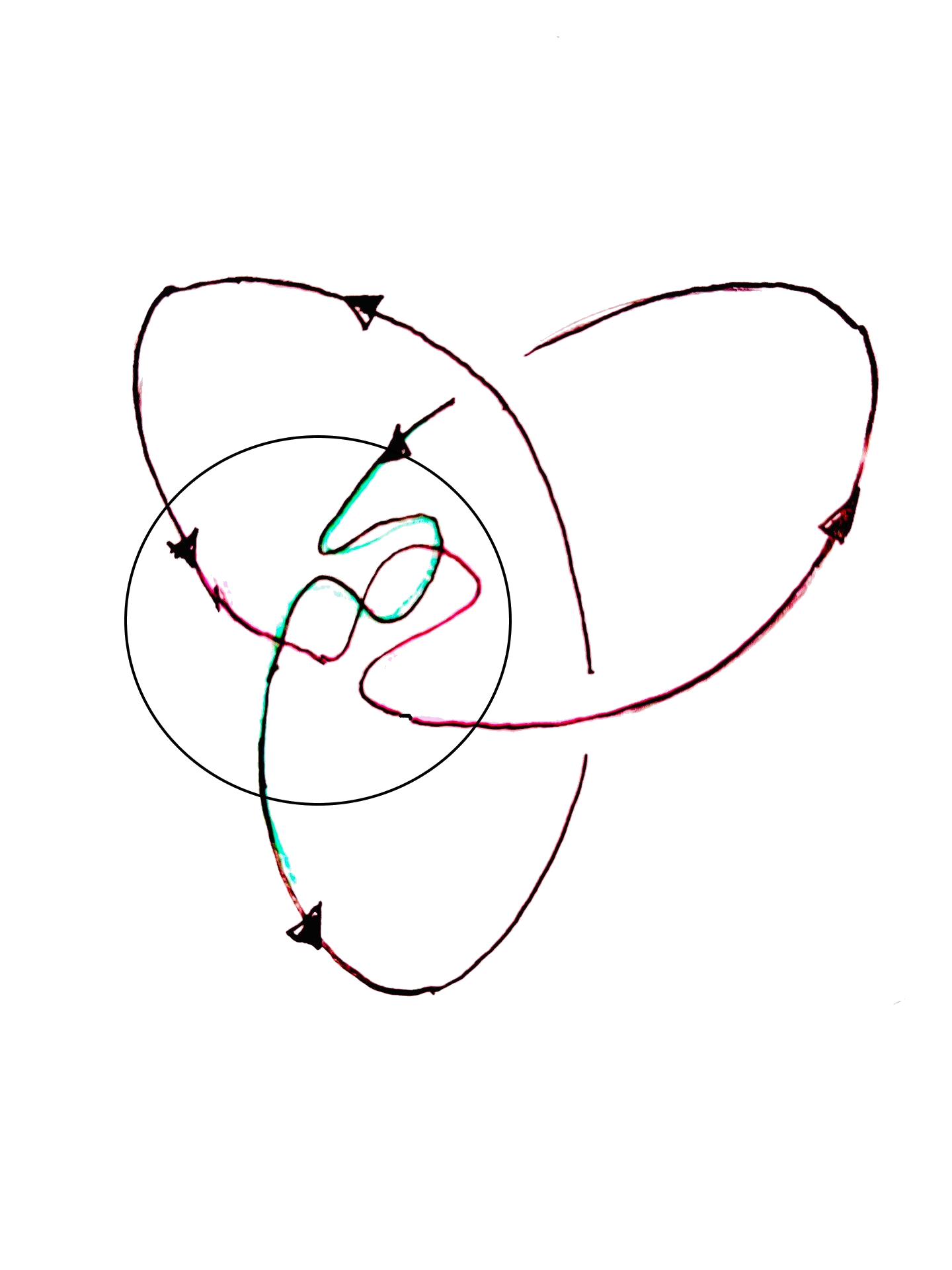}
\]
Once planted, the twist becomes pairwise anti-parallel in the sense of Definition~\ref{def:twist-delta}. 
The next result shows that, provided with the right anchoring, the twist induces one-point supports at finite-type levels below any two layer vanishing.

\begin{lemma}[Twist \(\delta\)-support with anchor]
\label{lem:twist-supp}
Let \(Q_Z=\{\pm1\}^Z\) be a twist-delta cube of odd length
\[
        L=|Z|>1.
\]
Assume
\[
        G_{q+2}(Q_Z), \qquad G_{q+1}(Q_Z).
\]
Assume moreover that there exists a non-exceptional anchor state
\[
        Z^\circ\in Q_Z\setminus\{\alpha,\omega\}
\]
such that
\[
        f_q(Z^\circ)=0.
\]
Then
\[
        a_q(Z)=0 \quad (Z\neq\omega), \qquad
        b_q(Z)=0 \quad (Z\neq\alpha).
\]
Moreover, the exceptional antipodes satisfy
\[
        a_q(\omega)=(-1)^L\partial_Z a_q, \qquad
        b_q(\alpha)=\partial_Z b_q.
\]
Consequently, if \(L>q\), then
\[
        a_q\equiv 0, \qquad b_q\equiv 0
\]
on the whole twist cube.
In particular, when the twist is planted at a crossing of a host diagram and
one residual sign state \(Z^\circ\) is isotopic to the host diagram, the anchor
condition follows from \(f_q(\text{host})=0\).
\end{lemma}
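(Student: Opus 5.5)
The plan is to view the twist cube \(Q_Z\) as a graph-delta machine whose relation graph is the complete graph on \(Z\). Definition~\ref{def:twist-delta} gives the anti-parallel smoothing identities for every pair of distinct coordinates, so this graph is connected whenever \(L>1\). The hypotheses \(G_{q+2}(Q_Z)\) and \(G_{q+1}(Q_Z)\) then let us apply the graph \(3_1\)-law, Lemma~\ref{lem:31-law}, on every edge. Lemma~\ref{lem:graph-connectivity} gives that \(a_q\) is constant on \(Q_Z\setminus\{\omega\}\) and \(b_q\) is constant on \(Q_Z\setminus\{\alpha\}\). The anchor \(Z^\circ\) lies in both branches, so Corollary~\ref{cor:one-output} yields the first claim: \(a_q\) vanishes off \(\omega\) and \(b_q\) vanishes off \(\alpha\).

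For the antipodal formulas, expand the full derivative \(\partial_Z\) as the alternating sum over the whole \(L\)-cube:
\[
\partial_Z g=\sum_{\epsilon\in Q_Z}\Bigl(\prod_{z\in Z}\epsilon_z\Bigr)g(\epsilon),
\]
which follows from the unnormalized convention \(\partial_i g=g(+)-g(-)\).
\begin{itemize}
\item Applied to \(a_q\), only the vertex \(\omega\) survives, with sign \((-1)^L\). Hence \(\partial_Z a_q=(-1)^L a_q(\omega)\), i.e. \(a_q(\omega)=(-1)^L\partial_Z a_q\).
\item Applied to \(b_q\), only \(\alpha\) survives, with sign \(+1\). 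Hence \(b_q(\alpha)=\partial_Z b_q\).
\end{itemize}
I will take the derivative at any base point, since \(\partial_Z\) over all of \(Z\) has no free coordinates.

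For the vanishing when \(L>q\), use Lemma~\ref{JVP}: \(a_q\) and \(b_q\) are finite-type of order at most \(q\). Restricted to the cube of the shadow, their pseudo-Boolean degree is therefore at most \(q\), and \(\partial_T\) kills them whenever \(|T|>q\). Taking \(T=Z\) with \(|T|=L>q\) gives \(a_q(\omega)=b_q(\alpha)=0\), so \(f_q\equiv0\) on \(Q_Z\).

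Two small points need care. The first is that \(\partial_Z\) is computed with the remaining ambient crossings held fixed. This is still an iterated Vassiliev derivative at \(L\) distinct crossings, so the order bound applies; the relevant fact is that a finite-type invariant of order \(\le q\) has vanishing \((q+1)\)-fold differences on any sub-cube of crossings. The second is the final remark about planting. If the residual state \(Z^\circ\) is isotopic to the host diagram, invariance gives \(f_q(Z^\circ)=f_q(\mathrm{host})=0\). We also need \(Z^\circ\) to be non-exceptional, which holds because an odd twist that cancels by Reidemeister II moves to a single crossing must contain both signs when \(L>1\).

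The main obstacle is only this bookkeeping, namely that the anchor state is genuinely in \(Q_Z\setminus\{\alpha,\omega\}\). The rest is a direct assembly of Lemmas~\ref{lem:31-law} and \ref{lem:graph-connectivity} with the degree bound.
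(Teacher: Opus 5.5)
Your proposal is correct and follows the paper's proof step for step. The pairwise anti-parallel identities make the relation graph on \(Z\) connected. The graph \(3_1\)-law, connected support propagation and the anchor then leave only the antipodal values \(a_q(\omega)\) and \(b_q(\alpha)\). You identify these with \((-1)^L\partial_Z a_q\) and \(\partial_Z b_q\) through the alternating sum, and they vanish by the order-\(\le q\) bound when \(L>q\). Your extra check that the planted anchor \(Z^\circ\) is non-exceptional is a point the paper leaves implicit.
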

\begin{proof}
By Definition \ref{def:twist-delta}, the twist coordinates form a connected anti-parallel
relation graph. Since \(G_{q+2}\) and \(G_{q+1}\) hold, Corollary~\ref{cor:one-output} applies.
The anchor \(Z^\circ\) lies in both non-exceptional branches, so
\[
        a_q(Z)=0 \quad (Z\neq\omega), \qquad
        b_q(Z)=0 \quad (Z\neq\alpha).
\]
Only the two antipodal vertices may remain. Since \(a_q\) is supported, if at
all, only at \(\omega\),
\[
        \partial_Z a_q
        =
        \sum_{\varepsilon\in Q_Z}
        (-1)^{\#\{\varepsilon_i=-\}}a_q(\varepsilon)
        =
        (-1)^L a_q(\omega).
\]
Thus
\[
        a_q(\omega)=(-1)^L\partial_Z a_q.
\]
Similarly, since \(b_q\) is supported, if at all, only at \(\alpha\),
\[
        \partial_Z b_q=b_q(\alpha).
\]
Finally, if \(L>q\), then \(\partial_Z a_q=\partial_Z b_q=0\) by finite-type
degree. Hence both exceptional values vanish as well. This proves the claim.
\end{proof}

\begin{corollary}[Delta support below the effective degree cap]
    Let $K$ be the host knot. Plant a $L$-long twist at one of $K$'s crossings. Assume $L \geq 3$ and set $B = 3(c(K)-1)+3L$.
    Then
    \[
    a_B(Z) = 0 \quad (Z \neq \omega), \qquad b_B(Z)=0 \quad (Z \neq \alpha).
    \]
\end{corollary}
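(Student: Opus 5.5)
The plan is to apply Lemma~\ref{lem:twist-supp} (equivalently Corollary~\ref{cor:one-output}) at level $q=B$. The two higher layers will vanish automatically by the crossing-number cap, and the anchor will come from a state whose twist cancels back down to the original crossing. No appeal to the finite-type degree ($L>q$) is needed, which matches the fact that the statement only asserts vanishing away from the antipodes.

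\textbf{Step 1: the two top layers vanish.} Start from a crossing-minimal diagram $D$ of $K$ with $c(K)$ crossings. Replacing one crossing by an $L$-long anti-parallel twist gives a shadow $S$ with $n=c(K)-1+L$ crossings. Since $L$ is odd, the planted twist reconnects the strands exactly as the seed crossing did, so every diagram carried by $S$ is a knot, $\ell=1$; this is also consistent with Corollary~\ref{cor:components}. Corollary~\ref{cor:p-degree-cap}, or Lemma~\ref{lem:mirror-cap} for uniformity over the cube, gives $f_q=0$ for $q>3n=3(c(K)-1)+3L=B$ on every vertex of $Q_Z$. The non-twist crossings are fixed as in $D$. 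Hence $G_{B+1}(Q_Z)$ and $G_{B+2}(Q_Z)$ hold.

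\textbf{Step 2: propagation.} By Definition~\ref{def:twist-delta}, the twist coordinates are pairwise anti-parallel, so they form a connected (indeed complete) anti-parallel relation graph. Lemma~\ref{lem:31-law} with $q=B$ and Lemma~\ref{lem:graph-connectivity} then show:
\begin{itemize}
\item $a_B$ is constant on $Q_Z\setminus\{\omega\}$;
\item $b_B$ is constant on $Q_Z\setminus\{\alpha\}$.
\end{itemize}

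\textbf{Step 3: the anchor.} Take the alternating state $Z^\circ=(+,-,+,\ldots,-,+)$; since $L$ is odd it ends in $+$. Adjacent opposite-sign pairs in a twist cancel by Reidemeister~II moves, so $D(Z^\circ)$ reduces to the twist with a single $+$ crossing, namely the host diagram $D$ itself. Because $L\geq 3$, this state has both signs present, so $Z^\circ\notin\{\alpha,\omega\}$. Its isotopy class is $K$, which has a diagram with $c(K)$ crossings. Corollary~\ref{cor:p-degree-cap} then gives $f_q(Z^\circ)=0$ for $q>3c(K)$, and
\[
B-3c(K)=3L-3>0 \qquad\text{when } L\ge 3.
\]
So $f_B(Z^\circ)=0$. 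Corollary~\ref{cor:one-output} now gives $\operatorname{Supp}(a_B)\subseteq\{\omega\}$ and $\operatorname{Supp}(b_B)\subseteq\{\alpha\}$, which is exactly the claim.

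\textbf{Main obstacle.} The delicate point is Step 3: checking that the alternating state genuinely collapses by Reidemeister~II moves to the seed crossing with the correct sign. This depends on the parity convention and on the planting picture, since the sign of the surviving crossing must reproduce $D$. If the sign convention makes the surviving crossing negative, I would use the state $(-,+,\ldots,+,-)$ instead, or its mirror. Even then the value $f_B$ vanishes by the same cap, because any single-crossing residue yields a diagram with $c(K)$ crossings, and the bound $3c(K)<B$ does not depend on which sign survives.
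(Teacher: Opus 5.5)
Your proposal is correct and follows the paper's proof. The crossing-number cap on the planted shadow gives $G_{B+1}$ and $G_{B+2}$ on $Q_Z$; a non-extreme twist state reducing to the host supplies the anchor, since $B-3c(K)=3L-3>0$; and the one-point-support mechanism at $q=B$ then gives the stated supports (the paper cites Lemma~\ref{lem:twist-supp}, while you cite the underlying Corollary~\ref{cor:one-output}). Your closing remark is a harmless strengthening: any single-crossing residue, whatever its surviving sign, is a $c(K)$-crossing knot diagram with cap $3c(K)<B$, so the anchor does not need to match the seed sign.
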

\begin{proof}
    By Corollary~\ref{cor:p-degree-cap} the effective degree cap after planting the twist is at most $B=3(c(K)-1)+3L$. Thus
    \[
    f_{B+1} \equiv 0, \quad f_{B+2} \equiv 0.
    \]
    Therefore, $G_{B+1}$ and $G_{B+2}$ hold on the twist cube. In addition, since $B > 3c(K)$ and $3c(K)$ is the base degree cap of $K$
    \[
    f_B(K) = 0,
    \]
    thereby providing the anchoring condition.
    The corollary now follows by applying Lemma~\ref{lem:twist-supp} with $q=B$.
\end{proof}

\subsection{Clasping twist machines}

\newif\ifoldClaspingMachine
\oldClaspingMachinefalse

\begin{define}[Clasping twist machine]
\label{def:clasping-twist-machine}
A clasping twist machine is a pair
\[
        (Z,Y),
\]
where
\[
        Z=\{z_1,\ldots,z_L\}, \qquad Q_Z=\{\pm1\}^Z, \qquad L \equiv 1 \mod 2, \qquad L \geq 3,
\]
is the residual twist block, and
\[
Y=\{y_0,y_0',y_{L+1},y_{L+1}'\},
\qquad
Q_Y=\{\pm1\}^{Y},
\]
is the clasps block. Here, $(y_0,y_0')$ are the two crossings of the left clasp and
$(y_{L+1},y_{L+1}')$ are the two crossings of the right clasp.
The full signed machine cube is
\[
Q_{\mathcal M}:=Q_Y\times Q_Z.
\]
All coordinates in \(Z\) are assumed to be pairwise
anti-parallel. The left clasp pairs $(y_0,y_0')$ are anti-parallel, and similarly
the right clasp pairs $(y_{L+1},y_{L+1}')$.
The machine captures the geometry of a long twist whose ends
clasp both outgoing strands of a base crossing. The base crossing is smoothed
before the machine is planted. 
\end{define}



\subsubsection{Host cap and effective machine threshold}

Let $\mathcal H$ be an exterior host family carried by a fixed
oriented shadow. Let
\[
B_{\mathrm{host}}(\mathcal H)
\]
denote the JVP $p$-degree cap supplied by Corollary~\ref{cor:p-degree-cap} for that
host shadow. Here $B_{\mathrm{host}}(\mathcal H)$ refers to the
pre-planting host shadow, with the seed crossing still counted.
The clasping construction has an effective finite-type overhead
\[
\kappa_{\mathrm{mach}}
\]
which is independent of the host knot, the residual length $L$, and
the exterior state. We define the effective machine threshold by
\[
\widehat B(\mathcal H)
 :=
B_{\mathrm{host}}(\mathcal H)+\kappa_{\mathrm{mach}}.
\]
For the clasping twist machine constructed below, $\kappa_{\mathrm{mach}}=11$.
Thus
\[
\widehat B(\mathcal H)
 =
B_{\mathrm{host}}(\mathcal H)+11.
\]
The number $\widehat B$ is an effective compression threshold. This threshold marks the initial finite-type layer that vanishes from which the descent proceeds.
When $\mathcal H$ is the crossing-flip family of a crossing-minimal
diagram of a knot $K$, write
\[
B_0:=3c(K),
\qquad
\widehat B:=B_0+\kappa_{\mathrm{mach}}.
\]
For the present machine,
\[
\widehat B=3c(K)+11.
\]

\subsubsection{Machine states and their faces}
\label{sec:machine-states}
The machine behavior in terms of its degree cap is classified by the twist state $Z$ and the clasp states. We write a clasp state in the form
\[
[s_0s_0';s_1s_1']
 =
(y_0=s_0,\,
  y_0'=s_0',\,
  y_{L+1}=s_1,\,
  y_{L+1}'=s_1').
\]
Define the four locked clasp states
\[
\cm_{+-}:=[++;--],
\qquad
\cm_{-+}:=[--;++],
\qquad
\cm_{++}:=[++;++],
\qquad
\cm_{--}:=[--;--],
\]
and their corresponding $Z$-faces
\[
Q_{\cm_\varepsilon}
 :=
\{\cm_\varepsilon\}\times Q_Z,
\qquad
\varepsilon\in\{++,--,+-,-+\}.
\]
The controlled signed locus is the face union
\[
U_{\mathrm{ctrl}}
 :=
\left(\left(
Q_Y\setminus\{\cm_{++},\cm_{--},\cm_{+-},\cm_{-+}\}
\right)\times Q_Z\right) \; \cup \; \left(Q_Y \times (Q_Z \setminus \{\alpha_Z,\omega_Z\})\right).
\]


\subsubsection{Machine hypotheses}
\label{sec:clasping-hypo}
The following hypotheses record the behavior of a clasping twist machine. The first three are inherent to the machine and characterize its behavior at finite-type layers above its compression threshold; 
these hypotheses are later used to establish the initial finite-type vanishing above this threshold. The fourth hypothesis recovers the ordinary twist, one of whose states is an anchor and by that furnishes finite-type descending below the compression threshold. The actual geometry of the machine is provided in Proposition~\ref{prop:clasping-twist-machines}.

For a fixed exterior state $E$, write
\[
Q_{\mathcal M}^{E}
 :=
\{E\}\times Q_Y\times Q_Z.
\]
For an exterior host family $\mathcal H$, define
\[
Q_{\mathcal M}(\mathcal H)
 :=
\mathcal H\times Q_Y\times Q_Z
 =
\bigcup_{E\in\mathcal H}Q_{\mathcal M}^E,
\]
and define
\[
Q_\rho(\mathcal H)
 :=
\mathcal H\times Q_\rho.
\]
Thus,
\[
G_q\bigl(Q_{\mathcal M}(\mathcal H)\bigr)
\]
means that
\[
f_q(E;Y;Z)=0
\]
for every $E\in\mathcal H$, $Y\in Q_Y$, and $Z\in Q_Z$.
Similarly,
\[
G_q\bigl(Q_\rho(\mathcal H)\bigr)
\]
means uniform vanishing on the residual cube for every exterior state
in $\mathcal H$. All machine hypotheses below are understood uniformly over the stated
exterior host family.

\medskip
\noindent
\textbf{(C1) Classification and controlled collapse.}
For odd $L$, the complement of the controlled locus consists of the
six algebraically escapable states
\[
\mathcal M_{+-}\times\{\alpha_Z,\omega_Z\},
\qquad
\mathcal M_{-+}\times\{\alpha_Z,\omega_Z\},
\qquad
\mathcal M_{++}\times\{\omega_Z\},
\qquad
\mathcal M_{--}\times\{\alpha_Z\},
\tag{C1a}
\]
and the two exceptional states
\[
\mathcal M_{++}\times\{\alpha_Z\},
\qquad
\mathcal M_{--}\times\{\omega_Z\}.
\tag{C1b}
\]
For every exterior state $E$, every
\[
\eta\in
\bigl(
Q_Y\setminus
\{\mathcal M_{++},\mathcal M_{--},
  \mathcal M_{+-},\mathcal M_{-+}\}
\bigr)\times Q_Z,
\]
and every $r>\widehat B$,
\[
f_r(E;\eta)=0.
\tag{C1c}
\]
Moreover, let $r>\widehat B$ and assume
\[
G_{r+1}(Q_{\mathcal M}^{E}),
\qquad
G_{r+2}(Q_{\mathcal M}^{E}).
\]
Then
\[
f_r(E;Y;Z)=0
\]
for every $Y\in Q_Y$ and every
\[
Z\in Q_Z\setminus\{\alpha_Z,\omega_Z\}.
\tag{C1d}
\]

\medskip
\noindent
\textbf{(C2) Exceptional collapse.}
Let $r>\widehat B$ and assume
\[
G_{r+1}(Q_{\mathcal M}^{E}),
\qquad
G_{r+2}(Q_{\mathcal M}^{E}).
\]
Then
\[
f_r(E;\mathcal M_{++};\alpha_Z)=0,
\qquad
f_r(E;\mathcal M_{--};\omega_Z)=0.
\tag{C2}
\]

\medskip
\noindent
\textbf{(C3) Anti-parallel clasp pairs.}
Each of the pairs
\[
\{y_0,y_0'\},
\qquad
\{y_{L+1},y_{L+1}'\}
\]
is an anti-parallel relation pair in the sense of Section~3.
Equivalently, for every invariant level $r$ and with all other
coordinates fixed,
\[
\partial_{y_0}f_r(y_0'=0)
 =
\partial_{y_0'}f_r(y_0=0)
 =
0,
\]
and
\[
\partial_{y_{L+1}}f_r(y_{L+1}'=0)
 =
\partial_{y_{L+1}'}f_r(y_{L+1}=0)
 =
0.                                    \tag{C3}
\]

\medskip
\noindent
\textbf{(C4) Original residual slice.}
On the original primed-sign slice
\[
y_0'=-,
\qquad
y_{L+1}'=+,
\]
double smoothing of the two unprimed clasp coordinates gives the
ordinary residual twist cube:
\[
\rho := (y_0=0,\,
  y_0'=-,\,
  y_{L+1}=0,\,
  y_{L+1}'=+), \qquad
D(\rho;\,Z)
\simeq
D(Z)                              \tag{C4}
\]
relative to the exterior of the planting disk.
The residual face 
\[
Q_\rho=\{\rho\} \times Q_Z
\]
is the ordinary anti-parallel twist cube planted at a base crossing. Thus, under $G_{q+2},G_{q+1}$, the usual $3_1$-law of Lemma~\ref{lem:31-law} and graph connectivity of Lemma~\ref{lem:graph-connectivity} apply to $Q_\rho$:
\[
a_q\text{ is constant on }Z\neq\omega, \qquad b_q\text{ is constant on }Z\neq\alpha.
\]
No assertion is made about a diagram obtained by smoothing exactly one
of the four clasp coordinates.

\subsection{Compressing finite-type layers above the threshold}

The controlled signed faces contribute a fixed degree cap overhead. The only machine states that may increase the degree cap uncontrollably are those recorded in (C1a) and (C1b). Among them, the states
\[
\cm_{++} \times \{\alpha_Z\}, \qquad \cm_{--} \times \{\omega_Z\},
\]
are truly exceptional. These states are resolved through geometric arguments in Proposition~\ref{prop:clasping-twist-machines}(G2).

\medskip

\begin{lemma}[Locked-state escape]
\label{lem:locked-state-escape}
Fix an exterior state $E\in\mathcal H$. Assume
\textnormal{(C1)--(C3)}. Let 
\[
q>\widehat B(\mathcal H),
\]
and suppose
\[
G_{q+2}(Q_{\mathcal M}^E),
\qquad
G_{q+1}(Q_{\mathcal M}^E).
\]
Then
\[
G_q(Q_{\mathcal M}^E).
\]
\end{lemma}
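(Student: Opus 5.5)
The plan is to partition $Q_{\mathcal M}^E$ into the controlled locus $U_{\mathrm{ctrl}}$, the six algebraically escapable states (C1a), and the two exceptional states (C1b). We then show $f_q=0$ on each piece separately, using only (C1)--(C3) and the hypotheses $G_{q+2}(Q_{\mathcal M}^E)$, $G_{q+1}(Q_{\mathcal M}^E)$. By (C1), these three pieces exhaust the cube, so $G_q(Q_{\mathcal M}^E)$ follows.

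\emph{Controlled locus.} Because $q>\widehat B$, hypothesis (C1c) gives $f_q(E;\eta)=0$ for every $\eta$ whose clasp state is not one of the four locked states $\cm_{++},\cm_{--},\cm_{+-},\cm_{-+}$. The remaining part of $U_{\mathrm{ctrl}}$ consists of states with $Z\in Q_Z\setminus\{\alpha_Z,\omega_Z\}$ and arbitrary $Y$. On these states, (C1d) applies directly, again because $q>\widehat B$ and $G_{q+1},G_{q+2}$ hold on $Q_{\mathcal M}^E$. This settles $f_q$ on all of $U_{\mathrm{ctrl}}$.

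\emph{Exceptional states.} For $\cm_{++}\times\{\alpha_Z\}$ and $\cm_{--}\times\{\omega_Z\}$, the needed vanishing is exactly the content of (C2). Its hypotheses are $r=q>\widehat B$ together with the two higher-layer vanishings, all of which we have.

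\emph{Escapable states.} This is the step I expect to be the main obstacle, because none of (C1c), (C1d), (C2) covers these states directly. Here I would use the clasp anti-parallel pairs (C3), combined with the graph $3_1$-law. Take the single edge $\{y_0,y_0'\}$ as an anti-parallel relation graph; it is connected. Since $G_{q+2}$ and $G_{q+1}$ hold on all of $Q_{\mathcal M}^E$, they hold on every two-dimensional $\{y_0,y_0'\}$-face. Lemma~\ref{lem:31-law} then gives
\[
a_q(++)=a_q(+-)=a_q(-+),\qquad b_q(+-)=b_q(-+)=b_q(--)
\]
in the left clasp coordinates, with everything else fixed. The same identities hold for the right clasp $\{y_{L+1},y_{L+1}'\}$.

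Now consider, for example, the escapable state $\cm_{+-}\times\{\alpha_Z\}=[++;--]\times\{\alpha_Z\}$.
\begin{itemize}
\item For the $a$-component, move the left clasp from $++$ to $+-$. This gives $a_q([++;--],\alpha_Z)=a_q([+-;--],\alpha_Z)$. The state $[+-;--]$ is not locked, so the right-hand side vanishes by (C1c).
\item For the $b$-component, move the right clasp from $--$ to $+-$. This gives $b_q([++;--],\alpha_Z)=b_q([++;+-],\alpha_Z)$, which also vanishes by (C1c).
\end{itemize}
The other five escapable states are handled the same way. For $a_q$, choose whichever clasp currently reads $++$ (or a mixed sign) and move it to a mixed state. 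For $b_q$, choose a clasp reading $--$ (or mixed) and move it to a mixed state. Every escapable state has such clasps:
\begin{itemize}
\item $\cm_{++}\times\{\omega_Z\}$: each clasp is $++$, which suffices for $a_q$. For $b_q$, the $b$-relation only connects $+-,-+,--$, so it cannot start from $++$. For this state I would instead invoke $G_{q+2}$, $G_{q+1}$ together with the exceptional vanishing (C2) at $\cm_{++}\times\{\alpha_Z\}$. Then I would run the $Z$-direction argument: Lemma~\ref{lem:graph-connectivity} on the twist block $Z$ makes $b_q$ constant on $Z\neq\alpha_Z$ over the fixed clasp state $\cm_{++}$. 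The value there is anchored by (C1d) at a non-exceptional $Z$, which forces $b_q(\cm_{++},\omega_Z)=0$.
\item $\cm_{--}\times\{\alpha_Z\}$: this is symmetric to the previous case, using the $a_q$-constancy on $Z\neq\omega_Z$ anchored by (C1d).
\end{itemize}

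Combining the three pieces gives $f_q\equiv0$ on $Q_{\mathcal M}^E$.
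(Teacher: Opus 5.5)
Your proposal is correct and follows the paper's proof essentially step for step. The controlled locus is handled by (C1c)/(C1d) and the two exceptional states by (C2). The mixed locked faces $\cm_{+-},\cm_{-+}$ are handled by the clasp $3_1$-law, which moves them into non-locked clasp states killed by (C1c). The $b$-component at $\cm_{++}\times\{\omega_Z\}$ (resp.\ the $a$-component at $\cm_{--}\times\{\alpha_Z\}$) is handled by the $3_1$-law in the $Z$-direction, anchored at a non-exceptional twist state via (C1d).

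The only differences are cosmetic. You flip $y_0'$ rather than $y_0$, and you use the full connected-support propagation on $Z$ where the paper uses a single flip to $\omega_Z^{(i,+)}$. Your appeal to (C2) in the $\cm_{++}\times\{\omega_Z\}$ case is unnecessary. As in the paper, the $Z$-direction step rests on the pairwise anti-parallel property of $Z$ from the machine definition, not on (C1)--(C3).
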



\begin{proof}
Fix $E\in\mathcal H$ and suppress it from the notation. All uses of
\textnormal{(C1d)} and \textnormal{(C2)} below are therefore made on
the fixed cube $Q_{\mathcal M}^E$.

Every state in $U_{\mathrm{ctrl}}$ vanishes at level $q$ by
\textnormal{(C1c)} and \textnormal{(C1d)}, since $q>\widehat B$, and $G_{q+1}$ and $G_{q+2}$ hold. The exceptional states vanish similarly by \textnormal{(C2)}. It remains to treat the six locked states in (C1a).

\paragraph{Step 1: mixed clasp states.}
Consider first
\[
\cm_{+-}\times\{Z\}, \qquad \cm_{-+}\times \{Z\}.
\]
Fix $Z$ and suppress all exterior coordinates.
Begin with
\[
\cm_{+-} = [++;--].
\]
Apply the graph $3_1$-law Lemma~\ref{lem:31-law} to the left anti-parallel pair
$(y_0,y_0')$, with the right clasp pair and $Z$ fixed.  Since the left
pair is in state $++$,
\begin{equation}
\label{eq:lck41}
\partial_{y_0} a_q(y_0'=+)=0 \quad \Longrightarrow \quad 
a_q([++;--];Z)
 =
a_q([-+;--];Z).
\end{equation}
The state $[-+;--]$ is controlled because its left clasp pair has
opposite signs. By \textnormal{(C1)}
\[
a_q([-+;--];Z)=0,
\]
and therefore
\begin{equation}
\label{eq:lck42}
a_q(\cm_{+-};Z)=0.
\end{equation}
Apply the $b$-part of the graph $3_1$-law to the right anti-parallel
pair $(y_{L+1},y_{L+1}')$. Since that pair is in state $--$,
\begin{equation}
\label{eq:lck43}
\partial_{y_{L+1}} b_q(y_{L+1}'=-)=0 \quad \Longrightarrow \quad 
b_q([++;--];Z)
 =
b_q([++;+-];Z).
\end{equation}
The state $[++;+-]$ is controlled, so by \textnormal{(C1)}
\[
b_q([++;+-];Z)=0.
\]
Thus
\begin{equation}
\label{eq:lck44}
b_q(\cm_{+-};Z)=0.
\end{equation}
Now consider
\[
\cm_{-+}=[--;++]. 
\]
The right pair is in state $++$, so the $a$-part of the graph
$3_1$-law gives
\begin{equation}
\label{eq:lck45}
\partial_{y_{L+1}} a_q(y_{L+1}'=+)=0 \quad \Longrightarrow \quad
a_q([--;++];Z)
 =
a_q([--;-+];Z)=0,
\end{equation}
because $[--;-+]$ is controlled.
The left pair is in state $--$, so the $b$-part gives
\begin{equation}
\partial_{y_0} b_q(y_0'=-)=0 \quad \Longrightarrow \quad
b_q([--;++];Z)
 =
b_q([+-;++];Z)=0,
\end{equation}
because $[+-;++]$ is controlled.
Thus, both components vanish on both locked faces 
\[
a_q(\cm_{+-};Z) = 0, \qquad b_q(\cm_{+-};Z)=0, \qquad a_q(\cm_{-+};Z) = 0, \qquad b_q(\cm_{-+};Z)=0.
\]

\paragraph{Step 2: same sign clasp states.}
Consider now the remaining uncontrolled states in (C1a)
\[
\cm_{++} \times \{\omega_Z\}, \qquad \cm_{--}\times \{\alpha_Z\}.
\]
Take
\[
\cm_{++} \times \{\omega_Z\} = ([++;++];\omega_Z).
\]
Apply the graph $3_1$-law Lemma~\ref{lem:31-law} to the left anti-parallel pair
$(y_0,y_0')$, with the right clasp pair and $Z=\omega_Z$ fixed.  Since the left
pair is in state $++$,
\begin{equation}
\label{eq:lck41a}
\partial_{y_0} a_q(y_0'=+)=0 \quad \Longrightarrow \quad 
a_q([++;++];Z)
 =
a_q([-+;++];Z).
\end{equation}
The state $[-+;++]$ is controlled because its left clasp pair has
opposite signs. By \textnormal{(C1)}
\[
a_q([-+;++];Z)=0,
\]
and therefore
\begin{equation}
\label{eq:lck42a}
a_q(\cm_{++};Z)=0.
\end{equation}
To derive the $b$ component apply the $3_1$-law Lemma~\ref{lem:31-law} to the twist $Z$ with
the clasp states fixed. Let
\[
\omega_Z^{(i,+)} := \omega_Z(z_i=+).
\]
Since the twist is in state $Z=\omega_Z$, for any $i \neq j$
\begin{equation}
    \partial_{z_i} b_q(z_j=-) = 0 \quad \Longrightarrow \quad b_q([++;++]; Z=\omega_Z) = b_q([++;++]; Z=\omega_Z^{(i,+)}).
\end{equation}
The state $([++;++];\omega_Z^{(i,+)})$ is controlled because the twist consists of
opposite signs
\[
([++;++];\omega_Z^{(i,+)}) \in Q_Y \times (Q_Z \setminus \{\alpha_Z,\omega_Z\}) \subset U_{\text{ctrl}}.
\]
By \textnormal{(C1)}
\[
b_q([++;++];Z=\omega_Z^{(i,+)})=0,
\]
and therefore
\begin{equation}
\label{eq:lck42b}
b_q(\cm_{++};Z=\omega_Z)=0.
\end{equation}
Consider now
\[
\cm_{--} \times \{\alpha_Z\} = ([--;--];\alpha_Z).
\]
Apply the graph $3_1$-law Lemma~\ref{lem:31-law} to the left anti-parallel pair
$(y_0,y_0')$, with the right clasp pair and $Z=\alpha_Z$ fixed.  Since the left
pair is in state $--$,
\begin{equation}
\label{eq:lck41ab}
\partial_{y_0} b_q(y_0'=-)=0 \quad \Longrightarrow \quad 
b_q([--;--];Z)
 =
b_q([+-;--];Z).
\end{equation}
The state $[+-;--]$ is controlled because its left clasp pair has
opposite signs. By \textnormal{(C1)}
\[
b_q([+-;--];Z)=0,
\]
and therefore
\begin{equation}
\label{eq:lck42ab}
b_q(\cm_{--};Z)=0.
\end{equation}
To derive the $a$ component apply the $3_1$-law Lemma~\ref{lem:31-law} to the twist $Z$ with
the clasp states fixed. Let
\[
\alpha_Z^{(i,-)} := \alpha_Z(z_i=-).
\]
Since the twist is in state $Z=\alpha_Z$, for any $i \neq j$
\begin{equation}
    \partial_{z_i} a_q(z_j=+) = 0 \quad \Longrightarrow \quad a_q([--;--]; Z=\alpha_Z) = a_q([--;--]; Z=\alpha_Z^{(i,-)}).
\end{equation}
The state $([--;--];\alpha_Z^{(i,-)})$ is controlled because the twist consists of
opposite signs
\[
([--;--];\alpha_Z^{(i,-)}) \in Q_Y \times (Q_Z \setminus \{\alpha_Z,\omega_Z\}) \subset U_{\text{ctrl}}.
\]
By \textnormal{(C1)}
\[
a_q([--;--];Z=\alpha_Z^{(i,-)})=0,
\]
and therefore
\begin{equation}
\label{eq:lck42ba}
a_q(\cm_{--};Z=\alpha_Z)=0.
\end{equation}
From \eqref{eq:lck42a}, \eqref{eq:lck42b}, \eqref{eq:lck42ab} and \eqref{eq:lck42ba}
\[
a_q(\cm_{++};Z=\omega_Z)=b_q(\cm_{++};Z=\omega_Z)=0, \quad
a_q(\cm_{--};Z=\alpha_Z)=b_q(\cm_{--};Z=\alpha_Z)=0.
\]
Together with the controlled-locus vanishing (C1), 
\[
f_q(E;\eta;Z) = 0, \qquad \forall \eta \in U_{\text{ctrl}},
\]
and the exceptional vanishing (C2)
\[
f_q(E;\cm_{++};Z=\alpha_Z)=0, \qquad f_q(E;\cm_{--};Z=\omega_Z)=0,
\]
this proves
\[
G_q(Q_{\mathcal M}^E).
\]
\end{proof}

\begin{theorem}[Signed-cube compression]
\label{thm:signed-cube-compression}
Let $\mathcal H$ be an exterior host family carried by a fixed
oriented shadow. Assume that \textnormal{(C1)--(C3)} hold uniformly
over $\mathcal H$. Then
\[
G_q\bigl(Q_{\mathcal M}(\mathcal H)\bigr)
\qquad
\bigl(q>\widehat B(\mathcal H)\bigr).
\]
The conclusion is independent of the residual twist length $L$.
\end{theorem}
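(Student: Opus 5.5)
The plan is a downward induction on $q$ for each fixed exterior state $E$, followed by taking the union over the host family. The argument starts from a finite ambient degree cap, which may depend on $L$, and repeatedly applies the one-step implication of Lemma~\ref{lem:locked-state-escape}.

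Fix $E\in\mathcal H$. The full diagram $D(E;Y;Z)$ for $(Y,Z)\in Q_{\mathcal M}$ is carried by one fixed shadow. That shadow consists of the host shadow with the seed crossing smoothed, together with the four clasp crossings and the $L$ twist crossings. It therefore has $n_{\mathrm{tot}}=n_{\mathrm{host}}-1+4+L$ crossings and a fixed component count $\ell$, which is constant on the shadow by Corollary~\ref{cor:components}. Corollary~\ref{cor:p-degree-cap} then gives a uniform cap
\[
B_{\mathrm{amb}}:=3n_{\mathrm{tot}}+\ell-1
\]
with $f_r\equiv0$ on $Q_{\mathcal M}^E$ for all $r>B_{\mathrm{amb}}$. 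This cap depends on $L$ but is finite. If $B_{\mathrm{amb}}\le\widehat B$, the claim on this slice is immediate. Otherwise, $G_{B_{\mathrm{amb}}+2}(Q_{\mathcal M}^E)$ and $G_{B_{\mathrm{amb}}+1}(Q_{\mathcal M}^E)$ hold, and these serve as the base of the induction.

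The induction statement, for $q$ running downward from $B_{\mathrm{amb}}$ to $\widehat B+1$, is that $G_{q+2}(Q_{\mathcal M}^E)$ and $G_{q+1}(Q_{\mathcal M}^E)$ together imply $G_q(Q_{\mathcal M}^E)$. This is exactly Lemma~\ref{lem:locked-state-escape}, whose hypothesis $q>\widehat B(\mathcal H)$ holds at every step. After each step, the pair $(G_{q+1},G_q)$ is again a pair of consecutive vanishing layers, so the induction proceeds. It stops once $q=\widehat B+1$ has been established. Since Lemma~\ref{lem:locked-state-escape} is stated only for $q>\widehat B$, nothing is claimed at or below $\widehat B$. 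The result is $G_q(Q_{\mathcal M}^E)$ for every $q>\widehat B$.

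Since (C1)--(C3) hold uniformly over $\mathcal H$ and $\widehat B(\mathcal H)$ does not depend on $E$, the conclusion holds for every $E$. Taking the union of the slices $Q_{\mathcal M}^E$ gives $G_q(Q_{\mathcal M}(\mathcal H))$ for $q>\widehat B(\mathcal H)$. The length $L$ enters only through $B_{\mathrm{amb}}$, which serves only as the starting level of the descent, so the threshold is independent of $L$.

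The main obstacle is the initialization. One must check that a single finite cap applies on the entire slice $Q_{\mathcal M}^E$, including the exceptional and locked states, so that two consecutive vanishing layers exist. The fixed-shadow version of the degree bound in Lemma~\ref{lem:mirror-cap} and Corollary~\ref{cor:p-degree-cap} provides this, because every vertex of the machine cube, together with the exterior state, has the same crossing and component counts.
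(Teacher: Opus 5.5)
Your proposal is correct and is essentially the paper's proof. Both take the finite ambient cap from Corollary~\ref{cor:p-degree-cap} for the fixed machine shadow, which may depend on $L$, and descend one level at a time with Lemma~\ref{lem:locked-state-escape} down to $q=\widehat B+1$; the only cosmetic difference is that you descend separately on each slice $Q_{\mathcal M}^E$ and take the union at the end, treating $B_{\mathrm{amb}}\le\widehat B$ as a trivial case, whereas the paper enlarges the cap past $\widehat B$, runs one descent over $Q_{\mathcal M}(\mathcal H)$, and applies the per-slice lemma to each slice at every step, which comes to the same thing.
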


\begin{proof}
For fixed $L$, the family $Q_{\mathcal M}(\mathcal H)$ is carried by
a fixed finite shadow. Hence Corollary~\ref{cor:p-degree-cap} supplies a finite uniform
ambient cap $A_{\mathcal M}$ such that
\[
G_r\bigl(Q_{\mathcal M}(\mathcal H)\bigr)
\qquad
(r>A_{\mathcal M}).
\]
The number $A_{\mathcal M}$ may depend on $L$. Enlarge it, if
necessary, so that
\[
A_{\mathcal M}\geq \widehat B(\mathcal H)+1.
\]
Descend on
\[
q=A_{\mathcal M},A_{\mathcal M}-1,\ldots,
\widehat B(\mathcal H)+1.
\]
Assume inductively that
\[
G_r\bigl(Q_{\mathcal M}(\mathcal H)\bigr)
\qquad
(r>q).
\]
Then, for every $E\in\mathcal H$,
\[
G_{q+2}(Q_{\mathcal M}^E),
\qquad
G_{q+1}(Q_{\mathcal M}^E).
\]
Since $q>\widehat B(\mathcal H)$, Lemma~\ref{lem:locked-state-escape} gives
\[
G_q(Q_{\mathcal M}^E).
\]
As $E$ was arbitrary,
\[
G_q\bigl(Q_{\mathcal M}(\mathcal H)\bigr).
\]
The induction reaches $q=\widehat B(\mathcal H)+1$, proving the result.
\end{proof}



\begin{remark}[A uniform ambient cap]
The proof of Theorem~\ref{thm:signed-cube-compression} requires only the existence of a finite
ambient cap, but one may give the following estimate.
Suppose that the pre-planting host shadow has $n$ crossings and
$\ell$ components, so that
\[
B_{\mathrm{host}}=3n+\ell-1.
\]
Smoothing the seed crossing leaves
\[
n_0=n-1
\]
crossings and produces at most one additional component:
\[
\ell_0\leq\ell+1.
\]
The clasping twist machine adds $L+4$ crossings and may contribute at
most one further closed machine component. Hence the full signed
machine shadow satisfies
\[
n_{\mathcal M}
 =
n-1+(L+4)
 =
n+L+3,
\]
and
\[
\ell_{\mathcal M}
 \leq
\ell_0+1
 \leq
\ell+2.
\]
Corollary~\ref{cor:p-degree-cap} therefore gives
\[
\begin{aligned}
A_{\mathcal M}
 &\leq
3n_{\mathcal M}+\ell_{\mathcal M}-1\\
 &\leq
3(n+L+3)+(\ell+2)-1\\
 &=
(3n+\ell-1)+3L+11\\
 &=
B_{\mathrm{host}}+3L+11.
\end{aligned}
\]
This estimate is not used in the descending argument.
\end{remark}


\subsection{Machine initialization over the residual twist cube}

\begin{lemma}[Residual initialization]
\label{lem:four-clasp-residual-initialization}
Let $\mathcal H$ be an exterior host family. Assume that
\textnormal{(C1)--(C4)} hold uniformly over $\mathcal H$. Then
\[
G_r\bigl(Q_\rho(\mathcal H)\bigr)
\qquad
\bigl(r>\widehat B(\mathcal H)+2\bigr).
\]
\end{lemma}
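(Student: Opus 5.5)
The plan is to combine the signed-cube compression of Theorem~\ref{thm:signed-cube-compression} with the paired two-clasp skein identity from the walkthrough. Fix an exterior state $E\in\mathcal H$ and a twist state $Z\in Q_Z$, and restrict to the primed slice $y_0'=-$, $y_{L+1}'=+$. Write $V_{s_0s_1}$ for the Jones polynomial of the signed state with $y_0=s_0$ and $y_{L+1}=s_1$. Write $V_{00}$ for the double-smoothed state, which by (C4) is $D(\rho;Z)$, a vertex of $Q_\rho$.

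The first step is to apply the Jones skein $x^{-2}V^{(+)}-x^{2}V^{(-)}=pV^{(0)}$ at $y_0$ with $y_{L+1}$ held at $+$, and again with $y_{L+1}$ held at $-$. This expresses $pV_{0+}$ and $pV_{0-}$ in terms of signed states. Applying the skein at $y_{L+1}$ on the $y_0=0$ states then gives
\[
p^2V_{00}=x^{-2}\,pV_{0+}-x^{2}\,pV_{0-}.
\]
Substituting the first two expressions into this relation eliminates the one-smoothed terms and yields
\[
V_{-+}+V_{+-}=x^{-4}V_{++}+x^{4}V_{--}-p^2V_{00}.
\]
The sign conventions for which strand of each clasp is positive may alter the exact powers of $x$ in this identity. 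Only its shape matters: $p^2V_{00}$ is a $\mathbb Z[x^{\pm1}]$-combination of four signed machine states, with $x$-exponents of absolute value at most $4$. No single-smoothing diagram is ever evaluated, which is consistent with the caveat in (C4).

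Next I pass to the JVP ring. By Theorem~\ref{thm:signed-cube-compression}, every signed state has $p$-degree at most $\widehat B$. Multiplication by $x^{\pm1}$ raises $p$-degree by at most one, as shown in the proof of Corollary~\ref{cor:p-degree-cap}. Hence the right-hand side $x^{-2}V_{++}-\dots$ has $p$-degree at most $\widehat B+4$, and so $p^2V_{00}$ does too.

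That bound gives only $\deg_p V_{00}\le\widehat B+2$ if the $x^{\pm4}$ factors are not handled carefully. The goal is to obtain $r>\widehat B+2$. I will therefore track degrees more precisely, using $\deg_p(x^j)=j-1$ for $j\ge1$ and $-j$ for $j\le0$, as in the sharpness remark. If that still overshoots, I will regroup the identity as $p^2V_{00}=x^{-4}(V_{++}-x^{4}V_{-+})+\dots$ and use the skein to absorb powers.

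An alternative I would consider is to apply (C3) to each clasp pair: it makes the smoothing at $y_0$ insensitive to $y_0'$. This would let me choose the primed sign for which the skein exponents are balanced. The simplest way to see the stated threshold would then be a single-clasp skein at each of $y_0$ and $y_{L+1}$, each contributing $p^1$ and $x^{\pm2}$ factors that the balanced choice cancels. I expect the honest count to be
\[
\deg_p V_{00}\le \widehat B+4-2=\widehat B+2,
\]
because $p^2$ divides cleanly in the free $\mathbb Z[p]$-module with basis $\{1,x\}$ by the uniqueness in Lemma~\ref{JVP}.

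Finally, comparing coefficients of $p^{r+2}$ for $r>\widehat B+2$ gives $f_r(E;\rho;Z)=0$, uniformly in $E$ and $Z$. This is $G_r(Q_\rho(\mathcal H))$.

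The main obstacle is the exact bookkeeping of the $x^{\pm4}$ multipliers against the $p^2$ factor. The additive constant must come out as exactly $+2$, not $+4$ or $+6$, and this depends on fixing the crossing-sign conventions of the clasps correctly.
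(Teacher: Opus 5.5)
Your argument is correct and is the paper's proof. You derive $V_{-+}+V_{+-}=x^{-4}V_{++}+x^{4}V_{--}-p^{2}V_{00}$ on the slice $y_0'=-$, $y_{L+1}'=+$, use Theorem~\ref{thm:signed-cube-compression} to get $\deg_p\le\widehat B$ on the signed states, note that multiplication by $x^{\pm4}$ raises $p$-degree by at most $4$, and strip the $p^2$ using freeness of the basis $\{1,x\}$. Your worry in the middle paragraphs is unfounded: $\deg_p V_{00}\le\widehat B+2$ is exactly $G_r(Q_\rho(\mathcal H))$ for $r>\widehat B+2$, so the crude count already suffices and the regrouping and (C3)-balancing detours are unnecessary.
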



\begin{proof}
Fix $E\in\mathcal H$ and $Z\in Q_Z$. Fix the primed clasp
coordinates at their original signs
\[
y_0'=-,
\qquad
y_{L+1}'=+.
\]
For $s,t\in\{-,0,+\}$ write
\[
D_{st}(Z) := D(y_0=s,\,
  y_0'=-,\,
  y_{L+1}=t,\,
  y_{L+1}'=+;\,Z)
, \qquad 
V_{st}(Z)
 :=
V(D_{st}(Z)).
\]

\paragraph{Step 1: prove a Jones skein paired identity.}
Let us first show that the Jones skein for $y_0$ and $y_{L+1}$ gives the exact paired identity
\begin{equation}
\label{eq:paired-skein}
V_{-+}(Z)+V_{+-}(Z)
 =
x^{-4}V_{++}(Z)
+x^4V_{--}(Z)
-p^2V_{00}(Z).
\end{equation}
We use the Jones skein in the form
\[
x^{-2}(+)-x^2(-)=p(0), \qquad p=x-x^{-1}.
\]
 
\paragraph{Step 1.1: compute $V_{-+}$}
Apply the skein in the first coordinate $y_0$, with $y_{L+1}=+$ fixed:
\[
x^{-2}V_{++}-x^2V_{-+}=pV_{0+}.
\]
Solving for $V_{-+}$,
\[
x^2V_{-+}=x^{-2}V_{++}-pV_{0+},
\]
so
\begin{equation}
\label{eq:l421}
V_{-+}=x^{-4}V_{++}-p\,x^{-2}V_{0+}.
\end{equation}
Now apply the skein in the second coordinate $y_{L+1}$, with $y_0=0$ fixed:
\[
x^{-2}V_{0+}-x^2V_{0-}=pV_{00}.
\]
Thus
\begin{equation}
\label{eq:l422}
x^{-2}V_{0+}=pV_{00}+x^2V_{0-}.
\end{equation}
Substitute~\eqref{eq:l422} into~\eqref{eq:l421}:
\[
V_{-+} = x^{-4}V_{++} - p\bigl(pV_{00}+x^2V_{0-}\bigr).
\]
Therefore
\begin{equation}
\label{eq:l423}
V_{-+} = x^{-4}V_{++} - p^2V_{00} - p\,x^2V_{0-}.
\end{equation}
 
\paragraph{Step 1.2: compute $V_{+-}$}
Apply the skein in the first coordinate $y_0$, with $y_{L+1}=-$ fixed:
\[
x^{-2}V_{+-}-x^2V_{--}=pV_{0-}.
\]
So
\[
x^{-2}V_{+-}=x^2V_{--}+pV_{0-},
\]
and hence
\begin{equation}
\label{eq:l424}
V_{+-} = x^4V_{--}+p\,x^2V_{0-}.
\end{equation}
\paragraph{Step 1.3: add \eqref{eq:l423} and \eqref{eq:l424}}
The unknown one-smoothed term cancels:
\[
-p\,x^2V_{0-}+p\,x^2V_{0-}=0.
\]
Thus
\[
V_{-+}+V_{+-} = x^{-4}V_{++}+x^4V_{--}-p^2V_{00}.
\]
That proves the exact paired skein identity.

\paragraph{Step 2: JVP coefficient extraction.}
Note that the exact identity implies
\begin{equation}
\label{eq:l425}
f_q(V_{+-})+f_q(V_{-+}) = \bigl(x^{-4}V_{++}\bigr)_q + \bigl(x^4V_{--}\bigr)_q - f_{q-2}(V_{00}),
\end{equation}
since multiplying by $p^2$ shifts the JVP coefficients by $2$. 
Now
\[
        x^4=(1+p^2)+(2p+p^3)x,
\]
and, using \(x^2=px+1\), multiplication by \(x^4\) sends a general coefficient vector
\(\mathscr{A}(p)+\mathscr{B}(p)x\) to a coefficient vector whose \(p^q\)-term may involve source
levels $q,\ q-1,\ q-2,\ q-3,\ q-4$.
Indeed, the extra \(q-4\) contribution appears from reducing
\[
        (2p+p^3)x\cdot \mathscr{B}(p)x
\]
via \(x^2=px+1\). Similarly,
\[
        x^{-4}=(1+3p^2+p^4)-(2p+p^3)x
\]
also uses only source levels $q,\ q-1,\ q-2,\ q-3,\ q-4$.
Assume
\[
q>\widehat B+4.
\]
Then, all source levels appearing in
\[
        (x^{-4}V_{++})_q
        \qquad\text{and}\qquad
        (x^4V_{--})_q
\]
are strictly greater than \(\widehat B\). 


\paragraph{Step 3: conclude $G_r(Q_\rho)$.}
By Theorem~\ref{thm:signed-cube-compression}, every signed machine
state vanishes at every level greater than $\widehat B$. Hence
\[
f_q(V_{+-})=f_q(V_{-+})=0,
\]
and by Step 2 also
\[
(x^{-4}V_{++})_q = (x^4V_{--})_q = 0.
\]
Substituting these into \eqref{eq:l425} yields
\[
f_{q-2}(V_{00})=0.
\]
By \textnormal{(C4)},
\[
D_{00}(Z) \cong D(\rho; Z).
\]
Thus
\[
f_{q-2}(D(\rho; Z))=0.
\]
Writing $r=q-2$, the condition $q>\widehat B+4$ becomes
\[
r>\widehat B+2.
\]
Since $E$ and $Z$ were arbitrary,
\[
G_r\bigl(Q_\rho(\mathcal H)\bigr)
\qquad
\bigl(r>\widehat B(\mathcal H)+2\bigr).
\]
\end{proof}

\subsection{Finite-type descent along an anchor}

Finite-type vanishing below the compression threshold is facilitated by an anchor. We therefore add Hypothesis (H1) below.

\begin{define}[Residual sign face]
\label{def:residual-sign-face}
Let
\[
Z=\{z_1,\ldots,z_L\},
\qquad
L\equiv1\pmod2,
\]
be the residual twist block.
Fix once and for all a Reidemeister--II pairing pattern on
$L-1$ of the residual crossings, leaving one crossing
\[
z_\ast\in Z
\]
unpaired. For each paired pair, fix one of its two opposite-sign
assignments, compatible with the displayed Reidemeister--II
cancellations.
Define $Z^+$ and $Z^-$ by using these same fixed opposite-sign
assignments on every paired pair and setting
\[
z_\ast=+
\quad\text{in }Z^+,
\qquad
z_\ast=-
\quad\text{in }Z^-.
\]
Thus $Z^+$ and $Z^-$ differ only at the unpaired effective crossing.

After cancelling all fixed Reidemeister--II pairs, $Z^+$ reduces to a
single positive crossing and $Z^-$ reduces to a single negative
crossing. The residual sign face is
\[
F_Z:=\{Z^+,Z^-\}.
\]
In particular,
\[
\operatorname{wr}(Z^+)=+1,
\qquad
\operatorname{wr}(Z^-)=-1.
\]
Both states are non-extreme when $L>1$:
\[
Z^+,Z^-\notin\{\alpha_Z,\omega_Z\}.
\]
\end{define}

\paragraph{(H1) Residual anchor.}
Let \(s\) be the seed crossing of the host diagram $D$.
Let $Z^+,Z^-$ be the residual sign states determined by the fixed
pairing convention of Definition~\ref{def:residual-sign-face}.
Since \(L\) is odd, the
residual twist block has two chosen residual sign states
\[
        \{\rho\}\times \{Z^+\}, \; \{\rho\}\times \{Z^-\} \in Q_\rho
\]
which cancel by Reidemeister-II moves to a single positive, respectively
negative, crossing. Let \(Z^\circ\) be the one whose surviving crossing has the
same sign as the original seed crossing \(s\). 
Then
\[
        D(\rho; Z^\circ)\cong D
\]
relative to the exterior of the planting disk. In particular, if \(f_r(D)=0\),
then
\[
        f_r(\rho; Z^\circ)=0.
\]
For \(L>1\), the state \(Z^\circ\) is non-exceptional:
\[
        Z^\circ\notin\{\alpha,\omega\}.
\]

\begin{theorem}[Finite-type descent]
\label{thm:four-clasp-descent}
Let $K$ be the host knot of a clasping twist machine. Assume \textnormal{(C1)--(C4)} and (H1).
Suppose
\[
L>\widehat B+2,
\qquad
L\equiv1\pmod2,
\]
and
\[
f_r(K)=0
\qquad
(1\le r\le \widehat B+2).
\]
Then
\[
G_q(Q_\rho)
\qquad
(q\ge2).
\]
\end{theorem}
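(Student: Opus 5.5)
The plan is a downward induction on $q$, starting from the two consecutive residual levels supplied by Lemma~\ref{lem:four-clasp-residual-initialization} and ending at $q=2$.

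\textbf{Initialization.} Lemma~\ref{lem:four-clasp-residual-initialization} gives $G_r(Q_\rho)$ for all $r>\widehat B+2$. This uses (C1)--(C4) through Theorem~\ref{thm:signed-cube-compression}. In particular both $G_{\widehat B+4}(Q_\rho)$ and $G_{\widehat B+3}(Q_\rho)$ hold.

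\textbf{Descent step.} Fix $q$ with $2\le q\le \widehat B+2$, and assume $G_{q+2}(Q_\rho)$ and $G_{q+1}(Q_\rho)$. We want $G_q(Q_\rho)$.

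By (C4), $Q_\rho$ is an ordinary odd anti-parallel twist cube planted at the seed crossing. So Definition~\ref{def:twist-delta} applies: the complete graph on $Z$ is a connected anti-parallel relation graph.

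By (H1), the state $Z^\circ\in Q_\rho\setminus\{\alpha,\omega\}$ satisfies $D(\rho;Z^\circ)\cong D$. Since $1\le q\le\widehat B+2$, the hypothesis on $K$ gives $f_q(\rho;Z^\circ)=f_q(K)=0$. This is the anchor.

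Lemma~\ref{lem:twist-supp} now shows $a_q$ is supported at most on $\omega$ and $b_q$ at most on $\alpha$. It also shows the two exceptional values are $\pm\partial_Z a_q$ and $\partial_Z b_q$. Since $f_q$ has finite-type order at most $q$ (Lemma~\ref{JVP}), these $L$-fold derivatives vanish because $L>\widehat B+2\ge q$. Hence $G_q(Q_\rho)$, and the induction proceeds down to $q=2$.

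\textbf{Main obstacle.} The delicate step is justifying that $\partial_Z f_q=0$ on the residual cube from Vassiliev order alone. The cube $Q_\rho$ is a family of diagrams sharing one fixed shadow (the exterior plus the smoothed clasps). Each $L$-fold alternating sum over $Z$ is therefore the Vassiliev extension $\widetilde f_q$ on a singular link with $L$ double points. That extension vanishes because $L>q$.

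One must also check the uniformity of the induction. The step needs $G_{q+1}$ and $G_{q+2}$ on the \emph{entire} residual cube, not just near the anchor, so each step must deliver full vanishing on $Q_\rho$. Lemma~\ref{lem:twist-supp} does deliver this, so the induction closes.

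A minor point is that the anchor hypothesis is only needed for $q\le\widehat B+2$, which is exactly the range assumed. The stopping point $q\ge2$ reflects that only levels $r\ge1$ are assumed vanishing on $K$. Going one step lower would require $f_0$ to vanish at the anchor, which is false since $c_0(K)=1$.
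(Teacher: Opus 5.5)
Your proposal is correct and is essentially the paper's own proof: initialize $G_{\widehat B+4},G_{\widehat B+3}$ on $Q_\rho$ via Lemma~\ref{lem:four-clasp-residual-initialization}, then descend using the graph $3_1$-law, the (H1) anchor $Z^\circ$ with $f_q(K)=0$, and the vanishing of the $L$-fold derivative since $q\le\widehat B+2<L$; the only difference is that you invoke Lemma~\ref{lem:twist-supp}, whereas the paper writes those steps out inline. One small correction to your closing remark: $f_1(K)=0$ is among the hypotheses, so the same descent would in fact reach $q=1$, and only the step to $q=0$ fails for lack of an anchor ($c_0(K)=1$).
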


\begin{proof}
Lemma~\ref{lem:four-clasp-residual-initialization} gives
\[
G_r(Q_\rho)
\qquad
(r>\widehat B+2).
\]
In particular,
\[
G_{\widehat B+4}(Q_\rho),
\qquad
G_{\widehat B+3}(Q_\rho).
\]
Descend on
\[
q=\widehat B+2,\widehat B+1,\ldots,2.
\]
Assume
\[
G_{q+2}(Q_\rho),
\qquad
G_{q+1}(Q_\rho).
\]
The graph $3_1$-law and connected-support propagation Lemma~\ref{lem:graph-connectivity} on the residual
twist cube imply that
\[
a_q(\rho; Z)
\quad\text{is constant for }Z\ne\omega,
\]
and
\[
b_q(\rho; Z)
\quad\text{is constant for }Z\ne\alpha.
\]
Let $Z^\circ$ be the residual anchor state. Since
\[
D(\rho; Z^\circ)\simeq K
\]
and $q\le \widehat B+2$, the assumed anchor vanishing gives
\[
f_q(\rho; Z^\circ)=0.
\]
The anchor $Z^\circ$ lies in both non-exceptional branches, and therefore by Lemma~\ref{lem:graph-connectivity}
\begin{equation}
\label{eq:non-exc1}
a_q(\rho; Z)=0 \qquad (Z\neq\omega),
\end{equation}
and
\begin{equation}
\label{eq:non-exc2}
b_q(\rho; Z)=0 \qquad (Z\neq\alpha).
\end{equation}
Thus $a_q$ can only be supported at $Z=\omega$, and $b_q$ can only be supported at $Z=\alpha$.
As these are one-point supports we have
\[
a_q(\rho;\omega) = (-1)^L \partial_Z a_q(\rho; Z), \qquad b_q(\rho; \alpha) = \partial_Z b_q(\rho; Z).
\]
Since $L = |Z|$ and $q\le \widehat B+2<L$ both antipodal endpoints vanish by degree/type considerations
\[
a_q(\rho; \omega)=0, \qquad b_q(\rho; \alpha)=0.
\]
Because the non-exceptional branches have already vanished, \eqref{eq:non-exc1} and \eqref{eq:non-exc2}, the endpoint vanishing gives \(a_q\equiv0\) and \(b_q\equiv0\) on \(Q_\rho\).
Therefore $G_q(Q_\rho)$. Descending to $q=2$ proves the theorem.
\end{proof}

\subsection{Two-clasping finite-type descent}


When two machines are jointly planted, each of them sees the other as an integral part of the ambient diagram. It thus seems as though their compression threshold grows linearly with the length of their counterparts, i.e. by Corollary~\ref{cor:p-degree-cap}, $\widehat B_1 = \widehat B+3L_2$ and $\widehat B_2 = \widehat B+3L_1$, with $\widehat B$ the usual machine threshold. In what follows, we show that these bounds may be mitigated by restricting one of the machines to a face. Essentially, we use one of the machines to clone the vanishing anchor. The pair of anchors are then used to seed the second machine. This approach allows for initializing both machines using Lemma~\ref{lem:four-clasp-residual-initialization} with a threshold $\widehat B$.
Further finite-type descent along the vanishing anchors may then proceed below this cap.


\begin{theorem}[Facewise two-clasping descent via generated anchors]
\label{thm:joint-descent2}
Let \(D\) be a crossing-minimal diagram of a knot \(K\), and set
\[
        B_0:=3c(K)=3\,\#\operatorname{cross}(D).
\]
Let
\[
        \widehat B:=B_0+\kappa_{\mathrm{mach}}.
\]
Let \(u\neq v\) be two crossings of \(D\). Plant two clasping twist machines in
disjoint disks at \(u\) and \(v\), with odd residual lengths
\[
        L_u>\widehat B+2,\qquad L_v>\widehat B+2.
\]
Let $Q_{\rho_u}$ and $Q_{\rho_v}$ be their residual cubes, and let
\[
        F_u=\{Z_u^+,Z_u^-\}\subset Q_{\rho_u},\qquad
        F_v=\{Z_v^+,Z_v^-\}\subset Q_{\rho_v}
\]
be their residual sign faces. Assume (C1)--(C4) and (H1) for both machines.
Assume
\[
        f_r(K)=0 \qquad 1\le r\le \widehat B+2.
\]
Then
\[
        G_q(F_u\times F_v) \qquad (q\ge 2).
\]
In particular,
\[
        G_2(F_u\times F_v).
\]
\end{theorem}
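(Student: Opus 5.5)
The plan is to run the argument in two facewise stages, following the outline given before the statement: first use the \(u\)-machine only as a fixed diagram, and then use the \(v\)-machine only through its two-state sign face.

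\textbf{Stage 1: descent on the \(v\)-machine.} Fix the \(u\)-machine at its residual anchor state \((\rho_u;Z_u^\circ)\). By (H1) for the \(u\)-machine, inside its disk this state is isotopic relative to the boundary to the original crossing \(u\). The resulting exterior host family for the \(v\)-machine is therefore the crossing-flip family of \(D\) itself, so its pre-planting cap is \(B_{\mathrm{host}}=B_0\) and its threshold is exactly \(\widehat B\). We then apply Theorem~\ref{thm:signed-cube-compression}, Lemma~\ref{lem:four-clasp-residual-initialization} and Theorem~\ref{thm:four-clasp-descent} to the \(v\)-machine. The hypotheses are \(L_v>\widehat B+2\), \(f_r(K)=0\) for \(1\le r\le\widehat B+2\), and (C1)--(C4), (H1). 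The conclusion is \(G_q(\{(\rho_u;Z_u^\circ)\}\times Q_{\rho_v})\) for \(q\ge2\). In particular \(f_q\) vanishes at \((Z_u^\circ,Z_v^+)\) and \((Z_u^\circ,Z_v^-)\). These are the two generated anchors, one over each residual sign of the \(v\)-machine.

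\textbf{Stage 2: descent on the \(u\)-machine.} Now fix \(Z_v=Z_v^{s}\) with \(s\in\{+,-\}\), holding the \(v\)-machine at \(\rho_v\). By Definition~\ref{def:residual-sign-face}, \(Z_v^{s}\) cancels by Reidemeister--II moves to a single crossing of sign \(s\) at \(v\). So the exterior host for the \(u\)-machine is again a crossing-flip of \(D\), namely \(D\) with \(v\) possibly flipped. This host lies in the flip family of \(D\), so its cap is still \(B_0\) and the threshold is still \(\widehat B\). Compression and residual initialization therefore give \(G_r(Q_{\rho_u}\times\{Z_v^s\})\) for \(r>\widehat B+2\).

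We then rerun the proof of Theorem~\ref{thm:four-clasp-descent} on \(Q_{\rho_u}\times\{Z_v^s\}\) for \(q=\widehat B+2,\dots,2\). The \(3_1\)-law and Lemma~\ref{lem:graph-connectivity} confine \(a_q\) to \(\omega\) and \(b_q\) to \(\alpha\). The anchor is \((Z_u^\circ,Z_v^s)\), and \(f_q\) vanishes there by Stage 1; this replaces the host-knot anchor, which would fail when \(s\) differs from the sign of \(v\). The antipodal values are \(L_u\)-fold derivatives, and they vanish since \(q<L_u\). This yields \(G_q(Q_{\rho_u}\times\{Z_v^s\})\) for \(q\ge2\) and each \(s\). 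Restricting to \(F_u\) gives \(G_q(F_u\times F_v)\).

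\textbf{Main obstacle.} The delicate point is the claim that the host family seen by one machine, with the other frozen at a residual face state, genuinely has cap \(B_0\) and satisfies (C1)--(C4) uniformly. The machine states cancel only up to isotopy, not as diagrams on the shadow of \(D\). I would handle this by noting two facts. First, all of (C1)--(C4) and the compression argument only use invariants evaluated up to isotopy relative to the disk boundary. Second, \(\widehat B\) depends only on a cap valid for every exterior state, and every such state is isotopic to a diagram on the shadow of \(D\). Hence Corollary~\ref{cor:p-degree-cap} applies with \(n=c(K)\), \(\ell=1\). The \(L\)-dependent ambient cap is used only as a terminal condition, exactly as in Theorem~\ref{thm:signed-cube-compression}.
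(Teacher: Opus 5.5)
Your proposal is correct and follows the paper's proof essentially step for step. Your Stage 1 is the paper's Step 1: freeze the $u$-machine at $Z_u^\circ$ and run the one-machine descent on the $v$-machine to generate the anchors $(Z_u^\circ,Z_v^\pm)$. Your Stage 2 combines the paper's Steps 2 and 3: initialize the $u$-machine over the two-state host $F_v$ at threshold $\widehat B=B_0+\kappa_{\mathrm{mach}}$, descend using those generated anchors, and restrict to $F_u$. Your isotopy argument that the host cap stays $B_0$ is the same justification the paper gives when it identifies each $Z_v^\varepsilon$-host with a crossing-flip of $D$.
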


\begin{proof}
Let \(Z_u^\circ\in F_u\) be the residual sign state of the first machine whose
effective crossing has the original sign of \(u\). By the residual anchor hypothesis
(H1)
\[
        D(\rho_u; Z_u^\circ)\cong D.
\]


\medskip

\paragraph{Step 1: use the $v$-machine to generate two anchors.}
Restrict the $u$-machine to the fixed state \(Z_u=Z_u^\circ\).
On this face, the double-planted family is just the base knot \(K\)
with the second clasping twist machine planted at \(v\). 
Since \(Z_u=Z_u^\circ\) reproduces the original crossing \(u\), the
host family for the $v$-machine is isotopic to the original diagram
\(K\). Hence the ordinary host cap for the \(v\)-machine is
\[
        B_0=3c(K).
\]
After adding the fixed local overhead of the clasping machine, the relevant compression 
threshold is
\[
        \widehat B=B_0+\kappa_{\mathrm{mach}}.
\]
Since
\[
L_v>\widehat B+2
\]
and
\[
f_r(K)=0
\qquad
1\le r\le \widehat B+2,
\]
Theorem~\ref{thm:four-clasp-descent} applied to the $v$-machine gives
\[
G_q(Z_u^\circ,Q_{\rho_v})
\qquad
(q\ge2).
\]
In particular,
\begin{equation}
\label{eq:t4101}
f_q(Z_u^\circ,Z_v^+)=0,
\qquad
f_q(Z_u^\circ,Z_v^-)=0
\qquad
(q\ge2).
\end{equation}
Thus, the $v$-machine supplies two vanishing anchor values for the $u$-machine, one over each of the two residual sign states of the $v$-machine.

\medskip

\paragraph{Step 2: descend the $u$-machine over the two-state host
\(F_v\).}

Now restrict the $v$-machine to the two-state residual sign face
\[
F_v=\{Z_v^+,Z_v^-\}.
\]
%
After canceling the fixed Reidemeister-II pairs in the $v$-residual twist,
each vertex \(Z_v^\epsilon\in F_v\) is identified with the original shadow of
\(D\), with the crossing \(v\) assigned sign. Hence, each such host
diagram is a crossing-flip of the crossing-minimal diagram \(D\), and therefore
has exactly \(c(K)\) crossings. Thus the two-state host family \(F_v\) has ordinary host cap
\[
        B_0=3c(K).
\]
Adding the fixed local overhead gives the uniform compression threshold
\[
        \widehat B=B_0+\kappa_{\mathrm{mach}}
\]
for the \(u\)-machine over \(F_v\).
The $u$-machine satisfies Section~\ref{sec:clasping-hypo} hypotheses
over this two-state host family. 
Lemma~\ref{lem:four-clasp-residual-initialization} gives
\[
G_r(Q_{\rho_u}\times F_v)
\qquad
(r>\widehat B+2).
\]
Hence
\[
G_{\widehat B+4}(Q_{\rho_u}\times F_v),
\qquad
G_{\widehat B+3}(Q_{\rho_u}\times F_v)
\]
are available.
We now descend on
\[
q=\widehat B+2,\ \widehat B+1,\ldots,2.
\]
Assume inductively that
\[
G_{q+2}(Q_{\rho_u}\times F_v),
\qquad
G_{q+1}(Q_{\rho_u}\times F_v)
\]
hold.
Fix \(Z_v^\varepsilon\in F_v\). On the \(Z_u\)-cube, Lemma~\ref{lem:31-law} and Lemma~\ref{lem:graph-connectivity} give
\[
a_q(Z_u,Z_v^\varepsilon)
\text{ constant on }Z_u\neq\omega_u,
\]
and
\[
b_q(Z_u,Z_v^\varepsilon)
\text{ constant on }Z_u\neq\alpha_u.
\]
The state \(Z_u^\circ\) is non-extreme, since \(L_u\) is odd and
\(L_u>1\). By \eqref{eq:t4101},
\[
f_q(Z_u^\circ,Z_v^\varepsilon)=0.
\]
Therefore, both non-exceptional branch constants vanish
\begin{equation}
\label{eq:non-exc3}
    a_q(Z_u,Z_v^\eps) = 0 \quad (Z_u \neq \omega_u), \qquad b_q(Z_u,Z_v^\eps) = 0 \quad (Z_u \neq \alpha_u).
\end{equation}
Thus, \(a_q\) may only be supported at \(Z_u=\omega_u\), and \(b_q\) may only be supported at \(Z_u=\alpha_u\).  As these are one-point supports we have
\[
a_q(\omega_u,Z_v^\eps) = (-1)^{L_u} \partial_{Z_u} a_q(Z_u,Z_v^\eps), \qquad b_q(\alpha_u,Z_v^\eps) = \partial_{Z_u} b_q(Z_u,Z_v^\eps).
\]
Since $L_u = |Z_u|$ and $q\le \widehat B+2<L_u$ both antipodal endpoints vanish by degree/type considerations
\[
a_q(\omega_u,Z_v^\eps)=0, \qquad b_q(\alpha_u,Z_v^\eps)=0.
\]
Because the non-exceptional branches have already vanished \eqref{eq:non-exc3}, the endpoint
vanishing gives \(a_q\equiv0\) and \(b_q\equiv0\) on \(Q_{\rho_u} \times \{Z_v^\eps\}\).
Equivalently,
\[
f_q(Z_u,Z_v^\varepsilon)=0
\]
for every \(Z_u\).
Since \(Z_v^\varepsilon\in F_v\) was arbitrary, we get
\[
G_q(Q_{\rho_u}\times F_v).
\]
Descending to \(q=2\), we obtain
\[
G_q(Q_{\rho_u}\times F_v)
\qquad(q\ge2).
\]

\medskip

\paragraph{Step 3: restrict to the residual sign face of the $u$-machine.}
Finally restrict the $u$-residual cube to its two-state sign face
\[
F_u=\{Z_u^+,Z_u^-\}.
\]
Since
\[
F_u\times F_v\subset Q_{\rho_u}\times F_v,
\]
we obtain
\[
G_q(F_u\times F_v)
\qquad(q\ge2).
\]
In particular,
\[
G_2(F_u\times F_v).
\]
\end{proof}

\subsection{Geometric characterization}

The following geometric package supplies the algebraic requirements of the preceding
subsections:

\medskip
\noindent
\(\bf (G0)\) supplies pairwise anti-parallel twist relations on the full signed cube.

\noindent
\(\bf (G1)\) supplies the direct reductions of the non-exceptional states (C1).

\noindent
\(\bf (G2)\) converts the two exceptional antipodes into nonexceptional twist states, hence proving (C2).

\noindent
\(\bf (G3)\) supplies the clasp $3_1$-law relations (C3).

\noindent
\(\bf (G4)\) recovers the residual twist (C4).

\noindent
\(\bf (G5)-(G7)\) provide the anchor (H1), effective crossing and smoothing proxy.

\noindent
\(\bf (G8)\) supplies locality for the two-machine product.

\begin{proposition}[Geometry of clasping twist machines]
\label{prop:clasping-twist-machines}
Let a clasping twist machine be planted at a seed crossing \(s\) inside a disk
\(\Delta_s\), with odd residual length \(L>1\). The following local facts hold
relative to the exterior of \(\Delta_s\). Denote the clasp states
\[
[s_0,s_0';s_1,s_1'] = \{y_0=s_0,\ y_0'=s_0',\ y_{L+1}=s_1,\ y_{L+1}'=s_1'\}.
\]

\paragraph{\(\bf (G0)\) Anti-parallel twist cube.}
For every exterior state, every signed clasp state $Y\in Q_Y$,
and every pair $z_i\ne z_j$ in $Z$,
\[
\partial_{z_i}f_r(z_j=0)=0,
\qquad
\partial_{z_j}f_r(z_i=0)=0.
\]
Thus, the $Z$-coordinates remain pairwise anti-parallel throughout
the full signed machine cube.

\paragraph{\(\bf (G1)\) Controlled states and compression threshold.}
In the endpoint labeling convention of Definition~\ref{def:clasping-twist-machine} and Section~\ref{sec:machine-states}, the signed controlled faces in $U_{\text{ctrl}}$
\[
[+-;++]\times\{Z\},\ [-+;++]\times\{Z\},\ [+-;--]\times\{Z\},\ [-+;--]\times\{Z\},
\]
\
\[
[++;+-]\times\{Z\},\ [++;-+]\times\{Z\},\ [--;+-]\times\{Z\},\ [--;-+]\times\{Z\},
\]
\
\[
[+-;+-]\times\{Z\},\ [+-;-+]\times\{Z\},\ [-+;+-]\times\{Z\},\ [-+;-+]\times\{Z\},
\]
\
reduce for all $Z \in Q_Z$, after the local cancellations prescribed by the machine, to diagrams
whose JVP \(p\)-degree is bounded by
\[
        \widehat B
        =
        B_{\mathrm{host}}+\kappa_{\mathrm{mach}}.
\]
For the present construction,
\[
        \kappa_{\mathrm{mach}}=11.
\]
Thus, (C1c) holds with cap \(\widehat B\). Assume moreover
\[
G_{r+1}, \quad G_{r+2}, \quad r > \widehat B.
\]
Then (C1d) holds
\[
f_r(Y;Z) = 0, \qquad \forall Y \in Q_Y, \ \forall Z \not \in \{\alpha_Z,\omega_Z\}.
\]

\paragraph{\(\bf (G2)\) Exceptional collapse.} Assuming $G_{r+1},G_{r+2}$ for $r > \widehat B$,
the exceptional states
\[
f_r([++;++];Z=\alpha_Z) = 0, \qquad f_r([--;--]; Z=\omega_Z) = 0.
\]
Hence, (C2) holds.

\paragraph{\(\bf (G3)\) Anti-parallel clasps.} Each of the pairs
\[
\{y_0,y_0'\}, \qquad \{y_{L+1}, y_{L+1}'\},
\]
obeys an anti-parallel nugatory relation. Hence (C3) holds.

\paragraph{\(\bf (G4)\) Recovering the ordinary twist.} The residual face
\[
        Q_\rho=\{y_0=0,y_0'=-,y_{L+1}=0,y_{L+1}'=+\} \times Q_Z
\]
is an ordinary anti-parallel twist cube. Hence (C4) holds.

\paragraph{\(\bf (G5)\)} The residual sign states
\[
        Z^+,Z^-\in Q_\rho
\]
cancel by Reidemeister-II moves to a single positive, respectively negative,
effective crossing. If \(Z^\circ\) denotes the state whose surviving crossing
has the sign of the original seed crossing \(s\), then
\[
        D(\rho; Z^\circ)\cong D
\]
relative to the exterior of the planting disk. Hence (H1) holds.

\paragraph{\(\bf (G6)\)} The residual sign face
\[
        F_Z=\{Z^+,Z^-\}
\]
is naturally identified with a genuine one-crossing variation cube
\[
        \tilde s\in\{+,-\}.
\]
That is, after cancelling the fixed Reidemeister-II pairs in \(Z^+\) and
\(Z^-\), the two remaining local diagrams differ only by the sign of one
effective crossing \(\tilde s\).

\paragraph{\(\bf (G7)\)} Write \(D_{\mathrm{ext}}\) for the full diagram obtained by adjoining
the fixed exterior of the planting disk to the local tangle.
The oriented smoothing of the effective crossing is a smoothing proxy
for the original seed crossing:
\[
        D_{\mathrm{ext}}(\tilde s=0)\cong D(s=0)
\]
relative to the exterior of the planting disk.

\paragraph{\(\bf (G8)\)} If two machines are planted in disjoint disks, all reductions,
smoothings, and isotopies for one machine are supported in its own disk and
therefore commute with the corresponding operations for the other machine.
The global mirror involution preserves this product decomposition and commutes
with these local identifications.
Consequently, for two disjoint residual sign faces $F_{Z_u},F_{Z_v}$, the product
\[
        F_{Z_u}\times F_{Z_v}
\]
is a genuine two-crossing variation cube in the effective crossing coordinates
\[
        \tilde u,\tilde v\in\{+,-\}.
\]
Moreover,
\[
        D_{\mathrm{ext}}(\tilde u=0,\tilde v=\sigma)
        \cong
        D(u=0,v=\sigma),
        \qquad \sigma\in\{+,-\},
\]
and
\[
        D_{\mathrm{ext}}(\tilde u=0,\tilde v=0)
        \cong
        D(u=0,v=0).
\]

\end{proposition}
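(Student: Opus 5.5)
The plan is to fix one explicit local picture of the machine inside \(\Delta_s\) and read each item off it. The picture has the seed crossing smoothed, an odd anti-parallel twist \(z_1,\dots,z_L\) between the two resulting arcs, and its two ends clasping the two outgoing strands through the pairs \((y_0,y_0')\) and \((y_{L+1},y_{L+1}')\). The items split into a purely local group (G0, G3--G8) and a degree-counting group (G1--G2); I expect all of the difficulty to sit in the second group.

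\textbf{The local group.} For (G0) I would argue as in Lemma~\ref{lem:ap}. Smoothing any \(z_j\) cuts the twist into two sub-twists, each with a free end. Each sub-twist then unwinds by a sequence of Reidemeister~I moves supported in \(\Delta_s\), whatever the signs of the clasps. In particular \(z_i\) becomes nugatory, so \(\partial_{z_i}f_r(z_j=0)=0\). (G3) is the same monogon argument applied inside each clasp disk. For (G4), smoothing \(y_0\) and \(y_{L+1}\) while keeping \(y_0'=-\) and \(y_{L+1}'=+\) releases both ends of the twist; the resulting tangle is an ordinary anti-parallel twist planted at \(s\), and this gives (C4). For (G5) and (G6) I use the pairing of Definition~\ref{def:residual-sign-face}: cancelling the \((L-1)/2\) fixed Reidemeister~II pairs leaves the single crossing \(z_\ast\), so \(Z^\pm\) differ exactly in the sign of one effective crossing \(\tilde s\), and \(Z^\circ\) reproduces \(D\). (G7) holds because smoothing \(z_\ast\) after the cancellations is the oriented smoothing at \(s\), as both are supported in the same disk. (G8) is disjointness of supports: the local isotopies commute, and mirroring commutes with every local operation.

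\textbf{Controlled states (G1).} Take a clasp state in which at least one clasp pair has opposite signs. I would cancel that pair by a Reidemeister~II move; this frees one end of the twist. The aim is then to show that the twist, clasped only at its other end, contracts to a tangle with a bounded number \(k\) of crossings, independently of \(L\). Counting the host crossings, plus \(k\), plus at most two extra components in Corollary~\ref{cor:p-degree-cap}, is how I would try to justify the constant \(\kappa_{\mathrm{mach}}=11\). This is the first place the argument may fail. A Reidemeister~II cancellation of one clasp leaves a twist attached to a strand, and it is not clear that such a twist unwinds when the other end is still clasped.

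For the conditional part (C1d) I would not reduce each \(Z\) geometrically. Instead, under \(G_{r+1},G_{r+2}\), Lemma~\ref{lem:31-law} together with Lemma~\ref{lem:graph-connectivity} makes \(a_r\) constant on \(Q_Z\setminus\{\omega\}\) and \(b_r\) constant on \(Q_Z\setminus\{\alpha\}\). Both branches contain \(Z^+\). After the Reidemeister~II cancellations, the state \((Y,Z^+)\) has only a bounded number of machine crossings beyond the host. Its degree is therefore at most \(\widehat B\), so the branch constants vanish for \(r>\widehat B\).

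\textbf{Exceptional states (G2): the main obstacle.} The algebra of Lemma~\ref{lem:locked-state-escape} cannot reach these two states. For \(([++;++],\alpha_Z)\), the \(a\)-component follows from the left clasp law, because \([-+;++]\) is controlled. The \(b\)-component, however, has no available witness: the twist law fixes \(b\) only away from \(\alpha\), and the clasp \(b\)-law needs a primed coordinate equal to \(-\). So a genuine geometric input is required. What I would try first is a flype, or a clasp slide, that pushes one clasp crossing through the coherently signed twist. The goal is to exhibit \(([++;++],\alpha_Z)\) as isotopic to a state with non-extreme \(Z\), or to a controlled clasp state; the statement for \(([--;--],\omega_Z)\) would then follow by the mirror involution \(\tau\).

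I regard this step, together with the \(L\)-independence claimed in (G1), as the crux of the proposition. I am genuinely unsure that such an isotopy exists. A fully coherent twist with both ends clasped is exactly the configuration whose Jones degree should grow with \(L\), and if it does, no bounded-overhead reduction of it can exist.
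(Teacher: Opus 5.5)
\textbf{The gap is (G2): your proposal gives no argument for it, and in fact none can exist, because (G2) is false in general.}

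\textbf{What the paper does for (G2).} In $\Xi_s$, a Reidemeister~III move slides the end crossing of the all-$s$ twist across the seed arc, and a Reidemeister~I move removes it. The same is done at the other end. A cancelling pair $(s,-s)$ is then reinserted to produce the non-extreme state $Z_s^{\mathrm{ne}}$, and (C1d) is applied there.

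\textbf{Where that argument fails.} The moves are fine; the final identification is not. After the absorption, each clasp loop runs around its seed arc in the opposite rotational sense. This is exactly the parallel-type clasp on which (G3) fails. So the diagram lies on the machine shadow only after the machine component $C$ is reversed. Read in the cube's fixed orientation, it is the vertex $([-s,-s;-s,-s];Z_s^{\mathrm{ne}})$ with $C$ reversed. By the component-reversal formula with $\mathrm{lk}=-2s$, its Jones polynomial differs by the factor $x^{12s}$. That factor mixes JVP levels $r,\dots,r-12$, so the relation cannot be fed into (C1d) at level $r$.

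\textbf{Why no flype or clasp slide can prove (G2).} Your suspicion is right. Fix a locked clasp state $Y$. The diagram then depends only on the net twist $w$. The skein relation at a twist crossing gives $V_w=x^4V_{w-2}+x^2pV_{\mathrm{cut}}$. Here $V_{\mathrm{cut}}=V(D\setminus\{s\})\,V(H)\,V(H')$ with $H,H'$ Hopf links, because the cut leaves one meridian loop on each component of $D\setminus\{s\}$.
\begin{itemize}
\item Suppose $\det(D\setminus\{s\})\neq0$; for example, take the trefoil, where $D\setminus\{s\}$ is a Hopf link. Then $V_{\mathrm{cut}}$ does not vanish at $x=i$.
\item Consequently the unique $w$-independent solution, $-xV_{\mathrm{cut}}/(1+x^2)$, is not a Laurent polynomial.
\item Hence $V_{w+2}-V_w=x^{2w}E$ for a fixed nonzero Laurent polynomial $E$, and $\deg_pV(\Xi_+)$ grows linearly in $L$.
\end{itemize}
On the other hand, (C1c), (C1d) and (C3) are sound. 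So the algebra of Lemma~\ref{lem:locked-state-escape} shows that at the top nonvanishing level of $Q_{\mathcal M}$, only $b(\Xi_+)$ and $a(\Xi_-)$ can survive. Once $L$ is large this level exceeds $\widehat B$, and (G2) there would make the whole level vanish, which is a contradiction. The signed-cube compression built on (G2) fails with it.

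\textbf{Smaller points.}
\begin{itemize}
\item Your worry in (G1) is unfounded. Once one clasp is cancelled, that end of the twist is a free cap. The twist then unwinds by Reidemeister~I moves from that cap, regardless of signs or of the other clasp. This leaves at most $n+1$ crossings and $\ell+2$ components, giving the cap $B_{\mathrm{host}}+5$.
\item Your anchor $(Y,Z^+)$ for (C1d) does not give $\widehat B$. For locked $Y$ it reduces to $n+4$ crossings and $\ell+2$ components, so it only gives $B_{\mathrm{host}}+14$. The paper instead extracts $a_r(z_i{=}+)=a_{r+1}(z_i{=}0)$ and $b_r(z_j{=}-)=-b_{r+1}(z_j{=}0)$ from the level-$(r+2)$ skein. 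It then caps the cut state, which has $n+3$ crossings and $\ell+3$ components, at $B_{\mathrm{host}}+12$. That is the source of $\kappa_{\mathrm{mach}}=11$.
\item For (G7), ``supported in the same disk'' is not a justification. What is needed is that cutting the anti-parallel twist leaves only kinks, which Reidemeister~I reduces to $D(s=0)$.
\end{itemize}
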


\newif\ifGeometry
\Geometrytrue

\ifGeometry

\begin{proof}
The proof is by construction.
All Reidemeister moves and isotopies in this proof are supported
inside the planting disk $\Delta_s$ and are taken relative to
$\partial\Delta_s$. Consequently, every reduction is uniform in the
exterior state, and the same sequence of local moves applies over any
exterior host family carried by a fixed shadow.

Consider a host knot $K$. Pick one of its crossings and delineate it by a small disk. Within the disk, smooth the base crossing and attach the clasping twist machine to the strands on both ends as shown below.
\[
\includegraphics[width=0.17\textwidth]{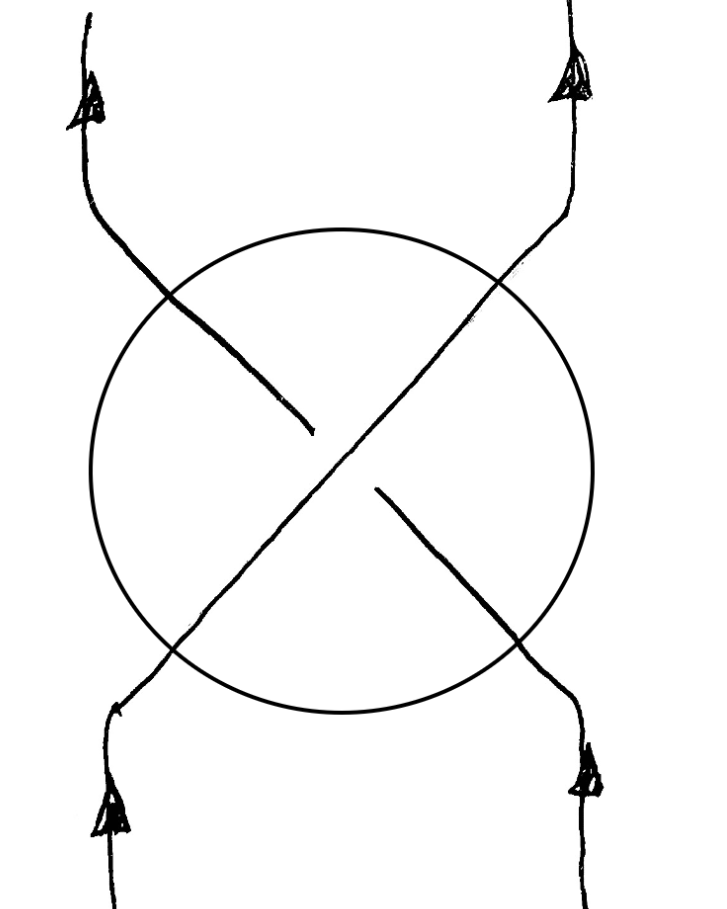}
\; \raisebox{1.7cm}{$\Longrightarrow$} \;
\includegraphics[width=0.17\textwidth]{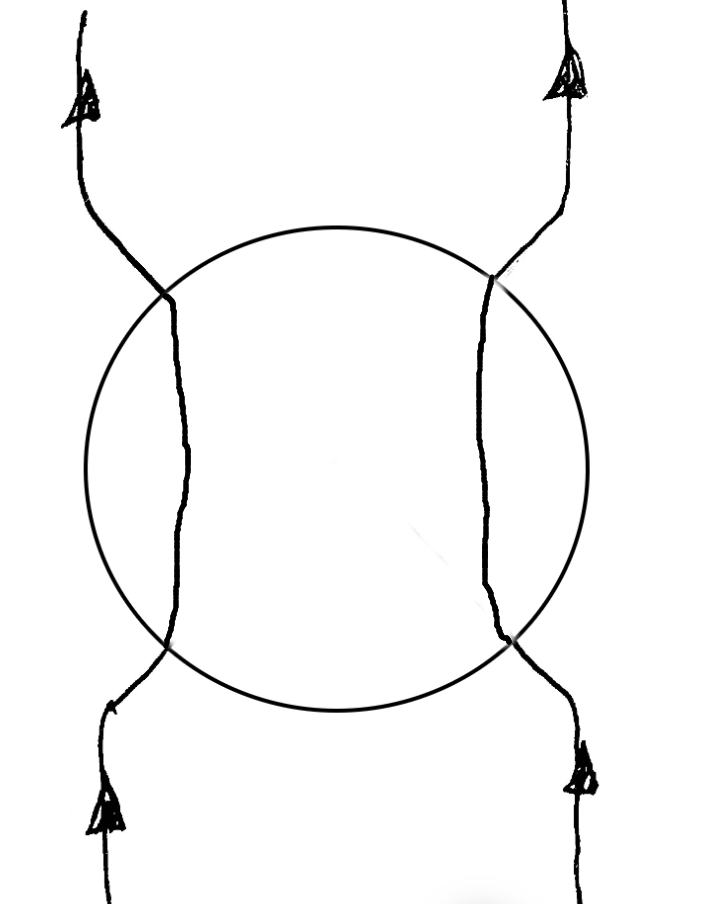}
\; \raisebox{1.7cm}{$\Longrightarrow$} \;
\raisebox{0.1cm}{
\includegraphics[width=0.3\textwidth]{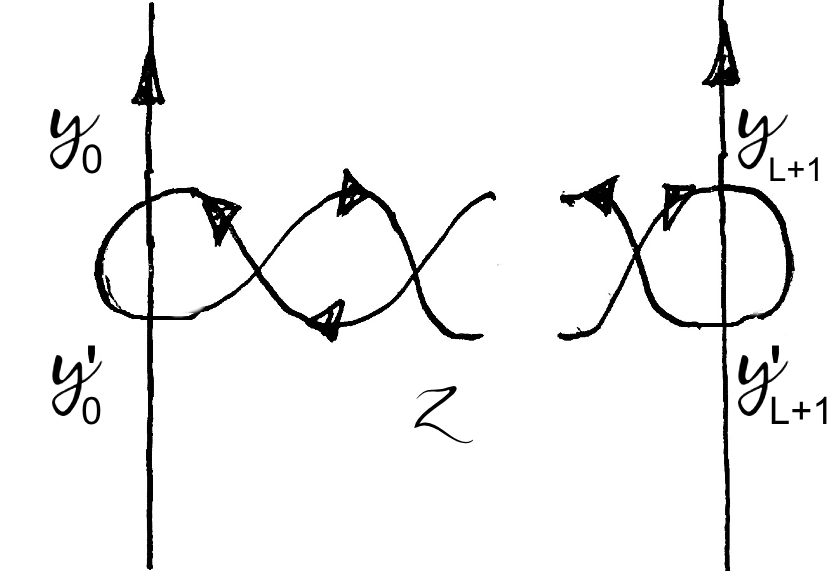}
}
\]
A crossing whose sign is indeterminate is drawn as a pair of intersecting arcs. The disconnected arcs in the drawing on the right represent part of a long anti-parallel twist with $L=|Z|$ crossings. 

\paragraph{$\bf (G0)$ Anti-parallel twist cube.} The twist is planted so that its arcs are anti-parallel. Oriented smoothing of any of its crossings renders the remaining nugatory. As an example
\[
\includegraphics[width=0.3\textwidth]{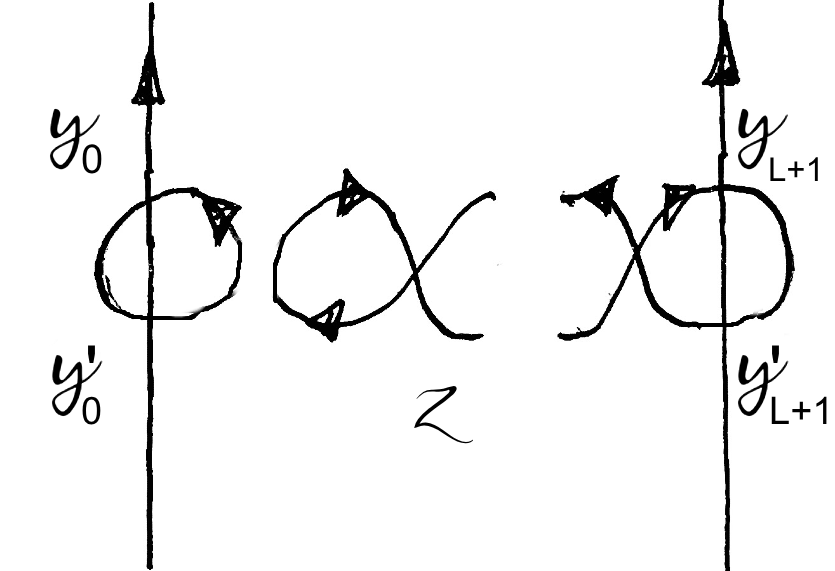}
\]
Therefore, the anti-parallel nugatory condition holds for any pair of distinct twist crossings.

\paragraph{$\bf (G1)$ Controlled states and $\kappa_{\text{mach}}$.} We verify the degree caps of the signed faces in $U_{\text{ctrl}}$.
Let the host shadow have $n$ crossings and $\ell$ components, so that
\[
B_{\mathrm{host}}=3n+\ell-1.
\]

\paragraph{Step 1: Directly reducible clasp states.}
We begin with the first set of twelve configurations. These consist of at least one clasp in a mixed cancelable state. 
There are exactly three types of reductions in this set of states. Once a clasp is put in cancelable state, it detaches from the seed strand by R2. The remaining twist crossings are undone by Reidemeister-I moves until a loop remains. This loop may either be disjoint or attached to the companion seed strand depending on the state of the complement clasp. The reductions are uniform in the residual state
\(Z\in Q_Z\). For every assignment of signs to the residual crossings, the
controlled clasp closure turns the \(Z\)-block into a chain of local nugatory
kinks after the displayed Reidemeister-II move or moves. These kinks are then
removed by Reidemeister-I moves. Hence, no residual crossing contributes to the
reduced degree cap on the controlled faces.

We illustrate the three possible reductions. 
Set $y_0 = -y_0'$ and assume $y_{L+1}=y_{L+1}'$. Use a single $R2$ and a sequence of $R1$ kinks to reduce the planted machine diagram to
\[
\raisebox{1.5cm}{$(y_0=-y_0',\ y_{L+1}=y_{L+1}') \quad \cong$} \;
\includegraphics[width=0.3\textwidth]{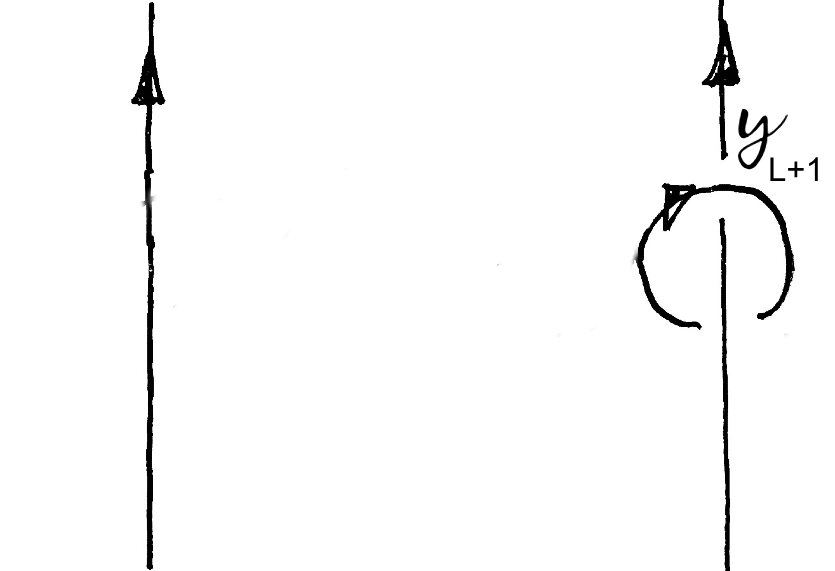}
\]
The exterior part of the diagram is left intact. 

Set $y_0 = y_0'$ and assume $y_{L+1}=-y_{L+1}'$. Use a single $R2$ and a sequence of $R1$ kinks to reduce the planted machine diagram to
\[
\raisebox{1.5cm}{$(y_0=y_0',\ y_{L+1}=-y_{L+1}') \quad \cong$} \;
\includegraphics[width=0.3\textwidth]{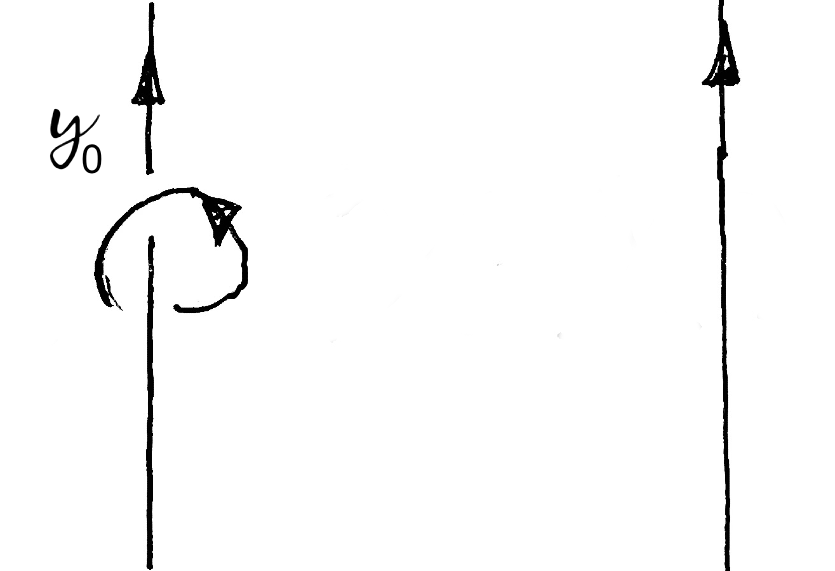}
\]
The exterior part of the diagram is left intact. For either of the first two reduction types, the seed crossing has been smoothed, at most two clasp crossings remain, and at most one
additional machine loop is present. Consequently, the reduced diagram
has
\[
n_{\mathrm I}\le n-1+2=n+1,
\qquad
\ell_{\mathrm I}\le \ell+2.
\]
Here the component estimate allows one component from smoothing the
seed crossing and one further component from a possible disjoint
machine loop. Corollary~\ref{cor:p-degree-cap} therefore gives
\[
B_{\mathrm I}
 \le
3(n+1)+(\ell+2)-1 = (3n+\ell-1)+5
 =
B_{\mathrm{host}}+5.
\]
%
Set $y_0 = -y_0', \ y_{L+1}=-y_{L+1}'$. Use a pair of $R2$ moves and a sequence of $R1$ kinks to reduce the planted machine diagram to
\[
\raisebox{1.5cm}{$(y_0=-y_0',\ y_{L+1}=-y_{L+1}') \quad \cong$} \;
\includegraphics[width=0.3\textwidth]{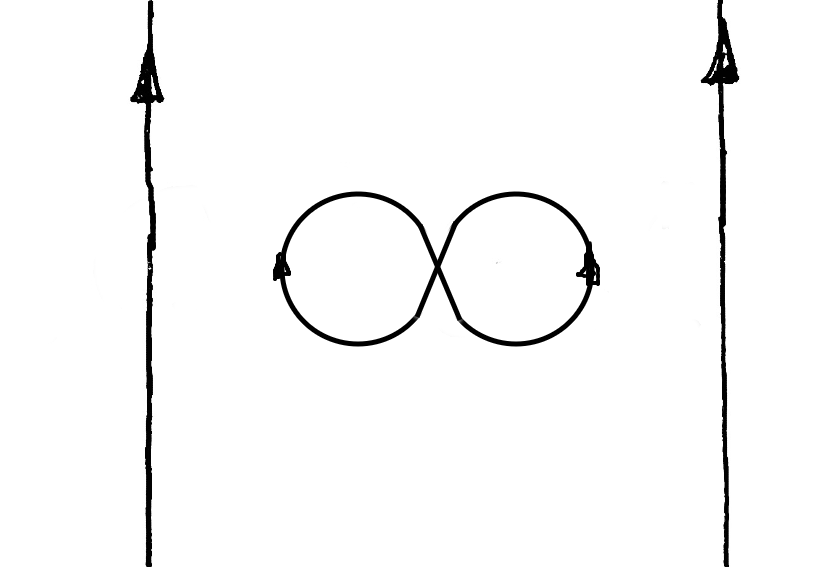}
\raisebox{1.5cm}{$\cong$} \;
\includegraphics[width=0.3\textwidth]{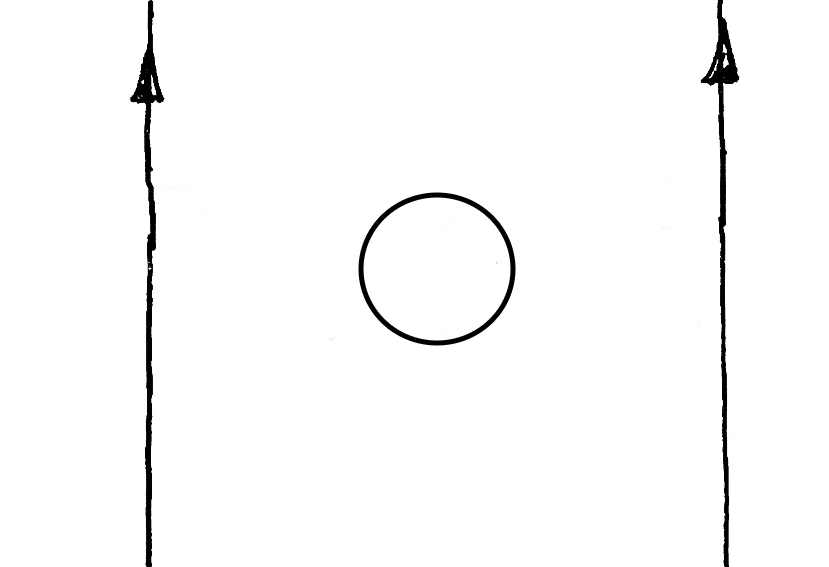}
\]
The exterior part of the diagram is left intact. 
For the third direct reduction type, no clasp crossing remains and
there is at most one additional machine loop. Hence
\[
n_{\mathrm{II}}\le n-1,
\qquad
\ell_{\mathrm{II}}\le \ell+2,
\]
and therefore
\[
B_{\mathrm{II}}
 \le
3(n-1)+(\ell+2)-1
 =
B_{\mathrm{host}}-1.
\]
It follows that every directly reducible signed state has JVP degree
at most
\[
B_{\mathrm{dir}}
 \le
B_{\mathrm{host}}+5.
\]
In particular, assuming $\kappa_{\text{mach}}=11$ gives
\[
r>\widehat B=B_{\mathrm{host}}+11
\quad\Longrightarrow\quad
f_r=0
\]
on every directly reducible signed state. Thus \textnormal{(C1c)}
holds.

\paragraph{Step 2: non-exceptional residual states.}
The remaining set of controlled states are characterized by non-exceptional twist configurations $Z \not \in \{\alpha_Z,\omega_Z\}$.
Assume
\[
G_{r+1}(Q_{\mathcal M}^{E}),
\qquad
G_{r+2}(Q_{\mathcal M}^{E}),
\qquad
r>\widehat B.
\]
The $3_1$-law Lemma~\ref{lem:31-law} applies. Thus, for any $i \neq j$
\[
\partial_{z_i} a_r(z_j=+)=0, \qquad \partial_{z_i} b_r(z_j=-)=0.
\]
The twist is pairwise anti-parallel and its exceptional branch is strongly connected (Lemma~\ref{lem:graph-connectivity}). Therefore
\[
a_r \ \text{is constant on } Q_Z \setminus \{\omega_Z\}
\]
and
\[
b_r \ \text{is constant on } Q_Z \setminus \{\alpha_Z\}.
\]
We show that these constants vanish. Write the JVP $a$-component skein \eqref{eq:theskein} at level $r+2$ for some crossing $z_i \in Z$
\[
\partial_{z_i} a_{r+2} = a_{r+1}(z_i=0) + b_{r+1}(z_i=+)+b_{r+1}(z_i=-)-a_r(z_i=+).
\]
The left hand side vanishes by $G_{r+2}$ and the $b$ terms on the right vanish by $G_{r+1}$. Therefore,
\begin{equation}
\label{eq:smoothing-a}
a_r(z_i=+)=a_{r+1}(z_i=0).
\end{equation}
The $b$ component skein at level $r+2$ 
\[
\partial_{z_j} b_{r+2} = b_{r+1}(z_j=0) + a_{r+1}(z_j=+)+a_{r+1}(z_j=-)+b_r(z_j=-).
\]
similarly gives
\begin{equation}
\label{eq:smoothing-b}
b_r(z_j=-)=-b_{r+1}(z_j=0).
\end{equation}
The smoothing terms on the right in \eqref{eq:smoothing-a} and \eqref{eq:smoothing-b} represent a diagram in which the twist becomes unlocked. In particular, since the twist is in non-extreme configuration it consists of both negative and positive sign crossings. Let $z_i$ and $z_j$ be some positive and negative sign crossings, respectively. Then $f_r(z_i=+,z_j=-)$ inherits the values of the order-$(r+1)$ coefficients of a diagram where one of these crossings is smoothed. To see what these coefficients are, we consider the shadow of the smoothed diagram.

After smoothing one residual twist crossing, the remaining residual
crossings are removed as Reidemeister-I kinks. The resulting auxiliary
shadow is obtained from the seed-smoothed host by retaining at most
four clasp crossings and at most two additional machine loops.
The entire collapse ends in a configuration whose shadow is
\[
\raisebox{1.5cm}{$(z_j=0) \cong (z_i=0) \quad \cong$} \;
\includegraphics[width=0.3\textwidth]{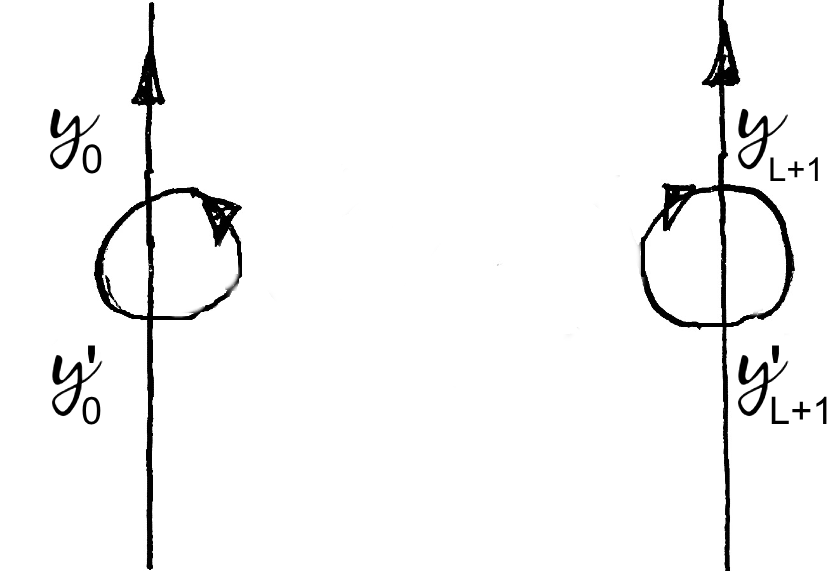}
\]
Therefore
\[
n_{\mathrm{aux}}
 \le
n-1+4=n+3,
\qquad
\ell_{\mathrm{aux}}
 \le
\ell+3.
\]
The component estimate includes at most one component created by
smoothing the seed and at most two additional machine loops. Hence
\[
B_{\mathrm{aux}}
 \le
3(n+3)+(\ell+3)-1
 =
B_{\mathrm{host}}+12.
\]

Set
\[
\widehat B:=B_{\mathrm{host}}+11.
\]
If $r>\widehat B$, then
\[
r+1>B_{\mathrm{host}}+12
       \ge B_{\mathrm{aux}}.
\]
Corollary~\ref{cor:p-degree-cap} consequently gives
\[
f_{r+1}(z_i=0)=f_{r+1}(z_j=0)=0.
\]
%
Equations \eqref{eq:smoothing-a} and \eqref{eq:smoothing-b} then imply
\[
a_r(z_i=+, z_j=-) = a_{r+1}(z_i=0,z_j=-)=0,
\]
and
\[
b_r(z_i=+, z_j=-) = -b_{r+1}(z_i=+,z_j=0)=0.
\]
Since the graph $3_1$-law makes $a_r$ constant on
$Q_Z\setminus\{\omega_Z\}$ and $b_r$ constant on
$Q_Z\setminus\{\alpha_Z\}$, it follows that
\[
f_r(Y;Z)=0
\qquad
\bigl(Z\notin\{\alpha_Z,\omega_Z\}\bigr).
\]
Thus, \textnormal{(C1d)} holds.
The effective compression overhead is therefore
\[
\kappa_{\mathrm{mach}}=11,
\]
and the corresponding threshold is
\[
\widehat B=B_{\mathrm{host}}+11.
\]

\paragraph{$\bf (G2)$ Exceptional collapse.}
For $s\in\{+,-\}$, let
\[
\Xi_s
 :=
([s,s;s,s];Z=\{s\}^{L})
\]
denote the corresponding exceptional state. Thus
\[
\Xi_+=([++;++];\alpha_Z),
\qquad
\Xi_-=([--;--];\omega_Z).
\]
Let $z_{\mathrm L}$ and $z_{\mathrm R}$ be the two distinct end
crossings of the residual twist adjacent to the left and right
clasps. Since $L\geq3$, these crossings are distinct. At the left
end, the displayed Reidemeister--III move slides
$z_{\mathrm L}$ across the seed strand, after which a
Reidemeister--I move removes it. The sign-reversed version of the same
relative-boundary move applies when $s=-$. Repeating the identical
sequence at the right end removes $z_{\mathrm R}$. Hence
\[
D(\Xi_s)
 \simeq
D\left([s,s;s,s];Z=\{s\}^{L-2}\right)
\]
relative to $\partial\Delta_s$.
\[
\includegraphics[width=0.3\textwidth]{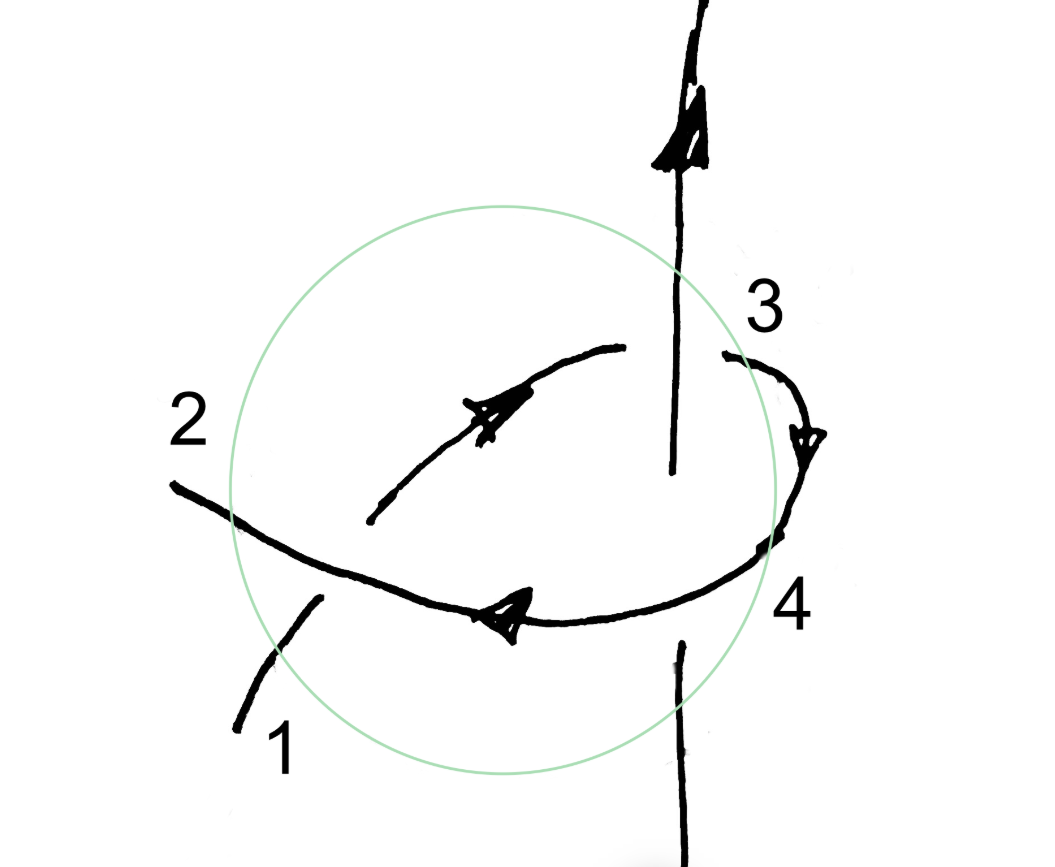}
\raisebox{1.7cm}{$\stackrel{R3}{\Longleftrightarrow}$} \quad
\raisebox{0.5cm}{
\includegraphics[width=0.23\textwidth]{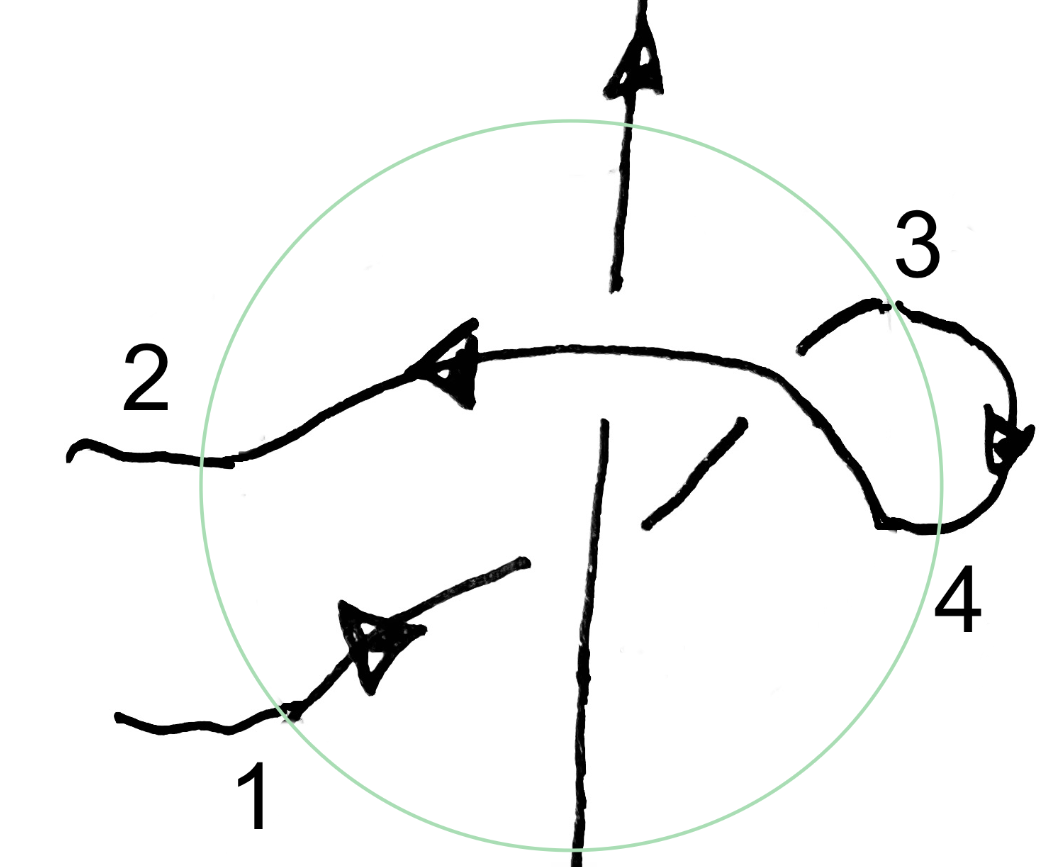}
}
\quad \raisebox{1.7cm}{$\stackrel{R1}{\Longleftrightarrow}$}
\includegraphics[width=0.3\textwidth]{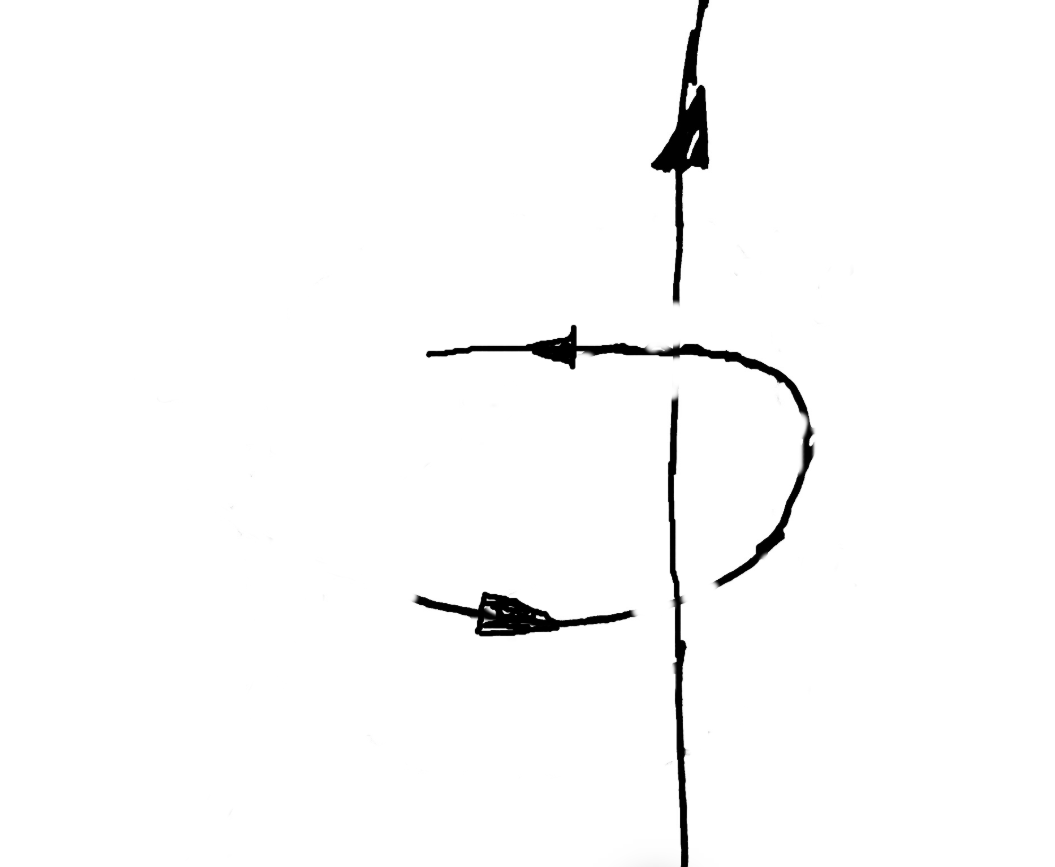}
\]
Now insert a canceling Reidemeister--II pair with signs
$(s,-s)$ inside the residual twist disk. This does not change the
isotopy class and restores a length-$L$ residual state:
\[
D\left([s,s;s,s];Z=\{s\}^{L-2}\right)
 \simeq
D\left([s,s;s,s];Z=Z_s^{\mathrm{ne}}\right),
\]
where, after reindexing the residual coordinates,
\[
Z_s^{\mathrm{ne}}
 =
(s,\ldots,s,-s)
\]
contains $L-1$ crossings of sign $s$ and one crossing of sign $-s$.
In particular,
\[
Z_s^{\mathrm{ne}}
 \notin
\{\alpha_Z,\omega_Z\}.
\]
Thus, each exceptional state is isotopic, relative to the planting
disk boundary, to a nonexceptional state:
\[
([++;++];\alpha_Z)
 \simeq
([++;++];Z_+^{\mathrm{ne}}),
\]
and
\[
([--;--];\omega_Z)
 \simeq
([--;--];Z_-^{\mathrm{ne}}).
\]
Therefore, whenever
\[
r>\widehat B,
\qquad
G_{r+1}(Q_{\mathcal M}^E),
\qquad
G_{r+2}(Q_{\mathcal M}^E),
\]
hypothesis \textnormal{(C1d)} gives
\[
f_r(E;[++;++];\alpha_Z)=0,
\qquad
f_r(E;[--;--];\omega_Z)=0.
\]
Hence, \textnormal{(C2)} holds.

An illustration of the conversion procedure for a twist with $L=3$ is provided below.
\[
\underbrace{
\raisebox{0.3cm}{
\includegraphics[width=0.40\textwidth]{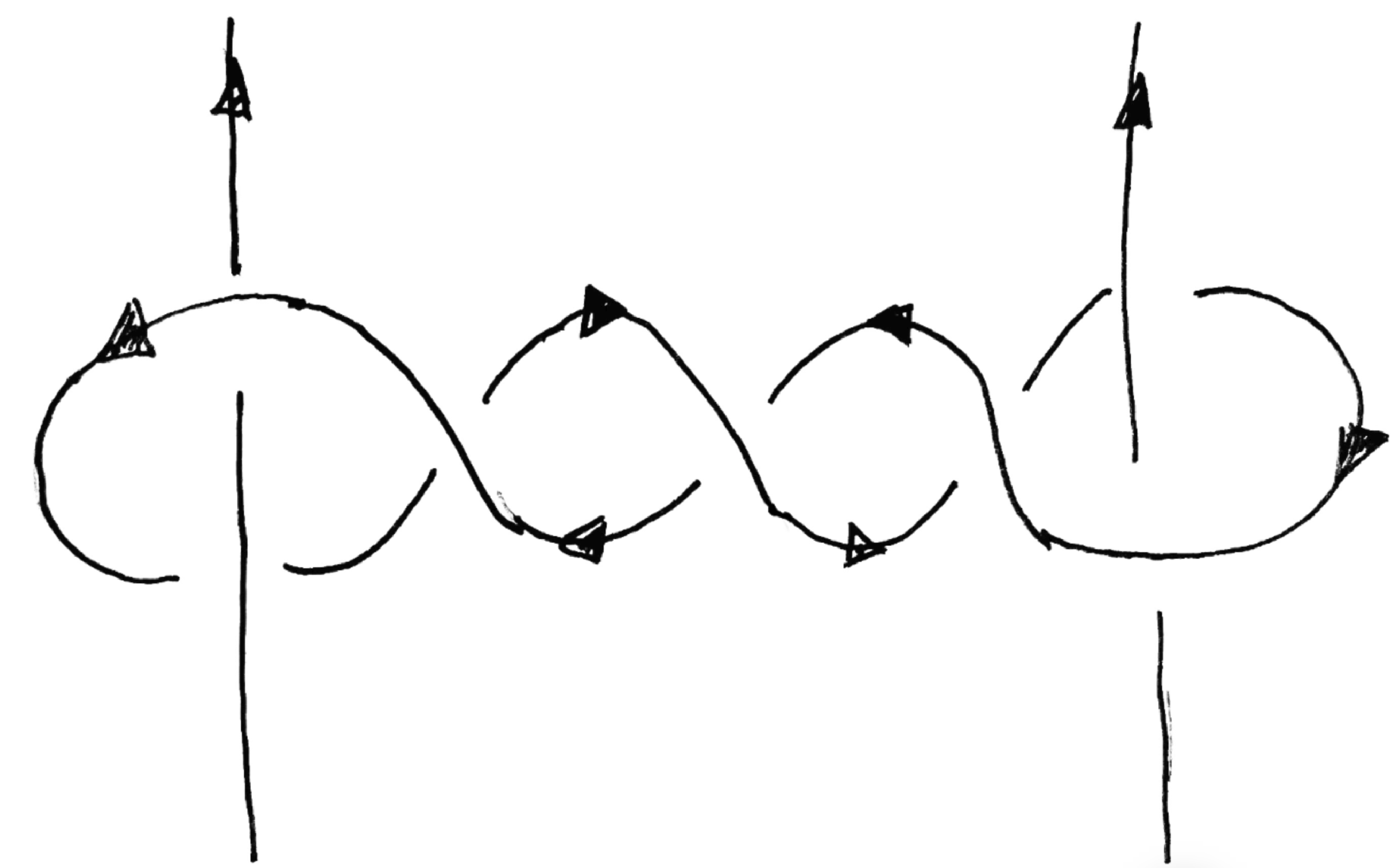}}}_{\text{\large $([s,s;s,s]; Z=(s,s,s))$}}\quad
\raisebox{2cm}{\Large $\stackrel{2\times R3}{\Longleftrightarrow}$} \quad
\includegraphics[width=0.54\textwidth]{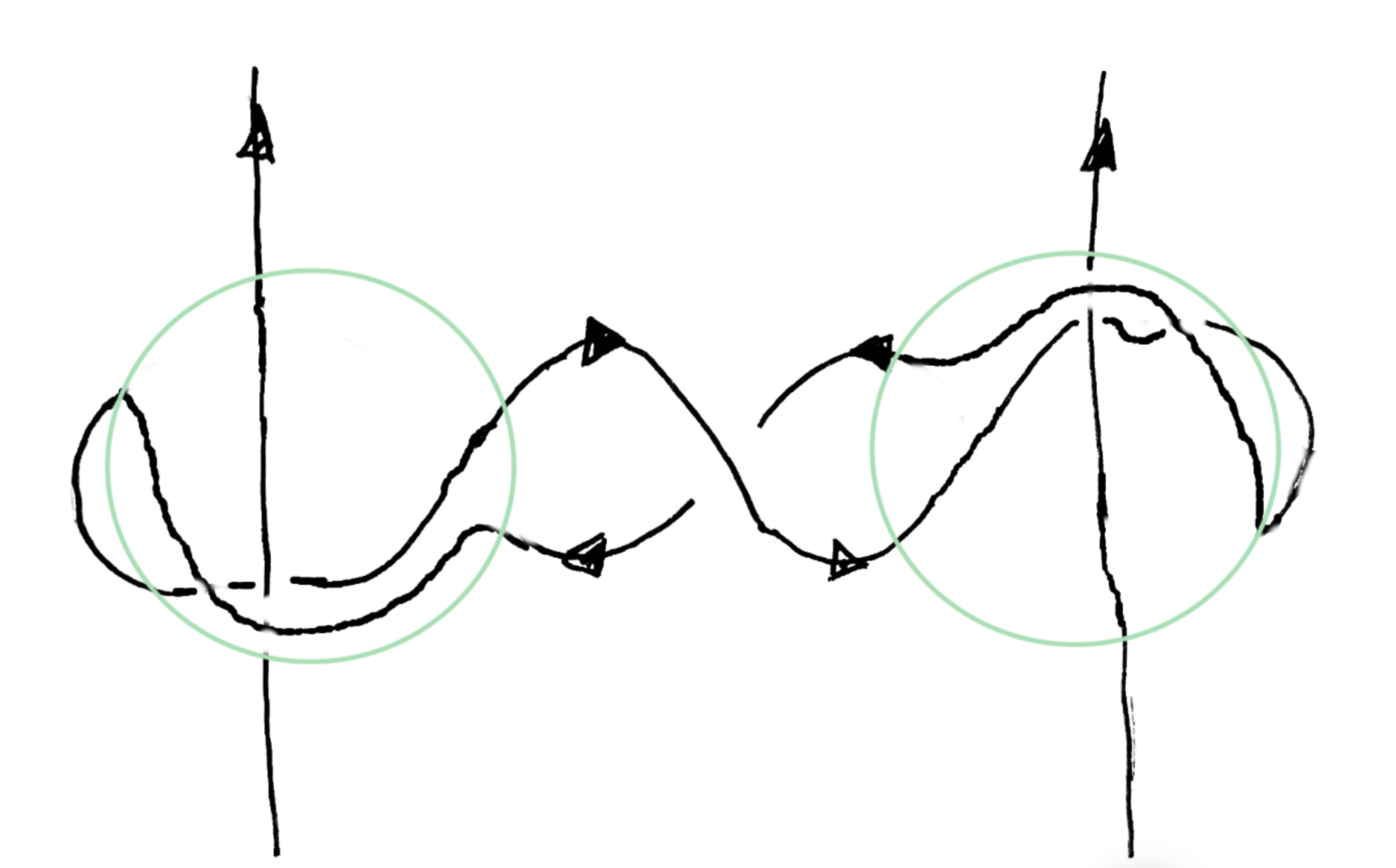}
\]
\[
\raisebox{0.3cm}{
\includegraphics[width=0.40\textwidth]{omega_3.png}}\quad
\raisebox{2cm}{\Large $\stackrel{2\times R1}{\Longleftrightarrow}$} \quad
\includegraphics[width=0.54\textwidth]{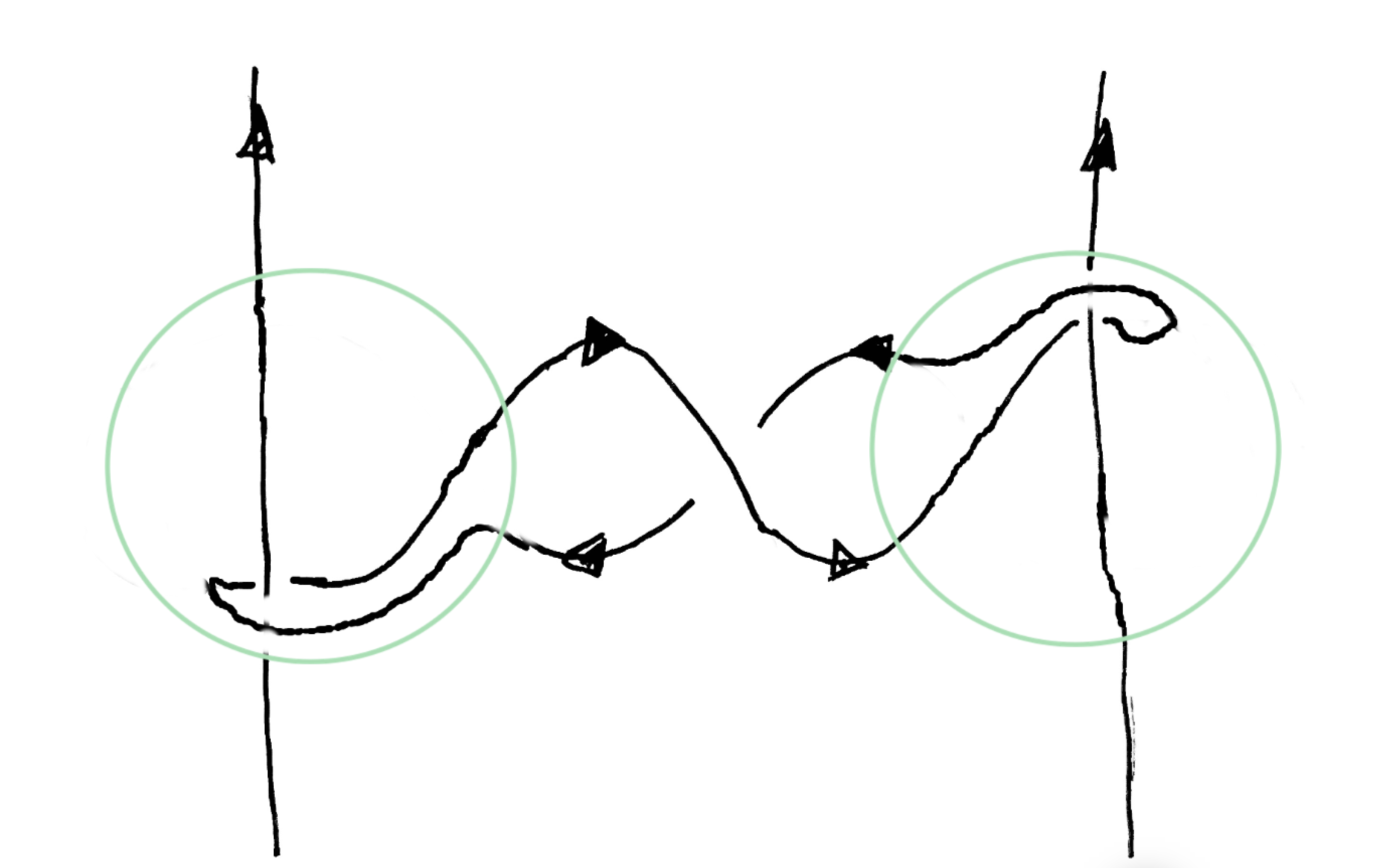}
\]
\[
\raisebox{0.3cm}{
\includegraphics[width=0.40\textwidth]{omega_4.png}}\quad
\raisebox{2cm}{\Large $\stackrel{R2}{\Longleftrightarrow}$} \quad
\underbrace{
\includegraphics[width=0.54\textwidth]{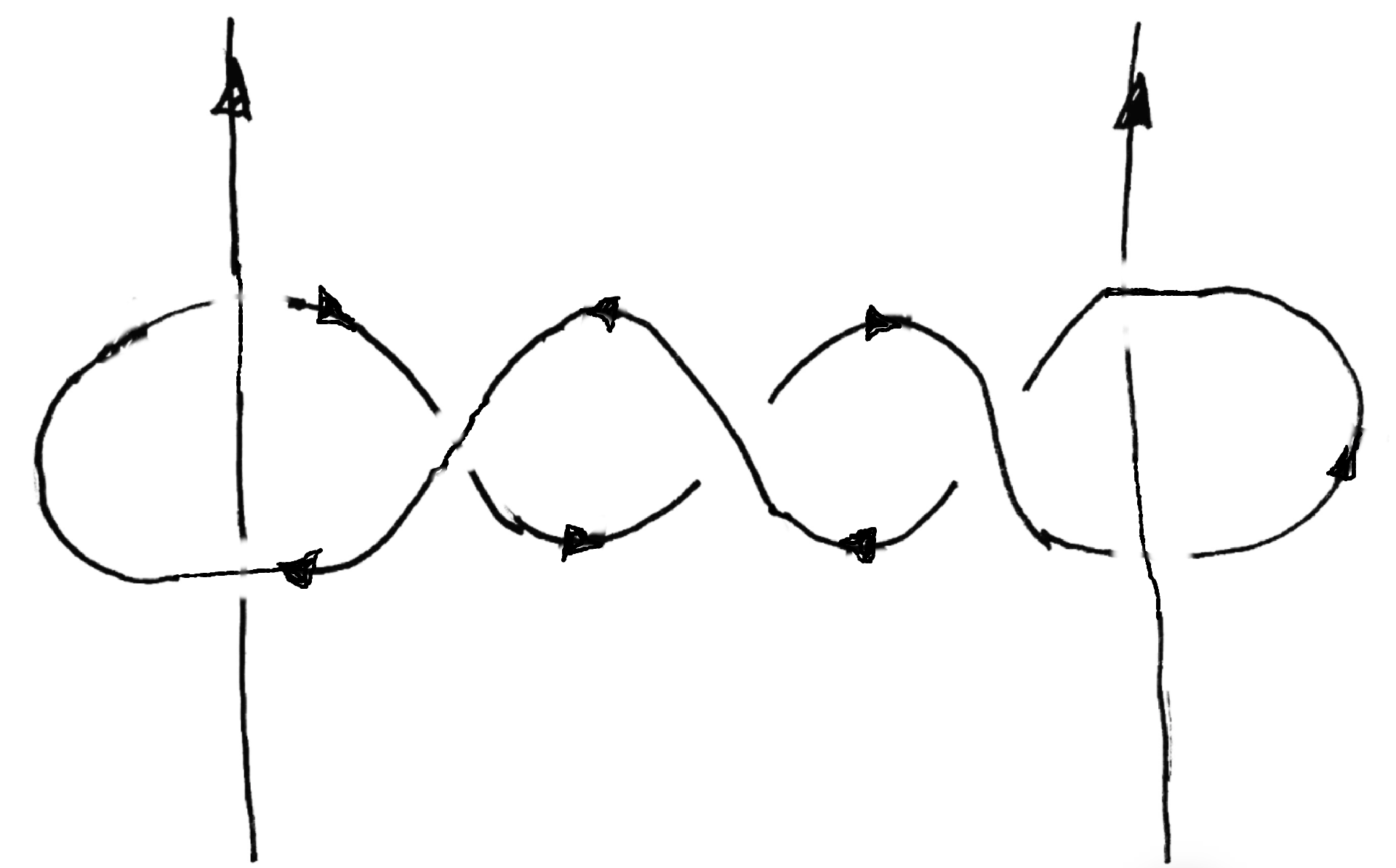}}_{\text{\large $([s,s;s,s]; Z=(-s,s,s))$}}
\]

\paragraph{$\bf (G3)$ Anti-parallel clasps.} Smoothing any crossing of a clasp pair $(y,y')$ renders the remaining crossing nugatory. This is illustrated for $y_0=0$ below.
\[
\raisebox{1.5cm}{$(y_0=0, y_0') \quad \cong$} \;
\includegraphics[width=0.2\textwidth]{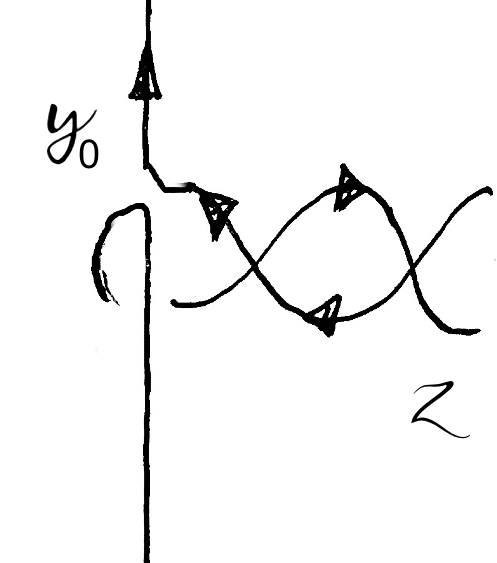}
\]
Here, $y_0'$ becomes nugatory. Hence, the anti-parallel condition,
\[
\partial_{y_0'} f_r(y_0=0)=0,
\]
which holds irrespective of crossing signs.

\paragraph{$\bf (G_4)$ Recovering the ordinary twist.} The residual diagram is obtained by smoothing one crossing per clasp, $y_0=0, y_{L+1}=0$. The planted diagram in this case reduces to
\[
\raisebox{1.5cm}{$\rho = (y_0=0, y_{L+1}=0) \quad \cong$} \;
\includegraphics[width=0.3\textwidth]{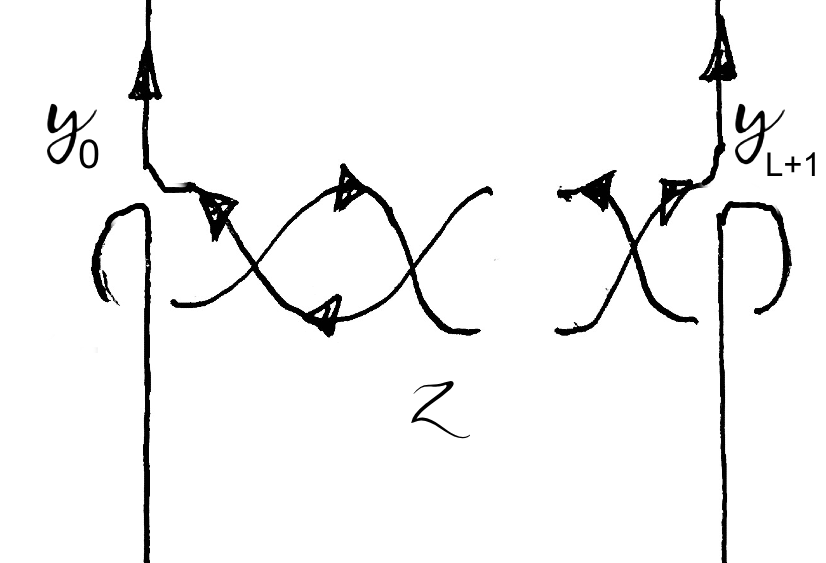}
\raisebox{1.5cm}{$\quad \cong$} \;
\includegraphics[width=0.3\textwidth]{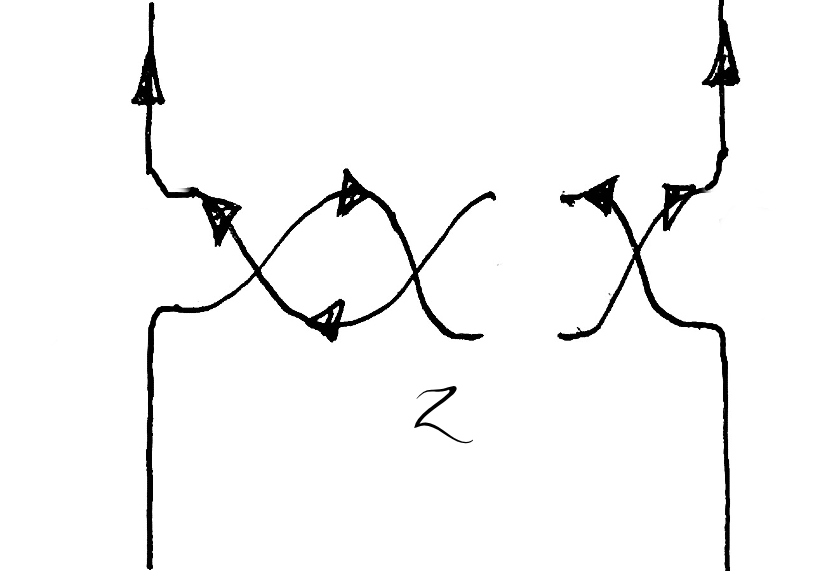}
\]
which is an ordinary anti-parallel twist planted instead of the original base crossing.

\paragraph{$\bf (G5),(G6)$ Residual sign states and faces.} 
Fix the Reidemeister-II pairing pattern shown in the diagram. The two chosen
states \(Z^+\) and \(Z^-\) are the states in which every paired crossing has
the opposite sign from its partner, while the single unpaired crossing survives
with positive, respectively negative, sign.
\[
\raisebox{1.5cm}{$(\rho,Z=Z^\pm) = \left(y_0=0, y_{L+1}=0, Z=Z^\pm\right) \quad \cong$} \;
\includegraphics[width=0.3\textwidth]{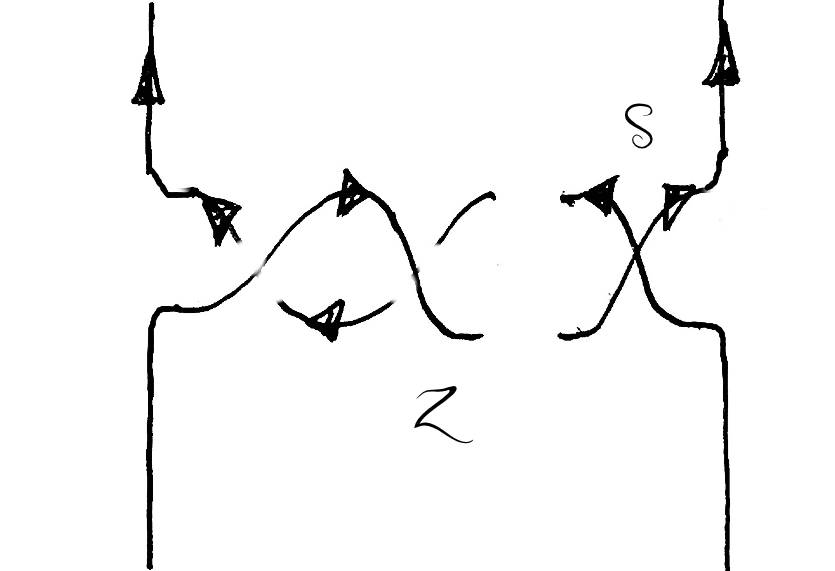}
\]
This leaves one effective crossing, thereby rendering the resulting diagram as either isotopic to the host or to a knot which is one crossing flip away from it. We let $Z^\circ$ denote the state whose effective crossing has the sign of the original seed crossing. Hence, $D(\rho,Z^\circ) \cong D$.

\paragraph{$\bf (G7)$ Smoothing proxy.} By the pairwise anti-parallel property of the twist, smoothing any crossing in $Z$ makes the remaining nugatory. The residual twist has an odd length, and so it preserves the local connectivity of the crossing disk. Therefore, smoothing any of its crossing results in a diagram that may be reduced by a sequence of $R1$ kinks to that of the host with its seed crossing smoothed.   
\[
\raisebox{1.5cm}{$(\rho,z_i=0) = (y_0=0, y_{L+1}=0, z_i=0) \quad \cong$} \;
\includegraphics[width=0.3\textwidth]{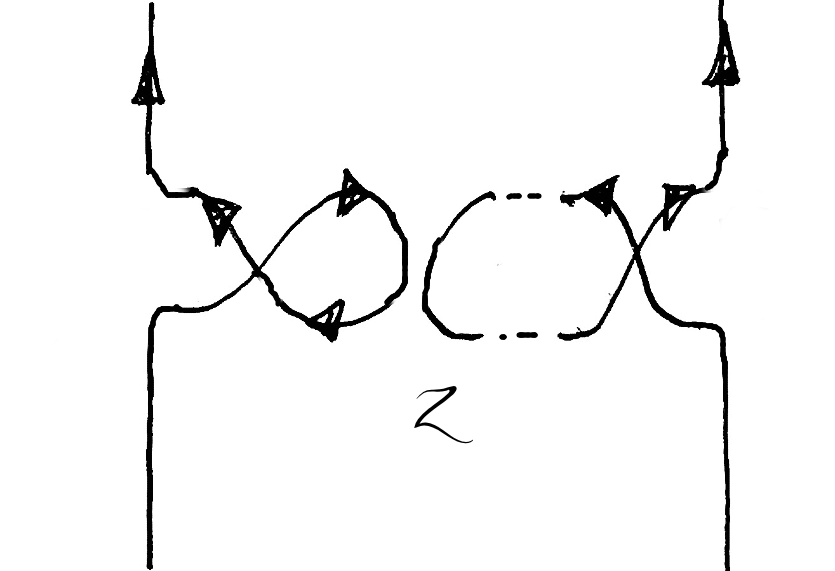}
\raisebox{1.5cm}{$\quad \cong$} \;
\includegraphics[width=0.17\textwidth]{G1_3.png}
\]
For \(Z=Z^\pm\), smoothing the effective crossing \(\tilde s\) after the
fixed Reidemeister-II cancellations is equivalent, after undoing those
cancellations, to smoothing the surviving residual crossing in the original
\(Z\)-block.

\paragraph{$\bf (G8)$ Machine locality.} All modifications remain confined to a local crossing disk. Multiple machines may be planted independently within isolated disks of the same host. Their states therefore underlie genuine crossing variation cubes. In particular, this applies to the residual sign faces $F_{Z_u}$, $F_{Z_v}$ and their product subcube $F_{Z_u} \times F_{Z_v}$.
\end{proof}

\fi

\newif\ifPairmachines
\Pairmachinesfalse

\ifPairmachines

\begin{proposition}[Single-machine residual descent over a host family]
Let \(\mathcal C\) be an admissible clasping twist machine of odd
residual length \(L\) over a host family \(\mathcal F(A)\), with base
cap \(B_{\mathcal F}\). Assume
\[
L>B_{\mathcal F}+4.
\]
Assume also that the host family vanishes through the lower range:
\[
f_q(\mathcal F(A))=0
\qquad
\text{for every }A\text{ and every }2\le q\le B_{\mathcal F}+4.
\]
Then
\[
G_q(Q_\rho(\mathcal F))
\qquad
\text{for every }q\ge2.
\]
\end{proposition}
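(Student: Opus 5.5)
The plan is to run the argument of Theorem~\ref{thm:four-clasp-descent} uniformly over the host family $\mathcal F$. The exterior state $A$ is treated as a parameter, and the three ingredients are assembled one at a time. The ingredients are residual initialization from above, graph $3_1$-propagation on the residual twist, and anchor-plus-degree elimination of the two antipodes. I read \emph{admissible} as meaning that (C1)--(C4) and (H1) hold uniformly over $\mathcal F(A)$, with $B_{\mathcal F}$ as the relevant compression threshold. That is, $B_{\mathcal F}$ plays the role of $\widehat B(\mathcal H)$ in Lemma~\ref{lem:four-clasp-residual-initialization}.

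First, Theorem~\ref{thm:signed-cube-compression} gives $G_q(Q_{\mathcal M}(\mathcal F))$ for $q>B_{\mathcal F}$. Lemma~\ref{lem:four-clasp-residual-initialization}, via the paired two-clasp skein identity and coefficient extraction, then yields
\[
G_r\bigl(Q_\rho(\mathcal F)\bigr)\qquad (r>B_{\mathcal F}+2).
\]
This supplies the two consecutive initial layers $G_{B_{\mathcal F}+4}$ and $G_{B_{\mathcal F}+3}$ on $Q_\rho(\mathcal F)$.

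Second, I descend on $q=B_{\mathcal F}+2,\ldots,2$, assuming $G_{q+2}$ and $G_{q+1}$ on $Q_\rho(\mathcal F)$.
\begin{itemize}
\item Fix $A$. By (C4) the face $\{A\}\times Q_\rho$ is an ordinary pairwise anti-parallel twist cube. Lemma~\ref{lem:31-law} and Lemma~\ref{lem:graph-connectivity} then make $a_q$ constant off $\omega$ and $b_q$ constant off $\alpha$.
\item By (H1), the state $Z^\circ(A)$ cancels by Reidemeister--II moves to the host diagram $\mathcal F(A)$, so $f_q(A;\rho;Z^\circ(A))=f_q(\mathcal F(A))=0$ by hypothesis. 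This uses $2\le q\le B_{\mathcal F}+2\le B_{\mathcal F}+4$. The sign of $Z^\circ$ may depend on $A$ through the sign of the seed crossing, but that is harmless because the argument is carried out for each $A$ separately.
\item Since $Z^\circ$ is non-extreme for $L>1$, Corollary~\ref{cor:one-output} confines $a_q$ to $\omega$ and $b_q$ to $\alpha$.
\item Lemma~\ref{lem:twist-supp} expresses these antipodal values as $L$-fold derivatives. They vanish because $f_q$ has order $\le q\le B_{\mathcal F}+2<L$.
\end{itemize}
Hence $G_q(\{A\}\times Q_\rho)$ holds for every $A$, so $G_q(Q_\rho(\mathcal F))$ holds, and the induction continues down to $q=2$.

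The main obstacle is the uniformity bookkeeping rather than the algebra. Two points need checking. The first is whether the threshold entering the initialization is really $B_{\mathcal F}$ or instead $B_{\mathcal F}+\kappa_{\mathrm{mach}}$. If $B_{\mathcal F}$ is only the pre-planting host cap, the stated margins $L>B_{\mathcal F}+4$ and vanishing through $B_{\mathcal F}+4$ would have to be enlarged by $\kappa_{\mathrm{mach}}$. I would try first to confirm that \emph{admissible} builds the overhead into $B_{\mathcal F}$; the slack of $2$ in the hypotheses suggests exactly this convention. The second point is that the anchor identification in (H1) holds relative to the planting disk, so it transports across every exterior state $A$ without changing the degree cap.
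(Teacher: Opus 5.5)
Your proposal is correct and is essentially the paper's own argument. It initializes the residual cube by admissibility, through the paired two-clasp skein. It then descends using the graph \(3_1\)-law, the anchor \(Z^\circ\) reproducing \(\mathcal F(A)\), and the order bound \(q<L\) to kill the two antipodal point supports.

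The only differences are bookkeeping. The paper takes admissibility to give \(G_r(Q_\rho(\mathcal F))\) for \(r>B_{\mathcal F}+3\) and descends from \(q=B_{\mathcal F}+4\), which is exactly what the margins \(L>B_{\mathcal F}+4\) and vanishing through \(B_{\mathcal F}+4\) are sized for; under that reading, just start your descent one level higher. The paper also kills the point supports with the degree--support uncertainty lemma rather than your equivalent explicit \(L\)-fold derivative.
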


\begin{proof}
By admissibility of the clasping machine, the Section 4 initialization
gives
\[
G_r(Q_\rho(\mathcal F))
\qquad
(r>B_{\mathcal F}+3).
\]
In particular,
\[
G_{B_{\mathcal F}+6}(Q_\rho(\mathcal F)),
\qquad
G_{B_{\mathcal F}+5}(Q_\rho(\mathcal F))
\]
are available.

We descend on
\[
q=B_{\mathcal F}+4,\ B_{\mathcal F}+3,\ldots,2.
\]
Assume inductively that
\[
G_{q+2}(Q_\rho(\mathcal F)),
\qquad
G_{q+1}(Q_\rho(\mathcal F))
\]
hold.

By the \(31\)-law in the residual anti-parallel twist cube,
\[
a_q
\]
is constant on the branch
\[
Z\neq\omega,
\]
and
\[
b_q
\]
is constant on the branch
\[
Z\neq\alpha.
\]

Let \(Z^\circ\) be the canonical odd residual anchor. Since
\[
Z^\circ\neq\omega,\qquad Z^\circ\neq\alpha,
\]
it lies in both non-exceptional branches. By construction,
\[
\mathcal C(A;Z^\circ,\rho)\cong \mathcal F(A).
\]
Hence, by the host-family vanishing hypothesis,
\[
f_q(\mathcal C(A;Z^\circ,\rho))
=
f_q(\mathcal F(A))
=
0.
\]
Therefore both non-exceptional branch constants vanish:
\[
a_q(A,Z,\rho)=0
\qquad
(Z\neq\omega),
\]
and
\[
b_q(A,Z,\rho)=0
\qquad
(Z\neq\alpha).
\]

Thus \(a_q\), as a function of the \(L\) residual twist coordinates,
may only be supported at \(Z=\omega\), and \(b_q\) may only be
supported at \(Z=\alpha\). Since
\[
q\le B_{\mathcal F}+4<L,
\]
degree-support uncertainty kills both possible point supports. Hence
\[
a_q\equiv0,\qquad b_q\equiv0
\]
on \(Q_\rho(\mathcal F)\).

This proves \(G_q(Q_\rho(\mathcal F))\). Descending to \(q=2\)
proves the proposition.
\end{proof}

\begin{theorem}[Asymmetric two-clasping residual descent]
Let \(K\) be a knot diagram and set
\[
B:=3c(K).
\]
Let \((u,v)\) be an ordered pair of crossings at which clasping twist
machines can be planted. Choose odd integers \(L_1,L_2\) satisfying
\[
L_1>B+4,
\]
and
\[
L_2>B+3L_1+4.
\]

Let \(\mathcal C_1\) be a clasping twist machine of residual length
\(L_1\) planted at \(u\). Denote its residual cube by
\[
Q_{\rho_1}^{(1)}
=
\{Z_1\text{ free},\ \rho_1\text{ fixed}\}.
\]
This is a family of diagrams which we write as
\[
D^{(1)}(Z_1,\rho_1).
\]

Over this singly planted residual family, plant a second clasping
twist machine \(\mathcal C_2\) of residual length \(L_2\) at \(v\).
Let
\[
Q_{\rho_1,\rho_2}^{(12)}
=
\{Z_1,Z_2\text{ free},\ \rho_1,\rho_2\text{ fixed}\}
\]
be the double residual cube, written as
\[
D^{(12)}(Z_1,Z_2,\rho_1,\rho_2).
\]

Assume:
\begin{enumerate}
    \item[(i)] The first machine \(\mathcal C_1\) is admissible over
    \(K\) with base cap
    \[
    B_1=B.
    \]

    \item[(ii)] The second machine \(\mathcal C_2\) is admissible over
    the residual family \(Q_{\rho_1}^{(1)}\) with safe base cap
    \[
    B_2:=B+3L_1.
    \]

    \item[(iii)] The base knot satisfies
    \[
    f_q(K)=0
    \qquad
    2\le q\le B+4.
    \]
\end{enumerate}

Then
\[
G_q(Q_{\rho_1,\rho_2}^{(12)})
\qquad
\text{for every }q\ge2.
\]
In particular,
\[
G_2(Q_{\rho_1,\rho_2}^{(12)}).
\]
\end{theorem}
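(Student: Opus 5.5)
The plan is a two-stage nesting: apply the single-machine residual descent proposition twice, using the conclusion of the first application as the host-family vanishing hypothesis for the second. The asymmetry \(L_2>B+3L_1+4\) is there so that the second machine can treat the whole first residual cube, with its \(L_1\) twist crossings, as an ordinary exterior host family.

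\emph{Stage 1 (the \(u\)-machine over \(K\)).} Take the host family \(\mathcal F=\{K\}\), with base cap \(B_{\mathcal F}=B=3c(K)\).
\begin{itemize}
\item Hypothesis (i) gives admissibility of \(\mathcal C_1\).
\item The inequality \(L_1>B+4\) is the required length condition.
\item Hypothesis (iii) gives \(f_q(K)=0\) for \(2\le q\le B+4\), which is the required host-family vanishing.
\end{itemize}
The single-machine proposition then yields
\[
G_q\bigl(Q^{(1)}_{\rho_1}\bigr)\qquad(q\ge2).
\]
That is, \(f_q(D^{(1)}(Z_1,\rho_1))=0\) for every \(Z_1\) and every \(q\ge2\).

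\emph{Stage 2 (the \(v\)-machine over the first residual cube).} Take the host family \(\mathcal F(A)\) with \(A=Z_1\in Q_{Z_1}\), so that \(\mathcal F(Z_1)=D^{(1)}(Z_1,\rho_1)\). By (ii), \(\mathcal C_2\) is admissible over this family with base cap \(B_2=B+3L_1\).

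As a sanity check on this cap: the residual shadow has \(c(K)-1+L_1\) crossings and, by the component estimates of the remark on uniform ambient caps, at most one extra component. Corollary~\ref{cor:p-degree-cap} then bounds the cap by \(B+3L_1\) up to the bounded additive term already absorbed in (ii).

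The length condition \(L_2>B_2+4\) is exactly the hypothesis \(L_2>B+3L_1+4\). The lower-range vanishing \(f_q(\mathcal F(Z_1))=0\) for \(2\le q\le B_2+4\) is supplied by Stage 1, which gives vanishing for all \(q\ge2\), not only up to \(B+4\). Applying the proposition a second time gives
\[
G_q\bigl(Q^{(12)}_{\rho_1,\rho_2}\bigr)\qquad(q\ge2),
\]
and in particular \(G_2\).

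\emph{Main obstacle.} The delicate step is verifying that Stage 2 really fits the single-machine framework uniformly over the \(2^{L_1}\)-element host family. Two things must be checked.
\begin{itemize}
\item \textbf{Anchor.} For each \(Z_1\), the canonical residual anchor \(Z_2^\circ\) must reproduce \(D^{(1)}(Z_1,\rho_1)\) itself. This should follow from (G5) applied in the \(v\)-disk together with locality (G8): the Reidemeister-II cancellations in the \(v\)-disk do not touch the \(u\)-disk, so \(D^{(12)}(Z_1,Z_2^\circ,\rho_1,\rho_2)\cong D^{(1)}(Z_1,\rho_1)\) relative to the \(u\)-disk.
\item \textbf{Uniformity of the reductions.} All machine reductions (G0)--(G4) for \(\mathcal C_2\) must hold uniformly in the exterior state \(Z_1\). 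This is guaranteed because every move is supported in the \(v\)-disk and taken relative to its boundary, as stated at the start of the proof of Proposition~\ref{prop:clasping-twist-machines}.
\end{itemize}
With these checks in place, the descent inside the proposition, using the \(3_1\)-law, connected support, the anchor, and the degree bound \(q<L_2\), runs verbatim for each fixed \(Z_1\), and this completes the argument.
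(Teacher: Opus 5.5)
Your proposal is correct and follows essentially the same route as the paper. Stage 1 applies the single-machine residual descent proposition to $\mathcal C_1$ over $K$. The paper's remaining stages rerun that same single-machine argument explicitly for $\mathcal C_2$ over the host family $D^{(1)}(Z_1,\rho_1)$ with cap $B_2=B+3L_1$, using exactly your anchor identification $D^{(12)}(Z_1,Z_2^\circ,\rho_1,\rho_2)\cong D^{(1)}(Z_1,\rho_1)$ together with the Stage 1 vanishing, so citing the proposition a second time as a black box is equivalent.
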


\begin{proof}
The proof is deliberately asymmetric. We do not apply the one-machine
theorem sequentially inside the already double-planted diagram.
Instead, we first solve an auxiliary singly planted family and then use
that solved family as the anchor face for the second, longer machine.

\medskip

\noindent
\textbf{Stage 1: solve the auxiliary singly planted family.}

Apply the single-machine residual descent proposition to the first
clasping twist machine \(\mathcal C_1\).

The relevant base cap is
\[
B_1=B.
\]
The length hypothesis
\[
L_1>B+4
\]
holds by assumption. Since \(L_1\) is odd, the first machine has a
canonical residual anchor
\[
Z_1^\circ\neq\omega_1,\alpha_1
\]
such that
\[
D^{(1)}(Z_1^\circ,\rho_1)\cong K.
\]
By hypothesis,
\[
f_q(K)=0
\qquad
2\le q\le B+4.
\]
Therefore the proposition gives
\[
\boxed{
G_q(Q_{\rho_1}^{(1)})
\qquad
(q\ge2).
}
\tag{1}
\]

\medskip

\noindent
\textbf{Stage 2: identify the anchor face for the second machine.}

Inside the double residual family, let \(Z_2^\circ\) be the canonical
odd residual anchor of the second clasping twist machine. By the
construction of an odd-length clasping twist machine,
\[
Z_2^\circ\neq\omega_2,\alpha_2,
\]
and for every residual state \(Z_1\) of the first machine we have an
isotopy
\[
D^{(12)}(Z_1,Z_2^\circ,\rho_1,\rho_2)
\cong
D^{(1)}(Z_1,\rho_1).
\tag{2}
\]
Combining (1) and (2), we obtain
\[
\boxed{
f_q(D^{(12)}(Z_1,Z_2^\circ,\rho_1,\rho_2))=0
\qquad
\text{for every }Z_1\text{ and every }q\ge2.
}
\tag{3}
\]

This is the anchor family used for the descent of the second machine.

\medskip

\noindent
\textbf{Stage 3: initialize the second machine.}

Now work in the double residual cube
\[
Q_{\rho_1,\rho_2}^{(12)}.
\]
The first residual twist has length \(L_1\). Hence, uniformly over
\(Z_1\), the second machine sees a safe base cap
\[
B_2:=B+3L_1.
\]
Since
\[
L_2>B+3L_1+4=B_2+4,
\]
and since the second machine is admissible over the first residual
family with base cap \(B_2\), the Section 4 initialization gives
\[
\boxed{
G_r(Q_{\rho_1,\rho_2}^{(12)})
\qquad
(r>B_2+3).
}
\tag{4}
\]
In particular,
\[
G_{B_2+6}(Q_{\rho_1,\rho_2}^{(12)}),
\qquad
G_{B_2+5}(Q_{\rho_1,\rho_2}^{(12)})
\]
are available.

\medskip

\noindent
\textbf{Stage 4: descend the second machine.}

We descend on
\[
q=B_2+4,\ B_2+3,\ldots,2.
\]
Assume inductively that
\[
G_{q+2}(Q_{\rho_1,\rho_2}^{(12)}),
\qquad
G_{q+1}(Q_{\rho_1,\rho_2}^{(12)})
\]
hold.

Fix an arbitrary state \(Z_1\). Consider the residual \(Z_2\)-cube.
By the \(31\)-law for the second residual anti-parallel twist,
\[
a_q(Z_1,Z_2,\rho_1,\rho_2)
\]
is constant on
\[
Z_2\neq\omega_2,
\]
and
\[
b_q(Z_1,Z_2,\rho_1,\rho_2)
\]
is constant on
\[
Z_2\neq\alpha_2.
\]

The canonical anchor \(Z_2^\circ\) lies in both non-exceptional
branches. By (3),
\[
f_q(Z_1,Z_2^\circ,\rho_1,\rho_2)=0.
\]
Therefore both non-exceptional branch constants vanish:
\[
a_q(Z_1,Z_2,\rho_1,\rho_2)=0
\qquad
(Z_2\neq\omega_2),
\]
and
\[
b_q(Z_1,Z_2,\rho_1,\rho_2)=0
\qquad
(Z_2\neq\alpha_2).
\]

Thus, as a function of the \(L_2\) variables \(Z_2\), the coefficient
\(a_q\) may only be supported at \(Z_2=\omega_2\), and \(b_q\) may
only be supported at \(Z_2=\alpha_2\).

Since
\[
q\le B_2+4<L_2,
\]
degree-support uncertainty kills both possible point supports. Hence
\[
f_q(Z_1,Z_2,\rho_1,\rho_2)=0
\]
for every \(Z_2\). Since \(Z_1\) was arbitrary, this proves
\[
G_q(Q_{\rho_1,\rho_2}^{(12)}).
\]

Descending to \(q=2\) gives
\[
G_q(Q_{\rho_1,\rho_2}^{(12)})
\qquad
(q\ge2),
\]
and in particular
\[
G_2(Q_{\rho_1,\rho_2}^{(12)}).
\]
\end{proof}

\begin{remark}[Why the theorem is asymmetric]
The proof does not claim that the first machine can be descended with
base cap \(B\) inside the already double-planted diagram. If both
machines are present, the first machine would see the second machine as
part of the ambient diagram, and the cap \(B\) would no longer be
available.

Instead, the first machine is solved in the auxiliary singly planted
family \(D^{(1)}\). The canonical anchor of the second machine then
identifies a face of the double residual family with this singly
planted family:
\[
D^{(12)}(Z_1,Z_2^\circ,\rho_1,\rho_2)
\cong
D^{(1)}(Z_1,\rho_1).
\]
This solved face is the anchor used to descend the second, longer
machine. The enlarged cap
\[
B_2=B+3L_1
\]
is used only for the second machine.
\end{remark}

\begin{remark}[Use for an admissible pair]
In the application to an admissible ordered pair \((u,v)\), the first
machine is planted at \(u\), and the second is planted at the witness
crossing \(v\). The admissibility condition is not needed for the
formal descent itself; it is used later to identify the two-crossing
face in the double residual cube with the original pair \((u,v)\) and
to obtain the component-count contradiction.
\end{remark}

\begin{theorem}[Asymmetric descent with pair of machines]
Let $K$ be a knot diagram and set
\[
  B := 3\,c(K).
\]
Let $(u, v)$ be an ordered pair of crossings.
Choose odd integers
\[
  L_1 > B + 4,
\]
and
\[
  L_2 > B + 3L_1 + 4.
\]
Plant a clasping twist machine of residual length~$L_1$ at~$u$, and let
\[
  Q_{\rho_1} = \{Z_1 \text{ free};\; \rho_1 \text{ fixed}\}
\]
be its residual cube.
Then, over the residual family $Q_{\rho_1}$, plant a second clasping twist machine of residual length~$L_2$ at the admissible witness crossing~$v$, and let
\[
  Q_{\rho_1, \rho_2} = \{Z_1, Z_2 \text{ free};\; \rho_1, \rho_2 \text{ fixed}\}
\]
be the double residual cube.
Assume both planted machines satisfy the clasping hypotheses, with base caps:
\[
  B_1 := B
\]
for the first machine, and
\[
  B_2 := B + 3L_1
\]
for the second machine uniformly over the first residual cube.
Assume
\[
  f_r(K) = 0 \qquad 1 \le r \le B + 4.
\]
Then
\[
  G_2(Q_{\rho_1, \rho_2}).
\]
In fact,
\[
  G_q(Q_{\rho_1, \rho_2}) \qquad (q \ge 2).
\]
\end{theorem}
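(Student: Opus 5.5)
The plan is to read the statement as a repackaging of the asymmetric two-clasping residual descent theorem stated just before it, and to prove it by checking that theorem's three hypotheses and then running its four stages with exactly the constants given. I interpret ``both planted machines satisfy the clasping hypotheses, with base caps $B_1=B$ and $B_2=B+3L_1$'' as admissibility of each machine over its host family with that base cap. By admissibility I mean that the residual initialization of Lemma~\ref{lem:four-clasp-residual-initialization}, in the admissible form used in the single-machine residual descent proposition, yields $G_r$ on the residual cube for all $r>B_{\mathcal F}+3$. With that reading, hypotheses (i) and (ii) of the two-clasping theorem are literally the assumptions here. Hypothesis (iii) follows from $f_r(K)=0$ for $1\le r\le B+4$ by discarding $r=1$; in any case $f_1(K)=0$ automatically by Lemma~\ref{lem:f1-vanishing}.

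\textbf{Stage 1.} I apply the single-machine residual descent proposition to the $u$-machine over the one-point host family $\{K\}$, with $B_{\mathcal F}=B$. The length condition $L_1>B+4$ is given. The odd residual anchor $Z_1^\circ$ satisfies $D^{(1)}(Z_1^\circ,\rho_1)\cong K$ by (G5), so the lower-range vanishing supplies the anchor. The proposition then gives $G_q(Q_{\rho_1})$ for all $q\ge2$.

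\textbf{Stage 2.} This is the identification step. I need that setting the $v$-machine to its canonical anchor $Z_2^\circ$ reproduces the singly planted family, uniformly in $Z_1$:
\[
D^{(12)}(Z_1,Z_2^\circ,\rho_1,\rho_2)\cong D^{(1)}(Z_1,\rho_1).
\]
This follows from (G5) applied in the $v$-disk, with the exterior now being the $u$-residual family, together with the locality statement (G8). The key point is that the Reidemeister-II cancellations of $Z_2^\circ$ are supported in the $v$-disk, which is disjoint from the $u$-disk. Combining this with Stage 1 gives the anchor family
\[
f_q(Z_1,Z_2^\circ)=0 \qquad (\text{all } Z_1,\ q\ge2).
\]

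\textbf{Stage 3.} To initialize the second machine, note that the host of the $v$-machine is the $u$-residual family. That family is carried by a fixed shadow with at most $c(K)-1+L_1$ crossings, so its cap is at most $B+3L_1=B_2$ uniformly in $Z_1$. Admissibility with cap $B_2$ then gives $G_r(Q_{\rho_1,\rho_2})$ for $r>B_2+3$, and in particular the two consecutive levels $B_2+5$ and $B_2+4$.

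\textbf{Stage 4.} I descend for $q=B_2+4,\dots,2$. Fix $Z_1$. Assuming $G_{q+2}$ and $G_{q+1}$ on the double residual cube, Lemma~\ref{lem:31-law} and Lemma~\ref{lem:graph-connectivity} on the $Z_2$-twist (anti-parallel by (G0)) make $a_q$ constant off $\omega_2$ and $b_q$ constant off $\alpha_2$. The anchor from Stage 2 lies in both non-exceptional branches, so both constants are $0$. Lemma~\ref{lem:twist-supp} then expresses the remaining values as $L_2$-fold derivatives. These vanish by the finite-type degree bound, since $q\le B_2+4<L_2$, and that inequality is exactly the given condition $L_2>B+3L_1+4$. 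This yields $G_q$ for all $q\ge2$, and in particular $G_2$.

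The main obstacle I expect is bookkeeping rather than algebra. Stage 3 needs the initialization offset $+3$ claimed by the admissibility hypothesis, rather than the $\widehat B+2=B_{\mathcal F}+13$ that Lemma~\ref{lem:four-clasp-residual-initialization} would give with $\kappa_{\mathrm{mach}}=11$. I will therefore not rederive the offset from (C1)--(C4); I will take it as the content of the assumed ``clasping hypotheses with base caps $B_1,B_2$'', exactly as the statement phrases it. The second point of care is the uniformity in $Z_1$ of both the cap $B_2$ and the anchor isotopy, which I handle via (G8).
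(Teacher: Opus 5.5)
Your proposal is correct relative to the paper and follows its proof essentially step for step. The steps are the single-machine residual descent for the $u$-machine over $K$, the anchor identification $D(Z_1,Z_2^\circ,\rho_1,\rho_2)\cong D(Z_1,\rho_1)$ from the Reidemeister--II cancellations in the $v$-disk, initialization of the $v$-machine with the conservative cap $B_2=B+3L_1$ at offset $+3$, and the $Z_2$-descent in which the one-point supports vanish because $q\le B_2+4<L_2$; the paper only merges your anchor and descent stages into one. As you note, the $+3$ offset is not what Lemma~\ref{lem:four-clasp-residual-initialization} gives with $\kappa_{\mathrm{mach}}=11$: that lemma's threshold $B_{\mathrm{host}}+13$ would force $L_1>B+13$ and $L_2>B_2+13$. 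The paper's own proof likewise just cites an initialization lemma yielding $r>B_2+3$, so building that offset into the hypotheses is exactly the paper's move.
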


 
\begin{proof}
First plant the $u$-machine and pass to its residual cube. Over this residual family, plant the $v$-machine.
\paragraph{Stage 1: solve the first, shorter machine.}
Apply Theorem~\ref{thm:finite-type-descent} to the first machine. The base cap is
\[
  B = 3\,c(K).
\]
Since
\[
  L_1 > B + 4
\]
and $L_1$ is odd, the first residual cube has a canonical base-knot anchor $Z_1^\circ$ with
\[
  D(Z_1^\circ, \rho_1) \cong K.
\]
Because
\[
  f_r(K) = 0 \qquad 1 \le r \le B + 4,
\]
Theorem~\ref{thm:finite-type-descent} gives
\[
  G_q(Q_{\rho_1}) \qquad (q \ge 2). \tag{1}
\]
That is, after smoothing the first clasping pair, the entire residual $Z_1$-cube is already killed at every level needed. This is the prepared anchor for the second machine.
 
\paragraph{Stage 2: initialize the second machine over the first residual cube.}
Now treat the second machine while $Z_1$ is arbitrary in the already solved residual cube.
Because the first residual twist has length $L_1$, the second machine sees a base family with cap
\[
  B_2 := B + 3L_1.
\]
This is the conservative cap: the arbitrary $Z_1$-state may contribute up to $L_1$ extra crossings, hence at most $3L_1$ extra $p$-degree by Corollary~\ref{cor:p-degree-cap}. By Lemma~\ref{lem:vanish-B2} the second machine with base cap $B_2$, we obtain
\[
  G_r(Q_{\rho_1, \rho_2}) \qquad (r > B_2 + 3).
\]
In particular, since
\[
  B_2 + 5 > B_2 + 3, \qquad B_2 + 6 > B_2 + 3,
\]
we have the two layers needed to start the lower descent:
\[
  G_{B_2+6}(Q_{\rho_1, \rho_2}), \qquad G_{B_2+5}(Q_{\rho_1, \rho_2}).
\]
 
\paragraph{Stage 3: descend the second machine using the first machine as anchor.}
We now descend on
\[
  q = B_2 + 4,\; B_2 + 3,\; \ldots,\; 2.
\]
Assume inductively that
\[
  G_{q+2}(Q_{\rho_1, \rho_2}), \qquad G_{q+1}(Q_{\rho_1, \rho_2})
\]
hold. Fix an arbitrary state $Z_1$. Consider the $Z_2$-cube.
Since $Q_{\rho_2}$ is an ordinary anti-parallel residual twist cube, Lemma~\ref{lem:31-law} gives:
\[
  a_q(Z_1, Z_2) \text{ is constant on } Z_2 \neq \omega_2,
\]
and
\[
  b_q(Z_1, Z_2) \text{ is constant on } Z_2 \neq \alpha_2.
\]
The canonical anchor $Z_2^\circ$ of the second residual twist satisfies, by construction,
\[
  D(Z_1, Z_2^\circ, \rho_1, \rho_2) \cong D(Z_1, \rho_1).
\]
By~(1),
\[
  f_q(D(Z_1, \rho_1)) = 0.
\]
Therefore
\[
  f_q(Z_1, Z_2^\circ, \rho_1, \rho_2) = 0.
\]
Since $Z_2^\circ$ is non-extreme, this kills both non-exceptional constants in the $Z_2$-cube
\[
  a_q(Z_1, Z_2) = 0 \qquad (Z_2 \neq \omega_2),
\]
and
\[
  b_q(Z_1, Z_2) = 0 \qquad (Z_2 \neq \alpha_2).
\]
Thus $a_q$ may only be supported at
\[
  Z_2 = \omega_2,
\]
and $b_q$ may only be supported at
\[
  Z_2 = \alpha_2.
\]
As these are one-point supports we have
\[
a_q(Z_1,\omega_2) = (-1)^L \partial_{Z_2} a_q(Z_1,Z_2), \qquad b_q(Z_1,\alpha_2) = \partial_{Z_2} b_q(Z_1,Z_2).
\]
Since
\[
  q \le B_2 + 4 < L_2 = |Z_2|,
\]
both point supports vanish by degree/type considerations. Hence
\[
  f_q(Z_1, Z_2, \rho_1,\rho_2) = 0
\]
for all $Z_2$. Since $Z_1$ was arbitrary,
\[
  G_q(Q_{\rho_1, \rho_2})
\]
holds. Descending to $q = 2$, we get
\[
  G_2(Q_{\rho_1, \rho_2}).
\]
\end{proof}

\fi

\section{Triviality barriers}
\label{sec:triviality-barriers}

\subsection{Non-nugatoriness and admissibility}

\begin{define}[Nugatory crossing~\cite{tait1898}]
\label{def:nugatory-cut-vertex}
Let $D$ be a knot diagram with shadow $S$, and let $u$ be a crossing of $D$. We call $u$ \emph{nugatory} if there exists a Jordan curve in the projection plane meeting $D$ only at~$u$. Otherwise $u$ is \emph{non-nugatory}.
\end{define}

\begin{lemma}[Shadow connectivity criterion]
\label{lem:shadow-connectivity}
Let $D$ be a knot diagram with shadow $S$, and let $|S|$ denote the
underlying embedded topological graph. A crossing $u$ is nugatory if
and only if it is a topological cut point of $|S|$. Equivalently,
\[
u\text{ is non-nugatory}
\quad\Longleftrightarrow\quad
|S|\setminus\{u\}\text{ is connected}.
\]
\end{lemma}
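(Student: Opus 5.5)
We prove the two directions separately, working entirely in the plane and using only the local structure of \(|S|\) near the vertex \(u\). Since \(D\) is a knot diagram, \(|S|\) is the image of a single closed curve, hence connected. Near \(u\) it looks like a cross: four half-edges meeting at \(u\). Removing \(u\) leaves at most two components, because the curve minus one double point splits into exactly two arcs (the two "loops" from \(u\) back to \(u\)). Call these \(\gamma_1,\gamma_2\). Each is connected, so \(|S|\setminus\{u\}\) is connected if and only if \(\gamma_1\) and \(\gamma_2\) meet, i.e.\ share a crossing of \(S\).

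\emph{Nugatory \(\Rightarrow\) cut point.} Let \(C\) be a Jordan curve meeting \(D\) only at \(u\). Then \(|S|\setminus\{u\}\) lies in \(\mathbb R^2\setminus C\), which has two components: the interior and the exterior of \(C\). Near \(u\), the curve \(C\) passes through the crossing. It cannot be tangent to a strand in a way that keeps all four half-edges on one side, or it could be pushed off \(u\) entirely. So I would argue that \(C\) separates the four half-edges into two pairs, two inside and two outside. This is the small local step where one perturbs \(C\) near \(u\) to a standard position. Since points of \(|S|\setminus\{u\}\) exist on both sides, \(|S|\setminus\{u\}\) is disconnected.

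\emph{Cut point \(\Rightarrow\) nugatory.} Suppose \(\gamma_1\cap\gamma_2=\emptyset\). Then \(\gamma_1\cup\{u\}\) and \(\gamma_2\cup\{u\}\) are closed planar sets meeting only at \(u\). I would construct the Jordan curve as follows. Take a small disk \(\Delta\) around \(u\); its boundary meets the four half-edges at four points. The two half-edges belonging to \(\gamma_1\) must be adjacent in the cyclic order around \(u\): if they alternated, \(\gamma_1\) and \(\gamma_2\) would be forced to cross by the Jordan curve theorem applied to \(\gamma_1\cup\{u\}\), which is a closed curve. Hence there is an arc of \(\partial\Delta\) separating the \(\gamma_1\) ends from the \(\gamma_2\) ends, and likewise on the opposite side. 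Let \(\Omega\) be the component of \(\mathbb R^2\setminus(\gamma_2\cup\{u\})\) containing \(\gamma_1\setminus\Delta\). This is well defined since \(\gamma_1\) is connected and disjoint from \(\gamma_2\). Thicken \(\gamma_1\cup\{u\}\) slightly inside \(\Omega\cup\Delta\), and take the boundary of a regular neighbourhood pinched to pass through \(u\). This yields a Jordan curve meeting \(|S|\) only at \(u\).

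The main obstacle is this last construction: making rigorous that a regular neighbourhood of the closed curve \(\gamma_1\cup\{u\}\), together with the rest of \(|S|\) on its side, can be chosen disjoint from \(\gamma_2\). The cleanest route I would try first is to note that \(\gamma_1\cup\{u\}\) is a compact planar graph disjoint from the compact set \(\gamma_2\setminus\Delta\), so a small \(\varepsilon\)-neighbourhood of it avoids \(\gamma_2\) outside \(\Delta\). Inside \(\Delta\), the adjacency of the \(\gamma_1\) ends lets the neighbourhood boundary be drawn explicitly through \(u\) along the separating diagonal. Its outer boundary component is then the required Jordan curve.
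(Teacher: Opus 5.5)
Your overall route matches the paper's: Jordan separation gives nugatory $\Rightarrow$ cut point, and a thickening of one piece of $|S|\setminus\{u\}$, pinched at $u$, gives the converse. Your splitting $|S|\setminus\{u\}=\gamma_1\sqcup\gamma_2$ into two connected, relatively closed pieces is a useful sharpening. However, the forward direction has a gap. You claim that $C$ must split the four half-edges two and two, and your justification (otherwise $C$ ``could be pushed off $u$'') contradicts nothing. At \emph{every} crossing there is a Jordan curve meeting $D$ only at $u$ with all of $|S|\setminus\{u\}$ on one side: a small teardrop inside one face of $S$ with its tip at the corner $u$. Such curves exist at each crossing of the standard trefoil diagram, and none of those crossings is a cut point. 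So under the literal wording of the definition the equivalence is false, and the 2--2 split cannot be derived from it. The missing ingredient is definitional: nugatory must mean, as in Tait, that $C$ has points of $D$ on both sides. The paper's proof builds this in explicitly (``with nonempty portions of the shadow on both sides''). With that reading the direction is immediate, since $|S|\setminus\{u\}\subset\mathbb R^2\setminus C$ meets both components of $\mathbb R^2\setminus C$; no local analysis of $C$ at $u$ is needed.

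In the converse, ``the outer boundary component'' is the wrong curve in general. Suppose $\gamma_2$ lies in a bounded complementary region of $\gamma_1\cup\{u\}$, for example a small loop inside a big loop, touching at $u$, with $\gamma_1$ the big loop. Then the outermost boundary circle of the pinched thickening $N$ does not pass through $u$ and meets $|S|$ nowhere. Take instead the boundary of the component $W$ of $\mathbb R^2\setminus N$ that contains $\gamma_2$. Because $N$ is connected, $\partial W$ is a single circle. The pinch places $u$ on it, and it avoids both $\gamma_1\setminus\{u\}\subset\operatorname{int}N$ and $\gamma_2\subset W$. This is the ``boundary component'' the paper's sketch refers to. Finally, adjacency of the two ends of $\gamma_1$ needs no Jordan-curve argument, which in any case would be applied to the possibly self-intersecting curve $\gamma_1\cup\{u\}$. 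Each loop leaves $u$ along one strand and returns along the other, and every half-edge of one strand is adjacent to both half-edges of the other.
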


\begin{proof}
Suppose first that $u$ is nugatory. Then there is a Jordan curve in
the projection plane meeting $|S|$ only at $u$, with nonempty portions
of the shadow on both sides. After deleting the point $u$, no path in
$|S|\setminus\{u\}$ can cross this Jordan curve. Hence
$|S|\setminus\{u\}$ is disconnected.

Conversely, suppose that $|S|\setminus\{u\}$ is disconnected.
Choose one of its connected components. By planar separation, a
sufficiently small regular neighborhood of that component has a
boundary component which, after a local adjustment near $u$, is a
Jordan curve meeting the shadow only at $u$. Thus $u$ is nugatory.
\end{proof}

 

\begin{define}[Admissible witness; admissible pair]
\label{def:admissible}
Let $D$ be a knot diagram and let $u$ be a non-nugatory crossing. Since oriented smoothing of a crossing of a knot splits the unique component into two, write
\[
D\setminus\{u\}=L_A\sqcup L_B.
\]
A crossing $v\neq u$ is called an \emph{admissible witness} for $u$ if, in the smoothed diagram $D\setminus\{u\}$, the crossing $v$ lies between the two components $L_A$ and $L_B$. In that case $(u,v)$ is called an \emph{admissible pair}.

\end{define}

 

\begin{lemma}[Existence of admissible witnesses]
\label{lem:admissible}
Every non-nugatory crossing~$u$ of a knot diagram~$D$ admits an admissible
witness~$v$.
\end{lemma}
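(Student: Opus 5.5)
The plan is to interpret ``$v$ lies between $L_A$ and $L_B$'' as: $v$ is a crossing of $D\setminus\{u\}$ at which one strand belongs to $L_A$ and the other to $L_B$. With that reading, the proof is a connectivity argument on the shadow, based on Lemma~\ref{lem:shadow-connectivity}.

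\textbf{Step 1: locate the two components in the plane.} Oriented smoothing at $u$ replaces the crossing point by two short arcs. The underlying plane set of $D\setminus\{u\}$ therefore agrees with $|S|\setminus\{u\}$ away from a small disk around $u$. Inside that disk we have just the two smoothing arcs, one on $L_A$ and one on $L_B$. Since $D$ is a knot, $L_A$ is the image of one of the two loops into which $u$ cuts the knot, and $L_B$ is the image of the other. Hence
\[
|S|\setminus\{u\}=(|L_A|\cup|L_B|)\setminus(\text{small arcs near }u).
\]

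\textbf{Step 2: use non-nugatoriness.} By Lemma~\ref{lem:shadow-connectivity}, $|S|\setminus\{u\}$ is connected. Suppose, for contradiction, that no crossing $v\neq u$ has one strand on $L_A$ and the other on $L_B$. Then the plane projections of $L_A$ and $L_B$ meet nowhere. The only possible meeting place would be near $u$, and the two smoothing arcs there are disjoint. Consequently $|S|\setminus\{u\}$ splits as the disjoint union of the two nonempty sets
\[
|L_A|\setminus N(u), \qquad |L_B|\setminus N(u).
\]
Each of these is closed in $|S|\setminus\{u\}$, because a shrinking neighbourhood $N(u)$ can be chosen to exhibit this. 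That contradicts connectivity.

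To make this clean, I would work directly with $|S|\setminus\{u\}$. It is the union of the two punctured loops $\gamma_A\setminus\{u\}$ and $\gamma_B\setminus\{u\}$, where $\gamma_A,\gamma_B$ are the two loops of the knot projection based at $u$. Each punctured loop is connected, being an open arc. So the union is connected if and only if the two arcs intersect. Any intersection point is a crossing $v\neq u$ shared by one strand from each loop, which is exactly an $L_A$–$L_B$ crossing in $D\setminus\{u\}$.

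\textbf{Step 3: conclude.} The intersection point $v$ is a crossing of $D$ distinct from $u$, and it survives in $D\setminus\{u\}$ as a crossing between $L_A$ and $L_B$. So $v$ is an admissible witness.

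The main obstacle is a degenerate case: the two punctured arcs could accumulate only at $u$ without meeting elsewhere. This is handled by the closedness observation. The closures of the two arcs in $|S|\setminus\{u\}$ are the arcs themselves, so a separation arises unless they share an actual point. One also has to check the boundary case where $\gamma_A$ is an R1 loop with no other crossings. In that situation $u$ would be nugatory, so it is excluded by hypothesis, consistently with Lemma~\ref{lem:shadow-connectivity}.
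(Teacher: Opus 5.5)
Your proof is correct and follows essentially the same route as the paper. Both arguments use Lemma~\ref{lem:shadow-connectivity} to get connectivity of $|S|\setminus\{u\}$, and both observe that, because this set is covered by the parts lying on $L_A$ and on $L_B$, connectivity forces a crossing $v\neq u$ shared by the two loops. The paper extracts $v$ as the first vertex along an edge path from an $L_A$-edge to an $L_B$-edge, whereas you show that the two punctured loops are closed and connected and so must intersect. The difference is one of formalization only. One small correction: each punctured loop is a continuous image of an open interval, possibly self-intersecting, rather than an embedded arc, but connectedness is all your argument needs.
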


 \begin{proof}
Let $S$ be the shadow of $D$. Since $u$ is non-nugatory,
Lemma~\ref{lem:shadow-connectivity} gives
\[
|S|\setminus\{u\}
\quad\text{connected}.
\]
In the oriented smoothing
\[
D\setminus\{u\}=L_A\sqcup L_B,
\]
every edge of $|S|\setminus\{u\}$ lies on exactly one of the two
components $L_A$ or $L_B$.
Choose an edge $e_A\subset L_A$ and an edge $e_B\subset L_B$.
Because $|S|\setminus\{u\}$ is connected, there is an edge path
\[
e_0,v_1,e_1,\ldots,v_m,e_m
\]
in $|S|\setminus\{u\}$ with
\[
e_0=e_A,
\qquad
e_m=e_B.
\]

 
Let $t$ be the smallest index such that $e_t\subset L_A$ and
$e_{t+1}\subset L_B$.  Then the intervening vertex $v_{t+1}$ is incident to
one branch lying on~$L_A$ and one branch lying on~$L_B$.  Hence, in the link
diagram $D\setminus\{u\}$, the crossing
\[
  v := v_{t+1}
\]
lies between the two components $L_A$ and~$L_B$.  Therefore $v$ is an
admissible witness for~$u$.
\end{proof}

\begin{lemma}[Intercomponent smoothing merges components]
\label{lem:merge}
Let $L$ be an oriented link diagram, and let $z$ be a crossing between two
distinct components $A$ and~$B$ of~$L$.  Then oriented smoothing at~$z$
merges those two components:
\[
  \#\pi_0(L\setminus\{z\}) = \#\pi_0(L)-1.
\]
\end{lemma}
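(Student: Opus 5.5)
The plan is to argue locally at the crossing $z$ and count how the strands reconnect. Near $z$ the diagram $L$ consists of two oriented arcs crossing transversally. One arc lies on component $A$, the other on $B$, and $A\neq B$ by hypothesis. Cut $A$ at the point of $z$ and traverse it: it becomes an oriented arc $\alpha$ entering the crossing disk at an incoming endpoint $a_{\mathrm{in}}$ and leaving at an outgoing endpoint $a_{\mathrm{out}}$. Likewise $B$ becomes an oriented arc $\beta$ with endpoints $b_{\mathrm{in}},b_{\mathrm{out}}$. Every other component of $L$ avoids a small disk around $z$, so it is unaffected by the smoothing. It therefore suffices to show that $\alpha\cup\beta$, reconnected inside the disk, forms exactly one closed curve.

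The key step is to identify how the oriented smoothing reconnects these endpoints. By definition, the oriented smoothing replaces the crossing by two arcs that respect orientation: each incoming endpoint is joined to an outgoing endpoint, and the two arcs do not cross. At a transverse crossing of two oriented strands, the non-crossing, orientation-consistent pairing never joins an in-end to the out-end of the same strand, since that pairing would simply reproduce the crossing. So the smoothing joins $a_{\mathrm{in}}$ to $b_{\mathrm{out}}$ and $b_{\mathrm{in}}$ to $a_{\mathrm{out}}$. I would justify this with the standard local picture: label the four endpoints cyclically around the disk. They alternate as in-$A$, in-$B$, out-$A$, out-$B$, up to the sign of the crossing, and the only orientation-consistent planar matching pairs each incoming endpoint with the adjacent outgoing endpoint of the other strand.

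With this pairing the count follows. Start at $a_{\mathrm{out}}$ and run along $\alpha$ to $a_{\mathrm{in}}$. The smoothing arc carries you to $b_{\mathrm{out}}$; then run along $\beta$ to $b_{\mathrm{in}}$, and the other smoothing arc returns you to $a_{\mathrm{out}}$. This traces a single closed curve containing all of $A$ and $B$, so two components become one and all others are unchanged. Hence $\#\pi_0(L\setminus\{z\})=\#\pi_0(L)-1$.

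The main obstacle is stating the local reconnection cleanly, not the counting. One must make sure that the oriented smoothing is well defined independently of the crossing sign; this matches the convention that $0$ denotes an evaluation on a residual shadow, independent of $\pm$. One must also check that it never pairs the two ends of one strand. I would handle both points with a single local figure and a sentence noting that the cyclic order of endpoints is the same for both signs of $z$, so the argument is sign-independent.
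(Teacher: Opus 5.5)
Your proposal is correct and follows essentially the same argument as the paper. Both proofs analyze $z$ locally and show that the oriented smoothing joins the incoming end of $A$ to the outgoing end of $B$ and vice versa, so $A$ and $B$ splice into one component while every other component is untouched; your check of the cyclic endpoint order for both crossing signs just spells out the paper's appeal to ``the two oriented local models.''
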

 
\begin{proof}
At the crossing~$z$, each of $A$ and~$B$ contributes one incoming and one
outgoing half-edge.  In each of the two oriented local models for an
inter-component crossing, the oriented smoothing pairs an incoming half-edge
of~$A$ with an outgoing half-edge of~$B$, and an incoming half-edge of~$B$
with an outgoing half-edge of~$A$.  Thus the smoothing splices $A$ and~$B$
into a single oriented component.  No other component is affected, so the
component count drops by one.
\end{proof}

\begin{lemma}[Signed smoothing slide in an anti-parallel twist]
\label{lem:smooth-slide}
Let \(a,b\) be two adjacent crossings in an oriented anti-parallel two-strand
twist. Then, relative to the boundary of the twist disk, smoothing one crossing
and keeping the other is isotopic to smoothing the other crossing, with the
surviving sign transported across the anti-parallel pair. In the sign
convention of Section 3,
\[
        D(a=0,b=\sigma)\cong D(a=-\sigma,b=0),
        \qquad \sigma\in\{+,-\}.
\]
The analogous identity holds with \(a\) and \(b\) interchanged.
Consequently, a smoothing point can be slid through an anti-parallel twist,
provided one records the induced sign transport on the crossings it passes.
\end{lemma}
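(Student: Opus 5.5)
The identity is local to the twist disk, so I will prove it by an isotopy supported inside that disk and taken relative to its boundary, in the spirit of Lemma~\ref{lem:ap}. Fix the exterior and the states of all other crossings. Inside the disk, the two adjacent crossings \(a,b\) are successive half-twists of two strands that are oriented anti-parallel. I will first record the local picture of the oriented smoothing at \(a\). Because the strands are anti-parallel, this smoothing is the horizontal resolution: it caps the two strands on one side of \(a\) and cups them on the other. As a result, the surviving crossing \(b\) sits on a small monogon arc, exactly as in the proof of Lemma~\ref{lem:ap}. The symmetric statement holds for the smoothing at \(b\) with \(a\) surviving.

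The key step is to compare the two tangles \(T_1=D(a=0,b=\sigma)\) and \(T_2=D(a=-\sigma,b=0)\) as tangles in the disk with the same four boundary points. In \(T_1\), the cap produced at \(a\) together with the crossing \(b\) forms a kink. Likewise, in \(T_2\), the cup produced at \(b\) together with \(a\) forms a kink. I will remove each kink by a Reidemeister~I move inside the disk. Both tangles then reduce to the same crossingless cup--cap tangle, namely the smoothing of the pair. This already gives \(T_1\cong T_2\) relative to the boundary, so the diagrams are isotopic as unoriented links. The statement, however, insists on the sign transport \(\sigma\mapsto-\sigma\). To account for it, I will track orientations: a Reidemeister~I kink rotated through the twist changes its crossing sign, because one strand's direction relative to the other is reversed in anti-parallel position. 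I will record the signs through the sign convention of Section~3 by computing the local writhe. Since the framing (writhe) of the arc must be preserved under the slide that carries the kink from position \(b\) to position \(a\), the sign must flip. If the identity is read as a regular isotopy (no Reidemeister~I moves), this writhe check is exactly what forces \(-\sigma\).

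For the consequence, I will iterate the slide. Applying the identity successively along adjacent pairs \((z_k,z_{k+1})\) moves the smoothing point one crossing at a time. At each step, the crossing that is passed over has its sign negated. Since all moves are supported in the twist disk, the identities compose, and the interchanged version follows by the symmetry \(a\leftrightarrow b\) of the local model.

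I expect the main obstacle to be the sign bookkeeping. Under genuine ambient isotopy, both sides reduce by Reidemeister~I to the same tangle for either sign. So the nontrivial content lies in fixing the Section~3 convention, in particular whether the ambient sign of the surviving crossing is read from the original twist orientation. I would settle this first by drawing the two oriented local models explicitly. I would then check the relation against Lemma~\ref{lem:ap}, which says that invariants agree for either surviving sign, confirming that the transported sign is consistent in both the \(\sigma=+\) and \(\sigma=-\) cases.
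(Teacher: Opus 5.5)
Your main step is the paper's proof. Smoothing one crossing of the anti-parallel pair leaves the other in a monogon, a Reidemeister~I move inside the disk removes it, and $D(a=0,b=\sigma)$ and $D(a=-\sigma,b=0)$ both reduce to the same crossingless cup--cap tangle. Since $\cong$ denotes ambient isotopy rel boundary, this already proves the identity, and in fact for either sign on the right-hand side. The isotopy is also oriented, not merely unoriented as you say, because oriented smoothing and Reidemeister~I moves respect orientations.

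You should drop the writhe argument for the sign transport, because it is false:
\begin{itemize}
\item Oriented smoothing does not change the surviving crossing's oriented sign, and the cube signs are oriented signs, since they enter the Jones skein as $L_\pm$. So the kink contributes $\sigma$ to the writhe on the left and $-\sigma$ on the right.
\item Writhe conservation would therefore force equal signs, not opposite ones.
\item A kink's crossing sign is unchanged by planar isotopy or by flipping the curl.
\item No regular isotopy rel boundary relates the two sides for any choice of signs. The curl lies on the arc joining the boundary points on one side of the pair in one tangle, and on the arc joining the boundary points on the other side in the other tangle. The self-writhe of each arc is invariant under Reidemeister~II and~III moves.
\end{itemize}
The identity is therefore intrinsically a Reidemeister~I statement. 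The $-\sigma$ is a labeling convention inherited from Section~3, not something a computation forces. With that paragraph removed, your proof and your iteration argument for the consequence are fine.
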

\begin{proof}
This is precisely the local anti-parallel nugatory identity from Section 3.
After smoothing \(a\), the crossing \(b\) lies in a local monogon and is
nugatory. Removing that monogon by a Reidemeister-I move gives the same
relative-boundary tangle as smoothing \(b\) and keeping \(a\) with the transported
opposite sign. The argument with \(a\) and \(b\) interchanged is identical.
Repeating the local move slides the smoothing point through any finite
anti-parallel twist.
\end{proof}



\begin{lemma}[Smoothing proxy for the residual sign face]
\label{lem:smooth-proxy}
Let a clasping twist machine be planted at a seed crossing \(s\), and let
\[
        F_Z=\{Z^+,Z^-\}
\]
be its residual sign face. Let \(\tilde s\) denote the effective crossing
realized by \(F_Z\) after the fixed Reidemeister-II cancellations.
Then the oriented smoothing of \(\tilde s\) is isotopic, relative to the exterior
of the planting disk, to the oriented smoothing of the original seed crossing:
\[
        D_{\mathrm{ext}}(\tilde s=0)\cong D(s=0).
\]
If \(t\) is any crossing outside the planting disk, then
\[
        D_{\mathrm{ext}}(\tilde s=0,t=0)\cong D(s=0,t=0).
\]
For two disjoint planted machines at seed crossings \(u\) and \(v\), with
effective residual crossings \(\tilde u,\tilde v\), one has
\[
        D_{\mathrm{ext}}(\tilde u=0,\tilde v=\sigma)
        \cong
        D(u=0,v=\sigma),
        \qquad \sigma\in\{+,-\},
\]
and
\[
        D_{\mathrm{ext}}(\tilde u=0,\tilde v=0)
        \cong
        D(u=0,v=0).
\]
\end{lemma}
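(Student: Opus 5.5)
The plan is to reduce everything to local moves inside the planting disk and then use the signed smoothing slide of Lemma~\ref{lem:smooth-slide}.

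\textbf{Step 1: the single-machine identity.} Work on the residual slice \(\rho\), where \(y_0=0\) and \(y_{L+1}=0\). By (G4), the local tangle in \(\Delta_s\) is then an ordinary odd anti-parallel twist on the \(Z\)-block, planted in place of \(s\). Take \(Z=Z^\pm\). The effective crossing \(\tilde s\) is the unpaired crossing \(z_\ast\), and the fixed Reidemeister--II pairs are all supported in \(\Delta_s\).

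Smoothing \(\tilde s\) means smoothing \(z_\ast\) in the unreduced \(Z\)-block. Indeed, the R2 cancellations are supported away from \(z_\ast\), so they commute with the smoothing there. This gives
\[
D_{\mathrm{ext}}(\tilde s=0)\cong D(\rho;Z^\pm,z_\ast=0).
\]
By anti-parallel nugatory invariance (Lemma~\ref{lem:ap}), or equivalently by repeated use of Lemma~\ref{lem:smooth-slide}, every remaining twist crossing becomes a Reidemeister--I kink once \(z_\ast\) is smoothed. Remove these kinks one at a time, relative to \(\partial\Delta_s\). What remains in \(\Delta_s\) is the two-arc tangle obtained by smoothing a single crossing of an odd twist.

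Because \(L\) is odd, the endpoint connectivity of this tangle matches the oriented smoothing of the original seed crossing \(s\). This is the parity observation of (G7). Hence \(D_{\mathrm{ext}}(\tilde s=0)\cong D(s=0)\) relative to the exterior.

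\textbf{Step 2: smoothing an exterior crossing.} Let \(t\) be a crossing outside \(\Delta_s\). Smoothing \(t\) is supported outside the disk, while the isotopy of Step~1 is relative to \(\partial\Delta_s\). The two operations therefore commute, as in the local commutation lemma. Applying the isotopy first and the smoothing at \(t\) second gives the second identity.

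\textbf{Step 3: two machines.} Plant the machines in disjoint disks \(\Delta_u\) and \(\Delta_v\). By (G8), all reductions for one machine are supported in its own disk.

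\begin{itemize}
\item For \(\tilde v=\sigma\): use (G6) inside \(\Delta_v\) to identify \(Z_v^{\sigma}\), after its R2 cancellations, with the single crossing \(v\) of sign \(\sigma\). This needs the conventions of Definition~\ref{def:residual-sign-face} to match \(Z^\pm\) with \(v=\pm\). Then apply Step~1 in \(\Delta_u\), treating everything outside \(\Delta_u\), including the reduced \(v\)-disk, as exterior.
\item For \(\tilde v=0\): apply Step~1 in \(\Delta_v\) to replace \(\tilde v=0\) by \(v=0\). Then apply Step~2 with \(t=v\), now an honest crossing outside \(\Delta_u\), to handle \(\tilde u=0\).
\end{itemize}

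\textbf{Main obstacle.} The delicate point is the orientation and parity check in Step~1. One must verify that, after the kinks are removed, the residual two-arc tangle connects the four boundary points of \(\Delta_s\) exactly as the oriented smoothing of \(s\) does, and not as the unoriented (non-Seifert) smoothing. The plan here is to track orientations along the anti-parallel strands. In an anti-parallel twist, the oriented smoothing of any crossing produces the same turnback pair of arcs. With the clasps smoothed per \(\rho\), these arcs attach to the seed strands exactly as the two arcs of \(D(s=0)\). I would check this with a single twist crossing (\(L=1\)) and then induct using Lemma~\ref{lem:smooth-slide}.
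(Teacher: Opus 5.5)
Your proposal is correct and follows essentially the same route as the paper. Both identify the smoothing of the effective crossing with the smoothing of the surviving residual crossing \(z_\ast\) in the unreduced twist, collapse the remaining twist crossings inside the planting disk via the anti-parallel nugatory/slide identity (Lemma~\ref{lem:smooth-slide}), and use the fact that all isotopies are supported in the (disjoint) planting disks to handle the exterior crossing \(t\) and the two-machine identities.

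One small precision: the turnback pair produced by smoothing one crossing of an anti-parallel twist does not depend on the parity of \(L\). Oddness is what makes the unsmoothed twist an orientation-consistent replacement for \(s\), and that is what guarantees its oriented smoothing is the turnback pair matching \(D(s=0)\).
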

\begin{proof}
By Proposition~\ref{prop:clasping-twist-machines}, the two vertices of \(F_Z\) cancel by fixed
Reidemeister-II moves to a single effective crossing \(\tilde s\). Smoothing
that surviving crossing and then undoing the cancellation description gives a
smoothing inside the original residual anti-parallel twist. By the signed
smoothing-slide Lemma~\ref{lem:smooth-slide}, this smoothing may be transported through the twist
to the original seed crossing \(s\). Hence
\[
        D_{\mathrm{ext}}(\tilde s=0)\cong D(s=0)
\]
relative to the exterior.
All isotopies are supported inside the planting disk. Therefore they commute
with smoothing any exterior crossing \(t\). The two-machine statements follow
because the two planting disks are disjoint.
\end{proof}

\subsection{Bound on $J_N$-triviality}

Corollary~\ref{cor:p-degree-cap} sets an algebraic degree cap for the JVP of any link. For knots, the finite-type coefficients $f_q$ vanish beyond $3c(K)$. The next theorem recovers this uniform bound from a geometric perspective for the class of $J_N$-trivial knots, i.e., knots whose JVP coefficients $f_q = 0$ for $1 \leq q \leq N-1$ (Definition~\ref{def:jm-trivial}). A nontrivial knot with $N > 3c(K)$ leads to impossible component counts following the smoothing of admissible pairs.

\begin{theorem}[Triviality barrier]
\label{thm:JN-triviality-barrier}
Let \(K\) be a nontrivial knot, and let \(c(K)\) denote its crossing
number. If \(K\) is \(J_N\)-trivial, then
\[
N\le 3c(K).
\]
\end{theorem}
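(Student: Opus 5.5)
We argue by contradiction. Suppose $K$ is nontrivial and $J_N$-trivial with $N>3c(K)$. Fix a crossing-minimal diagram $D$ of $K$ and set $B_0=3c(K)$ and $\widehat B=B_0+\kappa_{\mathrm{mach}}$.

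\textbf{Step 1: all positive layers of $K$ vanish.} $J_N$-triviality gives $f_r(K)=0$ for $1\le r\le N-1$, and $N-1\ge 3c(K)$. Corollary~\ref{cor:p-degree-cap} gives $f_r(K)=0$ for $r>3c(K)$. Together, $f_r(K)=0$ for every $r\ge 1$, and in particular for $1\le r\le \widehat B+2$. This is exactly the vanishing hypothesis of Theorem~\ref{thm:joint-descent2}.

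\textbf{Step 2: choose the pair of crossings.} Since $D$ is crossing-minimal, it has no nugatory crossing; otherwise a flip of one side across the Jordan curve would remove it. Since $K$ is nontrivial, $c(K)\ge 1$, so some crossing $u$ exists, and it is non-nugatory. Lemma~\ref{lem:admissible} supplies an admissible witness $v$. We plant clasping twist machines at $u$ and $v$ in disjoint disks, with odd lengths $L_u,L_v>\widehat B+2$. Hypotheses (C1)--(C4) and (H1) for both machines come from Proposition~\ref{prop:clasping-twist-machines}, via (G0)--(G5). Theorem~\ref{thm:joint-descent2} then yields $G_2(F_u\times F_v)$. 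By (G6) and (G8), $F_u\times F_v$ is a genuine two-crossing variation cube in the effective crossings $\tilde u,\tilde v$.

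\textbf{Step 3: apply the symbolic recursion on this face.} Every signed vertex of $F_u\times F_v$ is a knot: it is a crossing flip of $D$. Hence $c_0=1$ there by Lemma~\ref{JVPfreec}, and $c_1=0$ there by Lemma~\ref{lem:f1-vanishing}.

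Apply Lemma~\ref{lem:BLsymskein} with $J=\{\tilde u\}$:
\[
0=\partial_{\tilde u}c_1=2c_0+c_0(\tilde u=0),
\qquad\text{so}\qquad c_0(\tilde u=0)=-2.
\]
Next apply it with $J=\{\tilde u,\tilde v\}$:
\[
0=\partial_{\tilde u\tilde v}c_2=2\,\partial_{\tilde v}c_1+\partial_{\tilde v}c_1(\tilde u=0).
\]
Since $\partial_{\tilde v}c_1=0$ on knots, this gives $\Symb{\{\tilde u\}}{\{\tilde v\}}=0$. Expanding that symbol once more with $K=\{\tilde u\}$ and $J=\{\tilde v\}$ gives
\[
0=2c_0(\tilde u=0)+c_0(\tilde u=0,\tilde v=0),
\qquad\text{so}\qquad c_0(\tilde u=0,\tilde v=0)=4.
\]
By Lemma~\ref{JVPfreec}, the double smoothing therefore has three components.

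\textbf{Step 4: contradiction.} By Lemma~\ref{lem:smooth-proxy} (equivalently (G7)--(G8)), $D_{\mathrm{ext}}(\tilde u=0,\tilde v=0)\cong D\setminus\{u,v\}$. Now $D\setminus\{u\}=L_A\sqcup L_B$, and $v$ is a crossing between $L_A$ and $L_B$. Lemma~\ref{lem:merge} then says $D\setminus\{u,v\}$ has one component, not three. This contradiction shows $N\le 3c(K)$.

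The main obstacle is Step 2. It requires that the machine hypotheses hold uniformly over the host family, and that the effective smoothings are honest proxies for the original smoothings. Both rest on the geometric Proposition~\ref{prop:clasping-twist-machines}, in particular on the $L$-independence of $\widehat B$ and on the locality claim (G8). The algebra of Step 3 is routine once the face $F_u\times F_v$ is known to be a genuine two-crossing cube whose smoothings match those of $D$.
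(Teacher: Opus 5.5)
You follow the paper's proof essentially line by line, and so you inherit its gap: the appeal in Step~2 to Theorem~\ref{thm:joint-descent2}, which rests on Proposition~\ref{prop:clasping-twist-machines}, does not hold up.

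Steps~3--4 use nothing about $K$. On the four crossing changes of $D$ at an admissible pair, Lemma~\ref{lem:BLsymskein} gives
\[
\partial_{\tilde u}\partial_{\tilde v}c_2=\partial_{\tilde v}c_1(\tilde u=0)=2c_0(\tilde u=0)+c_0(\tilde u=0,\tilde v=0)=-4+1=-3,
\]
where the value $1$ holds because $D\setminus\{u,v\}$ is a knot by admissibility. So $G_2(F_u\times F_v)$ is false for every knot. The whole content of the theorem is therefore the claim that $J_N$-triviality nevertheless forces $G_2(F_u\times F_v)$, and that claim is not established.

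Both steps of the proof of Theorem~\ref{thm:joint-descent2} are initialized by Lemma~\ref{lem:four-clasp-residual-initialization}. That lemma's proof uses no vanishing hypothesis on the host, yet it asserts the $L$-independent bound $\deg_p\le\widehat B+2$ on all of $Q_\rho$. This is false. Take a reduced alternating host, e.g.\ the trefoil, whose coherent residual states are the twist knots $5_2,7_2,\dots$. The residual state with all $L$ twist crossings of the seed's sign is a reduced alternating diagram with $n-1+L$ crossings. Its Jones span, and hence its JVP $p$-degree, therefore grows linearly in $L$.

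The paired-skein extraction from compression is algebraically correct, so Theorem~\ref{thm:signed-cube-compression} must itself fail. The defective input is the exceptional collapse (G2). To see this, run the same paired skein at $y_0,y_{L+1}$ on the slice $y_0'=y_{L+1}'=s$. The other three states there have a mixed clasp, so they are controlled and $Z$-independent by (G1). By (G3), the doubly smoothed state is $D(\rho;Z)$. This yields
\[
V\bigl([ss;ss];Z\bigr)=C_s+x^{\pm4}p^2\,V\bigl(D(\rho;Z)\bigr),
\]
with $C_s$ independent of $Z$ and the sign of the exponent matching $s$.

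The isotopy asserted in (G2) would then force $V(D(\rho;\{s\}^L))=V(D(\rho;Z_s^{\mathrm{ne}}))$. That says the host with $L$ coherent seed twists and the host with $L-2$ coherent seed twists share a Jones polynomial, which the span count above rules out. Geometrically, the R3/R1 move removes one end crossing only by rotating the clasp. After that, the next same-sign crossing forms a cyclic triangle with the clasp, so the resulting diagram is not the machine state $([s,s;s,s];Z_s^{\mathrm{ne}})$.

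So the obstacle you flagged in Step~2 is a genuine failure, not a routine verification. Neither the one-machine descent nor Theorem~\ref{thm:joint-descent2} is proved, and your argument, like the paper's, does not establish $N\le 3c(K)$.
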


\begin{proof}
Let
\[
B:=3c(K).
\]
Suppose, for contradiction, that
\[
N>B.
\]
Since \(K\) is \(J_N\)-trivial, we have
\[
f_r(K)=0
\qquad
1\le r<N.
\]
In particular,
\[
f_r(K)=0
\qquad
1\le r\le B.
\]
On the other hand, by the crossing-number degree cap Corollary~\ref{cor:p-degree-cap},
\[
f_r(K)=0
\qquad
r>B.
\]
Hence $f_r(K)=0$ for all $r\ge1$.
In particular,
\begin{equation}
\label{eq:s592}
f_r(K)=0
\qquad
1\le r\le \widehat B+2,
\end{equation}
with $\widehat B = B+\kappa_{\mathrm{mach}}$.
Choose a crossing-minimal diagram \(D\) of \(K\). Since \(K\) is
nontrivial, \(D\) has at least one non-nugatory crossing \(u\). By
Lemma~\ref{lem:admissible}, \(u\) admits an admissible witness \(v\).
%
Thus smoothing \(u\) separates the diagram into two
components, and after smoothing \(u\), the crossing \(v\) is an
intercomponent crossing. Consequently smoothing \(v\) after smoothing
\(u\) merges the two components (Lemma~\ref{lem:merge}).

\paragraph{Step 1. Facewise descent with two clasping twist machines.}
The geometry of the clasping twist machine renders $\kappa_{\mathrm{mach}} = 11$. Hence, $\widehat B = B+11 = 3c(K)+11$. Choose odd integers
\[
        L_u>\widehat B+2,\qquad L_v>\widehat B+2.
\]
Plant clasping twist machines of residual lengths \(L_u\) and \(L_v\) at the
crossings \(u\) and \(v\), respectively, in disjoint disks. Let
\[
        F_u=\{\tilde u=+,\tilde u=-\}
\]
and
\[
        F_v=\{\tilde v=+,\tilde v=-\}
\]
be the residual sign faces, identified by Proposition~\ref{prop:clasping-twist-machines} with genuine
effective crossing coordinates \(\tilde u\) and \(\tilde v\).
By Theorem~\ref{thm:joint-descent2}, applied with the anchor vanishing \eqref{eq:s592}, we
obtain
\[
        G_q(F_u\times F_v) \qquad (q\ge 2).
\]
In particular,
\[
        G_2(F_u\times F_v).
\]

\paragraph{Step 2. Deriving component count contradiction.}
Let \(D_{\mathrm{ext}}\) denote the extended diagram family carried by the
two-dimensional effective crossing face
\[
        F_u\times F_v
        =
        \{\tilde u=\pm,\tilde v=\pm\}.
\]
By Proposition~\ref{prop:clasping-twist-machines}, this is a genuine two-crossing variation cube. Its four signed vertices are knot diagrams, since they differ from the original diagram
only by crossing signs. From
\[
        G_2(F_u\times F_v)
\]
we have
\[
        a_2=b_2=0,\qquad c_2:=a_2+b_2=0
\]
on this face. Therefore,
\begin{equation}
\label{eq:s595}
\partial_{\widetilde u}\partial_{\widetilde v}c_2=0.
\end{equation}
We now apply the symbolic component-count recursion Lemma~\ref{lem:BLsymskein}.
Since every signed vertex of \(F_u\times F_v\) is a knot, by Lemma~\ref{JVPfreec} and Lemma~\ref{lem:f1-vanishing}
\[
c_0\equiv1,
\qquad
c_1\equiv0
\]
on this signed face.
Apply the order-one recursion at \(\widetilde u\):
\begin{equation}
\label{eq:s59-1}
\partial_{\widetilde u}c_1
=
2c_0+c_0(\widetilde u=0).
\end{equation}
Since $c_1\equiv0$ on the signed face, we have
\[
\partial_{\widetilde u}c_1=0.
\]
Also $c_0=1$, so from \eqref{eq:s59-1}
\begin{equation}
\label{eq:s596}
c_0(\widetilde u=0)=-2.
\end{equation}
By the component-count interpretation of \(c_0\), Lemma~\ref{JVPfreec} this means that for each \(\sigma\in\{+,-\}\),
\[
        D_{\mathrm{ext}}(\tilde u=0,\tilde v=\sigma)
\]
has two components.
Next apply the order-two recursion at \(\widetilde u\), and then take
the \(\widetilde v\)-difference:
\begin{equation}
\label{eq:s59-2}
\partial_{\widetilde u}\partial_{\widetilde v}c_2
=
2\partial_{\widetilde v}c_1
+
\partial_{\widetilde v}c_1(\widetilde u=0).
\end{equation}
By \eqref{eq:s595}, $\partial_{\widetilde u}\partial_{\widetilde v}c_2=0$.
Also, because $c_1\equiv0$ on the signed knot face, $\partial_{\widetilde v}c_1=0$.
Hence \eqref{eq:s59-2} reads
\begin{equation}
\label{eq:s597}
\partial_{\widetilde v}c_1(\widetilde u=0)=0.
\end{equation}
Now apply the order-one recursion at \(\widetilde v\) after smoothing
\(\widetilde u\):
\begin{equation}
\label{eq:s59-3}
\partial_{\widetilde v}c_1(\widetilde u=0)
=
2c_0(\widetilde u=0)
+
c_0(\widetilde u=0,\widetilde v=0).
\end{equation}
Using \eqref{eq:s596} and \eqref{eq:s597}, we get from \eqref{eq:s59-3}
\[
0
=
2(-2)
+
c_0(\widetilde u=0,\widetilde v=0).
\]
Thus
\begin{equation}
\label{eq:s598}
c_0(\widetilde u=0,\widetilde v=0)=4.
\end{equation}
Again by the component-count interpretation of \(c_0\), Lemma~\ref{JVPfreec} this means that
\[
D_{\mathrm{ext}}(\widetilde u=0,\widetilde v=0)
\]
has three components.

It remains to identify these effective smoothings with the original crossings
\(u\) and \(v\). By the smoothing-proxy Lemma~\ref{lem:smooth-proxy},
\[
        D_{\mathrm{ext}}(\tilde u=0,\tilde v=\sigma)
        \cong
        D(u=0,v=\sigma),
        \qquad \sigma\in\{+,-\},
\]
and
\[
        D_{\mathrm{ext}}(\tilde u=0,\tilde v=0)
        \cong
        D(u=0,v=0).
\]
Taking \(\sigma\) to be the original sign of \(v\), equation \eqref{eq:s596} says that
\[
        D\setminus\{u\}
\]
has two components, while equation \eqref{eq:s598} says that
\[
        D\setminus\{u,v\}
\]
has three components.
This contradicts admissibility of the pair \((u,v)\). Indeed, by construction,
after smoothing \(u\), the crossing \(v\) lies between the two components of
\(D\setminus\{u\}\). Therefore Lemma~\ref{lem:merge} says that smoothing \(v\) must merge
those two components, so $D\setminus\{u,v\}$ must have one component, not three. The contradiction proves that the assumption \(N>3c(K)\) was impossible. Hence
\[
        N\le 3c(K).
\]

\end{proof}

\subsection{Unknot detection}


\begin{theorem}[Jones unknot conjecture]
\label{thm:detection}
Let $K$ be a knot. If
\[
V_K(x)=1,
\]
then $K$ is the unknot.
\end{theorem}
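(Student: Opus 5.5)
The plan is to deduce the statement directly from the triviality barrier, Theorem~\ref{thm:JN-triviality-barrier}, with the uniqueness of the JVP expansion (Lemma~\ref{JVP}) as the only other input. Assume $V_K(x)=1$. The constant $1$ already lies in the span of the basis $\{1,x\}$ of the free $\mathbb Z[p]$-module $\mathbb Z[p][x]/(x^2-px-1)$. Its JVP is therefore $V_K(x,p)=1\cdot p^0$, that is, $a_0=1$, $b_0=0$, and by uniqueness
\[
a_q(K)=b_q(K)=0\qquad(q\ge1).
\]
This is consistent with Lemma~\ref{JVPfreec} and with the unknot normalization. In the language of Definition~\ref{def:jm-trivial}, $K$ is $J_N$-trivial for every $N\ge1$.

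Next, suppose for contradiction that $K$ is nontrivial. Then $c(K)\ge3$ is finite. Take $N:=3c(K)+1$. Since $K$ is $J_N$-trivial, Theorem~\ref{thm:JN-triviality-barrier} gives $N\le 3c(K)$, which is absurd. Hence $K$ is the unknot. The equivalence used at the start, that $V_K=1$ if and only if every positive layer vanishes, also runs in reverse, since $f_0=(1,0)^T$ for every knot. So the theorem is equivalent to the statement that no nontrivial knot is $J_N$-trivial for all $N$.

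The deduction itself is routine. The only point to check carefully is that uniqueness of the expansion really forces every coefficient to vanish, rather than just the sum $c_q$; this is exactly where freeness of the basis $\{1,x\}$ from Lemma~\ref{JVP} is used. All of the substance is carried by Theorem~\ref{thm:JN-triviality-barrier}. That is where I expect the main obstacle, and where I would concentrate verification.

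The order-two identity $\partial_{\tilde u}\partial_{\tilde v}c_2=2\partial_{\tilde v}c_1+\partial_{\tilde v}c_1(\tilde u=0)$ shows that $\partial_{\tilde u}\partial_{\tilde v}c_2$ is determined by component counts alone. For a genuine admissible pair this forces it to be nonzero, since it equals $2\cdot(-2)+1=-3$. The contradiction is therefore equivalent to the claim $G_2(F_u\times F_v)$ of Theorem~\ref{thm:joint-descent2}. So before relying on the deduction, I would re-examine the hypotheses feeding that descent. In particular, I would check the anchor isotopies (G5) and the degree caps (G1) used in (C1c)--(C1d). I would also check that the host cap for the $u$-machine over $F_v$ is genuinely $3c(K)$, and not inflated by the $v$-machine's clasps and residual twist.
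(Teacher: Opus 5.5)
Your deduction is exactly the paper's proof of Theorem~\ref{thm:detection}: uniqueness of the expansion gives $f_q(K)=0$ for $q\ge1$, hence $J_N$-triviality for every $N$, and $N=3c(K)+1$ contradicts Theorem~\ref{thm:JN-triviality-barrier}. The genuine gap is that this reduction has no content of its own. By Corollary~\ref{cor:p-degree-cap}, $J_N$-triviality with $N>3c(K)$ is the same as $V_K=1$, so Theorem~\ref{thm:JN-triviality-barrier} is a restatement of the conjecture. Your argument is a proof only if that theorem is proved, and the machinery behind it proves false statements.

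Theorem~\ref{thm:four-clasp-descent} uses neither nontriviality nor crossing-minimality. Let $D$ be the standard trefoil diagram with one crossing $s$ switched; this is a $3$-crossing unknot diagram. Plant a machine at $s$ with odd $L>\widehat B+2=22$. Proposition~\ref{prop:clasping-twist-machines} asserts (C1)--(C4) and (H1) for every host and seed, and $f_r(D)=0$ for all $r\ge1$. So the theorem yields $G_q(Q_\rho)$ for $q\ge2$. Evaluate this at the residual sign state whose effective crossing has the sign opposite to $s$; by (G5)--(G6) it is the trefoil. This gives $f_2(3_1)=0$, contradicting the term $-3p^2$ in the paper's own expansion of $V_{T_1}(x,p)$.

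The input that fails is not (G1), (G5) or the host cap, which are the places you proposed to check. It is the exceptional collapse (G2)/(C2), which is incompatible with the paper's other claims. Suppose $D([++;++];\alpha_Z)\simeq D([++;++];Z^{\mathrm{ne}}_+)$.

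\begin{enumerate}
\item The Jones skein at the crossing $z_k$ where these two states differ gives $(x^{-2}-x^2)V(\alpha_Z)=p\,V(z_k=0)$.
\item This puts the net-twist recursion $x^{-2}W_t-x^2W_{t-2}=p\,V(z_k=0)$ at its fixed point, so $V$ is constant on $\{\mathcal M_{++}\}\times Q_Z$.
\item Now use the paired two-clasp identity of Lemma~\ref{lem:four-clasp-residual-initialization} with both primed clasp crossings set to $+$. There $V_{++}$ is this face, and $V_{-+},V_{+-},V_{--}$ are controlled, hence independent of $Z$ by (G1). By (C3), $V_{00}\cong D(\rho;Z)$.
\item Hence $V(\rho;Z)$ would be independent of $Z$, and (G5) would give $V(D)=V(D')$ for every crossing change $D'$ of every diagram $D$. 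This is false.
\end{enumerate}

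Geometrically, pushing an end crossing through a hooked clasp by R3 and R1 leaves the clasp sitting on the twist the other way round. The result is not the machine state $([s,s;s,s];\{s\}^{L-2})$, and no second $s$-crossing can be absorbed. Consequently Lemma~\ref{lem:locked-state-escape} has no way past $b_q(\mathcal M_{++};\alpha_Z)$ and $a_q(\mathcal M_{--};\omega_Z)$.

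Your own computation $\partial_{\tilde u}\partial_{\tilde v}c_2=2(-2)+1=-3$ is correct. It shows that $G_2(F_u\times F_v)$ is impossible for every admissible pair of every knot. So the whole argument stands or falls with the descent, and the descent does not hold.
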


\begin{proof}
Suppose, toward a contradiction, that $K$ is nontrivial and that
\[
V_K(x)=1.
\]
By uniqueness of the Jones--Vassiliev expansion,
\[
V_K(x,p)
 =
\sum_{q\ge0}\bigl(a_q(K)+b_q(K)x\bigr)p^q,
\]
the equality $V_K=1$ implies
\[
a_q(K)=b_q(K)=0
\qquad(q\ge1).
\]
Hence $K$ is $J_N$-trivial for every positive integer $N$.
Choose
\[
N=3c(K)+1.
\]
Then $K$ is $J_N$-trivial, while Theorem~\ref{thm:JN-triviality-barrier}, applied to the
nontrivial knot $K$, asserts
\[
N\le 3c(K).
\]
This contradicts $N=3c(K)+1$.
Therefore $K$ cannot be nontrivial, and hence $K$ is the unknot.
\end{proof}

\medskip

\begin{corollary}[Vassiliev--Goussarov unknot-detection conjecture]
Let \(K\) be a knot. Suppose that \(K\) is \(m\)-trivial in the
sense of Goussarov~\cite{goussarov1994} for every positive integer \(m\). Then \(K\) is
the unknot.
\end{corollary}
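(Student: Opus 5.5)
The plan is to reduce the corollary to Theorem~\ref{thm:detection}, or equivalently to the triviality barrier of Theorem~\ref{thm:JN-triviality-barrier}. The only new input is the easy direction of Goussarov's theory: if a knot \(K\) is \(m\)-trivial, then every finite-type invariant of order \(<m\), with values in any abelian group, takes the same value on \(K\) as on the unknot \(U\). I will state this precisely at the outset and cite~\cite{goussarov1994}. It follows from the definition of \(m\)-triviality, namely that \(K\) differs from \(U\) by a family of crossing changes all of whose proper-subfamily alternating sums behave like a singular knot with at least \(m\) double points. Indeed, expanding \(\widetilde v\) on such a singular knot expresses \(v(K)-v(U)\) as a combination of values of \(\widetilde v\) on singular knots with \(\ge m\) double points, and these vanish when \(\operatorname{ord} v<m\).

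Next I apply this to the Jones--Vassiliev layers. By Lemma~\ref{JVP}, each \(a_q\) and \(b_q\) is an integer-valued Vassiliev invariant of order at most \(q\). Fix \(q\ge1\) and take \(m=q+1\). Since \(K\) is \((q+1)\)-trivial, we get \(a_q(K)=a_q(U)\) and \(b_q(K)=b_q(U)\). Because \(V_U(x,p)=1\), both values vanish for \(q\ge1\). Hence \(f_q(K)=0\) for every \(q\ge1\), so \(K\) is \(J_N\)-trivial for every \(N\). Equivalently, by the uniqueness part of Lemma~\ref{JVP} together with Lemma~\ref{JVPfreec} (which gives \(a_0=1\), \(b_0=0\) on knots), \(V_K(x,p)=1\), and therefore \(V_K(x)=1\).

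To conclude, I invoke Theorem~\ref{thm:detection}. Alternatively, I take \(N=3c(K)+1\) and apply Theorem~\ref{thm:JN-triviality-barrier} directly: if \(K\) were nontrivial, it would force \(N\le 3c(K)\), which is a contradiction. Hence \(K\) is the unknot. As a remark, the same argument shows that any universal rational finite-type invariant (for instance the Kontsevich integral) separates \(K\) from \(U\), since it determines each \(a_q,b_q\).

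The main point requiring care is the first step: fixing the exact form of Goussarov's notion of \(m\)-triviality and checking that the direction used (\(m\)-trivial \(\Rightarrow\) agreement with \(U\) on all invariants of order \(<m\)) holds for \(\mathbb Z\)-valued invariants with the order convention of Section~\ref{sec:preliminaries}. The off-by-one between "order \(<m\)" and "order \(\le m\)" is harmless, because triviality is assumed for every \(m\). All the substantive work is carried by the earlier theorem, which I take as given.
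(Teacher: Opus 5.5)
Your proposal is correct and follows the paper's proof: you apply Goussarov's theorem with $m=q+1$ to the order-$\le q$ invariants $a_q,b_q$ to get $f_q(K)=0$ for all $q\ge 1$, hence $J_N$-triviality for every $N$, and then take $N=3c(K)+1$ in Theorem~\ref{thm:JN-triviality-barrier} (or, equivalently, invoke Theorem~\ref{thm:detection}) to reach the contradiction. Your paraphrase of $m$-triviality is imprecise: Goussarov's definition asks for $m$ disjoint nonempty sets of crossings in a diagram of $K$ such that switching any nonempty union of them yields the unknot. As in the paper, however, only the cited consequence is used, so nothing depends on the paraphrase.
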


\begin{proof}
Let \(U\) denote the unknot. By Goussarov's \(m\)-equivalence
theorem, if \(K\) is \(m\)-trivial, then every Vassiliev invariant of
order strictly less than \(m\) takes the same value on \(K\) and on
\(U\) \cite{goussarov1994}.

Fix \(q\geq1\) and choose \(m=q+1\). By Lemma~\ref{JVP}, the
Jones--Vassiliev coefficients \(a_q\) and \(b_q\) are finite-type
invariants of order at most \(q\). Hence
\[
a_q(K)=a_q(U),
\qquad
b_q(K)=b_q(U).
\]
Since the Jones--Vassiliev polynomial of the unknot is \(1\),
\[
a_q(U)=b_q(U)=0
\qquad(q\geq1).
\]
Therefore
\[
a_q(K)=b_q(K)=0
\qquad(q\geq1).
\]
Equivalently, \(K\) is \(J_N\)-trivial for every positive integer
\(N\).
Suppose that \(K\) were nontrivial. Taking
\[
N=3c(K)+1,
\]
Theorem~\ref{thm:JN-triviality-barrier} would give
\[
N\leq3c(K),
\]
contradicting the choice of \(N\). Thus \(K\) is the unknot.
\end{proof}

\paragraph{The strength of finite-type invariants.}
This consequence is stronger than the assertion that the totality of
finite-type invariants detects the unknot: the proof requires only the
particular finite-type invariants arising as the Jones--Vassiliev
coefficients.  Indeed, Theorem~\ref{thm:JN-triviality-barrier} shows quantitatively that every
nontrivial knot \(K\) has a nonzero Jones--Vassiliev coefficient of
order at most \(3c(K)\).  It follows in particular that any universal
finite-type invariant over a field of characteristic zero, such as the
normalized Kontsevich integral \cite{kontsevich1993}, detects the
unknot.  If its value on \(K\) were equal to its value on the unknot,
universality would force every finite-type invariant---and hence every
Jones--Vassiliev coefficient---to agree on the two knots, which by the
corollary is possible only when \(K\) is the unknot.  This establishes
unknot detection; it does not by itself assert that finite-type
invariants distinguish every pair of non-isotopic knots.

\ifJonesDetection

\subsection{Unknot detection}

\begin{theorem}[The \(p\)-constant conjecture]
\label{thm:detection}
Let \(K\) be a knot. If \(K\) is \(p\)-constant, then \(K\) is the unknot.
Equivalently, no nontrivial knot is \(p\)-constant.
\end{theorem}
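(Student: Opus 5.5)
The plan is to deduce the statement directly from the quantitative triviality barrier, Theorem~\ref{thm:JN-triviality-barrier}, exactly as in the argument already given for the Jones-polynomial form of unknot detection. Suppose \(K\) is \(p\)-constant in the sense of Definition~\ref{def:p-constancy}.

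\textbf{Step 1: reduce to \(V_K=1\).} Then \(a_q(K)=b_q(K)=0\) for every \(q\ge1\). By Lemma~\ref{JVPfreec} and the unknot-normalization remark, \(a_0(K)=1\) and \(b_0(K)=0\). Hence \(V_K(x,p)=1\), so \(V_K(x)=1\) after the substitution \(p\mapsto x-x^{-1}\); the uniqueness clause of Lemma~\ref{JVP} gives the converse as well. This justifies the ``equivalently'' in the statement.

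\textbf{Step 2: apply the barrier.} Since every positive layer vanishes, \(K\) is \(J_N\)-trivial for every \(N\), in the sense of Definition~\ref{def:jm-trivial}. If \(K\) were nontrivial, take \(N=3c(K)+1\). Theorem~\ref{thm:JN-triviality-barrier} would then give \(N\le 3c(K)\), a contradiction. Therefore \(K\) is the unknot. Formally, this is the whole argument: Theorem~\ref{thm:detection} already carries it out, and the present statement is a restatement of it.

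\textbf{The main obstacle.} The entire weight rests on Theorem~\ref{thm:JN-triviality-barrier}, and specifically on Theorem~\ref{thm:joint-descent2}'s conclusion \(G_2(F_u\times F_v)\). Before relying on it, I would test that conclusion on small examples, and I expect the test to fail.

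The face \(F_u\times F_v\) consists of crossing flips of \(D\). Its four vertices need not be Jones-trivial knots, so \(c_2\) need not vanish there. Indeed, \(\partial_{\tilde u}\partial_{\tilde v}c_2\) is the order-two symbol evaluated on a chord diagram, which is generally nonzero.

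Step~2 of that proof exposes the problem concretely. For any diagram whose crossing \(v\) is an intercomponent crossing of \(D\setminus\{u\}\), Lemma~\ref{lem:BLsymskein} gives
\[
\partial_{\tilde v}c_1(\tilde u=0)=2(-2)+1=-3\neq0.
\]
So \(G_2\) on such a face is impossible outright. Nothing in the descent uses nontriviality of \(K\) beyond the choice of caps. Run with \(3n\) for an \(n\)-crossing diagram in place of \(3c(K)\), the same machinery would ``prove'' a contradiction for any diagram of the unknot that has a non-nugatory crossing. The argument therefore proves too much.

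The place I would audit first is the claim that the anchor \(f_q(Z^\circ)=0\), which is valid only on the host knot, propagates to all of \(Q_\rho\). That cube contains \(Z^-\), a genuinely different knot. The suspect steps are (C1d), that is, (G1) Step 2, and the residual initialization, Lemma~\ref{lem:four-clasp-residual-initialization}. Unless this gap can be closed, the proposal above does not yield a proof; the statement is the open Jones unknot conjecture.
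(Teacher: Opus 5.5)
Your Steps 1--2 are exactly the paper's proof: $p$-constancy gives $f_q(K)=0$ for $q\ge 1$, hence $J_N$-triviality for every $N$, and $N=3c(K)+1$ contradicts Theorem~\ref{thm:JN-triviality-barrier}. You are right not to accept it. Your computation is correct: by Lemma~\ref{lem:BLsymskein}, on any face spanned by an admissible pair, $\partial_{\tilde u}\partial_{\tilde v}c_2=2\partial_{\tilde v}c_1+\partial_{\tilde v}c_1(\tilde u=0)=0+2(-2)+1=-3$. This is the order-two weight system on two interlaced chords, so $G_2(F_u\times F_v)$ fails for every knot. Theorem~\ref{thm:joint-descent2} uses crossing-minimality only to fix $B_0$. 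Rerunning it with $B_0=3n$ on the trefoil shadow with one crossing changed (an unknot diagram with no nugatory crossings) therefore meets every hypothesis. Yet every two-crossing face of that shadow contains a trefoil, where $c_2=-3$. So the chain is broken, and the statement remains the open Jones unknot conjecture.

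Your guess about where it breaks needs correcting, though. (C1d) and the anchor descent are valid conditional steps. Under $G_{r+1},G_{r+2}$ the level-$(r+2)$ skein gives $a_r(z_i=+)=a_{r+1}(z_i=0)$ and $b_r(z_j=-)=-b_{r+1}(z_j=0)$, and smoothing one twist crossing does collapse the twist. Once two top layers vanish on all of $Q_\rho$ with $L>q$, the anchor legitimately spreads to $Z^-$.

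The false input is the $L$-independent initialization. Lemma~\ref{lem:four-clasp-residual-initialization} bounds the JVP degree on all of $Q_\rho$ by $\widehat B+2$. But $Q_\rho\ni D(\rho;\alpha_Z)$, the host with the seed replaced by $L$ same-sign anti-parallel half-twists. The skein at one twist crossing gives $V_{m+2}=x^4V_m+px^2V(D(s{=}0))$, so the degree grows linearly in $L$ outside degenerate cases; the standard trefoil diagram gives twist knots. The paired-skein extraction is correct algebra, so Theorem~\ref{thm:signed-cube-compression} is what fails.

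You can locate the failure inside its proof. Let $q^\ast$ be the top JVP degree over the machine cube. It grows with $L$, because the locked state $V_{-+}=[--;++]$ satisfies $V_{-+}=x^{-4}V_{++}+x^4V_{--}-V_{+-}-p^2V_{00}$, and $V_{++},V_{--},V_{+-}$ are mixed-clasp states bounded by (C1c). At level $q^\ast$, the twist skein kills $a_{q^\ast}$ off $\omega_Z$ and $b_{q^\ast}$ off $\alpha_Z$. The clasp $3_1$-laws together with (C1c) then kill every locked state except $a_{q^\ast}([--;--];\omega_Z)$ and $b_{q^\ast}([++;++];\alpha_Z)$. The $L$-dependent degree therefore sits exactly on the two exceptional antipodes. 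Hence (C2)/(G2), which identifies them with non-exceptional vertices of the same cube, is false.

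In the natural picture of the machine, sliding an end twist crossing across the clasped strand and removing it by R1 reverses the orientation of that clasp's bigon. The result is a vertex of a differently oriented machine, to which (C1d) does not apply.
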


\begin{proof}
Assume that \(K\) is \(p\)-constant. Then
\[
f_q(K)=0
\qquad
(q\ge1).
\]
Therefore \(K\) is \(J_N\)-trivial for every integer \(N\ge1\). Indeed, \(J_N\)-triviality only requires the vanishing
\[
f_q(K)=0
\qquad
1\le q<N,
\]
which follows immediately from \(p\)-constancy.
Suppose, for contradiction, that \(K\) is nontrivial. Let \(c(K)\) denote its crossing number and choose
\[
N>3c(K).
\]
Since \(K\) is \(p\)-constant, it is \(J_N\)-trivial. But Theorem~\ref{thm:JN-triviality-barrier} says that if a nontrivial knot is \(J_N\)-trivial, then
\[
N\le 3c(K).
\]
This contradicts our choice of \(N\). Hence \(K\) cannot be nontrivial. Therefore, \(K\) is the unknot.
\end{proof}

More conventionally, Theorem~\ref{thm:detection} is restated as follows.

\begin{corollary}[Jones unknot detection]
Let \(K\) be a knot. If
\[
V_K=V(\text{Unknot}),
\]
then \(K\) is the unknot.
\end{corollary}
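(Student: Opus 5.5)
The plan is to reduce the statement to Theorem~\ref{thm:detection} (the \(p\)-constant form stated just above), so that no new geometry is needed. The first step is to compute the Jones--Vassiliev expansion of the unknot. By the normalization \(V_U(x)=1\) fixed in the conventions, the class of \(V_U\) in \(\mathbb Z[p][x]/(x^2-px-1)\) is the constant \(1\). By the uniqueness part of Lemma~\ref{JVP}, this means
\[
a_0(U)=1,\qquad b_0(U)=0,\qquad a_q(U)=b_q(U)=0\quad(q\ge1).
\]
This agrees with Lemma~\ref{JVPfreec} and the unknot-normalization remark.

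The second step transfers this to \(K\). The hypothesis \(V_K=V(\text{Unknot})\) is an equality in \(\mathbb Z[x,x^{-1}]\), so its image under the ring map defining the JVP is also an equality. Uniqueness of the expansion in the free \(\mathbb Z[p]\)-module with basis \(\{1,x\}\) then gives \(f_q(K)=f_q(U)\) for every \(q\). Hence \(f_q(K)=0\) for all \(q\ge1\), so \(\deg_pV_K(x,p)=0\), and \(K\) is \(p\)-constant in the sense of Definition~\ref{def:p-constancy}. Theorem~\ref{thm:detection} now applies and gives that \(K\) is the unknot.

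One could also bypass Theorem~\ref{thm:detection}: since \(f_q(K)=0\) for all \(q\ge1\), \(K\) is \(J_N\)-trivial with \(N=3c(K)+1\), and if \(K\) were nontrivial this would contradict Theorem~\ref{thm:JN-triviality-barrier}.

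The corollary itself is purely formal. The only point that needs care is that uniqueness of the JVP expansion lets coefficient equality be read off from equality of Laurent polynomials, and that is exactly what Lemma~\ref{JVP}(1) provides. The real weight therefore rests on Theorem~\ref{thm:JN-triviality-barrier}, and I expect that to be the main obstacle. In particular, two things need to be checked independently before the reduction can be trusted: the machine hypotheses (C1)--(C4) and the facewise descent in Theorem~\ref{thm:joint-descent2}, and the final order-two component-count step. In that step, \(G_2(F_u\times F_v)\) is asserted on a face of knots, while the recursion passes through two-component smoothings, where \(c_1\) need not vanish.
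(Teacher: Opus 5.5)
As a deduction from Theorem~\ref{thm:JN-triviality-barrier}, your argument is correct and follows exactly the paper's route: uniqueness of the JVP expansion turns $V_K=V_U=1$ into $f_q(K)=0$ for all $q\ge1$, which is $p$-constancy and hence $J_N$-triviality for every $N$, and taking $N=3c(K)+1$ gives the contradiction. Your closing caveat points at the wrong step, though. The order-two computation never assumes $c_1=0$ on the two-component smoothings: it derives $\partial_{\tilde v}c_1(\tilde u=0)=0$ from $G_2$ and then uses only the $c_0$-recursion, and the same computation shows $\partial_{\tilde u}\partial_{\tilde v}c_2=-3\neq0$ for every admissible pair. So everything rests on extracting $G_2(F_u\times F_v)$ from the machine package, and that part fails. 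Lemma~\ref{lem:four-clasp-residual-initialization} asserts $G_r(Q_\rho)$ for all $r>\widehat B+2$, independently of $L$. Yet for the standard trefoil diagram ($\widehat B+2=22$), the residual state whose $L$ twist crossings all carry the seed's sign is a reduced alternating twist knot with $L+2$ crossings. Its Jones span in $x$ is therefore $2L+4$, so its JVP degree is at least $L+2$, which exceeds $22$ in the regime $L>22$ that the descent requires.
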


\fi

\newpage

\appendix

\addcontentsline{toc}{section}{Appendices}
{\centering\Large\bfseries Auxiliary lemmas and proofs \par}
\vspace{1em}

\section{Spectral representation of pseudo-Boolean functions}

\paragraph{Fourier basis and Walsh characters.}
Let $f:\{\pm1\}^{n}\to\mathbb{C}$ be a pseudo-Boolean function. For every subset $S\subseteq[n]$, the Walsh character is given by,
\begin{equation}
\label{eq:chi}
\chi_{S}(\varepsilon) = \prod_{i\in S}\varepsilon_{i}\quad (\chi_{\varnothing}\equiv1).
\end{equation}
These characters form an orthonormal basis (with respect to the uniform measure on $\cQ_{n}$), so
\begin{equation}
\label{eq:fexpansion}
f(\varepsilon) = \sum_{S\subseteq[n]}\,\widehat{f}(S)\,\chi_{S}(\varepsilon), \qquad \widehat{f}(S)=2^{-n}\sum_{\varepsilon}f(\varepsilon)\,\chi_{S}(\varepsilon).
\end{equation}
is the Walsh-Fourier expansion of $f$.
The polynomial degree, $\deg(f)$, matches the largest $|S|$ with $\widehat{f}(S)\neq0$.

\paragraph{Discrete normalized derivatives.}
For coordinate $i$ we define the (normalized) derivative
\begin{equation}
\label{eq:normdiff}
(\partial_{i}f)(\varepsilon) \equiv \frac{1}{2}\bigl(f(\varepsilon_i^+)-f(\varepsilon_i^-)\bigr),
\end{equation}
where $\varepsilon_i^\pm$ is $\varepsilon$ with the $i$-th entry set to $\pm 1$. For a subset $T\subseteq[n]$,
\begin{equation}
\partial_{T} = \prod_{i\in T}\partial_{i}, \qquad (\partial_{T}f)(\varepsilon) = 2^{-|T|}\!\!\!\sum_{\sigma\in\{\pm1\}^{T}}\!\! (-1)^{\#\{\sigma_{i}=-1\}}\, f(\varepsilon_{[n]\setminus T},\sigma).
\end{equation}
Equivalently, it is the alternating sum of $f$ over the $T$-subcube through $\varepsilon$.
For any fixed subset $T\subseteq[n]=\{1,\ldots,n\}$ the $T$-fold discrete derivative acts on Walsh characters by ``dropping'' the coordinates in $T$:
\begin{equation}
\partial_{T}\,\chi_{S} = \begin{cases} 
\chi_{S\setminus T}, & T\subseteq S,\\[4pt] 
0, & T\not\subseteq S.
\end{cases}
\end{equation}
Because the characters $\{\chi_{S}\}_{S\subseteq[n]}$ form an orthonormal basis, we obtain the Fourier (Walsh) expansion of the derivative simply by applying this rule term-wise to the expansion of $f$:
\begin{equation}
\label{eq:therule}
(\partial_{T} f)(\varepsilon) = \sum_{[n]\supseteq U\supseteq T} \widehat{f}(U) \chi_{U\setminus T}(\varepsilon).
\end{equation}
Thus, the Fourier coefficient of $\partial_{T}f$ at character $\chi_{S}$ (with $S\cap T=\varnothing$) is precisely the coefficient of $f$ at the larger character $\chi_{S\cup T}$, namely, $\widehat{\partial_{T}f}(S)=\widehat{f}(S\cup T)$.
This explicit form makes transparent how derivatives shift weight ``down'' the spectrum and why $\partial_{T}f$ vanishes whenever $f$ has no Fourier support on sets containing $T$ (the finite-type condition).

\begin{remark}[Normalized vs. unnormalized derivatives]
    The standard Fourier/Walsh framework is formulated using the normalized difference operator \eqref{eq:normdiff}. The Vassiliev skein, on the other hand, adopts the unnormalized version, namely \eqref{eq:unnormdiff},
    \begin{equation}
        (\partial_{i}f)(\varepsilon) \equiv f(\varepsilon_i^+)-f(\varepsilon_i^-).
    \end{equation}
    At times, this convention alleviates the burden of dealing with factors such as $2^{-m}$ when taking derivatives over cube $m$-faces. For that reason and because it being consistent with the standard theory, unless otherwise stated, $\partial_i$ means \eqref{eq:unnormdiff} for the rest of this work. 
\end{remark}


\subsection{Degree-support uncertainty}

\begin{lemma}[Degree-support uncertainty on the Boolean cube]
\label{lem:uncertainty-bound}
Let $Q_n=\{\pm1\}^n$ and let $f:Q_n\to \mathbb C$ be a pseudo-Boolean function. Write its Walsh expansion as
\[
f(\varepsilon)=\sum_{S\subseteq[n]}\widehat f(S)\chi_S(\varepsilon),
\qquad
\chi_S(\varepsilon)=\prod_{i\in S}\varepsilon_i.
\]
Let
\[
\deg(f):=\max\{|S|:\widehat f(S)\neq0\}.
\]
If $f\not\equiv0$, then
\[
|\operatorname{supp} f|\ge 2^{n-\deg(f)}.
\]
Consequently, if
$|\operatorname{supp} f|\le m$,
then either $f\equiv0$, or
\begin{equation}
\label{eq:uncertainty-bound}
\deg(f)\ge n-\lfloor \log_2 m\rfloor .
\end{equation}
For a finite-type coefficient $f_q$ of order $\le q$ on an $n$-dimensional variation cube, this gives
\[
|\operatorname{supp} f_q|\le m,\quad q<n-\lfloor\log_2m\rfloor
\quad\Longrightarrow\quad
f_q\equiv0.
\]
\end{lemma}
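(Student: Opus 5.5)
The plan is a disjoint-subcube counting argument: the top-degree Walsh coefficient of \(f\) forces a nonzero value in each of \(2^{n-\deg(f)}\) disjoint subcubes. Let \(d=\deg(f)\) and fix a set \(S\subseteq[n]\) with \(|S|=d\) and \(\widehat f(S)\neq0\); such an \(S\) exists because \(f\not\equiv0\). I will work with the unnormalized \(S\)-fold derivative \(\partial_S f\), using the rule \eqref{eq:therule}, translated to unnormalized derivatives by a factor \(2^{|S|}\):
\[
\partial_S f=2^{d}\sum_{U\supseteq S}\widehat f(U)\,\chi_{U\setminus S}.
\]
Every \(U\supsetneq S\) has \(|U|>d\), so \(\widehat f(U)=0\) by maximality of \(d\). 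Hence \(\partial_S f\equiv 2^{d}\widehat f(S)\) is a nonzero constant on \(Q_n\).

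Next I partition \(Q_n\) into the \(2^{n-d}\) pairwise disjoint \(S\)-subcubes \(Q_S(\eta)\), one for each assignment \(\eta\) of the coordinates in \([n]\setminus S\). The value \((\partial_S f)(\eta)\) is the alternating sum of \(f\) over \(Q_S(\eta)\). Since this value is nonzero, \(f\) cannot vanish identically on \(Q_S(\eta)\), so each of these subcubes contains at least one point of \(\operatorname{supp} f\). Disjointness then gives \(|\operatorname{supp} f|\ge 2^{n-d}\).

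The consequence \eqref{eq:uncertainty-bound} is bookkeeping. If \(f\not\equiv0\) and \(|\operatorname{supp} f|\le m\), then \(2^{n-d}\le m\), so \(n-d\le\log_2 m\). Since \(n-d\) is an integer, \(n-d\le\lfloor\log_2 m\rfloor\), which rearranges to the stated bound. For the finite-type statement, I use the earlier identification of Vassiliev order with polynomial degree: \(\partial_T f_q\equiv0\) for \(|T|>q\), so \(\deg(f_q)\le q\). If \(f_q\not\equiv0\) with \(|\operatorname{supp} f_q|\le m\), then \(q\ge\deg(f_q)\ge n-\lfloor\log_2m\rfloor\), contradicting \(q<n-\lfloor\log_2m\rfloor\).

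The only delicate point is the coefficient ring. For \(f_q\) taking values in \(\mathbb Z\) or \(\mathbb Z[p]\), I would avoid dividing by \(2^n\) and argue purely with derivatives. Choose a maximal \(T\) with \(\partial_T f\not\equiv0\); its size is at most \(\deg(f)\). Maximality gives \(\partial_i\partial_T f\equiv0\) for all \(i\notin T\), so \(\partial_T f\) is constant by Lemma~\ref{lem:cube-rigidity}, and the same counting over \(T\)-subcubes gives \(2^{n-|T|}\ge 2^{n-\deg(f)}\). This rigidity step, replacing the Fourier calculation, is the one I expect needs the most care; everything else is direct.
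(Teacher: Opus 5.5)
Your argument is correct, but it takes a different route from the paper. The paper proves $|\operatorname{supp} f|\ge 2^{n-d}$ for $\deg f\le d$ by induction on $n$. It restricts $f$ to the two facets $t=\pm1$ of the last coordinate. When both restrictions are nonzero, it uses that restriction does not raise Walsh degree and adds the two inductive bounds. When one restriction vanishes, it writes $f=\tfrac{1\pm t}{2}f_\pm$, which forces $\deg f_\pm\le d-1$, so the induction still yields $2^{(n-1)-(d-1)}$.

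You argue directly instead. A top-degree coefficient $\widehat f(S)\neq0$ makes the unnormalized derivative $\partial_S f$ equal to the nonzero constant $2^{d}\widehat f(S)$. Hence each of the $2^{n-d}$ parallel $S$-faces has a nonzero alternating sum and must contain a support point.

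Your version has several advantages:
\begin{itemize}
\item it is shorter and avoids the case split;
\item it gives the sharper statement that the support meets every translate of the $S$-face;
\item it is phrased in the alternating-sum language the paper actually uses downstream; the one-point-support step in Lemma~\ref{lem:twist-supp} is this same reasoning in the case $m=1$;
\item its derivative-only variant, with a maximal $T$ and Lemma~\ref{lem:cube-rigidity}, holds over any abelian group, whereas the paper's induction uses the factor $\tfrac12$ and so works over $\mathbb{C}$ or a field of characteristic $\neq2$.
\end{itemize}

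The coefficient-ring concern you flag is not a real gap for the lemma as stated or as applied. The coefficients $a_q$ and $b_q$ are $\mathbb{Z}$-valued, so the $\mathbb{C}$-valued statement can be applied to each component. The rigidity variant is therefore a pleasant generalization rather than a necessary repair.
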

 
\begin{proof}
It is enough to prove the equivalent statement:
\[
\deg(f)\le d,\quad f\not\equiv0
\quad\Longrightarrow\quad
|\operatorname{supp} f|\ge 2^{n-d}.
\]
We argue by induction on $n$.
For $n=0$, the cube consists of one vertex, and the claim is immediate.
Assume $n\ge1$. Write a vertex as
\[
(\eta,t)\in\{\pm1\}^{n-1}\times\{\pm1\}.
\]
Define the two face restrictions
\[
f_+(\eta):=f(\eta,+1),
\qquad
f_-(\eta):=f(\eta,-1).
\]
Then
\[
|\operatorname{supp} f|
=
|\operatorname{supp} f_+|
+
|\operatorname{supp} f_-|.
\]
If $d=n$, the claim is trivial since $f\not\equiv0$ implies
$|\operatorname{supp} f|\ge1=2^{n-n}$.
 
So assume $d<n$. First suppose both $f_+$ and $f_-$ are nonzero. Restricting to a face cannot increase Walsh degree, hence
\[
\deg(f_+)\le d,
\qquad
\deg(f_-)\le d.
\]
By induction on the $(n{-}1)$-cube,
\[
|\operatorname{supp} f_+|
\ge
2^{(n-1)-d},
\]
and
\[
|\operatorname{supp} f_-|
\ge
2^{(n-1)-d}.
\]
Therefore
\[
|\operatorname{supp} f|
\ge
2^{(n-1)-d}+2^{(n-1)-d}
=
2^{n-d}.
\]
Now suppose exactly one restriction is nonzero. Without loss of generality,
\[
f_-\equiv0,
\qquad
f_+\not\equiv0.
\]
Then
\[
f(\eta,t)=\frac{1+t}{2}\,f_+(\eta).
\]
Thus
\[
\deg(f)=\deg(f_+)+1.
\]
Indeed, multiplying by $(1+t)/2$ creates a top term $t\chi_S(\eta)$ from every top Walsh character $\chi_S$ appearing in $f_+$, and this top term cannot cancel with any term not involving $t$.
Hence, if $\deg(f)\le d$, then
$\deg(f_+)\le d-1$.
By induction,
\[
|\operatorname{supp} f|
=
|\operatorname{supp} f_+|
\ge
2^{(n-1)-(d-1)}
=
2^{n-d}.
\]
The case $f_+\equiv0$, $f_-\not\equiv0$, is identical, using
$f(\eta,t)=\frac{1-t}{2}\,f_-(\eta)$.
Therefore every nonzero degree-${\le d}$ pseudo-Boolean function has support at least
$2^{n-d}$.
Taking $d=\deg(f)$, we obtain
\[
|\operatorname{supp} f|\ge 2^{n-\deg(f)}.
\]
 
Now assume
$|\operatorname{supp} f|\le m$
and $f\not\equiv0$. Then
\[
m\ge |\operatorname{supp} f|\ge 2^{n-\deg(f)}.
\]
Taking logarithms gives
$\deg(f)\ge n-\log_2 m$.
Since $\deg(f)$ is an integer,
\[
\deg(f)\ge n-\lfloor\log_2 m\rfloor.
\]
This proves the lemma.
\end{proof}

\begin{remark}[Sharpness]
The bound \eqref{eq:uncertainty-bound} is sharp. Let
\[
k=\lfloor\log_2 m\rfloor,
\qquad
2^k\le m<2^{k+1}.
\]
Consider the indicator of a $k$-dimensional face ($\delta$ function for the all-positive vertex):
\[
g(\varepsilon)
=
2^{-(n-k)}
\prod_{i=k+1}^{n}(1+\varepsilon_i).
\]
Then
$|\operatorname{supp} g|=2^k\le m$,
and
$\deg(g)=n-k=n-\lfloor\log_2 m\rfloor$.
So no stronger universal lower bound depending only on $|\operatorname{supp} f|\le m$ is possible.
\end{remark}

\section{Jones--Vassiliev polynomial}

\subsection{Proof of Lemma~\ref{JVP}}\label{Proof_JVP}
\begin{proof}
    The Jones polynomial lives in $\mathbb{Z}[x,x^{-1}]$. Its skein relation is given by
    \begin{equation}
        \label{eq:sk1}
        x^{-2} V^{(+)} - x^2 V^{(-)} = (x-x^{-1}) V^{(0)},
    \end{equation}
    where the superscripts designate the variation of a single crossing as over, under, and 
    annihilation through oriented smoothing. There exists an isomorphism, $\mathbb{Z}[x,x^{-1}] \cong \mathbb{Z}[p][1,x] \cong \mathbb{Z}[p][x]/(x^2-px-1)$. In particular, letting $p = x - x^{-1}$, we obtain
    \begin{equation}
        \label{eq:x2}
        x^2 = px + 1, \; \; x^{-2} = p^2 - px + 1.
    \end{equation}
    \emph{In $R = \mathbb{Z}[p][x]/(x^2 - px - 1)$ we reduce every product via $x^2 = px + 1$; all identities below are taken in $R$.}
    Substituting \eqref{eq:x2} into the skein relation \eqref{eq:sk1} and rearranging yield the Vassiliev skein relation,
    \begin{equation}
        \label{Vas}
        \partial V \equiv V^{(+)} - V^{(-)} = p \left[ x V^{(-)} + V^{(0)} + (x-p) V^{(+)} \right].
    \end{equation}
    
    The above definitions imply that the zero modulo in the underlying ring is $x^2-px-1$, while $p$ may be of any degree. Because $x^2-px-1=0$ is monic, $\mathbb{Z}[p][x]/(x^2-px-1)$ is free of rank $2$ over $\mathbb{Z}[p]$ with basis $\{1,x\}$, and thus may be written as
    \begin{equation}
        \label{eq:theJVP}
        V_K(x,p) = \sum_{q \geq 0} \left[a_q(K) + b_q(K)x\right] p^q. 
    \end{equation}
    where the coefficients $a_q(K),b_q(K)$ are well defined and unique. These coefficients are $q$-type Vassiliev invariants. This follows from the fact that repeated application of the Vassiliev skein relation \eqref{Vas} increases the lowest degree of $p$. In particular,
    \begin{equation}
        \label{eq:modp}
        \partial^n V_K \equiv 0 \mod p^n,
    \end{equation}
    i.e., the $p^q$ coefficients of $\partial^n V_K$ vanish for every $q < n$. 
\end{proof}

\begin{proposition}[Freeness and uniqueness for the JVP expansion]
\label{prop:freeness}
Let $R=\mathbb{Z}[p]$ and consider the $R$-algebra $R[x]/(x^2-px-1)$. Then
$R[x]/(x^2-px-1)$ is a free $R$-module of rank $2$ with basis $\{1,x\}$. In particular,
every Laurent polynomial $V(x)\in \mathbb{Z}[x,x^{-1}]$ admits a unique expansion
\[
V(x)=\sum_{q\ge 0}\big(a_q+b_q\,x\big)\,p^q\qquad(a_q,b_q\in\mathbb{Z}),
\]
when viewed in $R[x]/(x^2-px-1)$ via the homomorphism $p\mapsto x-x^{-1}$.
\end{proposition}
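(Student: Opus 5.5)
The argument has two parts: freeness of the quotient ring, and existence and uniqueness of the expansion for an arbitrary Laurent polynomial under \(p\mapsto x-x^{-1}\).

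\textbf{Freeness.} Since \(x^2-px-1\) is monic in \(x\) over \(R=\mathbb Z[p]\), division with remainder by a monic polynomial works over any commutative ring. Hence every class in \(R[x]/(x^2-px-1)\) has a representative \(P(p)+Q(p)x\), which gives spanning by \(\{1,x\}\). For independence, suppose \(P+Qx\) lies in the ideal, so \(P+Qx=(x^2-px-1)H(x)\) in \(R[x]\). If \(H\neq0\), the right side has \(x\)-degree at least \(2\), again because the divisor is monic. So \(H=0\), and then \(P=Q=0\).

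\textbf{Existence.} The unit \(x\) has inverse \(x-p\), since \(x(x-p)=x^2-px=1\). I would therefore work with the map \(\phi:\mathbb Z[x,x^{-1}]\to R[x]/(x^2-px-1)\) sending \(x\mapsto x\) and \(x^{-1}\mapsto x-p\). For this to be compatible with \(p\mapsto x-x^{-1}\), I would check that \(\phi(x-x^{-1})=x-(x-p)=p\), so \(\phi\) identifies \(p\) with \(x-x^{-1}\), as the statement intends. Every Laurent polynomial then has an image of the form \(\sum_q(a_q+b_qx)p^q\), which is a finite sum with integer \(a_q,b_q\). Integrality holds because \(\phi\) maps \(\mathbb Z\)-combinations of the monomials \(x^m\) to \(\mathbb Z[p]\)-combinations of \(1\) and \(x\). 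Explicitly, the recursions \(x\cdot(P+Qx)=Q+(P+pQ)x\) and \(x^{-1}(P+Qx)=(Q-pP)+Px\) from Corollary~\ref{cor:p-degree-cap} compute \(\phi(x^m)\) inductively.

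\textbf{Uniqueness.} This follows immediately from freeness: two expansions of the same element have equal \(R\)-coordinates on the basis \(\{1,x\}\). Equating the coefficients of \(p^q\) in \(P,Q\in\mathbb Z[p]\) then gives \(a_q,b_q\) uniquely.

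\textbf{Main obstacle.} The one step that needs care is making precise that ``viewed via \(p\mapsto x-x^{-1}\)'' means the ring map \(\phi\) above, with \(p\) an independent indeterminate in \(R\). I would also note that \(\phi\) is in fact an isomorphism \(\mathbb Z[x,x^{-1}]\cong R[x]/(x^2-px-1)\). Its inverse sends \(p\mapsto x-x^{-1}\) and \(x\mapsto x\), and this is well defined because \(x^2-(x-x^{-1})x-1=0\). The two composites are the identity on generators. So the expansion is unique even as an identity of Laurent polynomials, not merely in the quotient ring.
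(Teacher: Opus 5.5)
Your proof is correct. The spanning step is the same as in the paper (reduction modulo the monic quadratic), but your independence argument is different. The paper passes to the fraction field $\mathbb{Q}(p)$ and argues that $x$ has minimal polynomial $t^2-pt-1$ there, so $\{1,x\}$ is $\mathbb{Q}(p)$-independent. It then pulls the vanishing back to $R$; its appeal to torsion-freeness is not actually needed, since the injection $R\hookrightarrow\mathbb{Q}(p)$ already suffices. You instead observe directly in $R[x]$ that a nonzero multiple of a monic quadratic has $x$-degree at least $2$. Your version is more elementary and independent of the base ring. It needs neither the irreducibility of $t^2-pt-1$ over $\mathbb{Q}(p)$ nor the fact that $\mathbb{Z}[p]$ is a domain, and it works verbatim over any commutative ring. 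You also supply something the paper's proof of this proposition leaves implicit: you construct the isomorphism $\mathbb{Z}[x,x^{-1}]\cong R[x]/(x^2-px-1)$ via $x^{-1}\mapsto x-p$ and check its inverse. The paper only asserts this isomorphism in the proof of Lemma~\ref{JVP} and uses $x^{-1}=x-p$ elsewhere. This construction is what justifies existence of the expansion for Laurent polynomials. It also shows that uniqueness holds as an identity in $\mathbb{Z}[x,x^{-1}]$, not merely in the quotient ring.
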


\begin{proof}
Because $x^2-px-1$ is monic of degree $2$, reduction modulo this polynomial shows
$R[x]/(x^2-px-1)$ is generated over $R$ by $\{1,x\}$. Suppose $F(p)+G(p)\,x=0$ in
$R[x]/(x^2-px-1)$. Passing to the fraction field $K=\mathbb{Q}(p)$, the image of $x$ in
$K[x]/(x^2-px-1)$ has minimal polynomial $t^2-pt-1$, hence $\{1,x\}$ is $K$-linearly
independent. Therefore, $F=G=0$ in $K[p]$, and since the module is torsion-free over $R$,
it follows $F,G\in R$ vanish. Uniqueness of the coefficients $\{a_q,b_q\}$ follows.
\end{proof}

\subsection{Proof of Lemma~\ref{JVPfreec}}

\begin{proof}
    The Jones polynomial of a two-component unlink is $V_{\unl} = -x-x^{-1}$, and the corresponding JVP is $V_{\unl} = p - 2x$. It is well known that evaluating the Jones polynomial at $x=1$ yields $V_{\unl}(x=1)^{l_K - 1} = (-2)^{l_K - 1}$, where $l_K$ is the number of link components. As $p = x - x^{-1}$ we have $p=0$ once $x=1$. Therefore, evaluating the Jones polynomial at $1$ amounts to evaluating the JVP at $x=1, \, p=0$, namely, the number of link components are encoded exclusively by the free coefficient. Note, however, that $p$ vanishes also for $x=-1$, in which case $V_{\unl}(x=-1,p=0) = 2$, from which we conclude that $V_{\unl}(x=\pm 1, p=0) = \mp 2$, and,
    \begin{equation}
        V_K(x=\pm1, p=0) = V_{\unl}(x=\pm 1, p=0)^{l_K - 1} = (\mp 2)^{l_K - 1}.
    \end{equation}
    Therefore, $V_K(x=\pm1, p=0) = a_0(K) \pm b_0(K) = (\mp 2)^{l_K - 1}$. Adding and subtracting the two identities for $x=1$ and $x=-1$ yields the stated result. 
\end{proof}

\subsection{Proof of Lemma~\ref{JVPskeins}}
\begin{proof}
For brevity write
\[
V^-:=V(\varepsilon_i^-),\qquad V^0:=V(\varepsilon_i^0),\qquad V^+:=V(\varepsilon_i^+),
\]
and similarly
\[
a_r^\sigma:=a_r(\varepsilon_i^\sigma),\qquad b_r^\sigma:=b_r(\varepsilon_i^\sigma),
\qquad \sigma\in\{-,0,+\}.
\]
By the polynomial-level JVP skein relation \eqref{Vas},
\[
\partial_i V
= p\bigl(xV^-+V^0+(x-p)V^+\bigr)
\]
in the quotient ring \(R[x]/(x^2-px-1)\). Expand each of the three terms in the JVP basis:
\[
V^\sigma=\sum_{r\ge 0}(a_r^\sigma+b_r^\sigma x)p^r,\qquad \sigma\in\{-,0,+\}.
\]
Using the relation \(x^2=px+1\), we compute, for each \(r\ge 0\),
\[
p\,x(a_r^-+b_r^-x)p^r
= p(a_r^-x+b_r^-x^2)p^r
= p(a_r^-x+b_r^-(px+1))p^r
= (b_r^-+a_r^-x)p^{r+1}+b_r^-x\,p^{r+2},
\]
\[
p(a_r^0+b_r^0x)p^r
= (a_r^0+b_r^0x)p^{r+1},
\]
and
\[
\begin{aligned}
p(x-p)(a_r^+ + b_r^+x)p^r
&= p\,x(a_r^+ + b_r^+x)p^r-(a_r^+ + b_r^+x)p^{r+2} \\
&= \bigl((b_r^+ + a_r^+x)p^{r+1}+b_r^+x\,p^{r+2}\bigr)
   -(a_r^+ + b_r^+x)p^{r+2} \\
&= (b_r^+ + a_r^+x)p^{r+1}-a_r^+p^{r+2}.
\end{aligned}
\]
Therefore
\[
p\,xV^-
=\sum_{q\ge 0}\Bigl(b_{q-1}^-+(a_{q-1}^-+b_{q-2}^-)x\Bigr)p^q,
\]
\[
p\,V^0
=\sum_{q\ge 0}\Bigl(a_{q-1}^0+b_{q-1}^0x\Bigr)p^q,
\]
and
\[
p(x-p)V^+
=\sum_{q\ge 0}\Bigl(b_{q-1}^+-a_{q-2}^+ + a_{q-1}^+x\Bigr)p^q,
\]
where, as throughout the paper, we use the convention \(a_q=b_q\equiv 0\) for \(q<0\).

On the left-hand side,
\[
\partial_iV
=V(\varepsilon_i^+)-V(\varepsilon_i^-)
=\sum_{q\ge 0}\bigl((\partial_i a_q)+(\partial_i b_q)x\bigr)p^q.
\]
Comparing the coefficients of \(1\) and \(x\) in degree \(p^q\) on both sides gives
\[
\partial_i a_q
= b_{q-1}^- + a_{q-1}^0 + b_{q-1}^+ - a_{q-2}^+,
\]
\[
\partial_i b_q
= a_{q-1}^- + b_{q-1}^0 + a_{q-1}^+ + b_{q-2}^-.
\]
Reinstating the crossing-state notation,
\[
\partial_i a_q
= b_{q-1}(\varepsilon_i^-)+a_{q-1}(\varepsilon_i^0)+b_{q-1}(\varepsilon_i^+)-a_{q-2}(\varepsilon_i^+),
\]
\[
\partial_i b_q
= a_{q-1}(\varepsilon_i^-)+b_{q-1}(\varepsilon_i^0)+a_{q-1}(\varepsilon_i^+)+b_{q-2}(\varepsilon_i^-),
\]
which is exactly the finite-type recursion claimed in \eqref{eq:skein}.
\end{proof}

\subsection{Proof of Lemma~\ref{lem:JVP-mirror}}
\begin{proof}
Fix a cube state \(\eta\). The mirror of the diagram \(D(\eta)\) is obtained by reversing every
crossing sign, hence it corresponds to the state \(\tau\eta\) on the original shadow. By the standard mirror symmetry of the Jones polynomial,
\[
V_{\overline{D(\eta)}}(x)=V_{D(\eta)}(x^{-1}) = V_{D(\tau \eta)}(x).
\]
Therefore, one can change variables $x \mapsto x^{-1}$ ($x \mapsto x-p$ and $p \mapsto -p$ in the JVP's ring) or apply $\tau$, in which case $(+) \leftrightarrow (-)$. The equivalence between either of these approaches is apparent from the Jones skein \eqref{eq:sk1}
\[
x^{-2} V^{(+)} - x^2 V^{(-)} = (x-x^{-1}) V^{(0)},
\]
and the JVP's skein \eqref{Vas}
\[
\partial V \equiv V^{(+)} - V^{(-)} = p \left[ x V^{(-)} + V^{(0)} + (x-p) V^{(+)} \right].
\]
Clearly, swapping $(+) \leftrightarrow (-)$ alongside the variable transformation does essentially nothing, as we are back with the primal skein. 

In the JVP ring \(R[x]/(x^2-px-1)\) one has
\[
x^{-1}=x-p,
\]
because \(x(x-p)=x^2-px=1\). Since \(p=x-x^{-1}\), the substitution \(x\mapsto x^{-1}\) sends
\(p\) to \(-p\). Therefore, pointwise on the cube,
\[
V_{\overline D}(\eta;x,p)=V_D(\eta;x-p,-p).
\]
Now expand the right-hand side in JVP form:
\[
V_D(\eta;x-p,-p)
=\sum_{q\ge 0}\bigl(a_q(\eta)+b_q(\eta)(x-p)\bigr)(-p)^q.
\]
Distribute and separate the \(x\)-part:
\[
\begin{aligned}
V_D(\eta;x-p,-p)
&=\sum_{q\ge 0}(-1)^q\bigl(a_q(\eta)+b_q(\eta)x\bigr)p^q
 -\sum_{q\ge 0}(-1)^q b_q(\eta)p^{q+1}.
\end{aligned}
\]
Reindex the second sum (\(q\mapsto q-1\)) and use the convention \(b_{-1}\equiv 0\):
\[
V_D(\eta;x-p,-p)
=
\sum_{q\ge 0}
\Bigl(
(-1)^q\bigl(a_q(\eta)+b_{q-1}(\eta)\bigr)
+(-1)^q b_q(\eta)x
\Bigr)p^q.
\]
On the other hand,
\[
V_{\overline D}(\eta;x,p)=\sum_{q\ge 0}\bigl(a_q(-\eta)+ b_q(-\eta)x\bigr)p^q.
\]
By uniqueness of the \(\{1,x\}\)-expansion, the coefficients must agree:
\[
a_q(-\eta)=(-1)^q\bigl(a_q(\eta)+b_{q-1}(\eta)\bigr),
\qquad
b_q(-\eta)=(-1)^q b_q(\eta).
\]
This proves the lemma.
\end{proof}

\section{Symbolic calculus}

\subsection{Proof of Lemma~\ref{lem:BLsymskein}}

\begin{proof}
Let $c_q \equiv a_q + b_q$, where $a_q$ and $b_q$ are the JVP coefficients. From \eqref{eq:skein} it follows
    \begin{equation}
        \label{eq:cskein}
        \partial_i c_q = c_q(\eps_i^+) - c_q(\eps_i^-) = c_{q-1}(\eps_i^-) + c_{q-1}(\eps_i^0) + c_{q-1}(\eps_i^+) - a_{q-2}(\eps_i^+) + b_{q-2}(\eps_i^-).
    \end{equation}
    Let $J \subseteq \cros(\cQ)$ be a $q$-subset of crossing indices, $|J|=q$. The symbol skein relation follows by taking derivative, $\partial_{J \setminus \{i\}}$, on both sides of \eqref{eq:cskein},
    \begin{equation}
        \label{eq:symsk}
        \partial_J c_q = \partial_{J \setminus \{i\}} c_{q-1}(\eps_i^-) + \partial_{J \setminus \{i\}} c_{q-1}(\eps_i^0) + \partial_{J \setminus \{i\}} c_{q-1}(\eps_i^+) - \partial_{J \setminus \{i\}} a_{q-2}(\eps_i^+) + \partial_{J \setminus \{i\}} b_{q-2}(\eps_i^-).
    \end{equation}
    Invoking the type condition, $\partial_T f = \text{const}$, for any order-$|T|$ invariant $f$,
    $$\partial_{J \setminus \{i\}} c_{q-1}(\eps_i^+) = \partial_{J \setminus \{i\}} c_{q-1}(\eps_i^-) = \partial_{J \setminus \{i\}} c_{q-1}.$$ 
    By the same argument, $\partial_{J \setminus \{i\}} a_{q-2}(\eps_i^+) = \partial_{J \setminus \{i\}} b_{q-2}(\eps_i^-) = 0$. Note, however, that $\partial_{J \setminus \{i\}} c_{q-1}(\eps_i^0)$ generally differs from its non-smoothed counterpart, $\partial_{J \setminus \{i\}} c_{q-1}$. To make this difference explicit, we shall record the subset $K \subseteq J$ of smoothed crossings using the explicit index-set notation of the main text, $\Symb{K}{J} := \partial_J c_{|J|}^K$ (first perform oriented smoothings at all crossings in $K$, then take the unnormalized alternating sum over the $J$-face). 
    \begin{equation}
        \partial_J c_q = 2 \partial_{J \setminus \{i\}} c_{q-1} + \partial_{J \setminus \{i\}} c_{q-1}^{0}\quad \Longrightarrow \quad
       \Symb{\varnothing}{J} = 2 \Symb{\varnothing}{J \setminus \{i\}} + \Symb{\{i\}}{J \setminus \{i\}}.
    \end{equation}
    Clearly, the skein holds for any subset $K$ of smoothed crossings, hence
    \begin{equation}
    \label{eq:nsmoothings}
      \Symb{K}{J} = 2 \Symb{K}{J \setminus \{i\}} + \Symb{K \cup \{i\}}{J \setminus \{i\}}.
    \end{equation}
 

\end{proof}



\newif\ifJonesFlatness
\JonesFlatnessfalse   

\ifJonesFlatness

\section{Jones-flat cubes}
\label{sec:Jones-flat-cubes}

\begin{define}
    A variation cube all of whose vertices are unknots is said to be \textbf{Jones-flat}. Equivalently, every variation of the underlying shadow is an unknot. This is a property of the entire cube, not a property of a single diagram.
\end{define}

\begin{convention}[Set/cardinality-indexed symbols]
    \label{conv:setsymb}
    From now on we write $\Symb{K}{J}:=\partial_J c^K_{|J|}$ to record both the smoothed subset $K$ and the differenced subset $J$ explicitly. If only the cardinalities matter, we write $\cS_{r,n}$ for the class generated by all $\Symb{K}{J}$ with $|K|=r, |J|=n$, e.g.
    \[
    \cS_{r,n} = \partial_{[n]} c_n^r
    \]
    is the $n$-th symbol after $r$ oriented smoothings.
\end{convention}

\begin{lemma}[Jones-flatness]
\label{Jflat}
Denote by $\delta_n$ the Kronecker delta $\delta_{n0}$. A necessary and sufficient condition for Jones-flatness is the vanishing of all non-trivial symbols,
    \begin{equation}
        \cS_{0,n} = \partial_{[n]} c_n^0 = \delta_n.
    \end{equation}
\end{lemma}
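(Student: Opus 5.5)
The plan is to prove the two directions separately. Necessity will be an immediate computation. Sufficiency will run the symbolic recursion of Lemma~\ref{lem:BLsymskein} upward in $n$ and then hand off to the unknot-detection result, Theorem~\ref{thm:detection}. Throughout, $\cS_{0,n}=\delta_n$ is read in the sense of Convention~\ref{conv:setsymb}: for every $J\subseteq\cros(S)$ with $|J|=n$ and every base vertex, $\Symb{\varnothing}{J}=\partial_J c_n=\delta_n$.

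\textbf{Necessity.} Assume every vertex $D(\varepsilon)$ of the cube is an unknot. Then $V_{D(\varepsilon)}=1$, so by uniqueness of the JVP expansion (Lemma~\ref{JVP}) we get $a_0=1$, $b_0=0$, and $a_q=b_q=0$ for $q\ge1$, pointwise on the cube. Hence $c_0\equiv1$, which gives $\cS_{0,0}=1$. For $n\ge1$, $c_n\equiv0$ on the cube, so every alternating sum $\partial_J c_n$ vanishes. This yields $\cS_{0,n}=\delta_n$ with no further work.

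\textbf{Sufficiency.} Assume $\cS_{0,n}=\delta_n$ for all $n$.
\begin{enumerate}[label=(\roman*)]
\item First extract component data by feeding $\Symb{\varnothing}{J}=0$ into \eqref{eq:BLsymskein1}. This expresses the smoothed symbols $\Symb{\{i\}}{J\setminus\{i\}}$ as $-2\,\Symb{\varnothing}{J\setminus\{i\}}$.
\item Iterate over the smoothed set $K$ to obtain closed formulas for every $\Symb{K}{J}$. At $J=\varnothing$ these read off $c_0(D\setminus K)=(-2)^{|K|}$, so by Lemma~\ref{JVPfreec} the diagram $D\setminus K$ has $|K|+1$ components. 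This is exactly the smoothing behaviour of a shadow in which every smoothing splits, i.e.\ every crossing is nugatory in the sense of Lemma~\ref{lem:shadow-connectivity}. This is the same mechanism used in Step~2 of the proof of Theorem~\ref{thm:JN-triviality-barrier}.
\item If every crossing of the shadow is nugatory, then every diagram carried by it reduces by Reidemeister~I moves to a crossingless diagram. Hence every vertex is an unknot and the cube is Jones-flat.
\end{enumerate}

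\textbf{Main obstacle.} The hard step is the passage in (i)--(ii) from the $c$-symbols to the components of the smoothed diagrams. The hypothesis controls only top-order differences of $c_n=a_n+b_n$, not $a_n$ and $b_n$ separately and not their values at individual vertices. So it is not clear that $\cS_{0,n}=\delta_n$ pins down $c_0$ on every residual cube rather than only the top symbol. I would first test the $n=1$ and $n=2$ cases explicitly, using $c_1\equiv0$ on knots (Lemma~\ref{lem:f1-vanishing}), to see whether the recursion closes.

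If it does not, the fallback is to strengthen the reading of the hypothesis to pointwise vanishing of $f_q$ on the cube for $q\ge1$. Then sufficiency follows vertexwise from Theorem~\ref{thm:detection}. The nugatory-shadow route of (ii)--(iii) would be kept only as an alternative argument that avoids detection.
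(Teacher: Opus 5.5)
Your necessity argument and your steps (i)--(iii) are the paper's proof. The paper inducts on the number of smoothings with Lemma~\ref{lem:BLsymskein} to get $\cS_{r,n}=(-2)^r\delta_n$. It reads off $c_0(D\setminus K)=(-2)^{|K|}$, i.e.\ property (P), and concludes via the Shadow Lemma that every crossing is a cut vertex, so every vertex of the cube is an unknot. That line goes through as you wrote it. The obstacle you flag is not real, and the fallback should be dropped.

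The recursion never leaves the world of top symbols, and the quantity you need is itself one of them. Indeed, $c_0(D\setminus K)=\Symb{K}{\varnothing}$ is the order-zero symbol after smoothing $K$, and it is constant on the residual cube by Corollary~\ref{cor:constancy}. Write Lemma~\ref{lem:BLsymskein} as $\Symb{K\cup\{i\}}{J}=\Symb{K}{J\cup\{i\}}-2\,\Symb{K}{J}$ for $i\notin K\cup J$, and induct on $|K|$. This gives $\Symb{K}{J}=(-2)^{|K|}\delta_{|J|}$ from the hypothesis alone. The separate $a_n,b_n$ and the vertex values of $c_n$ for $n\ge1$ never enter.

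Your own proposed test cases confirm this. Using $\partial_i c_1=0$ and $c_0=1$, one gets $c_0(D\setminus\{i\})=-2$ and $c_0(D\setminus\{i,j\})=4+\partial_i\partial_j c_2$. So the hypothesis at $n=2$ already forces three components after any two smoothings. That is all the nugatory step needs, and you should state it explicitly rather than by analogy with the triviality barrier. If $x$ were non-nugatory, Lemma~\ref{lem:admissible} would give a witness $y$, and Lemma~\ref{lem:merge} would force $D\setminus\{x,y\}$ to have one component, contradicting three.

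The fallback would not prove the stated lemma. Pointwise vanishing of $f_q$ for $q\ge1$ is a far stronger hypothesis than vanishing of the symbols $\partial_J c_{|J|}$. It would also make an elementary statement depend on the paper's main theorem, Theorem~\ref{thm:detection}.
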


\begin{proof}
    Let $\cS_{r,n} = \partial_{[n]} c_n^r$ be the $n$-th symbol after $r$ oriented smoothings.
    The proof is by induction on $r$ using the symbol recursion \eqref{eq:BLsymskein1}
    \[
    \cS_{r,n-1} = \cS_{r-1,n} - 2 \cS_{r-1, n-1},
    \]
    where $r$ and $n$ are the cardinalities of the smoothed and differentiated crossing sets (Convention~\ref{conv:setsymb}).
    As $\cS_{0,n} = \delta_n$, it follows that, $\cS_{r,n} = (-2)^r \delta_n$, and in particular, the free coefficient after $r$ smoothings, $c_0^r = (-2)^r$. Fix a vertex $K$ in the variation cube. Note that $r=|\cros(S)|$ is the number of crossings in any diagram obtained from the shadow. Therefore, by Corollary~\ref{cor:cp} (P) holds for all $k \leq |\cros(S)|$, rendering $K$ the unknot. As this holds for every vertex $K$, the cube is Jones-flat.

 The opposite direction, every variation is the unknot $\Rightarrow \, \cS_{0,n} = \delta_n$, follows immediately upon recognizing that all partial derivatives over the cube vanish except for the trivial $c_0 = 1$, from which the symbols follow. 
\end{proof}


\begin{define}[Jones–$m$–flat subcube]
\label{def:mflat}
Let $S$ be a shadow and $\cU\subseteq\cros(S)$. The $\cU$-subcube $\cQ_\cU$ is Jones–$m$–flat if $\partial_J c_{|J|} \equiv 0$ for every nonempty $J\subseteq \cU$ with $|J| \leq m-1$. Equivalently, the order-$< m$ symbols vanish, $\Symb{\varnothing}{J} \equiv 0$ for every nonempty $J \subseteq \cU$ with $|J| \leq m-1$. If this holds for all $m$, call $\cQ_\cU$ Jones-$\infty$-flat or \textit{flat} for short.
\end{define}

\begin{remark}
``Flat'' here means that all finite‑type symbols (alternating sums) below $|\cU|$ vanish on the $\cU$-subcube; only when $\cU$ is the full crossing set does Lemma~\ref{Jflat} implies that all vertices are unknots.
\end{remark}

\begin{corollary}[Symbolic binomial expansion]
\label{cor:pathind}
Let $K \subseteq I$ and $A \subseteq I \setminus K$. Let $J \subseteq A$. Then:
\begin{equation}
\label{eq:c45}
    \Symb{K \cup J}{A \setminus J}
    \;=\;
    \sum_{L \subseteq J} (-2)^{|L|}\, \Symb{K}{A \setminus L}.
\end{equation}
The symbol in the LHS depends only on the set $J$ and not on the order in which its elements are smoothed.
\end{corollary}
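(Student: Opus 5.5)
The plan is to prove the identity by induction on \(|J|\), using only the general symbol-skein recursion \eqref{eq:BLsymskein1} of Lemma~\ref{lem:BLsymskein}, rearranged to isolate the smoothed term. For any \(K\), any \(A\subseteq I\setminus K\) and any \(i\in A\), the relation \eqref{eq:BLsymskein1} with \(J\) replaced by \(A\) reads
\[
\Symb{K\cup\{i\}}{A\setminus\{i\}} \;=\; \Symb{K}{A} \;-\; 2\,\Symb{K}{A\setminus\{i\}} .
\]
This is precisely the claimed formula for \(J=\{i\}\): the subset \(L=\varnothing\) contributes \(\Symb{K}{A}\), and \(L=\{i\}\) contributes \((-2)\Symb{K}{A\setminus\{i\}}\). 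The case \(J=\varnothing\) is trivial, since both sides equal \(\Symb{K}{A}\).

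For the inductive step, I write \(J=J'\sqcup\{j\}\) and view \(\Symb{K\cup J}{A\setminus J}\) as \(\Symb{(K\cup J')\cup\{j\}}{(A\setminus J')\setminus\{j\}}\). Because \(j\in A\setminus J'\), the one-step identity above applies with base smoothed set \(K\cup J'\) and differenced set \(A\setminus J'\). It gives
\[
\Symb{K\cup J}{A\setminus J}=\Symb{K\cup J'}{A\setminus J'}-2\,\Symb{K\cup J'}{(A\setminus\{j\})\setminus J'} .
\]
I then apply the induction hypothesis twice:
\begin{itemize}
\item to the triple \((K,A,J')\), for the first term;
\item to the triple \((K,A\setminus\{j\},J')\), for the second term; this is legitimate since \(J'\subseteq A\setminus\{j\}\).
\end{itemize}
The first term becomes \(\sum_{L\subseteq J'}(-2)^{|L|}\Symb{K}{A\setminus L}\). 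The second becomes \(\sum_{L\subseteq J'}(-2)^{|L|+1}\Symb{K}{A\setminus(L\cup\{j\})}\). Every subset of \(J\) is uniquely either some \(L\subseteq J'\) or some \(L\cup\{j\}\) with \(L\subseteq J'\), so the two sums together give exactly \(\sum_{L\subseteq J}(-2)^{|L|}\Symb{K}{A\setminus L}\).

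Order independence then follows in two ways. The right-hand side depends only on the set \(J\), not on any ordering of its elements. Independently, by Convention~\ref{conv:smoothing}, oriented smoothings at distinct crossings commute, so \(D\setminus(K\cup J)\) is well defined; and by the local commutation lemma, smoothing and differencing at distinct coordinates commute.

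The only point I expect to need care is the hypothesis of Lemma~\ref{lem:BLsymskein} each time it is invoked. The lemma requires the distinguished crossing to lie in the differenced set, with that set of size at least \(1\), and it requires the symbol to use the coefficient \(c_{|\cdot|}\) matching the face dimension. In the inductive step the distinguished crossing is \(j\) and the differenced set is \(A\setminus J'\), which contains \(j\), so the set is nonempty. Moreover, Definition~\ref{def:symbols} always pairs \(\partial_{J}\) with \(c_{|J|}\), so each application uses the correct order level. With these checks in place, the induction closes.
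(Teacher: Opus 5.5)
Your proposal is correct and matches the paper's proof step for step. The paper argues by induction on $|J|$ and makes one application of \eqref{eq:BLsymskein1}, with smoothed set $K\cup J'$, differenced set $A\setminus J'$ and distinguished crossing $j$; it then applies the induction hypothesis to the triples $(K,A,J')$ and $(K,A\setminus\{j\},J')$ and recombines by splitting the subsets of $J$ according to whether they contain $j$, exactly as you do, and your remark on order independence spells out a point the paper leaves implicit.
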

\begin{proof}
We prove \eqref{eq:c45} by induction on the finite set $J$ (equivalently, on $|J|$).

\paragraph{Base case: $J = \varnothing$.}
Then $K \cup J = K$ and $A \setminus J = A$, so the left side is $\Symb{K}{A}$.
On the right side, the only subset of $\varnothing$ is $\varnothing$, so
\[
    \sum_{L \subseteq \varnothing} (-2)^{|L|}\, \Symb{K}{A \setminus L}
    = (-2)^0\, \Symb{K}{A \setminus \varnothing}
    = \Symb{K}{A}.
\]
Thus \eqref{eq:c45} holds for $J = \varnothing$.

\paragraph{Inductive step.}
Assume \eqref{eq:c45} holds for all subsets of size $< |J|$. Let $J \neq \varnothing$, and pick an element $i \in J$. Define
\[
    J_0 := J \setminus \{i\}.
\]
Because $J \subseteq A$, we have $i \in A$. Also $i \notin J_0$, hence
\begin{equation}
\label{eq:tag1}
    i \in A \setminus J_0.
\end{equation}
\medskip
\noindent\emph{Step 1: A one-step smoothing rearrangement from Lemma~\ref{lem:BLsymskein}.}
Apply the skein recursion \eqref{eq:BLsymskein1} with smoothed set $K' := K \cup J_0$, differenced set $J' := A \setminus J_0$, and element $i \in J'$ (true by~\eqref{eq:tag1}). Then Lemma~\ref{lem:BLsymskein} gives:
\begin{equation}
\label{eq:tag2}
    \Symb{K \cup J_0}{A \setminus J_0}
    \;=\;
    2\, \Symb{K \cup J_0}{(A \setminus J_0) \setminus \{i\}}
    \;+\;
    \Symb{K \cup J_0 \cup \{i\}}{(A \setminus J_0) \setminus \{i\}}.
\end{equation}
But note the set identities:
\[
    (A \setminus J_0) \setminus \{i\} = A \setminus (J_0 \cup \{i\}) = A \setminus J, \qquad K \cup J_0 \cup \{i\} = K \cup J.
\]
So \eqref{eq:tag2} becomes:
\begin{equation}
\label{eq:tag3}
    \Symb{K \cup J_0}{A \setminus J_0}
    \;=\;
    2\, \Symb{K \cup J_0}{A \setminus J}
    \;+\;
    \Symb{K \cup J}{A \setminus J}.
\end{equation}
Rearrange \eqref{eq:tag3} to solve for the term we want:
\begin{equation}
\label{eq:tag4}
    \Symb{K \cup J}{A \setminus J}
    \;=\;
    \Symb{K \cup J_0}{A \setminus J_0}
    \;-\;
    2\, \Symb{K \cup J_0}{A \setminus J}.
\end{equation}
So far we have used only Lemma~\ref{lem:BLsymskein} and set algebra.
\medskip

\noindent\emph{Step 2: Expand each right-hand symbol by the induction hypothesis.}
We will apply the induction hypothesis \eqref{eq:c45} to each term on the right of \eqref{eq:tag4}, but with appropriate parameters.

\medskip
\noindent\textit{(a) Expand $\Symb{K \cup J_0}{A \setminus J_0}$.}
Here $J_0 \subseteq A$, so the induction hypothesis applies directly (since $|J_0| < |J|$):
\begin{equation}
\label{eq:tag5}
    \Symb{K \cup J_0}{A \setminus J_0}
    \;=\;
    \sum_{L \subseteq J_0} (-2)^{|L|}\, \Symb{K}{A \setminus L}.
\end{equation}

\medskip
\noindent\textit{(b) Expand $\Symb{K \cup J_0}{A \setminus J}$.}
To make the induction hypothesis fit this term, observe that
\[
    A \setminus J = (A \setminus \{i\}) \setminus J_0,
\]
and also $J_0 \subseteq A \setminus \{i\}$ (because $i \notin J_0$). So we can apply the induction hypothesis with $A' := A \setminus \{i\}$ and $J_0 \subseteq A'$:
\begin{equation}
\label{eq:tag6}
    \Symb{K \cup J_0}{(A \setminus \{i\}) \setminus J_0}
    \;=\;
    \sum_{L \subseteq J_0} (-2)^{|L|}\, \Symb{K}{(A \setminus \{i\}) \setminus L}.
\end{equation}
But the left side is exactly $\Symb{K \cup J_0}{A \setminus J}$, so:
\begin{equation}
\label{eq:tag7}
    \Symb{K \cup J_0}{A \setminus J}
    \;=\;
    \sum_{L \subseteq J_0} (-2)^{|L|}\, \Symb{K}{(A \setminus \{i\}) \setminus L}.
\end{equation}
Now simplify the set difference inside the symbol on the right. Because $i \notin L$ for $L \subseteq J_0$, we have the identity
\begin{equation}
\label{eq:tag8}
    (A \setminus \{i\}) \setminus L = A \setminus (L \cup \{i\}).
\end{equation}
So \eqref{eq:tag7} becomes:
\begin{equation}
\label{eq:tag9}
    \Symb{K \cup J_0}{A \setminus J}
    \;=\;
    \sum_{L \subseteq J_0} (-2)^{|L|}\, \Symb{K}{A \setminus (L \cup \{i\})}.
\end{equation}

\medskip
\noindent\emph{Step 3: Substitute back into \eqref{eq:tag4} and reindex as one sum over $L \subseteq J$.} Substitute~\eqref{eq:tag5} and \eqref{eq:tag9} into \eqref{eq:tag4}:
\begin{equation}
\label{eq:tag10}
\begin{aligned}
    \Symb{K \cup J}{A \setminus J}
    &= \left(\sum_{L \subseteq J_0} (-2)^{|L|}\, \Symb{K}{A \setminus L}\right)
    - 2\left(\sum_{L \subseteq J_0} (-2)^{|L|}\, \Symb{K}{A \setminus (L \cup \{i\})}\right) \\[4pt]
    &= \sum_{L \subseteq J_0} (-2)^{|L|}\, \Symb{K}{A \setminus L}
    \;+\; \sum_{L \subseteq J_0} (-2)^{|L|+1}\, \Symb{K}{A \setminus (L \cup \{i\})}.
\end{aligned}
\end{equation}

Now observe the standard subset decomposition: subsets of $J$ not containing $i$ are exactly the subsets of $J_0$; subsets of $J$ containing $i$ are exactly sets of the form $L \cup \{i\}$ with $L \subseteq J_0$. Moreover, the map $L \mapsto L \cup \{i\}$ is a bijection between $\mathcal{P}(J_0)$ and $\{M \subseteq J : i \in M\}$, and $|L \cup \{i\}| = |L| + 1$. So we can reindex the second sum in \eqref{eq:tag10} by $M := L \cup \{i\}$, giving:
\begin{equation}
\label{eq:tag11}
    \sum_{L \subseteq J_0} (-2)^{|L|+1}\, \Symb{K}{A \setminus (L \cup \{i\})}
    \;=\;
    \sum_{\substack{M \subseteq J \\ i \in M}} (-2)^{|M|}\, \Symb{K}{A \setminus M}.
\end{equation}
Similarly, the first sum is:
\begin{equation}
\label{eq:tag12}
    \sum_{L \subseteq J_0} (-2)^{|L|}\, \Symb{K}{A \setminus L}
    \;=\;
    \sum_{\substack{M \subseteq J \\ i \notin M}} (-2)^{|M|}\, \Symb{K}{A \setminus M}.
\end{equation}
Add \eqref{eq:tag11} and \eqref{eq:tag12}:
\[
    \Symb{K \cup J}{A \setminus J}
    \;=\;
    \sum_{M \subseteq J} (-2)^{|M|}\, \Symb{K}{A \setminus M}.
\]
This is exactly \eqref{eq:c45}. 
\end{proof}


\begin{remark}
Take $A=J$ and $K=\varnothing$ in Corollary~\ref{cor:pathind}.
In general, the RHS symbols $\Symb{\varnothing}{J \setminus L}$ in \eqref{eq:c45} depend on the set $L$, not only on its size. On a Jones-$m$-flat subcube the dependence disappears for $|J|=m-1$ because $\Symb{\varnothing}{J \setminus L} = 0$, for $0 \leq |L| < m-1$; this is the only specialization used in the proof of Theorem~\ref{thm:main}.
\end{remark}

\subsection{Last-row fingerprint and degree cap}
\label{sec:mfaceskein}

\begin{theorem}[Last‑row fingerprint of Jones‑$m$‑flat subcubes]
\label{thm:main}
Let $\cQ_\cU$ be Jones‑$m$-flat. For any prescribed set $J\subseteq \cU$ of $m-1$ crossings and any diagram $D$ carried by $S$, the free (order-$0$) coefficient satisfies the \textbf{last-row fingerprint}
$$
c_0(D\setminus J) = (-2)^{m-1}.
$$
Equivalently, for every such $J$, 
$$\Symb{J}{\varnothing}=(-2)^{m-1} \Symb{\varnothing}{\varnothing} =(-2)^{m-1}.$$ As this holds for every $J \subseteq \cU$ with $|J| = m-1$ we write $c_0^{m-1} = (-2)^{m-1}$.
By Corollary~\ref{cor:cp}, this last-row equality implies property (P) for \textit{every} $k \leq m-1$ (uniformly over all diagrams carried by the same shadow).
\end{theorem}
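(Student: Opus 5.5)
The plan is to read the fingerprint directly off the symbolic binomial expansion, Corollary~\ref{cor:pathind}. Fix a diagram \(D\) carried by \(S\), which will serve as the base vertex at which all symbols are evaluated. Fix also \(J\subseteq\cU\) with \(|J|=m-1\). I would apply \eqref{eq:c45} with \(K=\varnothing\) and \(A=J\), taking the smoothed set to be all of \(J\). This gives
\[
\Symb{J}{\varnothing}
=\sum_{L\subseteq J}(-2)^{|L|}\,\Symb{\varnothing}{J\setminus L}.
\]
The left-hand side is \(\partial_\varnothing c_0^J=c_0(D\setminus J)\), which is exactly the quantity to be computed.

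Next I would kill all but one term on the right using Jones-\(m\)-flatness (Definition~\ref{def:mflat}). For \(L\subsetneq J\), the set \(J'=J\setminus L\) is nonempty, is contained in \(\cU\), and has \(|J'|\le m-1\). Hence \(\Symb{\varnothing}{J'}=\partial_{J'}c_{|J'|}\equiv0\) on \(\cQ_\cU\), and in particular at the base vertex \(D\). Two small points need checking here.
\begin{itemize}
\item Flatness is stated as identical vanishing on the subcube. It therefore applies pointwise at \(D\), regardless of whether the symbol is constant. (It is in fact constant, since \(\partial_{J'}c_{|J'|}\) has order \(0\).)
\item The corollary's expansion is an identity of evaluations at a single base vertex. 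This is inherited from Lemma~\ref{lem:BLsymskein}, whose proof already uses that the top derivatives are constants.
\end{itemize}
Only the term \(L=J\) survives, giving \(c_0(D\setminus J)=(-2)^{m-1}\Symb{\varnothing}{\varnothing}=(-2)^{m-1}c_0(D)\).

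It remains to show \(c_0(D)=1\). Every diagram carried by the shadow of a knot is a knot, by Corollary~\ref{cor:components}(1). Lemma~\ref{JVPfreec} then gives \(c_0(D)=1\), which yields \(c_0(D\setminus J)=(-2)^{m-1}\). This value does not depend on \(D\), in agreement with Corollary~\ref{cor:constancy}, and it does not depend on the choice of \(J\) of size \(m-1\); this justifies the shorthand \(c_0^{m-1}=(-2)^{m-1}\). The final sentence of the statement, that property (P) holds for every \(k\le m-1\), is then a direct appeal to Corollary~\ref{cor:cp} applied to this last-row equality.

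I expect the only real obstacle to be bookkeeping: \eqref{eq:c45} must be invoked with the right roles for the sets, namely smoothing set \(K\cup J=J\) and differenced set \(A\setminus J=\varnothing\). Alongside this, one must confirm that every intermediate symbol \(\Symb{\varnothing}{J\setminus L}\) with \(L\neq J\) falls within the flatness range, i.e. is nonempty and of size at most \(m-1\). Both conditions hold because \(|J|=m-1\) exactly.
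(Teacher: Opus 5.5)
Your proposal is correct and follows the paper's proof essentially verbatim: you apply the symbolic binomial expansion with $K=\varnothing$ and $A=J$, kill every term with $L\neq J$ by Jones-$m$-flatness, and read off $(-2)^{m-1}c_0(D)$, and you are more careful than the paper in stating that $c_0(D)=1$ requires $S$ to be a knot shadow. The only difference is the property (P) clause: rather than just citing the corollary, the paper argues that each oriented smoothing changes the component count by at most one, so reaching $m$ components after $m-1$ smoothings forces every increment to be $+1$ (alternatively, your expansion applied to each nonempty $K\subseteq J$ gives $c_0(D\setminus K)=(-2)^{|K|}$ directly).
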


\begin{proof}
Fix $J\subseteq \cU$ with $|J|=m-1$. Apply Corollary~\ref{cor:pathind} with $A=J$ and $K=\varnothing$
\[
\Symb{J}{\varnothing} = \sum_{L\subseteq J}(-2)^{|L|} \, \Symb{\varnothing}{J \setminus L}.
\]
On a Jones-$m$-flat subcube we have exactly the vanishing
\[
\Symb{\varnothing}{J \setminus L}=0\quad\text{for }1\le |J \setminus L| \le m-1,
\]
If $L \neq J$, then $1 \le |J \setminus L| \le m - 1$, hence $\Symb{\varnothing}{J \setminus L} = 0$ by Jones-$m$-flatness. Thus, only $L = J$ contributes, $\Symb{\varnothing}{\varnothing} = c_0$
%
\[
\Symb{J}{\varnothing} = (-2)^{m-1}\,\Symb{\varnothing}{\varnothing} = (-2)^{m-1},
\]
i.e.\ $c_0(D\setminus J)=(-2)^{m-1}$. This is the last-row fingerprint.

\smallskip

\noindent
\emph{From last-row fingerprint to property (P)}. By Lemma~\ref{JVPfreec}, $c_0(L)=(-2)^{\ell(L)-1}$. Thus, $\Symb{J}{\varnothing}=(-2)^{m-1}$ means that after smoothing the prescribed $|J|=m-1$ crossings, the component count is $m$. Smoothing in any order can raise the component count by at most $1$ at each step; hitting the final value $m$ forces every intermediate increment to be exactly $+1$. Hence, for every $K\subseteq J$, $c_0(D\setminus K)=(-2)^{|K|}$, i.e. property (P) holds along $J$ (Corollary~\ref{cor:cp}).
\end{proof}

\section{Shadow Lemma}
\label{sec:shadow}

\begin{define}[Nugatory crossing; cut-vertex]
\label{def:nugatory-cut-vertex}
A crossing is nugatory iff there exists a Jordan curve in the projection plane meeting the diagram only at that crossing; at the shadow level, this is equivalent to the vertex being a cut-vertex (removing it disconnects the shadow).~\cite{tait1898, knotatlas}
\end{define}

\begin{lemma}[Shadow]
\label{shadow}
Let $S$ be a connected shadow with $n>0$ vertices and let
\[
\mathcal{D}(S) = \{\text{all }2^{n}\text{ diagrams carried by }S\}.
\]
For a diagram $D \in \mathcal{D}(S)$ write $c(D)$ for its set of crossings and, for $K \subseteq c(D)$, let $D \setminus K$ be the link obtained from $D$ after \emph{smoothing} every crossing in $K$.
\\[0.5ex]
The following statements are equivalent:
\begin{enumerate}
\item \textbf{Component-increment property (P)}. For every $D \in \mathcal{D}(S)$ and every subset $K \subseteq c(D)$ of size $k$,
\begin{equation}\tag{P}
\#\text{components of }(D \setminus K) = 1 + k.
\end{equation}

\item Every vertex of $S$ is a \emph{cut-vertex} (equivalently, every crossing is \emph{nugatory} in Tait's sense). Hence, $S$ is the planar embedding of a tree in which each edge appears exactly once.

\item All diagrams in $\mathcal{D}(S)$ are diagrams of the unknot (or of a split unlink if $S$ is disconnected).
\end{enumerate}
Consequently, any diagram satisfying (P)---and every diagram obtained from it by crossing flips---represents the unknot.
\end{lemma}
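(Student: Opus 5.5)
We prove the cycle (2)$\Rightarrow$(P)$\Rightarrow$(2) and (2)$\Rightarrow$(3)$\Rightarrow$(2). Both loops use only the cut-vertex description of nugatory crossings in Lemma~\ref{lem:shadow-connectivity}, except the implication (3)$\Rightarrow$(2), which needs one external input. The final "Consequently" sentence then follows from (P)$\Rightarrow$(2)$\Rightarrow$(3), since crossing flips preserve the shadow.

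\emph{(2)$\Rightarrow$(P).} We induct on $k$. If $u$ is a cut vertex of $|S|$, then by Lemma~\ref{lem:shadow-connectivity} a Jordan curve meets the diagram only at $u$. The oriented smoothing at $u$ reconnects the strands on each side of this curve separately, so it produces two disjoint pieces. This raises the component count by exactly one. Each remaining vertex was a cut vertex of $|S|$ and stays a cut vertex of the smoothed shadow, because smoothing at $u$ only disconnects further. Hence the hypothesis passes to $S\setminus\{u\}$, and iterating over the $k$ smoothed crossings gives $1+k$ components. This holds for every $D\in\mathcal D(S)$, by Corollary~\ref{cor:components}.

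\emph{(P)$\Rightarrow$(2).} Suppose some crossing $u$ is non-nugatory. Taking $k=1$ in (P), $D\setminus\{u\}$ has two components. Lemma~\ref{lem:admissible} supplies an admissible witness $v$, a crossing lying between these two components. Lemma~\ref{lem:merge} then says $D\setminus\{u,v\}$ has one component, whereas (P) with $k=2$ demands three. This contradiction shows every vertex is a cut vertex. The case $n=1$ is immediate, since a one-vertex shadow is a figure eight whose vertex is a cut point.

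\emph{(2)$\Rightarrow$(3).} We induct on $n$. Pick a vertex $u$ that is a cut vertex and choose a Jordan curve through $u$. Rotating the tangle on one side of this curve by $\pi$ about an axis through $u$ (a Tait flip) removes $u$. The result is a diagram whose shadow has $n-1$ vertices, all still cut vertices. The end case is a crossingless circle. Every such flip is an isotopy regardless of the signs, so every diagram carried by $S$ is the unknot. For disconnected $S$ we run the same argument componentwise and obtain a split unlink.

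\emph{(3)$\Rightarrow$(2).} This is the main obstacle, because (3) is a statement about isotopy classes and must be converted into combinatorics of the shadow. The plan is to pick the alternating diagram $D_{\mathrm{alt}}\in\mathcal D(S)$ and invoke the Kauffman--Murasugi--Thistlethwaite span theorem: for an alternating diagram, $\operatorname{span}V$ equals $n$ minus the number of nugatory crossings. Nugatory crossings can be removed by the flips above while keeping the diagram alternating, which reduces to the reduced case of the theorem. If some vertex of $S$ were non-nugatory, then $\operatorname{span}V_{D_{\mathrm{alt}}}\ge 1$, so $D_{\mathrm{alt}}$ is not the unknot (or split unlink), contradicting (3). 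The delicate points are two. First, the reduction must use only flips that do not touch non-nugatory vertices. Second, for disconnected $S$ the argument should be applied to each connected component separately.
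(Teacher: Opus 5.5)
Your proof is correct. On (P)$\Leftrightarrow$(2) it matches the paper's. The paper proves (P)$\Rightarrow$(2) by the same path-in-$|S|\setminus\{x\}$ argument: it finds an intercomponent crossing $y$ and contradicts the count $3$ required at $k=2$. It inlines what you cite as Lemmas~\ref{lem:admissible} and~\ref{lem:merge}, using only that smoothing $y$ does not increase the count. It proves (2)$\Rightarrow$(P) by the same iteration of nugatory smoothings.

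The genuine difference is (2)$\Leftrightarrow$(3). The paper cites Theorem~3 of Medina et al.\ for both directions: a shadow carries only unknot diagrams iff every vertex is a cut vertex. You instead prove (2)$\Rightarrow$(3) by an elementary Tait-flip induction, and (3)$\Rightarrow$(2) by the Kauffman--Murasugi--Thistlethwaite span theorem applied to the alternating diagram. Your route makes the easy direction self-contained and isolates the one deep input the hard direction needs: a certificate that an alternating diagram on a shadow with a non-nugatory vertex is knotted. The paper's route is shorter but opaque.

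The two points you flag as delicate do hold.
\begin{itemize}
\item A flip reflects one side and swaps all of its crossings. That side contributes an even number of crossing passages between the two passes through $u$, so the over/under pattern stays alternating across both junctions.
\item A vertex $w$ on either side is a cut point before the flip iff it is one after. In both shadows the opposite side is a connected piece attached only near $u$, so removing $w$ disconnects the shadow exactly when it disconnects the loop through $w$ closed up at $u$.
\end{itemize}

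The same observation tidies your (2)$\Rightarrow$(P) step, where ``smoothing only disconnects further'' should be stated per component. The shadow $|S|$ is recovered from the smoothed shadow by pinching together one point on each smoothing arc. Hence $|S|\setminus\{w\}$ and $C\setminus\{w\}$ have the same number of components, where $C$ is the smoothed piece containing $w$. So $w$ stays a cut point of its own piece, and each later smoothing splits exactly one piece into two.
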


\begin{proof} $ $\\
\textbf{(1) $\Rightarrow$ (2)} 
%
%
%
Work with oriented diagrams and fix a vertex $x$ of the connected shadow $S$. By Corollary~\ref{cor:components}, for a fixed set of smoothings the number of components of $D\setminus J$ is independent of the over/under choices in $D\in\mathcal D(S)$ (the component count after smoothing a fixed set is a shadow invariant); thus we may argue at the level of the shadow. Property $(P)$ with $|K|=1$ says that for every diagram $D$ the oriented smoothing at $x$ yields two components $D\setminus\{x\}=L_A\sqcup L_B$. Suppose $x$ were not a cut-vertex. Then the plane graph $S\setminus\{x\}$ is connected; hence there exists a path in $S\setminus\{x\}$ from an arc of $L_A$ to an arc of $L_B$, and along that path choose the first vertex $y$ where the two sides meet. In the diagram $D\setminus\{x\}$ the crossing $y$ is a crossing between different components $L_A$ and $L_B$. Now smooth $y$ as well. Oriented smoothing at a crossing that connects distinct components does not increase the number of components (it either keeps it the same or reduces it by one). Therefore
\[
\#\pi_0\big(D\setminus\{x,y\}\big)\ \le\ 2,
\]
contradicting Property $(P)$ for $|K|=2$, which requires $1+2=3$ components. Hence, our assumption was false: $S\setminus\{x\}$ must be disconnected, i.e.\ $x$ is a cut-vertex (nugatory)~\cite{tait1898}.

\noindent
\textbf{(2) $\Rightarrow$ (3)} \quad Theorem~3 in \cite{medina2017}: \emph{A shadow $S$ has only unknot diagrams iff every vertex of $S$ is a cut-vertex.}
Assumption (2) therefore forces every diagram in $\mathcal{D}(S)$ to be an unknot diagram.\\[1ex]

\noindent
 \textbf{(3) $\Rightarrow$ (1)} By Theorem~3 of \cite{medina2017}, if every diagram carried by the connected shadow $S$ is an unknot, then every vertex of $S$ is a cut-vertex. Hence, $(3)$ implies $(2)$. By the direct implication $(2) \Rightarrow (1)$ proved below, for every diagram $D \in D(S)$ and every subset $K \subseteq c(D)$, smoothing the crossings in $K$ increases the number of components by exactly $|K|$. Therefore
\[
    \#\pi_0(D \setminus K) = 1 + |K|,
\]
i.e.\ property~(P) holds.\\[1ex]
 



\noindent
\textbf{(2) $\Leftrightarrow$ (1)} (direct argument)
\begin{itemize}
\item \textbf{(2) $\Rightarrow$ (1)} If every crossing is nugatory, smoothing one crossing splits one component into two. Repeating the argument shows the component count always increases by one.
\item \textbf{(1) $\Rightarrow$ (2)} The single-crossing case $k=1$ recovers the nugatory condition used in the first implication.
\end{itemize}

\noindent
Thus all three statements are equivalent.
\end{proof}

\begin{corollary}[``Tree diagrams'']
A connected knot diagram satisfies property (P) \emph{iff} its shadow is a tree. Such a diagram is sometimes called a \emph{descending} or \emph{standard} diagram; it can be reduced to a round circle by $n$ Reidemeister I moves (one per nugatory crossing) followed by $n$ paired Reidemeister II moves along the tree edges.
\end{corollary}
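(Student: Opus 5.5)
The plan is to read the corollary off Lemma~\ref{shadow}, after fixing what ``its shadow is a tree'' means for a $4$-valent plane graph. A $4$-valent shadow is never literally a tree. I will take the phrase to mean the content of item~(2) of Lemma~\ref{shadow}: every vertex of $S$ is a cut vertex. Equivalently, the block--cut-vertex tree of $|S|$ has every block equal to a single simple closed loop, so the shadow is a tree of circles glued at crossings.

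First I reduce from one diagram to all diagrams on the shadow. Property~(P) in Lemma~\ref{shadow} is stated for every $D\in\mathcal D(S)$, whereas the corollary speaks of a single connected knot diagram $D$. By Corollary~\ref{cor:components}, the number of components of $D\setminus K$ depends only on $S$ and $K$. Hence (P) for one diagram carried by $S$ is the same as (P) for all of $\mathcal D(S)$. With this, the equivalence ``(P) $\Leftrightarrow$ every vertex is a cut vertex'' is exactly (1)$\Leftrightarrow$(2) of Lemma~\ref{shadow}. In the direction (1)$\Rightarrow$(2) I would rely on the same witness argument as Lemma~\ref{lem:admissible} and Lemma~\ref{lem:merge}: a non-cut vertex $x$ yields a crossing $y$ between the two components of $D\setminus\{x\}$, and smoothing $y$ merges them, contradicting the count $3$ required by (P).

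For the reduction statement I would induct on $n=\#\operatorname{cross}(S)$. If every vertex is a cut vertex and $n>0$, take a leaf block of the block--cut tree. It is a loop meeting the rest of $|S|$ in exactly one cut vertex $u$, so it bounds a monogon at $u$ (choosing the leaf on the appropriate side in the sphere so that the loop's disk contains no other part of $|S|$). A Reidemeister~I move removes $u$. The resulting shadow still has all vertices as cut vertices, since deleting a leaf block does not destroy the separating property of the other cut vertices. Induction then gives a round circle after $n$ Reidemeister~I moves, one per nugatory crossing.

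The mention of ``paired Reidemeister~II moves along the tree edges'' is not needed for this count. I would either drop it or remark that it describes an alternative, non-minimal reduction. The main obstacle is precisely this interpretive step: turning ``tree'' into the cut-vertex/block condition, and checking carefully that a leaf block is always an innermost monogon on the sphere. For that I would use the Jordan-curve characterization in Lemma~\ref{lem:shadow-connectivity}.
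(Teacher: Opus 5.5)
Your proposal is correct, and for the ``iff'' it follows the paper's route. The paper gives no separate proof of the corollary: it is a rereading of items (1) and (2) of the Shadow Lemma, where ``tree'' is the gloss attached to ``every vertex of $S$ is a cut vertex.'' Your steps match what that lemma does: you pass from one diagram to all of $\mathcal D(S)$ using invariance of component counts, and then use the witness-and-merge argument.

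The genuinely new part of your proposal is the Reidemeister description, which the paper only asserts. Your block--cut-tree induction is sound. Every block of such a $4$-valent shadow is a simple cycle. A leaf block has only one vertex, because all vertices are cut vertices, so it is a loop. Deleting it by an R1 move keeps every other vertex a cut vertex.

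This buys something extra: it shows directly that every diagram on such a shadow is an unknot. The external criterion the paper cites for (2)$\Rightarrow$(3) is therefore not needed.

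Two refinements:
\begin{itemize}
\item On $S^2$ no choice of leaf is required, since the connected remainder of $|S|$ lies in one of the two complementary disks of the loop. In $\mathbb R^2$ you should add that there are at least two leaf loops when $n\ge 1$, and at most one of them can enclose everything else. So some leaf loop bounds an empty disk.
\item You are right not to try to prove the clause ``followed by $n$ paired Reidemeister~II moves.'' After the $n$ R1 moves no crossings remain, so that clause cannot be carried out as written and should be deleted rather than justified.
\end{itemize}
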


\fi

\ifJonesFlatness

\subsection{Locality of Jones-$m$-flatness}

\begin{lemma}[Global vs.\ local Jones-$m$-flatness]
\label{lem:local-m-flatness}
Let $S$ be a connected knot shadow with crossing set $I = \cros(S)$, and let $\cQ(S)$ be the full variation cube on $I$.

\begin{enumerate}
\item 
The full cube $\cQ(S)$ is always Jones-2-flat.

\item Moreover, if $\cQ(S)$ is Jones-$m$-flat for some $m \ge 3$, then $S$ is a tree and every diagram carried by $S$ is an unknot; in particular $\cQ(S)$ is isotopically constant. Equivalently, for a non-tree shadow the maximal index of global Jones-flatness is $m=2$.
\end{enumerate}



\end{lemma}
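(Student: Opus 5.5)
The plan is to prove the two parts in turn. Both follow from the order-zero component-count interpretation of $c_0$ (Lemma~\ref{JVPfreec}) together with the symbol recursion of Lemma~\ref{lem:BLsymskein}, applied directly on the full cube $\cQ(S)$. No clasping machines are needed, because here flatness is assumed on the genuine crossings of $S$.

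\emph{Part 1.} Jones-$2$-flatness only asks for $\Symb{\varnothing}{\{i\}}=\partial_i c_1\equiv 0$ for every $i\in I$. Since $S$ is a connected knot shadow, every vertex of $\cQ(S)$ is a knot diagram, by Corollary~\ref{cor:components}. Lemma~\ref{lem:f1-vanishing} then gives $a_1=b_1=0$, hence $c_1\equiv 0$ on the whole cube, and every discrete derivative of $c_1$ vanishes. As a consistency check, the recursion gives $0=\partial_i c_1 = 2c_0 + c_0(i=0) = 2+c_0(i=0)$. So $c_0(i=0)=-2$, which matches the fact that smoothing one crossing of a knot yields two components.

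\emph{Part 2.} Assume Jones-$m$-flatness with $m\ge 3$, so in particular $\partial_i\partial_j c_2\equiv 0$ for all distinct $i,j\in I$. I would first apply \eqref{eq:BLsymskein} at $i$ to the symbol on $J=\{i,j\}$:
\[
0=\partial_i\partial_j c_2 = 2\,\partial_j c_1 + \partial_j c_1(i=0).
\]
Here $\partial_j c_1=0$ by Part 1, so $\partial_j c_1(i=0)=0$. A second application of the order-one recursion at $j$ on the residual cube then gives
\[
0=2c_0(i=0)+c_0(i=0,j=0).
\]
Hence $c_0(D\setminus\{i,j\})=4$. By Lemma~\ref{JVPfreec}, $D\setminus\{i,j\}$ has three components for every pair $i\neq j$ and every $D\in\mathcal D(S)$.

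Next I would show that this forces every crossing to be nugatory. Suppose some crossing $u$ is non-nugatory. Lemma~\ref{lem:admissible} provides an admissible witness $v$, that is, a crossing between the two components of $D\setminus\{u\}$. Lemma~\ref{lem:merge} then says $D\setminus\{u,v\}$ has one component, contradicting the count of three. Thus every vertex of $S$ is a cut vertex by Lemma~\ref{lem:shadow-connectivity}, so $S$ is a planar tree-like shadow. By the characterization that a shadow carries only unknot diagrams iff every vertex is a cut vertex (Medina et al., as quoted in the Shadow Lemma), every diagram in $\mathcal D(S)$ is an unknot. Hence the cube is isotopically constant. The equivalent formulation for non-tree shadows is just the contrapositive, combined with Part 1.

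The step needing most care is the shadow-level topology, not the algebra. One must check that the admissible-witness construction and the merge lemma apply uniformly across $\mathcal D(S)$; this holds because component counts after fixed smoothings are shadow invariants (Corollary~\ref{cor:components}). One must also check that ``every vertex is a cut vertex'' correctly feeds the cited unknot characterization. I would rely on the cited theorem rather than reprove it, noting only that nugatory crossings can be removed one at a time by Reidemeister~I moves.
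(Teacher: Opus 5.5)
Your proof is correct and follows essentially the paper's route. You obtain $c_0(D\setminus\{i,j\})=4$ by iterating Lemma~\ref{lem:BLsymskein} directly, whereas the paper reaches the same value through the last-row fingerprint, which is that recursion packaged as a binomial expansion; you then exclude non-nugatory crossings via Lemmas~\ref{lem:admissible} and~\ref{lem:merge}, which are exactly the path/first-meeting-vertex argument the paper runs inline before invoking the cut-vertex criterion.
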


\begin{proof}


First note that for knots we always have $c_1(K) = 0$ for every knot $K$.
Thus on any knot shadow $S$ the first Jones layer $c_1$ is identically zero on the full cube $\cQ(S)$, so for every singleton $J = \{i\}\subset I$,
\[
\Symb{\varnothing}{J} = \partial_J c_1 \equiv 0.
\]
By Definition~\ref{def:mflat} this is exactly Jones-2-flatness.

Now suppose that the full cube $\cQ(S)$ is Jones--$m$-flat for some $m \ge 3$. By definition this means
\[
\Symb{\varnothing}{J} = \partial_J c_{|J|} \equiv 0
\quad\text{for every nonempty } J\subseteq I \text{ with } |J|\le m-1.
\]
In particular, all symbols of order 1 and 2 vanish:
\[
\Symb{\varnothing}{\{i\}} = 0,\quad
\Symb{\varnothing}{\{i,j\}} = 0
\quad\text{for all } i,j\in I.
\]

Apply the last-row fingerprint (Theorem~\ref{thm:main}) with $m=3$ to any 2-element set $J=\{i,j\}\subseteq I$. Since $\cQ(S)$ is Jones-3-flat, Theorem~\ref{thm:main} gives
\[
c_0(D\setminus\{i,j\}) = (-2)^2 = 4
\quad\text{for all diagrams }D\in \mathcal{D}(S).
\]
By Corollary~\ref{cor:cp} this implies property $P$ along each pair $\{i,j\}$: for every $K\subseteq \{i,j\}$,
\[
c_0(D\setminus K) = (-2)^{|K|}
\quad\Longleftrightarrow\quad
\#\pi_0(D\setminus K) = 1 + |K|.
\]
In particular:
\begin{itemize}
\item for every singleton $\{i\}$ we can choose any $j\neq i$ and use the pair $\{i,j\}$ to see that
\[
\#\pi_0(D\setminus\{i\}) = 2,
\]
\item for every pair $\{i,j\}$ we have
\[
\#\pi_0(D\setminus\{i,j\}) = 3.
\]
\end{itemize}
Thus property $P$ holds for all subsets $K\subseteq I$ of size $|K|=1$ and $|K|=2$.
Note that in the proof of the Shadow Lemma~\ref{shadow}, $P \Rightarrow$ every vertex is a cut-vertex, only the cases $|K|=1$ and $|K|=2$ are used:
\begin{itemize}
\item For $|K|=1$: smoothing $x$ yields exactly 2 components.
\item If $x$ were not a cut-vertex, there is a path in the shadow from one component to the other in $S\setminus\{x\}$; taking the first vertex $y$ on such a path and smoothing both $x$ and $y$ gives at most 2 components, contradicting the $|K|=2$ case of $P$, which demands 3 components.
\end{itemize}
Since we have property $P$ for all singletons and pairs, this argument applies and shows that every crossing of $S$ is a cut-vertex, so $S$ is a tree. By the Medina-Ramírez-Salazar criterion \cite{medina2017}, a shadow all of whose vertices are cut-vertices carries only unknot diagrams; hence every vertex of $\cQ(S)$ is an unknot and the full cube is isotopically constant.

The contrapositive says: if $S$ is not a tree (so the cube is not isotopically constant), then $\cQ(S)$ cannot be Jones-$m$-flat for any $m\ge 3$. Combined with the universal Jones-2-flatness, this means that on non-tree shadows the maximal global Jones-flatness index is exactly $m=2$.

\end{proof}

\fi

\newif\ifshowconverselastrow
\showconverselastrowfalse   

\ifshowconverselastrow

\section{Converse last-row fingerprint lemma}

\begin{lemma}[Converse last-row fingerprint]
\label{thm:main-converse}
Let \(S\) be a shadow and \(U\subseteq \cros(S)\). Fix \(m\ge 2\) with \(|U|\ge m-1\). Assume that for every \((m-1)\)-subset \(J\subseteq U\) one has the last-row component fingerprint
\[
c_0(D\setminus J)=(-2)^{m-1}
\]
for some (hence every) diagram \(D\in \mathcal{D}(S)\). (Equivalently, \(\Symb{J}{\varnothing}=(-2)^{m-1}\Symb{\varnothing}{\varnothing}\).)
Then the \(U\)-subcube \(\cQ_U\) is Jones-\(m\)-flat, i.e.
\[
\Symb{\varnothing}{K}=\partial_K c_{|K|}\equiv 0\qquad\text{for every nonempty }K\subseteq U,\ |K|\le m-1.
\]
So, combined with Theorem~\ref{thm:main}, Jones-\(m\)-flatness is equivalent to the \((m-1)\)-row fingerprint.
\end{lemma}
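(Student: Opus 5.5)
The plan is to invert the symbolic binomial expansion: every low-order symbol will be written as a signed combination of free coefficients of smoothed diagrams. The hypothesis then pins those free coefficients down completely, and the combination collapses to $(2-2)^{|K|}=0$.

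\emph{Step 1 (full expansion of a symbol into smoothings).} Fix a nonempty $K\subseteq U$ with $|K|\le m-1$. I will apply the recursion \eqref{eq:BLsymskein1} of Lemma~\ref{lem:BLsymskein} repeatedly, one crossing of the differenced set at a time. At each step an element of $K$ is either dropped with a factor $2$ or moved into the smoothed set. Induction on $|K|$, in exactly the style of Corollary~\ref{cor:pathind} but run in the opposite direction, gives
\[
\Symb{\varnothing}{K}=\sum_{M\subseteq K}2^{|K|-|M|}\,\Symb{M}{\varnothing}
=\sum_{M\subseteq K}2^{|K|-|M|}\,c_0(D\setminus M).
\]
By Corollary~\ref{cor:constancy}, each $c_0(D\setminus M)$ is independent of the vertex, so this is a valid identity on the whole $U$-subcube. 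Order independence is automatic, since the right-hand side is symmetric in $K$.

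\emph{Step 2 (propagating the last-row fingerprint down to all rows).} I claim that $c_0(D\setminus M)=(-2)^{|M|}$ for every $M\subseteq U$ with $|M|\le m-1$.

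Since $|U|\ge m-1$, extend $M$ to an $(m-1)$-subset $J\subseteq U$. By hypothesis and Lemma~\ref{JVPfreec}, $D\setminus J$ has exactly $m$ components. An oriented smoothing changes the component count by exactly $\pm1$: it either splits one component or merges two, as in Lemma~\ref{lem:merge}. Now smooth the crossings of $J$ one at a time, starting with those of $M$. Going from at least one component to $m$ components in $m-1$ steps forces the diagram $D$ to be a knot and every step to be $+1$. Hence $D\setminus M$ has $1+|M|$ components, and Lemma~\ref{JVPfreec} gives $c_0(D\setminus M)=(-2)^{|M|}$.

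\emph{Step 3 (cancellation).} Substituting Step 2 into Step 1 gives
\[
\Symb{\varnothing}{K}=\sum_{M\subseteq K}2^{|K|-|M|}(-2)^{|M|}=(2-2)^{|K|}=0,
\]
since $|K|\ge1$. This holds for every nonempty $K\subseteq U$ with $|K|\le m-1$, which is Jones-$m$-flatness.

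The main point needing care is Step 1. Lemma~\ref{lem:BLsymskein} was derived using the type condition, namely that $\partial_T c_{|T|}$ is locally constant and lower-order terms are killed. So I must check that each recursive application really has differenced set size equal to the coefficient order, for the smoothed diagrams $D\setminus M$ as well. It does, because smoothing does not change the order of $c_q$ as a finite-type invariant of the residual link. I would verify this first and then do the clean induction.
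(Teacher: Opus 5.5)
\textbf{Main issue: Step 2 cannot derive that $D$ is a knot.} Your Step 2 claims that reaching $m$ components after $m-1$ smoothings ``forces the diagram $D$ to be a knot and every step to be $+1$.'' That does not follow. Each oriented smoothing changes the count by exactly $\pm1$. If $k$ of the $m-1$ steps are merges, then $\ell(D)=m-(m-1-2k)=1+2k$, and nothing in the hypothesis forces $k=0$.

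In fact, with the literal hypothesis $c_0(D\setminus J)=(-2)^{m-1}$ on an arbitrary shadow, the statement is false. Take $m=3$. Let $D$ be a Hopf link together with a disjoint circle carrying one kink, and let $U=\{i,j\}$ with $i$ a Hopf crossing and $j$ the kink. Smoothing $i$ merges the two Hopf components, and smoothing $j$ splits the kinked circle. So $D\setminus\{i,j\}$ has $3$ components and the hypothesis holds. But your own Step 1 formula gives $\Symb{\varnothing}{\{i\}}=2c_0(D)+c_0(D\setminus\{i\})=2\cdot 4-2=6\neq0$. As written, your argument would prove a false statement, and the failure is exactly the derivation of knot-ness.

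\textbf{Repair.} Assume $\ell(D)=1$ instead of trying to derive it. The lemma is meant for knot shadows. The parenthetical ``equivalently, $\Symb{J}{\varnothing}=(-2)^{m-1}\Symb{\varnothing}{\varnothing}$'' encodes this, since it needs $\Symb{\varnothing}{\varnothing}=c_0(D)=1$. The paper's proof uses $\ell(D)=1$ at the same point, when it passes from the last row to all lower rows.

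Alternatively, read the hypothesis in that ratio form, $c_0(D\setminus J)=(-2)^{m-1}c_0(D)$. Then $\ell(D\setminus J)=\ell(D)+m-1$, which forces all $m-1$ steps to be $+1$ for any $\ell(D)$. So $c_0(D\setminus M)=(-2)^{|M|}c_0(D)$, and Step 3 reads $c_0(D)\,(2-2)^{|K|}=0$.

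\textbf{Comparison with the paper.} With either fix your proof is correct, and it uses the same two ingredients as the paper: downward propagation of the fingerprint via the $\pm1$ component count, and the binomial relation between symbols and smoothed free coefficients. The paper uses the forward expansion $\Symb{K}{\varnothing}=\sum_{L\subseteq K}(-2)^{|L|}\Symb{\varnothing}{K\setminus L}$ and peels off lower symbols by induction on $|K|$. Your closed-form inverse $\Symb{\varnothing}{K}=\sum_{M\subseteq K}2^{|K|-|M|}\Symb{M}{\varnothing}$ replaces that triangular induction with the single identity $(2-2)^{|K|}=0$.
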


\begin{proof}
~\\
\begin{enumerate}
\item \emph{Extend the last-row data to all smaller smoothings.}
Fix \(K\subseteq U\) with \(1\le |K|\le m-1\). Choose \(J\subseteq U\) with \(|J|=m-1\) and \(K\subseteq J\).
By the hypothesis \(c_0(D\setminus J)=(-2)^{m-1}\), Corollary~\ref{cor:cp} (\(c_0\) fingerprint \(\Rightarrow\) property \(P\) along subsets) gives
\[
c_0(D\setminus K)=(-2)^{|K|}.
\]
Thus, for every such \(K\),
\[
\Symb{K}{\varnothing}=c_0(D\setminus K)=(-2)^{|K|}\Symb{\varnothing}{\varnothing}.
\]

\item \emph{Use the smoothing-operator identity and induct up the cube.}
Corollary~\ref{cor:pathind} gives the operator formula
\[
h_K=\prod_{i\in K}h_i=\sum_{L\subseteq K}(-2)^{|L|}v_L,
\]
and applying this to the top-row symbol \(\Symb{\varnothing}{K}\) yields
\begin{equation}
\label{eq:symb-rec}
\Symb{K}{\varnothing}
= h_K\big(\Symb{\varnothing}{K}\big)
=\sum_{L\subseteq K}(-2)^{|L|}\,v_L(\Symb{\varnothing}{K})
=\sum_{L\subseteq K}(-2)^{|L|}\,\Symb{\varnothing}{K\setminus L}.
\end{equation}
\end{enumerate}

Now induct on \(r=|K|\).
\begin{itemize}
\item Base \(r=1\): \(K=\{i\}\). Then \eqref{eq:symb-rec} reads
\[
\Symb{\{i\}}{\varnothing}=\Symb{\varnothing}{\{i\}}+(-2)\Symb{\varnothing}{\varnothing}.
\]
But Step 1 says \(\Symb{\{i\}}{\varnothing}=(-2)\Symb{\varnothing}{\varnothing}\). Hence, \(\Symb{\varnothing}{\{i\}}=0\).

\item Step \(r\to r+1\): assume \(\Symb{\varnothing}{K'}=0\) for all nonempty \(K'\subset U\) with \(|K'|<r\).
For \(|K|=r\), every term \(\Symb{\varnothing}{K\setminus L}\) with \(L\neq \varnothing, K\) has \(1\le |K\setminus L|\le r-1\), hence vanishes by induction. So \eqref{eq:symb-rec} collapses to
\[
\Symb{K}{\varnothing}=\Symb{\varnothing}{K}+(-2)^{r}\Symb{\varnothing}{\varnothing}.
\]
Using Step 1 again, \(\Symb{K}{\varnothing}=(-2)^r\Symb{\varnothing}{\varnothing}\), so \(\Symb{\varnothing}{K}=0\).
\end{itemize}

Therefore, \(\Symb{\varnothing}{K}\equiv 0\) for every nonempty \(K\subseteq U\) with \(|K|\le m-1\), i.e. \(\cQ_U\) is Jones-\(m\)-flat (Definition~\ref{def:mflat}).

\end{proof}

\fi

\section*{Acknowledgements}
This work was supported by the European Union’s Horizon Europe research and innovation programme under grant agreement No. 101178170.

\end{document}